\documentclass[aos]{imsart}

\RequirePackage{amsthm,amsmath,amsfonts,amssymb}

\RequirePackage[OT1]{fontenc}
\RequirePackage{graphicx,color,amssymb,amsmath,amsthm,mathrsfs}
\RequirePackage[authoryear]{natbib}
\RequirePackage[colorlinks,citecolor=blue,urlcolor=blue, hypertexnames=false]{hyperref}
\RequirePackage{tikz}
\RequirePackage{bbm}
\usepackage{caption} 
\usepackage{multibib}

\newcites{SM}{References}

\startlocaldefs
\numberwithin{equation}{section}
\theoremstyle{plain}
\newtheorem{thm}{Theorem}[section]
\newtheorem{lemma}[thm]{Lemma}
\newtheorem{proposition}[thm]{Proposition}
\newtheorem{cor}[thm]{Corollary}

\theoremstyle{definition}
\newtheorem{remark}[thm]{Remark}

\def\R{\mathbb{R}}

\def\N{\mathbb{N}}

\def\1{\mathbbm{1}}

\def\E{\mathbb{E}}
\def\Pr{\mathbb{P}}

\endlocaldefs

\begin{document}

\begin{frontmatter}
\title{Self-organized regime switching in null-recurrent dynamics}
\runtitle{Self-organized regime switching in null-recurrent dynamics}

\begin{aug}
\author[A]{\fnms{Johannes}~\snm{Brutsche}\ead[label=e1]{johannes.brutsche@stochastik.uni-freiburg.de}},
\author[A]{\fnms{Sebastian}~\snm{Hahn}\ead[label=e2]{sebastian.hahn@stochastik.uni-freiburg.de}}
\and
\author[A]{\fnms{Angelika}~\snm{Rohde}\ead[label=e3]{angelika.rohde@stochastik.uni-freiburg.de}}
\address[A]{Mathematical Institute, University of Freiburg\printead[presep={,\ }]{e1}\printead[presep={,\ }]{e2}\printead[presep={,\ }]{e3}}
\end{aug}

\begin{abstract}
Based on discrete observations $X_0,X_{\Delta},\dots, X_{n\Delta}$ for $\Delta=n^{-\gamma}$ with $\gamma\in [0,1)$ of the null-recurrent dynamic $dX_t = \sigma(X_t)dW_t$ with a Brownian motion $W$ and $\sigma(x)=\alpha\1\{x<\rho\} + \beta\1\{x\geq \rho\}$, we derive rate of convergence and limiting distribution of the profile MLE for $\rho$. This includes low-frequency asymptotics ($\gamma=0$) for which the observations form a null-recurrent Markov chain.
The derived non-standard limit is the argsup over a doubly stochastic drifted Poisson process explicitly involving the local time of oscillating Brownian motion. Its dependence on $\rho$ as well as the unknown volatility levels $\alpha$ and $\beta$ is shown to be continuous w.r.t. the topology of weak convergence, enabling statistical inference. Whereas this limit is independent of the sampling frequency, the profile MLE's rate of convergence equals $n^{-(1+\gamma)/2}$ and is proven to be minimax optimal.
The surprising idea of the proof of the limit theorem is to relate the long-term behavior of the null-recurrent Markov chain to the infill asymptotics on a fixed time interval. Indeed, in the very special case that $(X_t)_{t\geq 0}$ is started in the true parameter $X_0=\rho_0$, the process $(X_t-\rho_0)_{t\geq 0}$ is shown to possess a desirable distributional self-similarity. On basis of the strong Markov property, the artificial constallation of starting in $\rho_0$ is finally overcome by a coupling argument.
\end{abstract}

\begin{keyword}[class=MSC]
\kwd[Primary ]{62M05}
\kwd{62E20}
\kwd[; secondary ]{60F05}
\kwd{60J10}
\end{keyword}

\begin{keyword}
\kwd{Profile MLE}
\kwd{null-recurrent Markov chain}
\kwd{self-similarity}
\kwd{coupling}
\kwd{diffusion local time}
\end{keyword}

\end{frontmatter}
\numberwithin{equation}{section}
\addtocontents{toc}{\protect\setcounter{tocdepth}{0}}


\section{Introduction and preliminaries}
While Brownian motion originally describes random motion of particles suspended in a medium, oscillating Brownian motion as introduced in~\cite{Keilson/Wellner} naturally arises in the description of diffusive motion in porous or highly inhomogeneous media. Therefore, it serves as a toy model for change-point phenomena with structural breaks not depending on time but purely on state of the process itself. Apart from natural sciences, self-organized regime switching phenomena are also observed in economics, see for example~\cite{Yuan/Yau} and references therein. Figure~\ref{Fig_2} contains an example of stock prices, illustrating a typical sample path with an abrupt change at the threshold $\rho$ between two different volatility levels.

\vspace{-0.3cm}

\begin{center}
\begin{figure}[h]
\includegraphics[width=12cm, height=2.85cm]{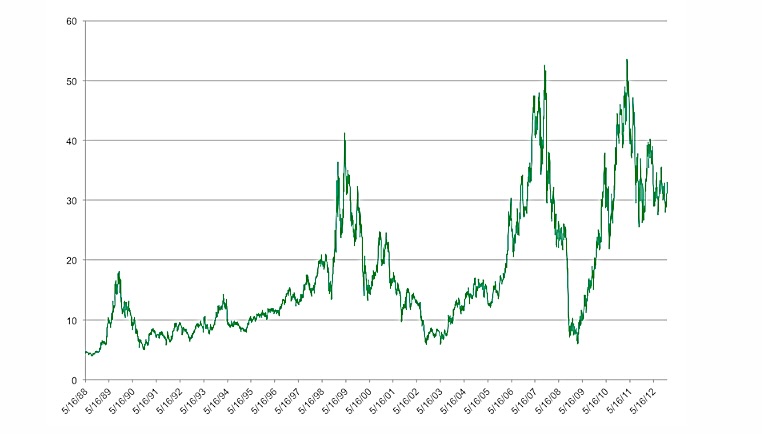}
\caption{\footnotesize{Sotheby's (BID) stock prices, 1988-2012, from \textit{Yahoo!Finance}. The process switches its volatility regime at a threshold $\rho$ of approximately $22$.}}\label{Fig_2}
\end{figure}
\end{center}

In mathematical terms, oscillating Brownian motion is a Markov process $X=(X_t)_{t\geq 0}$ solving the homogeneous stochastic differential equation
\begin{align}\label{eq:SDE_OBM}
dX_t = \sigma_{\rho,\alpha,\beta}(X_t) dW_t, \qquad X_0 =x_0\in\R,
\end{align} 
where $W$ is a standard Brownian motion and for different (but arbitrary) numbers $\alpha,\beta\in\R_{>0}$, the diffusion coefficient is given by 
\begin{align}\label{eq:coefficient_OBM}
\sigma_{\rho,\alpha,\beta}(x) := \begin{cases} \alpha, &\textrm{ if } x< \rho, \\ \beta, &\textrm{ if } x \geq\rho.\end{cases}
\end{align}
As shown in~\cite{LeGall}, the SDE \eqref{eq:SDE_OBM} possesses a unique strong solution. This solution is a strong Markov process, cf.~\cite{Keilson/Wellner}, who also identified the transition density $p_t^{\rho,\alpha,\beta}(x,y)$ from state $x$ to state $y$ within time $t$ for $\rho=0$. It is given for general $\rho$ in~\eqref{eq:transition_density} and one reads off that it is \textit{not} continuous in the parameter $\rho$ of interest and satisfies
\[ p_t^{\rho,\alpha,\beta}(x,y) = p_t^{0,\alpha,\beta}(x-\rho,y-\rho),\]
which will play a major role for this article. Together with the explicit density of the passage time distribution of $X$ in Theorem~$2$ in~\cite{Keilson/Wellner} for $\rho=0$, the process $X$ is seen to be null-recurrent. Especially, in contrast to stationary processes, an invariant probability distribution does not exist and the classical ergodic theorem is not available.

Whereas the threshold $\rho$ is observable via quadratic variation given any finite continuous trajectory $(X_t)_{0\leq t\leq T}$ that crosses $\rho$, its identification is a statistical problem on the basis of discrete observations 
\[X_{k\Delta_n^\gamma}, \ \ k=0,1,\dots, n, \]
with $\Delta_n^\gamma=n^{-\gamma}$ for $\gamma\in [0,1]$. For the so-called infill asymptotics corresponding to $\gamma=1$, the time horizon $n\Delta_n^\gamma$ is bounded, whereas for all $\gamma\in [0,1)$ it tends to infinity (see Figure~\ref{Fig_1}).

\vspace{-0.3cm}

\begin{center}
\begin{figure}[h]
\begin{tikzpicture}
  \draw[->] (0,0) -- (12,0) node[right] {$x$};

  \foreach \x in {0,1,2,3,4,5} {
    \draw (2*\x,0) -- (2*\x,0.1);
    \fill (2*\x,0) circle (2pt);
    \node[below=2pt] at (2*\x,0) {};
  }

  \draw[red,thick] (0.2,-0.1) -- (0.2,0.1) node[above] {$\tfrac{1}{n}$};
  \draw[red,thick] (0.4,-0.1) -- (0.4,0.1) node[above] {$\tfrac{2}{n}$};
  \draw[red,thick] (0.6,-0.1) -- (0.6,0.1) node[above] {};
  \draw[red,thick] (0.8,-0.1) -- (0.8,0.1) node[above] {};
  \draw[red,thick] (1.0,-0.1) -- (1.0,0.1) node[above] {};
  \draw[red,thick] (1.2,-0.1) -- (1.2,0.1) node[above] {};
  \draw[red,thick] (1.4,-0.1) -- (1.4,0.1) node[above] {};
  \draw[red,thick] (1.6,-0.1) -- (1.6,0.1) node[above] {};
  \draw[red,thick] (1.8,-0.1) -- (1.8,0.1) node[above] {};
  \draw[red,thick] (2.0,-0.1) -- (2.0,0.1) node[above] {};
  
  \draw[green,thick] (0.85,0.2) -- (0.85,-0.2) node[below] {$\tfrac{1}{\sqrt{n}}$};
  \draw[green,thick] (1.7,0.2) -- (1.7,-0.2) node[below] {$\tfrac{2}{\sqrt{n}}$};
  \foreach \y in {3,4,5,6,7,8,9,10,11,12,13}{
  	\draw[green,thick] (0.85*\y,0.2) -- (0.85*\y,-0.2) node[below] {};
  }
  
  \foreach \z in {0,1,2,3,4,5} {
    \draw[blue,thick] (2*\z,-0.3) -- (2*\z,0.3) node[above] {\z};
  }
  
\end{tikzpicture}
\caption{\small{An illustration of different observation schemes. The red lines correspond to $\gamma=1$, the infill asympotics, where no point after $T=1$ is observed. The blue dashes correspond to the low frequency observation scheme $\gamma=0$. The green ones correspond to $\gamma=1/2$, where the observations are more dense with growing $n$, but also the observation window increases.}}\label{Fig_1}
\end{figure}
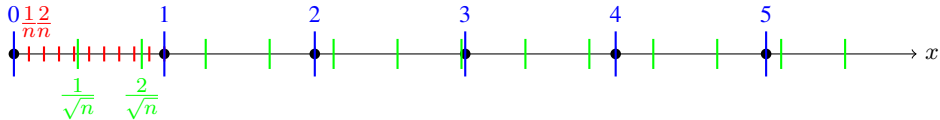
\end{center}

\vspace{-1cm}

Likewise, the only observation scheme for which the recurrence structure is irrelevant is the infill asymptotics (the red scheme in Figure~\ref{Fig_1}). Here, the maximum likelihood estimator (MLE) for $\rho$ has been studied in~\cite{Brutsche/Rohde} when the two volatility levels $\alpha$ and $\beta$ are known to the statistician. All other scenarios $\gamma\in[0,1)$ are still open and content of this article, where we derive rate of convergence and limiting distribution of the profile MLE for $\rho$ as $n\Delta_n^\gamma\rightarrow\infty$ (cf.~Theorem~\ref{thm:weak_convergence}). Compared to the MLE, the profile MLE is even applicable when the volatility levels $\alpha$ and $\beta$ are unknown. Our study includes the particular case that the observations form a fixed null-recurrent Markov chain ($\gamma=0$). Generally speaking, for all $\gamma\in [0,1)$, we are facing two severe challenges at once:
\begin{itemize}
\item the discontinuity of the likelihood function in the parameter of interest and
\item the missing law of large numbers due to the null-recurrence of the dynamics.
\end{itemize}
The more surprising it is that in our case, the long-run asymptotics ($\gamma\in [0,1)$) can be traced back to the infill asymptotics ($\gamma=1$). Indeed, just when $X_0=\rho_0$, the process $(X_t-\rho_0)_{t\geq 0}$ is discovered to possess a distributional self-similarity under the true parameter $\rho_0$. As a consequence, the distribution of the likelihood function can be expressed in terms of the infill observation scheme. By the strong Markov property and a coupling argument this idea is expanded to arbitrary starting points. The design of this proof strategy is the core of this article. Moreover, this route of proof illuminates the occurence of the local time of an independent oscillating Brownian motion in $\rho_0$ in the intensity of the profile MLE's doubly stochastic Poisson-type limit which is completely incomprehensible from a long-run asymptotic's perspective. Inter alia, this non-standard limit is caused by the discontinuity of the likelihood function in the threshold parameter $\rho$.

We highlight that the limit distribution does not depend on the value of $\gamma$ in the observation scheme, whereas its convergence rate is identified as $n^{-(1+\gamma)/2}$ and proven to be minimax optimal. Note that the derived $\sqrt{n}$-consistency in case $\gamma=0$ does \textit{not} correspond to the classical parametric rate in regular models as the likelihood function is discontinuous in the parameter $\rho$. Instead, it reflects the difficulty inherent to null-recurrent dynamics as compared to stationary time series settings (cf.~\cite{Chan},~\cite{Yuan/Yau}) and is actually fast, see also~\cite{Delattre/Hoffmann}. Finally, we show that the doubly stochastic Poisson-type limit depends continuously on $\alpha,\beta$ and $\rho$ in the topology of weak convergence (cf.~Theorem~\ref{thm:properties_limit}), enabling the construction of asymptotic confidence intervals for the threshold $\rho$.

To the best of our knowledge, the various literature on estimating a threshold in regime switching models in long-term asymptotics rely on ergodic properties so far. The contributions~\cite{Kutoyants}, \cite{Su/Chan_1}, \cite{Su/Chan_2}, and~\cite{Mazzonetto/Pigato} cover inference in ergodic threshold diffusions as the time horizon of the observed trajectories goes to infinity. The typical limit of such change-points is given as the argmax of a drifted two-sided Brownian motion, see for example Theorem~$4$ in~\cite{Su/Chan_1}. This type of limit also occurs in~\cite{Reiss/Strauch/Trottner} in context of change-point estimation for the stochastic heat equation. A limit involving compound Poisson processes arises in~\cite{Chan} in the context of threshold autoregressive time series models. \cite{Yuan/Yau} study an autoregressive model in which the regime-switching is described by a bivariate threshold variable based on minimizing the minimum description length. The respective objective function is not càdlàg and the limiting distribution theory is developed in a space of pure jump processes. In contrast to our model, both time series contributions work under the assumption of existence of an invariant measure.

The article is organized as follows: In Section~\ref{Section:results} we present our main result on the distributional limit of the profile MLE $\hat{\rho}_n^\gamma$ together with the statistical application including important properties of the limit and a simulation study. The proofs extend over Sections~\ref{Section:Preview}-\ref{Section:Proof_sketch_triple} with some technical details deferred to the Supplementary material (containing Appendices A-F).

\section{Main results}\label{Section:results} 
The transition density $p_t^{\rho,\alpha,\beta}(x,y)$ for the process solving~\eqref{eq:SDE_OBM} is explicitly given as
\begin{align}\label{eq:transition_density}
p_t^{\rho,\alpha,\beta}(x,y) = \begin{cases}
\frac{1}{\sqrt{2\pi t}\alpha}\left[ \exp\left( -\frac{(y-x)^2}{2t\alpha^2}\right) - \frac{\alpha-\beta}{\alpha+\beta}\exp\left(-\frac{(y-2\rho+x)^2}{2t\alpha^2}\right)\right] &\textrm{ for } x<\rho, y\leq \rho,\vspace{0.2cm} \\
\frac{1}{\sqrt{2\pi t}\beta}\left[ \exp\left( -\frac{(y-x)^2}{2t\beta^2}\right) + \frac{\alpha-\beta}{\alpha+\beta}\exp\left(-\frac{(y-2\rho+x)^2}{2t\beta^2}\right)\right] &\textrm{ for } x\geq\rho, y>\rho, \vspace{0.2cm} \\
\frac{2}{\alpha+\beta}\frac{\alpha}{\beta}\frac{1}{\sqrt{2\pi t}}\exp\left( -\frac{1}{2t}\left(\frac{y-\rho}{\beta} - \frac{x-\rho}{\alpha}\right)^2\right) & \textrm{ for } x<\rho< y, \vspace{0.2cm} \\
\frac{2}{\alpha+\beta}\frac{\beta}{\alpha}\frac{1}{\sqrt{2\pi t}}\exp\left( -\frac{1}{2t}\left(\frac{y-\rho}{\alpha} - \frac{x-\rho}{\beta}\right)^2\right) & \textrm{ for } y\leq\rho\leq x. \vspace{0.2cm} \\
\end{cases}
\end{align}
Indeed, by Proposition~$4.2$ in \cite{Blanchard/Roeckner/Russo} the transition density is given as the unique solution of the Kolmogorov forward (or Fokker--Planck) equation in a distributional sense, i.e. it satisfies
\begin{align*}
\int_\R p_t^{\rho,\alpha,\beta}(x,y)\varphi(y) dy = \varphi(x) + \int_0^t\int_\R \frac{1}{2} \sigma_{\rho}(y)^2 p_s^{\rho,\alpha,\beta}(x,y)\varphi''(y) dy ds
\end{align*}
for every $t\in [0,T]$ and any $\varphi\in\mathcal{S}(\R)$, where $\mathcal{S}(\R)$ denotes the Schwartz space of rapidly decreasing functions on $\R$. One easily verifies that~\eqref{eq:transition_density} satisfies this equation.
By the Markovian structure, the likelihood function for the observations $(X_{k\Delta_n^\gamma})_{k=1,\dots,n}$ factorizes into a product of transition densities such that the log-likelihood function is given as
\begin{align}\label{eq:likelihood-function}
L_{n,\gamma}^{\alpha,\beta}(X_0, X_{\Delta_n^\gamma}, \dots, X_{n\Delta_n^\gamma};\rho) := \sum_{k=1}^n \log\left(p_{\Delta_n^\gamma}^{\rho,\alpha,\beta}(X_{(k-1)\Delta_n^\gamma},X_{k\Delta_n^\gamma})\right).
\end{align} 
From the explicit transition density in~\eqref{eq:transition_density} it is read off that for continuous functions $f_k$, the likelihood function can be written as 
\[  \sum_{k=0}^{n+1} f_k(\rho,\alpha,\beta)\1_{I_k},\]
with $I_0 =\{-\infty <\rho<X_0:n\}$, $\{X_{k\Delta_n^\gamma:n}\leq \rho <\infty)\}$ and $I_k=\{X_{(k-1)\Delta_n^\gamma:n}\leq\rho< X_{k\Delta_n^\gamma:n}\}$ for $k=1,\dots, n$, where $(X_{0:n},X_{\Delta_n^\gamma:n},\dots,X_{n\Delta_n^\gamma:n})$ denoting the order statistic of the observations. For given $(\alpha,\beta)$ this function is not continuous, but càdlàg in the parameter $\rho$. Let $f:\R\times \R_{>0}^2\rightarrow\R$ be any function of the form $f(x,y) = \sum_{m=1}^N f_k(x,y)\1_{\{x_{m-1}\leq x< x_m\}}$ with $f_k:\R\times \R_{>0}^2\rightarrow\R$ being continuous. Then we define
\[ \underset{x\in \R\times\R_{>0}^2}{\mathrm{Argsup}\ } f(x) = \left\{(x_1,x_2)\in \R\times\R_{>0}^2:\ \max\left\{\lim_{u\nearrow x_1} f(u,x_2), f(x_1,x_2)\right\} = \sup_{z\in \R\times\R_{>0}^2} f(z)\right\} \]
and write $\arg\sup$ if $\mathrm{Argsup}$ is a singleton. Note that $\mathrm{Argsup}$ is always a subset of $\R^3$ but may be empty. Moreover, the special case of càdlàg functions $g:\R\rightarrow\R$ is part of this definition by considering $f(x_1,x_2,x_3)=g(x_1)$. We finally define the profile MLE $\hat{\rho}_n^\gamma$ to be the first component of some measurably selected representative of
\begin{align}\label{eq:argsup_triple}
\underset{\rho\in\R, \alpha,\beta >0}{\mathrm{Argsup}\ }\ L_{n,\gamma}^{\alpha,\beta}\left(X_0, X_{\Delta_n^\gamma}, \dots, X_{n\Delta_n^\gamma}; \rho\right).
\end{align}
By the consistency result in Proposition~\ref{prop:rate_of_triple_MLE} this set is non-empty with probability tending to one.

In order to properly describe the limit of the profile MLE $\hat{\rho}_n^\gamma$, we need to introduce the limiting process of a suitably rescaled version of the log-likelihood function that corresponds to the likelihood function in~\eqref{eq:likelihood-function}.
Herefore, we denote by $L_t^z(Y)$ the symmetric local time at point $t$ in $z$ for a continuous local martingale $Y$, i.e. by Corollary~$1.9$ in Chapter~VI of~\cite{Revuz/Yor},
\begin{align}\label{eq:local_time}
L_t^z(Y) = \lim_{\epsilon\searrow 0 } \frac{1}{2\epsilon} \int_0^t \1_{(z-\epsilon,z+\epsilon)}(Y_s) ds.
\end{align}
Now, we define two independent standard Poisson processes $N$ and $N'$ on $\R_{\geq 0}$ and an oscillating Brownian motion $X^{\rho,\alpha,\beta}$ starting in $\rho$ with diffusion coefficient $\sigma_{\rho,\alpha,\beta}$ (i.e. a process satisfying~\eqref{eq:SDE_OBM} with $x_0=\rho$). Based on these three processes, we define $\ell^{\rho,\alpha,\beta} = (\ell^{\rho,\alpha,\beta}(z))_{z\in\R}$ via
\begin{align}\label{eq:def_ell}
\begin{split}
\ell^{\rho,\alpha,\beta}(z) &= \1_{\{z< 0\}} \left( |z| \frac{\beta^2-\alpha^2}{\alpha^2\beta^2} L_1^{\rho}(X^{\rho,\alpha,\beta}) +\log\left(\frac{\alpha^2}{\beta^2}\right) N'\left(\frac{|z|L_1^{\rho}(X^{\rho,\alpha,\beta})}{\alpha^2}-\right)\right)  \\
&\hspace{0.5cm} +  \1_{\{z\geq 0\}} \left( |z| \frac{\alpha^2-\beta^2}{\alpha^2\beta^2} L_1^{\rho}(X^{\rho,\alpha,\beta}) +\log\left(\frac{\beta^2}{\alpha^2}\right)N\left(\frac{|z|L_1^{\rho}(X^{\rho,\alpha,\beta})}{\beta^2}\right)\right).
\end{split}
\end{align} 
We use the left-continuous version $N'(\bullet -)$ to ensure that $\ell^{\rho,\alpha,\beta}$ is càdlàg. For $z<0$, we may write ($z\geq 0$ can be decomposed analogously)
\begin{align*}
\ell^{\rho,\alpha,\beta}(z) &= |z| \left( \frac{\beta^2-\alpha^2}{\alpha^2\beta^2} + \frac{1}{\alpha^2}\log\left(\frac{\alpha^2}{\beta^2}\right)\right) L_1^{\rho}(X^{\rho,\alpha,\beta}) \\
&\hspace{1cm} +\log\left(\frac{\alpha^2}{\beta^2}\right) \left( N'\left(\frac{|z|L_1^{\rho}(X^{\rho,\alpha,\beta})}{\alpha^2}-\right) - \frac{|z|L_1^{\rho}(X^{\rho,\alpha,\beta})}{\alpha^2}\right),
\end{align*}
which is a process with negative drift plus a compensated Poisson process in case $\alpha\neq\beta$, where the first statement follows from the fact that $x\mapsto x-1-x\log(x)$ has a unique maximum in $x=1$ and vanishes in this point. While the limiting drift being a multiple of $|z|$ is locally of the same structure as that of classical estimators for change points in time, the stochastic terms is given as a two-sided Poisson process rather than a two-sided Brownian motion for our change in the state space. Recall that the profile MLE $\hat{\rho}_n^\gamma$ is the first component of an element $(\hat{\rho}_n^\gamma,\hat{\alpha}_n^\gamma,\hat{\beta}_n^\gamma)$ of~\eqref{eq:argsup_triple} based on discrete observations from the process in~\eqref{eq:SDE_OBM}.

\begin{thm}\label{thm:weak_convergence}
Let $(\ell^{\rho,\alpha,\beta}(z))_{z\in\R}$ be the process given in~\eqref{eq:def_ell} and $\gamma\in [0,1)$. Then $(\hat{\alpha}_n^\gamma,\hat{\beta}_n^\gamma)$ is $\sqrt{n}$-consistent and 
\begin{align}\label{eq:weak_limit}
n^{\frac{1+\gamma}{2}}\left(\hat{\rho}_n^\gamma-\rho_0\right) \stackrel{\mathcal{D}}{\longrightarrow}\ \underset{z\in\R}{\arg\sup\ }\ell^{\rho_0,\alpha_0,\beta_0}(z). 
\end{align}
\end{thm}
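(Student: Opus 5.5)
The proof strategy is to reduce the long-run scheme to the infill scheme via self-similarity, and then to handle the profile over $(\alpha,\beta)$ by a separate rate argument.

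\textbf{Step 1: the nuisance parameters.} First I establish $\sqrt{n}$-consistency of $(\hat\alpha_n^\gamma,\hat\beta_n^\gamma)$ together with a preliminary rate for $\hat\rho_n^\gamma$. This is the content announced in Proposition~\ref{prop:rate_of_triple_MLE}. The route is a standard M-estimation argument.
\begin{itemize}
\item Bound the expected log-likelihood ratio from above by $-c(\text{number of increments near }\rho_0)\,|\rho-\rho_0|$. Bound it also by a negative quadratic in $(\alpha-\alpha_0,\beta-\beta_0)$.
\item Control the fluctuations by a chaining/peeling argument.
\end{itemize}
The key counting fact is this. The number of observations in a window of width $h$ around $\rho_0$ is of order $h\cdot n\Delta_n^{-1/2}\cdot (n\Delta_n)^{-1/2}\cdot\Delta_n^{1/2}$. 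Equivalently, the occupation density at $\rho_0$ over $[0,n\Delta_n]$ is of order $(n\Delta_n)^{1/2}$, divided by $\Delta_n$. This gives the rate $n^{-(1+\gamma)/2}$ for $\rho$. The $\sqrt{n}$-rate for the volatilities follows because essentially all $n$ increments are informative about $\alpha$ or $\beta$.

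\textbf{Step 2: localize and remove the profile.} Writing $\rho=\rho_0+zn^{-(1+\gamma)/2}$, I show that replacing $(\alpha,\beta)$ by $(\alpha_0,\beta_0)$ perturbs the localized log-likelihood increment
\[
Z_n(z):=L_{n,\gamma}^{\alpha,\beta}(\rho_0+zn^{-(1+\gamma)/2})-L_{n,\gamma}^{\alpha,\beta}(\rho_0)
\]
only by $o_P(1)$, uniformly over compact $z$-sets and over $\sqrt{n}$-neighbourhoods of $(\alpha_0,\beta_0)$. The reason is that only the $O_P(1)$ increments straddling $[\rho_0,\rho]$ contribute to $Z_n$, and each contributes a term continuous in $(\alpha,\beta)$. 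Hence the profile MLE has the same limit as the argsup of $Z_n$ with known volatilities.

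\textbf{Step 3: finite-dimensional convergence.} I show that $Z_n$ converges to $\ell^{\rho_0,\alpha_0,\beta_0}$ in distribution in the Skorokhod sense on compacts, in three moves.
\begin{enumerate}
\item \emph{Start at $\rho_0$.} Assume first $X_0=\rho_0$. By Brownian scaling of the SDE with threshold at $\rho_0$, the process $(X_{t}-\rho_0)_{t\ge0}$ has the same law as $(c^{1/2}(X_{t/c}-\rho_0))_{t\geq0}$. Taking $c=n\Delta_n$, the observations $X_{k\Delta_n}-\rho_0$ equal in law $(n\Delta_n)^{1/2}(\tilde X_{k/n}-\rho_0)$, where $\tilde X$ is observed on the infill grid. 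The localization $zn^{-(1+\gamma)/2}$ becomes $z\,n^{-(1+\gamma)/2}(n\Delta_n)^{-1/2}=z/n$. This is exactly the infill localization of \cite{Brutsche/Rohde}.
\item \emph{Infill limit.} Since the transition density depends only on differences to $\rho$, the rescaled likelihood process coincides in law with the infill one. I then invoke the infill result, extended to this two-sided form. The number of grid steps crossing a level within $z/n$ of $\rho_0$ on $[0,1]$ is asymptotically Poisson with intensity proportional to $|z|L_1^{\rho_0}/\sigma^2$. Each crossing contributes $\log(\beta^2/\alpha^2)$, or its negative, and the non-crossing increments produce the drift. This yields $\ell^{\rho_0,\alpha_0,\beta_0}$.
\item \emph{General start.} For general $x_0$, apply the strong Markov property at the hitting time $\tau$ of $\rho_0$, which is a.s. finite by recurrence. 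Couple with a process started at $\rho_0$, shifted by $\tau$ rounded to the grid. Then:
   \begin{itemize}
   \item the contribution of the first $\lceil\tau/\Delta_n\rceil$ increments to $Z_n$ vanishes, since $\tau/(n\Delta_n)\to0$;
   \item the missing time at the end changes the local time by $o_P(1)$.
   \end{itemize}
   The grid misalignment within one step also has to be handled, and I expect it to be harmless because the crossings occur at a continuum of phases.
\end{enumerate}

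\textbf{Step 4: argmax continuous mapping.} I conclude with an argmax continuous mapping theorem for càdlàg processes. This needs three ingredients:
\begin{itemize}
\item tightness of the rescaled estimator, which is Step 1;
\item a.s. uniqueness of the argsup of $\ell$, which holds because the jump times are continuous and the drift is strictly negative;
\item the fact that $\ell(z)\to-\infty$ as $|z|\to\infty$.
\end{itemize}

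\textbf{Main obstacle.} I expect the hardest part to be Step 1. It requires uniform control of the log-likelihood over $|z|\le K$ versus $|z|>K$ without an ergodic theorem. Exponential-type deviation bounds have to be derived conditionally on local times of the null-recurrent chain, presumably again by transferring them via self-similarity to the infill scheme on $[0,1]$.
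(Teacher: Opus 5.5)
Your architecture matches the paper's: the triple-MLE rate (Proposition~\ref{prop:rate_of_triple_MLE}), negligibility of the profile perturbation (Proposition~\ref{prop:approximation_ell}), self-similarity plus the infill theorem, a coupling for general $x_0$, and an argmax theorem. However, the general-start step does not go through as you set it up. A restart at the hitting time $\tau$ of $\rho_0$ is incompatible with the fixed sampling grid. If you shift by $\tau$ rounded up to $\lceil\tau/\Delta_n^\gamma\rceil\Delta_n^\gamma$, the spliced path no longer has the law of $X^{x_0}$: it would have to sit at $\rho_0$ in between, or jump. If you shift by $\tau$ exactly, the post-$\tau$ observations are $Y_{(j+u)\Delta_n^\gamma}$, $j\geq 0$, with a random phase $u=\lceil\tau/\Delta_n^\gamma\rceil-\tau/\Delta_n^\gamma\in[0,1)$. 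After rescaling, these lie on the shifted infill grid $\{(j+u)/n\}$, i.e.\ they are observations of an OBM restarted at the random, $n$-dependent point $\tilde Y_{u/n}\neq\rho_0$. Neither Lemma~\ref{lemma:self_similarity}, which needs the start exactly at $\rho_0$, nor the fixed-start infill theorem of \cite{Brutsche/Rohde} covers this, and the remark about a continuum of phases does not repair it.

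The missing idea is the paper's meeting-time coupling~\eqref{eq:coupling_time}. Take an independent OBM $X^{\rho_0}$ on the same clock and set $T=\inf\{t: X^{x_0}_t=X^{\rho_0}_t\}$. This is a.s.\ finite, since $X^{x_0}-X^{\rho_0}$ is a continuous martingale with quadratic variation at least $2\min\{\alpha_0^2,\beta_0^2\}t$. By the strong Markov property the spliced $\overline{X}^c$ has the law of $X^{x_0}$, and it coincides with $X^{\rho_0}$ at every grid point after $T$. Hence the two likelihood processes differ only in their first $\lceil T/\Delta_n^\gamma\rceil$ summands, with no misalignment and no missing time at the end. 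For $\gamma\in(0,1)$ there are of order $n^\gamma$ such summands. Showing they vanish uniformly in $z$ needs the drift expansion of Lemma~\ref{lemma:expansion_drift} (a bound of order $n^{-(1-\gamma)/2}$) plus chaining for the martingale part, not just $\tau/(n\Delta_n^\gamma)\to0$.

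Two other steps rest on wrong or missing mechanisms. In Step~2 it is not true that only $O_P(1)$ straddling increments depend on $\rho$. Every increment with endpoints within $O(\sqrt{\Delta_n^\gamma})$ of $\rho_0$ does, through the reflected Gaussian term or the crossing density. There are $O_P(\sqrt n)$ of them, including the many that jump over the whole band, and each contributes $O(n^{-1/2})$; these produce the drift of $\ell$, as you say yourself in Step~3. The conclusion of Proposition~\ref{prop:approximation_ell} survives, but only because each such summand changes by $O(n^{-1})$ under an $O(n^{-1/2})$ change of $(\alpha,\beta)$. That is the second-order expansion of Appendix~\ref{Section:App_approximation_ell}.

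In Step~1, separate negativity in $\rho$ and in $(\alpha,\beta)$ is not enough. For large $\theta_\rho$ the profile can set $\alpha\approx\beta_0$. Then the contributions of $\ell^1_{n,\gamma}$ and $\ell^2_{n,\gamma}$ on the band $[\rho_0,\rho_0+\theta_\rho]$ cancel exactly: by Lemma~\ref{lemma:function_negative} the combined coefficient $H_{\alpha_0,\beta_0}(\theta_\alpha)$ is $\le 0$, with equality at $\theta_\alpha=\beta_0-\alpha_0$. So the negativity must come from the observations below $\rho_0$.

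Worse, for $\rho$ near the top of the path and $\beta\downarrow0$ the likelihood is not controlled at all. If $X_{n\Delta_n^\gamma}$ is the sample maximum, let $\rho\uparrow X_{n\Delta_n^\gamma}$ with $(X_{n\Delta_n^\gamma}-\rho)/\beta$ fixed. The single upcrossing factor is then of order $1/\beta$ while all other factors stay bounded away from zero, so the likelihood is unbounded. The paper handles this on the event $\{\max_{t\le n\Delta_n^\gamma}X_t-X_{n\Delta_n^\gamma}>1\}$, whose probability tends to one by Dambis--Dubins--Schwarz. On that event it uses an injection pairing each upcrossing with a later above-threshold or downcrossing step (Lemma~\ref{lemma:bound_small_theta_beta}). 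Without these ingredients your Step~1 does not yield the tightness that Step~4 needs.
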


A route of proof is given in three steps in Subsection~\ref{Subsection:Proof_Theorem1} whose details extend over Sections~\ref{Section:StepI} and~\ref{Section:StepII}. Remarkably, the rate of $(\hat{\alpha}_n^\gamma,\hat{\beta}_n^\gamma)$ does not depend on $\gamma$, whereas the rate for $\hat{\rho}_n^\gamma$ interpolates between $n^{-1}$ for $\gamma=1$ (infill asymptotics) and $n^{-1/2}$ for $\gamma=0$ (null-recurrent Markov chain). It follows from Proposition~\ref{prop:minimax_lower} in the supplementary material~\ref{Section:Proof_1A} that this rate is indeed mini\-max. The limiting distribution is the same for all $\gamma$, i.e for all step sizes $\Delta_n^\gamma$ of the observations. It coincides with the limit in Theorem~$1.1$ in~\cite{Brutsche/Rohde} for the infill asymptotics which has been derived in the sense of stable convergence for the MLE on the event $\{L_1^{\rho_0}(X)>0\}$ when $\alpha$ and $\beta$ are known. This is surprising, since the limit behavior for $\gamma\in [0,1)$ depends heuristically on recurrence properties of the observed Markov chain which cannot contribute to the analysis for $\gamma=1$.

As compared to $\gamma=1$, the long-term asymptotics for $\gamma\in[0,1)$ of this article require a completely different proof. For the infill asymptotics, we were able to prove the stronger mode of stable convergence that could be traced back to the uniform stochastic convergence of the semimartingale characteristics of the \textit{sequential} log-likelihood process w.r.t. discretized filtrations. For $\gamma\in [0,1)$, however, this argument is not available any longer and the problem of identifying the joint distribution of the random drift and martingale part of the log-likelihood function arises. Our crucial finding is that in the very special case that $(X_t)_{t\geq 0}$ is started in the true parameter $X_0=\rho_0$, the process $(X_t-\rho_0)_{t\geq 0}$ possesses a desirable distributional self-similarity. This enables to relate the weaker mode of convergence in distribution back to the problem in infill asymptotics, where this interplay is well-understood. Recall that the limit theorem in the infill asymptotics is restricted to the event $\{L_1^{\rho_0}(X)>0\}$, but $\Pr(L_1^{\rho_0}(X)>0)=1$ iff $X_0=\rho_0$. Indeed, the latter equivalence is well-known if $X$ is replaced by a Brownian motion and carries over to OBM by the Dambis--Dubins--Schwarz theorem (cf. Theorem~$19.4$ in~\cite{Kallenberg}). On basis of the strong Markov property, the artificial constallation of starting in $\rho_0$ is finally overcome by a coupling argument. In order to prove that the increasing number of observations before the coupling time does not affect the limit for $\gamma\in (0,1)$ necessiates a quantitative maximal deviation bound for oscillating Brownian local time (cf.~Lemma~\ref{lemma:probability_A3}).\medskip

\textbf{Statistical implications.}
A priori, the result in Theorem~\ref{thm:weak_convergence} is not directly applicable for statistical purposes as the limit depends on the unknown parameters $(\rho_0,\alpha_0,\beta_0)$. The continuity result in the next theorem paves the way for statistical inference.

\begin{thm}\label{thm:properties_limit}
The limit distribution in~\eqref{eq:weak_limit} is absolutely continuous with respect to Lebesgue measure and depends continuously on $(\rho_0,\alpha_0,\beta_0)$ in the topology of weak convergence.
\end{thm}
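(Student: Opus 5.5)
Writing $L:=L_1^{\rho}(X^{\rho,\alpha,\beta})$, the plan is to condition on $L$ and reduce everything to properties of the two-sided compound Poisson-type process with deterministic scale. Conditional on $L=\lambda>0$, the process $\ell^{\rho,\alpha,\beta}$ equals in law
\[
z\mapsto \lambda\Big(|z|c_{\pm}\Big) + \log(\alpha^2/\beta^2)\,\tilde N'(\lambda|z|/\alpha^2-)\ \text{or}\ \log(\beta^2/\alpha^2)\,\tilde N(\lambda z/\beta^2),
\]
where $\tilde N,\tilde N'$ are independent Poisson processes, independent of $L$. By Poisson scaling, $\ell(z)$ given $L=\lambda$ has the law of $\ell_1(\lambda z)$, where $\ell_1$ is the process with $L$ replaced by $1$. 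Hence
\[
\arg\sup_z \ell^{\rho,\alpha,\beta}(z) \stackrel{\mathcal D}{=} Z/L,\qquad Z:=\arg\sup_z\ell_1(z),
\]
with $Z$ independent of $L$.

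\textbf{Step 1: well-posedness of the argsup.} Because the drift is strictly negative on both sides (the function $x\mapsto x-1-x\log x$ is $<0$ for $x\neq1$, and $\alpha\neq\beta$), $\ell_1(z)\to-\infty$ almost surely as $|z|\to\infty$. The supremum is therefore attained at $0$ or at a jump time (left limits on the negative side). Ties between two distinct jump locations require an equality of the form $a\,t_i+b\,k=a'\,t_j+b'\,m$ with continuous jump times, which has probability zero. So $Z$ is a.s. unique.

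\textbf{Step 2: absolute continuity.} We have $P(L>0)=1$ since the process starts at $\rho$, and $L$ has a density. The density follows because, via Dambis--Dubins--Schwarz or the explicit scaling of oscillating Brownian motion, $L$ is a continuous function of Brownian local time at $0$ at an independent-type time; concretely, $L_1^\rho(X)$ has the law of a constant multiple of $|N(0,1)|$-type variables (the Keilson--Wellner formulas give an explicit density). The law of $Z$ is a priori not absolutely continuous: it has an atom at $0$ and atoms at jump points, which are continuous. Since $Z/L$ is a product of independent variables, it suffices to show that $P(Z=0)=0$, or to handle the atom separately. Here $P(Z=0)>0$ is possible, because the drift near $0$ is negative and a jump could go the wrong way. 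Note, however, that on one side the jumps are upward: $\log(\beta^2/\alpha^2)>0$ if $\beta>\alpha$. If the first jump on that side comes after a long negative drift, $0$ may indeed be the argsup, so $Z$ has an atom at $0$ and $Z/L$ has an atom at $0$.

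The main obstacle is therefore to reconcile this atom with the claim of absolute continuity. I would first re-check the drift on the side with upward jumps. For $z\ge0$ with $\beta>\alpha$, the drift is $(\alpha^2-\beta^2)/(\alpha^2\beta^2)<0$, and the value at the first jump time $T\sim\mathrm{Exp}$ is $\log(\beta^2/\alpha^2)-cT$. There is positive probability that this is $<0$ at every jump, so the atom exists unless the paper's argsup convention (the left-limit max) excludes it. The plan, then, is to show via the explicit structure that Step 1 holds, then compute $P(Z\in A)$ for Lebesgue-null sets $A\not\ni0$ through the joint density of jump times, which is absolutely continuous. The atom at $0$ would have to be addressed by a closer reading of the definition; I would flag this point.

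\textbf{Step 3: continuity in the parameters.} Couple all parameters through the same $\tilde N,\tilde N'$ and a fixed Brownian motion, with $X^{\rho,\alpha,\beta}-\rho$ a pathwise continuous function of $(\alpha,\beta)$ via $\sigma$-scaling (OBM is a space-transformed skew Brownian motion). The law is independent of $\rho$ after translation. Then $L$ depends continuously on $(\alpha,\beta)$ in probability, and $Z$ changes continuously in probability by Step 1's uniqueness and the uniform control of the tails. By the continuous mapping theorem, $Z/L$ converges in law.
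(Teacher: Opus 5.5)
Your absolute-continuity step has a genuine gap. You conclude that $Z$ may carry an atom at $0$ and leave this open, but the atom does not exist, and ruling it out is exactly the observation that is missing.

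You examined only the side on which the raw slope of $\ell$ is negative and the jumps point upward. On the other side the raw slope is strictly positive and the jumps point downward. If $\beta>\alpha$, then for $z<0$ one has $\ell^{\rho,\alpha,\beta}(z)=|z|\frac{\beta^2-\alpha^2}{\alpha^2\beta^2}L>0$ for all $-\alpha^2T_1'/L\le z<0$, where $T_1'>0$ is the first jump time of $N'$. Symmetrically, if $\alpha>\beta$, then $\ell^{\rho,\alpha,\beta}(z)>0$ for $0<z<\beta^2T_1/L$, where $T_1$ is the first jump time of $N$.

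Since $L>0$ a.s., this gives $\sup_z\ell^{\rho,\alpha,\beta}(z)>0=\ell^{\rho,\alpha,\beta}(0)=\lim_{u\nearrow 0}\ell^{\rho,\alpha,\beta}(u)$. Hence $0\notin\mathrm{Argsup}$ almost surely, and no special reading of the argsup convention is needed. The inequality $x-1-x\log x<0$ you invoked controls the compensated drift, which is negative on both sides. The raw slope, which alone drives $\ell$ up to the first jump, has opposite signs on the two sides.

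With $0$ excluded, $Z$ is a.s. one of the countably many points $\beta^2T_m$ or $-\alpha^2T_m'$, with Gamma-distributed $T_m,T_m'$. Hence $\Pr(Z\in A)\le\sum_{m\ge1}\bigl[\Pr(\beta^2T_m\in A)+\Pr(-\alpha^2T_m'\in A)\bigr]=0$ for every Lebesgue-null $A$. Independence of $Z$ and $L>0$ then gives a density for $Z/L$ without any information on the law of $L$, so the unproved ``$|N(0,1)|$-type'' claim can be dropped from this step. The repaired argument is the paper's (argsup only at a jump point, Gamma-distributed jump times), with your exact pathwise factorization $\arg\sup_z\ell^{\rho,\alpha,\beta}(z)=Z/L$ in place of the disintegration over $L$.

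For continuity, your route genuinely differs from the paper's. The paper proves:
\begin{itemize}
\item $L_1^{\rho_n}(X^{\rho_n,\alpha_n,\beta_n})\to L_1^{\rho}(X^{\rho,\alpha,\beta})$ in probability, via Tanaka's formula and Le Gall's stability theorem;
\item tightness of the argsups, by slicing and Doob's inequality conditionally on $L$;
\item convergence in $\mathcal{D}([-K,K])$, via Billingsley's moment criterion;
\end{itemize}
and then applies an argmax continuous mapping theorem.

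Your factorization reduces everything to $Z_{\alpha_n,\beta_n}\to Z_{\alpha,\beta}$ and $L_n\stackrel{\mathcal{D}}{\longrightarrow}L$. Independence yields joint convergence, and $(z,l)\mapsto z/l$ is continuous on $\{l>0\}$. With $N,N'$ held fixed, the candidate locations and values at the jump points depend continuously on $(\alpha,\beta)$. This avoids Skorohod-space tightness entirely and is more elementary.

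As written, however, Step 3 is only a sketch, and two inputs need to be supplied:
\begin{itemize}
\item The uniform tail control must be made explicit. For example, use $T_m/m\to1$ a.s. together with $1-x+\log x<0$ at $x=\beta^2/\alpha^2$ (and at $x=\alpha^2/\beta^2$ on the other side), uniformly for parameters near the limit.
\item The convergence of $L$ is asserted rather than shown. A single skew Brownian motion cannot serve as the coupling, because its skewness depends on $(\alpha,\beta)$. Either couple through a common reflecting Brownian motion, or use the Tanaka/Le Gall argument of the paper.
\end{itemize}
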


A route of proof is given in Subsection~\ref{Subsection:Proof_Theorem2} and the details are provided in Section~\ref{Section:proof_thm2}. It crucially exploits the particular structure of doubly stochastic Poisson process~\eqref{eq:def_ell} which is, conditioned on the local time component, a drifted Poissonian martingale.
This theorem justifies to replace the unknown quantities by the triple MLE $(\hat{\rho}_n^\gamma,\hat{\alpha}_n^\gamma,\hat{\beta}_n^\gamma)$. For $\kappa\in (0,1)$ define $\hat{q}_{\kappa}^n$ to be the $\kappa$-quantile of 
\begin{align}\label{eq:argsup_limit}
\underset{z\in\R}{\arg\sup\ }\ell^{\hat{\rho}_n^\gamma,\hat{\alpha}_n^\gamma, \hat{\beta}_n^\gamma}(z)
\end{align} 
and $q_\kappa$ the $\kappa$-quantile of $\arg\sup_{z\in\R}\ell^{\rho_0,\alpha_0,\beta_0}(z)$. By the properties of the limit given in Theorem~\ref{thm:properties_limit}, Lemma~$21.2$ in~\cite{Vaart} reveals
\[ \hat{q}_\kappa^n \longrightarrow_{\Pr} q_\kappa\]
for any $\kappa\in (0,1)$. In particular, we obtain that
\begin{align}\label{eq:confidence_interval}
\left[ \hat{\rho}_n^\gamma - n^{-\frac{1+\gamma}{2}} \hat{q}_{1-\kappa/2}^n, \hat{\rho}_n^\gamma - n^{-\frac{1+\gamma}{2}} \hat{q}_{\kappa/2}^n\right] 
\end{align} 
is an asymptotic $(1-\kappa)$-confidence interval for $\rho_0$. Table~\ref{Table_prop_rejection} displays the proportion of samples lying in the $(1-\kappa)$-confidence interval~\eqref{eq:confidence_interval}. One can observe that the coverage is already good for small sample sizes $n$, although the quantiles $\hat{q}_\kappa^n$ were estimated on basis of only $2,000$ samples of~\eqref{eq:argsup_limit}.

\vspace{-0.3cm}

\begin{center}
\begin{table}[h]
\begin{tabular}{c|cc}
\textbf{$n$} & \textbf{$\kappa=0.05$} & \textbf{$\kappa = 0.01$} \\
\hline
$500$  & $0.946$ & $0.992$ \\
$1000$ & $0.955$ & $0.995$ 
\end{tabular}\vspace{0.2cm}
\quad 
\begin{tabular}{c|cc}
\textbf{$n$} & \textbf{$\kappa=0.05$} & \textbf{$\kappa = 0.01$} \\
\hline
$500$ & $0.924$ & $0.986$ \\
$1000$ & $0.960$ & $0.996$ 
\end{tabular}\smallskip
\caption{\small{Proportions of samples lying in the $(1-\kappa)$-confidence interval~\eqref{eq:confidence_interval} for different values of $n$ when sampling $N=1000$ path with parameter $\rho_0=x_0=0$ and $(\alpha,\beta)=(0.5,0.65)$ on a grid of size $1/(100n)$. In the left table we set $\gamma=0.3$ and in the right one $\gamma=0.8$. }}\label{Table_prop_rejection}
\end{table}
\end{center}

\vspace{-1cm}

In order to effectively sample the limit and determine its quantiles, we estimated the local time of the OBM appearing in the limit from simulated discrete sample paths. Here, we used the consistent local time estimator given in Section~$3.1$ of~\cite{Mazzonetto}.

\section{Preview of the proofs of Theorem~\ref{thm:weak_convergence} and Theorem~\ref{thm:properties_limit}}\label{Section:Preview}

In this section we present the proof ideas of the two theorems of Section~\ref{Section:results}, where Subsection~\ref{Subsection:Proof_Theorem1} describes the high-level argumens of the proof of Theorem~\ref{thm:weak_convergence} in three steps and Subsection~\ref{Subsection:Proof_Theorem2} the route of proof of Theorem~\ref{thm:properties_limit}.

\subsection{Proof of Theorem~\ref{thm:weak_convergence}}\label{Subsection:Proof_Theorem1}
The proof is conducted along the following three steps.

\smallskip
\noindent
\textbf{Step I.} We assume $\alpha_0$ and $\beta_0$ to be known and analyze the MLE $\hat{\rho}_n^\gamma(\alpha_0,\beta_0)$ under this assumption. Note that $\hat{\rho}_n^\gamma(\alpha_0,\beta_0)=\rho_0+\hat{\theta}_n^\gamma$, where $\hat{\theta}_n^\gamma$ is a well-defined representative of $\mathrm{Argsup}_{\theta\in\R} \ell_{n,\gamma}^{\alpha_0,\beta_0}(\theta)$, where
\begin{align}\label{eq:def_ell_rho}
\ell_{n,\gamma}^{\alpha_0,\beta_0}(\theta) = \sum_{k=1}^n \log\left(\frac{p_{\Delta_n^\gamma}^{\rho_0+\theta,\alpha_0,\beta_0}(X_{(k-1)\Delta_n^\gamma},X_{k\Delta_n^\gamma})}{p_{\Delta_n^\gamma}^{\rho_0,\alpha_0,\beta_0}(X_{(k-1)\Delta_n^\gamma},X_{k\Delta_n^\gamma}}\right). 
\end{align}

Our crucial observation is that if in addition the process even starts in the true threshold $\rho_0$, the following self-similarity property of the oscillating Brownian motion holds true.

\begin{lemma}\label{lemma:self_similarity}
Let $X=(X_t)_{t\geq 0}$ be a solution to~\eqref{eq:SDE_OBM} with $\rho=\rho_0$, starting at $x_0=\rho_0$. Then for any $c>0$
\[ \left( X_{t} - \rho_0\right)_{t\geq 0}\ \stackrel{\mathcal{D}}{=} \left( c^{-\frac12}\left( X_{ct}-\rho_0\right)\right)_{t\geq 0}.\]
\end{lemma}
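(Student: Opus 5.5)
The plan is to reduce the statement to the scaling invariance of the driving Brownian motion together with pathwise uniqueness for \eqref{eq:SDE_OBM}. By the shift relation $p_t^{\rho,\alpha,\beta}(x,y)=p_t^{0,\alpha,\beta}(x-\rho,y-\rho)$, or directly from the SDE, the process $Y_t:=X_t-\rho_0$ solves
\[ dY_t=\sigma_{0,\alpha,\beta}(Y_t)\,dW_t,\qquad Y_0=0. \]
So it suffices to show that $(c^{-1/2}Y_{ct})_{t\geq 0}$ is again a solution of the same SDE, started at $0$ and driven by another standard Brownian motion. Uniqueness in law for this SDE then yields equality of the finite-dimensional distributions, and hence of the laws on $C([0,\infty))$. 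Uniqueness in law follows from pathwise uniqueness and strong existence (\cite{LeGall}) by Yamada--Watanabe.

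To carry this out, set $\tilde W_t:=c^{-1/2}W_{ct}$, which is a standard Brownian motion with respect to the time-changed filtration $\mathcal{F}_{ct}$, and set $\tilde Y_t:=c^{-1/2}Y_{ct}$. Then
\[ \tilde Y_t=c^{-1/2}\int_0^{ct}\sigma_{0,\alpha,\beta}(Y_s)\,dW_s=\int_0^t \sigma_{0,\alpha,\beta}(Y_{cu})\,d\tilde W_u . \]
To justify the second equality, I would check it for simple integrands and extend by $L^2$-isometry; alternatively, one can compare quadratic covariations with $\tilde W$. The key observation is that $\sigma_{0,\alpha,\beta}$ depends only on the sign of its argument, so $\sigma_{0,\alpha,\beta}(x)=\sigma_{0,\alpha,\beta}(c^{-1/2}x)$ for all $x$. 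Hence $\sigma_{0,\alpha,\beta}(Y_{cu})=\sigma_{0,\alpha,\beta}(\tilde Y_u)$, and $\tilde Y$ solves $d\tilde Y_t=\sigma_{0,\alpha,\beta}(\tilde Y_t)\,d\tilde W_t$ with $\tilde Y_0=c^{-1/2}\cdot 0=0$. This is exactly the place where starting in $\rho_0$ is needed: for any other starting point the initial value would change to $c^{-1/2}(x_0-\rho_0)$.

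The only step requiring care is the time-change of the stochastic integral, which means verifying that $\tilde W$ is a Brownian motion in the rescaled filtration and that the integral identity holds. This is standard. As a cross-check, one can verify the statement through the transition density \eqref{eq:transition_density} with $\rho=0$. There one has $p_{ct}^{0,\alpha,\beta}(c^{1/2}x,c^{1/2}y)\,c^{1/2}=p_t^{0,\alpha,\beta}(x,y)$, since every case distinction is invariant under positive scaling and each Gaussian term scales correctly. By the Markov property this gives equality of the finite-dimensional distributions for both processes started at $0$, and continuity of paths then completes the argument.
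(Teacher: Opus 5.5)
Your argument is correct, but your main route differs from the paper's. The paper's proof is exactly what you call the cross-check. It computes the transition kernel of $c^{-1/2}(X_{ct}-\rho_0)$ by a change of variables together with the identities \eqref{eq_proof:self_similarity_1} and \eqref{eq_proof:self_similarity_2}, both read off from \eqref{eq:transition_density}, and obtains $p_{t-s}^{0,\alpha_0,\beta_0}$. It then notes that $X-\rho_0$ has the same kernel and concludes from the common initial value $0$ and the Markov property.

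Your SDE argument never uses the density. It needs only three things:
\begin{itemize}
\item Brownian scaling of $W$;
\item the dilation invariance $\sigma_{0,\alpha,\beta}(\lambda x)=\sigma_{0,\alpha,\beta}(x)$ for $\lambda>0$;
\item uniqueness in law, which already follows from Le Gall's pathwise uniqueness via Yamada--Watanabe (strong existence is not needed for that implication).
\end{itemize}
This makes the role of $x_0=\rho_0$ transparent. It would also carry over to threshold-type diffusions with no closed-form transition density, as long as the coefficients have the matching homogeneity and uniqueness in law holds.

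The paper's route is self-contained once \eqref{eq:transition_density} is available. Its density identities are also exactly what is needed next in Lemma~\ref{lemma:distributional_identity_normalized_logLikelihood}. There the threshold in the likelihood ratio is moved to $\rho_0+\theta$ and rescaled to $\theta/\sqrt{c}$, which is a statement about the function $p_t^{\rho,\alpha,\beta}$ itself, not only about the law of the path.
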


By setting $c=n\Delta_n^\gamma$, this allows to conclude for the $n$-dimensional distributions 
\begin{align}\label{eq:distributional_identity}
\left(X_{k\Delta_n^\gamma} - \rho_0\right)_{k=0,1,\dots, n} \stackrel{\mathcal{D}}{=} \left( n^{\frac{1-\gamma}{2}} (X_{k/n}-\rho_0)\right)_{k=0,1,\dots, n}.
\end{align} 
From this property and the explicit form of the transition density $p_t^{\rho,\alpha,\beta}(x,y)$ we can derive the following result (a proof is given in Section~\ref{Section:StepI})

\begin{lemma}\label{lemma:distributional_identity_normalized_logLikelihood}
Let $x_0=\rho_0$ and $X=(X_t)_{t\geq 0}$ be a corresponding solution to~\eqref{eq:SDE_OBM}. Then,
\[ \left( \ell_{n,\gamma}^{\alpha_0,\beta_0}\left( z n^{-(1+\gamma)/2}\right) \right)_{z\in\R} \stackrel{\mathcal{D}}{=}\ \left( \sum_{k=1}^n \log\left(\frac{p_{1/n}^{\rho_0+z/n,\alpha_0,\beta_0}\left(X_{(k-1)/n},X_{k/n}\right)}{p_{1/n}^{\rho_0,\alpha_0,\beta_0}\left(X_{(k-1)/n},X_{k/n}\right)}\right) \right)_{z\in\R}.\]
\end{lemma}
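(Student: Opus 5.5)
The plan is to combine the distributional identity~\eqref{eq:distributional_identity}, which follows from Lemma~\ref{lemma:self_similarity} with $c=n\Delta_n^\gamma=n^{1-\gamma}$, with a scaling property of the transition density. Note that $\Delta_n^\gamma = c\cdot\frac1n$, since $n^{-\gamma}=n^{1-\gamma}\cdot n^{-1}$.

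\textbf{Step 1 (scaling of the density).} From the explicit formula~\eqref{eq:transition_density}, together with the translation property $p_t^{\rho,\alpha,\beta}(x,y)=p_t^{0,\alpha,\beta}(x-\rho,y-\rho)$, I will verify that for all $c>0$
\begin{align*}
p_{ct}^{\rho_0+\sqrt{c}\,\vartheta,\alpha,\beta}\bigl(\rho_0+\sqrt{c}\,u,\rho_0+\sqrt{c}\,v\bigr) = c^{-1/2}\, p_t^{\rho_0+\vartheta,\alpha,\beta}(\rho_0+u,\rho_0+v).
\end{align*}
Each of the four cases in~\eqref{eq:transition_density} has the form $(2\pi t)^{-1/2}$ times a constant times Gaussian exponentials. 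The arguments of these exponentials are quadratic forms in $(y-\rho,x-\rho)$ divided by $t$, so they are invariant under $(t,x-\rho,y-\rho)\mapsto(ct,\sqrt c(x-\rho),\sqrt c(y-\rho))$. The case distinctions ($x<\rho$, $y\le\rho$, etc.) are also preserved, because $c>0$. The prefactor picks up exactly $c^{-1/2}$. Consequently, in the log-likelihood ratio the factor $c^{-1/2}$ cancels between numerator and denominator.

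\textbf{Step 2 (transfer).} Write $\ell_{n,\gamma}^{\alpha_0,\beta_0}(\theta)=F_\theta\bigl((X_{k\Delta_n^\gamma}-\rho_0)_{k=0}^n\bigr)$ with the deterministic map
\begin{align*}
F_\theta(u_0,\dots,u_n)=\sum_{k=1}^n\log\frac{p_{\Delta_n^\gamma}^{\rho_0+\theta,\alpha_0,\beta_0}(\rho_0+u_{k-1},\rho_0+u_k)}{p_{\Delta_n^\gamma}^{\rho_0,\alpha_0,\beta_0}(\rho_0+u_{k-1},\rho_0+u_k)}.
\end{align*}
The whole family $(F_\theta)_{\theta\in\R}$ is a single measurable map from $\R^{n+1}$ into the space of càdlàg functions of $\theta$; on the finite-dimensional level it is simply the vector $(F_{\theta_1},\dots,F_{\theta_m})$. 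By~\eqref{eq:distributional_identity}, the process $(\ell_{n,\gamma}^{\alpha_0,\beta_0}(\theta))_\theta$ has the same law as $(F_\theta(\sqrt c\,(X_{k/n}-\rho_0))_k)_\theta$. Now set $\theta=zn^{-(1+\gamma)/2}=\sqrt c\cdot z/n$ with $c=n^{1-\gamma}$, since $n^{(1-\gamma)/2}\cdot n^{-1}=n^{-(1+\gamma)/2}$. Applying Step 1 with $t=1/n$, $\vartheta=z/n$, $u=X_{(k-1)/n}-\rho_0$ and $v=X_{k/n}-\rho_0$ turns each summand into $\log\bigl(p_{1/n}^{\rho_0+z/n}/p_{1/n}^{\rho_0}\bigr)(X_{(k-1)/n},X_{k/n})$. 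This is the claimed identity, holding jointly in $z$ because one and the same deterministic functional of the observation vector is used for all $z$.

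\textbf{Main obstacle.} The main point requiring care is the case bookkeeping in Step 1, especially at the boundary conventions ($y\le\rho$ versus $x\ge\rho$, etc.), to ensure that the scaling maps each region onto the corresponding region. The second point is to make precise that equality in law holds as processes in $z$, meaning equality of finite-dimensional distributions, which suffices for càdlàg processes. Both follow directly because the map in Step 2 is deterministic, so no probabilistic estimate is needed.
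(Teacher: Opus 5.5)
Your proposal is correct and follows essentially the same route as the paper. The paper combines the translation identity~\eqref{eq_proof:self_similarity_1} and the scaling identity~\eqref{eq_proof:self_similarity_2} (which you merge into a single identity in Step 1) with the distributional identity~\eqref{eq:distributional_identity} for $c=n^{1-\gamma}$, and then extends from a single $z$ to finite-dimensional vectors; your remark that one deterministic functional of the observation vector serves all $z$ at once is a more compact phrasing of that same finite-dimensional step.
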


As by definition
\[ n^{\frac{1+\gamma}{2}}\left( \hat{\rho}_n^\gamma(\alpha_0,\beta_0)-\rho_0\right) \in \underset{z\in\R}{\mathrm{Argsup}\ } \ell_{n,\gamma}^{\alpha_0,\beta_0}\left( z n^{-(1+\gamma)/2}\right), \]
Lemma~\ref{lemma:distributional_identity_normalized_logLikelihood} enables us to transfer results derived for infill asymptotics with fixed time horizon to arbitrary equidistant observation schemes with growing time horizon. Note in particular that in case $\gamma=0$ the resulting Markov chain is null-recurrent. In particular
\begin{align}\label{eq:limit_known_alpha/beta}
n^{\frac{1+\gamma}{2}}\left( \hat{\rho}_n^\gamma(\alpha_0,\beta_0)-\rho_0\right) \stackrel{\mathcal{D}}{\longrightarrow}\ \underset{z\in\R}{\arg\sup\ }\ell^{\rho_0,\alpha_0,\beta_0}(z) 
\end{align} 
is then a consequence of Theorem~$1.1$ in~\cite{Brutsche/Rohde}. Note that for the stable convergence result in the infill asymptotics, conditioning on the event $\{L_1^{\rho_0}(X)>0\}$ was crucial, but $\Pr(L_1^{\rho_0}(X)>0)=1$ in case $X_0=\rho_0$.
It should be noted that no martingale central limit theorem is available for the normalized log-likelihood process $\ell_{n,\gamma}^{\alpha_0,\beta_0}(zn^{(-(1+\gamma)/2})$. In the case of infill asymptotics, however, stable convergence could be established in place of convergence in distribution, which allowed the problem to be reduced to the convergence of the semimartingale characteristics of the normalized log-likelihood process $\ell_{n,\gamma}^{\alpha_0,\beta_0}(z/n)$. The identification of the limit was possible, since given $L_1^{\rho_0}(X)$, the limiting drift turned out to be deterministic and thus could be combined with a weak convergence of the martingale part (compare Slutzky's theorem).

\smallskip
\noindent
\textbf{Step II.} Here, we remove the artificial assumption $x_0=\rho_0$ and establish~\eqref{eq:limit_known_alpha/beta} for observations of an oscillating Brownian motion started in arbitrary $x_0\in\R$. This is done by a coupling argument, where the likelihood functions built from different OBMs, one started in $x_0$, the other in $\rho_0$, are related to each other. To this aim, denote by $X^{x_0}$ and $X^{\rho_0}$ two solutions of~\eqref{eq:SDE_OBM} with $\sigma_{\rho_0,\alpha_0,\beta_0}$ starting in $x_0$ and $\rho_0$, respectively. To define our coupling, introduce the stopping time
\begin{align}\label{eq:coupling_time}
T_{\rho_0}^{x_0}:= \inf\left\{t\geq 0: X_t^{x_0} = X_t^{\rho_0}\right\}.
\end{align} 
As $X^{x_0}$ and $X^{\rho_0}$ are on natural scale (see Theorem~$33.7$ in~\cite{Kallenberg}) and recurrent, the argument given in the first lines on p.$85$ of~\cite{Lindvall} yields $\Pr(T_{\rho_0}^{x_0}<\infty)=1$. We now define the process $\overline{X}^c$ as
\begin{align}\label{eq:coupled_process}
\overline{X}_t^c = X_t^{x_0}\1_{\{t\leq T_{\rho_0}^{x_0}\}} + X_t^{\rho_0}\1_{\{t> T_{\rho_0}^{x_0}\}},
\end{align} 
see Figure~\ref{fig:coupling} for an illustration. By the strong Markov property of the oscillating Brownian motion (see~\cite{Keilson/Wellner} and~\cite{Ito/McKean}, p.169), $\overline{X}^c$ is again an OBM started in $x_0$, so the distribution of $\overline{X}^c$ and $X^{x_0}$ coincide. In particular, the distribution of $n^{(1+\gamma)/2}(\hat{\rho}_n(X^{x_0})-\rho_0)$ and $n^{(1+\gamma)/2}(\hat{\rho}_n(\overline{X}^c)-\rho_0)$ coincide and we show that the latter converges weakly to $\arg\sup_{z\in\R} \ell^{\rho_0,\alpha_0,\beta_0}(z)$. To this aim, we will establish the two following claims:
\begin{itemize}
\item[(1A)] Denote by $\ell_{n,\gamma}^{\alpha_0,\beta_0}(zn^{-(1+\gamma)/2}, \overline{X}^c)$ the random variable $\ell_{n,\gamma}^{\alpha_0,\beta_0}(zn^{-(1+\gamma)/2})$ from~\eqref{eq:def_ell_rho} for the process $\overline{X}^c$. Then for any $K>0$, we have
\[ \left(\ell_{n,\gamma}^{\alpha_0,\beta_0}(zn^{-(1+\gamma)/2}, \overline{X}^c)\right)_{z\in [-K,K]} \stackrel{\mathcal{D}}{\longrightarrow} \left( \ell^{\rho_0,\alpha_0,\beta_0}(z)\right)_{z\in [-K,K]} \]
for the process $\ell$ given in~\eqref{eq:def_ell}.
\item[(2A)] The sequence $(n^{(1+\gamma)/2}(\hat{\rho}_n(\overline{X}^c)-\rho_0))_{n\in\N}$ is tight.
\end{itemize}

For $\gamma=0$, (1A) is a consequence of the fact that only finitely many observations occur before the coupling time $T_{\rho_0}^{x_0}$. The proof is conducted in Section~\ref{Section:StepII}. For $\gamma\in (0,1)$, however, the number of observations before the coupling time $T_{\rho_0}^{x_0}$ is increasing in $n$, see Figure~\ref{fig:coupling}. Their contribution to the limit is still shown to be negligible but this is significantly harder to prove. Indeed, it relies on a decomposition of the log-likelihood function into a drift and martingale term which have to be treated thouroughly. Additionally, tight quantitative maximal deviation bounds for oscillating Brownian local time are derived for this purpose (Lemma~\ref{lemma:probability_A3}).

\begin{center}
\begin{figure}[h]
\includegraphics[width=13cm, height=4.5cm]{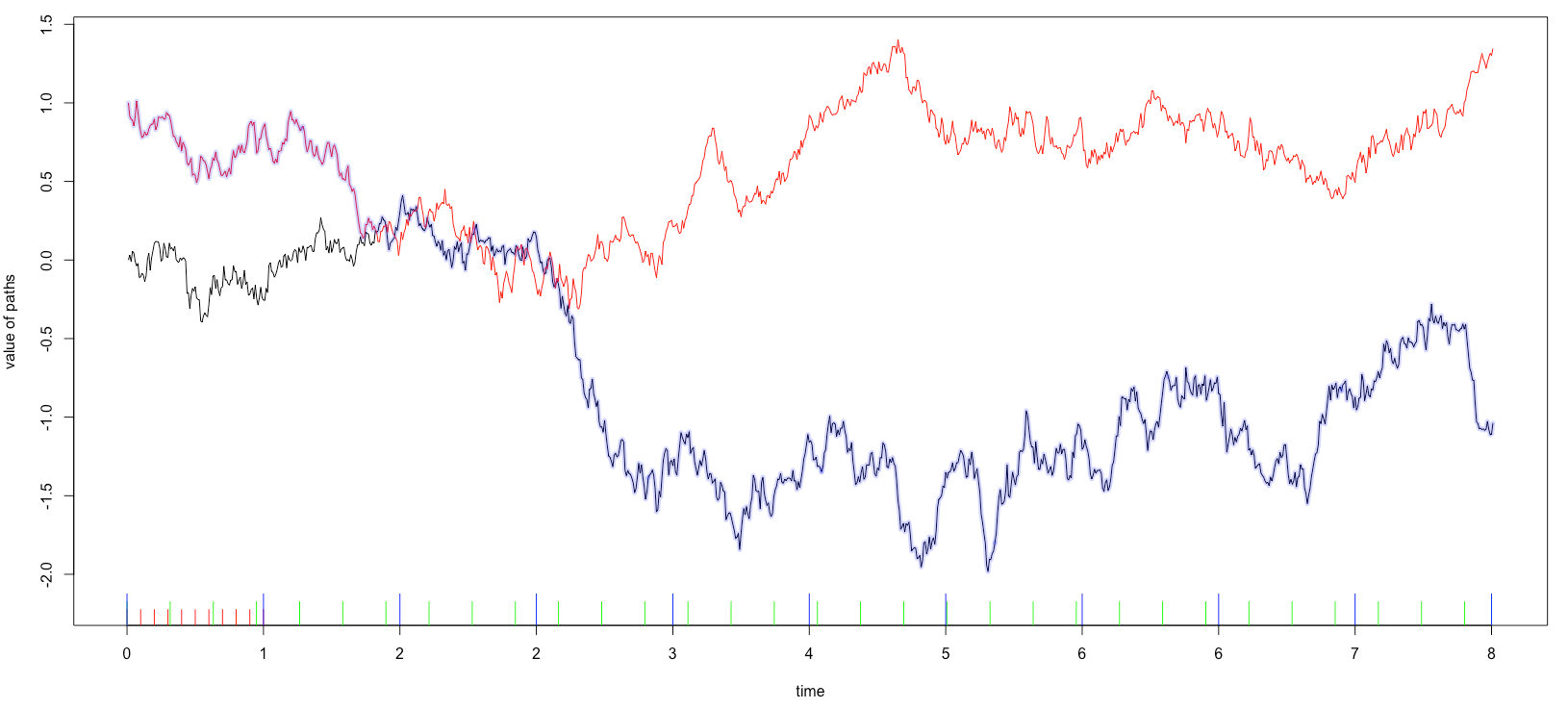}
\caption{\small{Simulation of a path $X^{\rho_0}$ in black for $\rho_0=0$ and $X^{x_0}$ in red with $x_0=1$. The coupled process $\overline{X}^c$ is then given as the blue one. The ticks on the x-axis illustrate the sampling schemes for $\gamma=0$ (blue), $\gamma=1/2$ (green) and $\gamma=1$ (red), compare Figure~\ref{Fig_1}.}}\label{fig:coupling}
\end{figure}
\end{center}

\vspace{-0.3cm}

By (1A) and Lemma~$3$ in Section~$16$ of~\cite{Billingsley_2} we have 
\begin{align*}
\left(\ell_{n,\gamma}^{\alpha_0,\beta_0}(zn^{-(1+\gamma)/2}, \overline{X}^c)\right)_{z\in\R} \stackrel{\mathcal{D}}{\longrightarrow}\ \left( \ell^{\rho_0,\alpha_0,\beta_0}(z)\right)_{z\in\R},
\end{align*} 
where the Skorohod space $\mathcal{D}(\R)$ is endowed with the topology of Skorohod convergence on compact sets. From the tightness in (2A) and Theorem~$3.12$ in~\cite{Ferger} we then conclude
\[ \underset{z\in\R}{\arg\sup\ }\ell_{n,\gamma}^{\alpha_0,\beta_0}(zn^{-(1+\gamma)/2}, \overline{X}^c) \stackrel{\mathcal{D}}{\longrightarrow}\ \underset{z\in\R}{\arg\sup\ }\ell^{\rho_0,\alpha_0,\beta_0}(z) \]
and hence 
\[ n^{\frac{1+\gamma}{2}}\left(\hat{\rho}_n(X^{x_0})-\rho_0\right)\stackrel{\mathcal{D}}{=}n^{\frac{1+\gamma}{2}}\left(\hat{\rho}_n(\overline{X}^c)-\rho_0\right)\stackrel{\mathcal{D}}{\rightarrow} \arg\sup_{z\in\R}\ell^{\rho_0,\alpha_0,\beta_0}(z), \]
which is~\eqref{eq:limit_known_alpha/beta} for arbitrary $x_0\in\R$. A proof of (1A) and (2A) is deferred to Section~\ref{Section:StepII}.\medskip

\smallskip
\noindent
\textbf{Step III.} Remove assumtion that $\alpha_0$ and $\beta_0$ are known. As a first step, we derive the rates of convergence of the triple MLE $(\hat{\rho}_n^\gamma, \hat{\alpha}_n^\gamma,\hat{\beta}_n^\gamma)$ that is given as an element of~\eqref{eq:argsup_triple}. Here and subsequently, the subscript $(\rho_0,\alpha_0,\beta_0)$ in the probability $\Pr_{\rho_0,\alpha_0,\beta_0}$ indicates that $X$ solves~\eqref{eq:SDE_OBM} with $\sigma_{\rho_0,\alpha_0,\beta_0}$.

\begin{proposition}\label{prop:rate_of_triple_MLE}
Let $\gamma\in [0,1)$. Then $(\hat{\alpha}_n^\gamma,\hat{\beta}_n^\gamma)$ are $\sqrt{n}$-consistent and $\hat{\rho}_n^\gamma$ is $n^{(1+\gamma)/2}$ consistent, i.e.
\[ \limsup_{K\to\infty}\limsup_{n\to\infty} \Pr_{\rho_0,\alpha_0,\beta_0}\left( \max\left\{ n^{\frac{1+\gamma}{2}} |\hat{\rho}_n^\gamma-\rho_0|, \sqrt{n}|\hat{\alpha}_n^\gamma-\alpha_0|, \sqrt{n}|\hat{\beta}_n^\gamma-\beta_0|\right\}>K \right) =0.\]
On the event $\{L_1^{\rho_0}(X)>0\}$, the result continuous to hold for $\gamma=1$.
\end{proposition}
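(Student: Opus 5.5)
The plan is to use the standard M-estimation route for rates, via a peeling (shelling) argument applied to the joint log-likelihood ratio. The estimator is written as a maximizer of
\[ \Lambda_n(\rho,\alpha,\beta)=L_{n,\gamma}^{\alpha,\beta}(\,\cdot\,;\rho)-L_{n,\gamma}^{\alpha_0,\beta_0}(\,\cdot\,;\rho_0). \]
The parameter space is then split into shells according to the natural distance
\[ d_n\bigl((\rho,\alpha,\beta),(\rho_0,\alpha_0,\beta_0)\bigr)=\max\bigl\{n^{\frac{1+\gamma}{2}}|\rho-\rho_0|,\ \sqrt{n}|\alpha-\alpha_0|,\ \sqrt{n}|\beta-\beta_0|\bigr\}. \]

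I would first reduce to the starting point $x_0=\rho_0$. The event in the statement is a measurable function of the observations, so the coupling $\overline{X}^c$ of Step II can be used to transfer the bound. Before the coupling time only $o_{\Pr}(n)$ observations occur when $\gamma<1$, and for $\gamma=0$ only $O_{\Pr}(1)$ occur. In the case $x_0=\rho_0$, Lemma~\ref{lemma:self_similarity} together with the shift identity $p_t^{\rho,\alpha,\beta}(x,y)=p_t^{0,\alpha,\beta}(x-\rho,y-\rho)$ lets me rewrite $\Lambda_n$ in distribution as the infill likelihood. The mechanism is the argument behind Lemma~\ref{lemma:distributional_identity_normalized_logLikelihood}, but now $\alpha,\beta$ vary as well. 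The key point is that $p_{\Delta}^{\rho,\alpha,\beta}(x,y)$ transforms under $(x,y,\rho,\Delta)\mapsto(cx,cy,c\rho,c^2\Delta)$ by the factor $c^{-1}$, which cancels in the ratio. Hence it suffices to prove the claim for infill data $X_{k/n}$ on $[0,1]$, where $L_1^{\rho_0}(X)>0$ almost surely. This also explains the final sentence of the statement: for genuinely infill data the same proof works on $\{L_1^{\rho_0}(X)>0\}$.

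On the infill scale I would decompose $\Lambda_n=\Lambda_n^{(1)}+\Lambda_n^{(2)}$. Here $\Lambda_n^{(1)}$ collects the increments in which $\rho$ lies between neither $X_{(k-1)/n},X_{k/n}$ and $\rho_0$ in a way that matters; for these terms the density is Gaussian, up to reflection terms that are exponentially small away from the threshold. The remainder $\Lambda_n^{(2)}$ collects the increments ``near'' the interval between $\rho$ and $\rho_0$. For the volatility part, $\Lambda_n^{(1)}$ behaves like a Gaussian variance log-likelihood. Its compensator is $\approx -c\,n\bigl((\alpha-\alpha_0)^2\,\mathrm{occ}_-+(\beta-\beta_0)^2\,\mathrm{occ}_+\bigr)$, where $\mathrm{occ}_\pm$ are the occupation times of $[0,1]$ above and below $\rho_0$. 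Both are positive almost surely, because $X$ starts at $\rho_0$. Its martingale part has quadratic variation of the same order. This yields the $\sqrt n$ rate in $(\alpha,\beta)$ by the usual chaining bound:
\[ \Pr\Bigl(\sup_{\text{shell }j}\Lambda_n\ge 0\Bigr)\lesssim 2^{-j}. \]
For the threshold part, moving $\rho$ by $h=z/n$ affects about $n\cdot L_1^{\rho_0}\cdot h$ increments, namely those whose endpoints straddle the strip between $\rho_0$ and $\rho_0+h$. Each such increment contributes a term with negative mean $-\mathrm{KL}(\alpha_0,\beta_0)$, as in the drift in \eqref{eq:def_ell}, plus a bounded martingale increment. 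Hence on the shell $|z|\in[2^j,2^{j+1})$ the drift is $\le -c\,2^j L$ and the fluctuations are $O(2^{j/2})$. A Bernstein/Freedman inequality for the martingale, combined with a maximal inequality in $z$ (the process is piecewise constant between order statistics), makes the shell probabilities summable. Cross terms between the $\rho$-deviation and the $(\alpha,\beta)$-deviation are handled by showing that, for $\sqrt n|\alpha-\alpha_0|\le K$, the threshold drift per affected increment stays bounded away from zero.

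Before this I need consistency on a compact set, for which I would use a crude argument. For $|\rho-\rho_0|\ge\epsilon$, or for $(\alpha,\beta)$ far from the truth, $n^{-1}\Lambda_n\to$ a strictly negative random limit. For large or small $\alpha,\beta$ the Gaussian log-likelihood diverges to $-\infty$, and the lower bound on the local time keeps the limit strictly negative. The main obstacle I expect is the uniform control, over the shells, of the near-threshold terms: the number of affected increments is random, through the local time, and the transition densities have mixed reflection terms. What I would try first is to condition on a discretized local time $L_1^{\rho_0}$ and to use the deterministic-given-local-time drift from \cite{Brutsche/Rohde}, together with uniform estimates on the counts of increments that cross a strip, in terms of $n h L_1^{\rho_0}$.
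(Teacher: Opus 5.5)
Your self-similarity reduction is sound: the three-parameter likelihood ratio scales exactly as you say, and this is how the paper handles $\gamma=0$. Your local shell analysis also roughly plays the role of the paper's regions $\mathcal{T}_{n,\gamma}^1,\mathcal{T}_{n,\gamma}^2$.

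\textbf{Main gap: the global consistency step.} You assert that for small or large $\alpha,\beta$ the Gaussian part diverges to $-\infty$, so that $n^{-1}\Lambda_n$ stays uniformly below a strictly negative limit. This fails when $\rho$ sits near the top (or, symmetrically, the bottom) of the sampled path.
\begin{itemize}
\item The $\beta$-regime then contains only a few observations, so there is no order-$n$ Gaussian penalty on $\beta$.
\item Every upcrossing $X_{(k-1)\Delta_n^\gamma}<\rho<X_{k\Delta_n^\gamma}$ carries the factor $\frac{2}{\alpha+\beta}\frac{\alpha}{\beta}$ from \eqref{eq:transition_density}, which rewards $\beta\downarrow 0$.
\item Concretely, take the event $\{X_{n\Delta_n^\gamma}>\max_{k<n}X_{k\Delta_n^\gamma}\}$, which has positive probability for every $n$. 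On it, the choice $\rho=X_{n\Delta_n^\gamma}-\beta$ with $\beta\downarrow0$ drives the likelihood to $+\infty$, so the Argsup is even empty.
\item For small positive $\beta$, unmatched $\log(\beta_0/\beta)$ gains, or steps above $\rho$ with tiny increments, still give large positive contributions.
\end{itemize}
No uniform law-of-large-numbers argument can handle this.

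The missing ingredient is Lemma~\ref{lemma:bound_small_theta_beta}. Work on the event that $X_{n\Delta_n^\gamma}$ lies a fixed distance below the running maximum (high probability via Dambis--Dubins--Schwarz) and that all increments are uniformly small. There, each upcrossing of $\rho$ is mapped injectively to a later step that is either a downcrossing or starts and ends above $\rho$. Its gain is then either cancelled by the factor $\beta/\alpha$ of the downcrossing, or absorbed into
\[
2\log(\beta_0/\beta)-\tfrac12\bigl(\beta_0^2/\beta^2-1\bigr)(X_{k\Delta_n^\gamma}-X_{(k-1)\Delta_n^\gamma})^2/(\Delta_n^\gamma\beta_0^2).
\]
These sums are then controlled in two ranges:
\begin{itemize}
\item for $\beta<\delta_l$, by $\Pr\bigl((X_{k\Delta_n^\gamma}-X_{(k-1)\Delta_n^\gamma})^2\le a\Delta_n^\gamma\bigr)\lesssim\sqrt a$ together with a dyadic decomposition in $\beta$;
\item for $\beta>\delta_u$, by moment bounds on the increments.
\end{itemize}

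A related uniformity issue arises for $\rho$ far above $\rho_0$ with $\alpha\approx\beta_0$. There the model fits the data on $[\rho_0,\rho]$, so the penalty must come from below $\rho_0$. Making this uniform requires combining $T_{n,\gamma}\le 0$ (Lemma~\ref{lemma:function_negative}) with order-$n$ occupation counts below $\rho_0$ (Lemma~\ref{lemma:estimate_counts_interval}). Your KL limit gives this only pointwise.

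\textbf{Second, smaller gap: the coupling reduction for $\gamma\in(0,1)$.} ``Only $o_{\Pr}(n)$ observations before coupling'' is not enough. In the finest shells the leading drift is only of order $|z|$, and at $\theta_\rho\asymp n^{-\gamma/2}$ it is of order $\sqrt n$. Meanwhile about $n^\gamma$ pre-coupling summands must be controlled uniformly over a non-compact parameter set.

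Moreover, the pre-coupling part $Y(X^{\rho_0};\theta)$ of the log-likelihood ratio of $X^{\rho_0}$ enters with a minus sign and tends to $-\infty$ as $\alpha$ or $\beta\downarrow0$. So the infill bound would be needed with a quantitative, parameter-dependent margin, not as a black box.

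The paper sidesteps this in three ways:
\begin{itemize}
\item it proves the rate directly in the $\Delta_n^\gamma$-scheme for $\gamma\in(0,1]$;
\item it uses self-similarity and coupling only to establish typical-path events such as Lemma~\ref{lemma:probability_A3};
\item it passes to the infill scale only for $\gamma=0$, where the number of pre-coupling observations is $O_{\Pr}(1)$.
\end{itemize}
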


In the high-frequency setting $\gamma=1$, the $\sqrt{n}$-consistency for estimation of $\alpha_0$ and $\beta_0$ was proven for different estimators in~\cite{Lejay/Pigato} when the threshold $\rho_0$ is known to the statistician. 
For the proof of Proposition~\ref{prop:rate_of_triple_MLE}, we introduce the normalized log-likelihood function
\begin{align}\label{eq:log-likelihood_triple}
\ell_{n,\gamma}(\theta_\rho,\theta_\alpha,\theta_\beta) = \sum_{k=1}^n \log\left(\frac{p_{\Delta_n^\gamma}^{\rho_0+\theta_\rho,\alpha_0+\theta_\alpha,\beta_0+\theta_\beta}(X_{(k-1)\Delta_n^\gamma},X_{k\Delta_n^\gamma})}{p_{\Delta_n^\gamma}^{\rho_0,\alpha_0,\beta_0}(X_{(k-1)\Delta_n^\gamma},X_{k\Delta_n^\gamma})}\right)
\end{align} 
that is the three-parameter analogue to $\ell_{n,\gamma}^{\alpha,\beta}(\theta_\rho)$ in~\eqref{eq:def_ell_rho} and use that by definition,
\begin{align*}
&\left(n^{\frac{1+\gamma}{2}} (\hat{\rho}_n^\gamma-\rho_0), \sqrt{n}(\hat{\alpha}_n^\gamma-\alpha_0), \sqrt{n}(\hat{\beta}_n^\gamma-\beta_0) \right)\\
&\hspace{2cm} \in \underset{(\theta_\rho,\theta_\alpha,\theta_\beta)\in\Upsilon_n}{\mathrm{Argsup}\ }\ \ell_{n,\gamma}\left( n^{-(1+\gamma)/2}\theta_\rho, \theta_\alpha/\sqrt{n},\theta_\beta/\sqrt{n}\right), 
\end{align*} 
where $\Upsilon_n := \R\times (-\sqrt{n}\alpha_0,\infty)\times (-\sqrt{n}\beta_0,\infty)$. Here, we first prove consistency of the triple MLE for $\gamma\in (0,1]$, which is outlined in Section~\ref{Section:Proof_sketch_triple} with details provided in Appendix~\ref{Section:triple_details}. By making use of the self-similarity of Lemma~\ref{lemma:self_similarity}, the result then transfers to the null-recurrent case $\gamma=0$ exactly as in the proof of (2A) given in Section~\ref{Section:StepII}.\\

Secondly, we denote by $\Pi_1:\R^3\rightarrow\R$ the projection onto the first coordinate. Let $\epsilon>0$, then by Proposition~\ref{prop:rate_of_triple_MLE} there exists $L>0$ such that with probability larger than $1-\epsilon$,
\begin{align}\label{eq:projection_tripleArgsup}
\begin{split}
&\Pi_1\left( \underset{(\theta_\rho,\theta_\alpha,\theta_\beta)\in\Upsilon_n}{\mathrm{Argsup}\ }\  \ell_{n,\gamma}\left( n^{-(1+\gamma)/2}\theta_\rho, \theta_\alpha/\sqrt{n},\theta_\beta/\sqrt{n}\right) \right)\\
&\hspace{2cm} = \underset{\theta_\rho\in\R}{\mathrm{Argsup}\ }\ \sup_{-L\leq \theta_\alpha,\theta_\beta\leq L}\ell_{n,\gamma}\left( n^{-(1+\gamma)/2}\theta_\rho, \theta_\alpha/\sqrt{n},\theta_\beta\sqrt{n}\right)
\end{split}
\end{align}
 This can be seen as follows: By the special structure of $\ell_{n,\gamma}$ (as described in Section~\ref{Section:results} right after~\eqref{eq:likelihood-function}) the function $\theta_\rho\mapsto \sup_{\theta_\alpha,\theta_\beta\in [-L,L]}\ell_{n,\gamma}$ is càdlàg by Theorem~$14.30$ of~\cite{Aliprantis/Border}, meaning that the second Argsup is well-defined and coincides with the projection of the first one as the supremum over $\theta_\alpha,\theta_\beta \in [-L,L]$ is attained.  In order to proceed, we observe the decomposition
\begin{align}\label{eq:decomposition_joint_ell}
\ell_{n,\gamma}(\theta_\rho,\theta_\alpha,\theta_\beta) = \ell_{n,\gamma}^{\alpha_0,\beta_0}(\theta_\rho) + \ell_{n,\gamma}^2(\theta_\rho,\theta_\alpha,\theta_\beta),
\end{align} 
where $\ell_{n,\gamma}^{\alpha_0,\beta_0}(\theta_\rho)$ is given in~\eqref{eq:def_ell_rho} and
\begin{align*}
\ell_{n,\gamma}^2(\theta_\rho,\theta_\alpha,\theta_\beta) &:= \sum_{k=1}^n \log\left(\frac{p_{\Delta_n^\gamma}^{\rho_0+\theta_\rho,\alpha_0+\theta_\alpha,\beta_0+\theta_\beta}(X_{(k-1)\Delta_n^\gamma},X_{k\Delta_n^\gamma})}{p_{\Delta_n^\gamma}^{\rho_0+\theta_\rho,\alpha_0,\beta_0}(X_{(k-1)\Delta_n^\gamma},X_{k\Delta_n^\gamma})}\right).
\end{align*}
On the right-hand side of~\eqref{eq:decomposition_joint_ell}, we can add and subtract $\ell_{n,\gamma}^2(0,\theta_\alpha,\theta_\beta)$ which obviously does not depend on $\theta_\rho$ anymore, i.e. arrive at
\[ \ell_{n,\gamma}(\theta_\rho,\theta_\alpha,\theta_\beta) = \ell_{n,\gamma}^{\alpha_0,\beta_0}(\theta_\rho) + \ell_{n,\gamma}^2(0,\theta_\alpha,\theta_\beta)+ \ell_{n,\gamma}^3(\theta_\rho,\theta_\alpha,\theta_\beta)\]
for
\[ \ell_{n,\gamma}^3(\theta_\rho,\theta_\alpha,\theta_\beta) := \ell_{n,\gamma}^2(\theta_\rho,\theta_\alpha,\theta_\beta)-\ell_{n,\gamma}^2(0,\theta_\alpha,\theta_\beta)\]
and then obtain from~\eqref{eq:projection_tripleArgsup},
\begin{align*}
&\Pi_1\left( \underset{(\theta_\rho,\theta_\alpha,\theta_\beta)\in\Upsilon_n}{\mathrm{Argsup}\ }\  \ell_{n,\gamma}\left( n^{-(1+\gamma)/2}\theta_\rho, \theta_\alpha/\sqrt{n},\theta_\beta/\sqrt{n}\right) \right)\\
&\hspace{0.5cm} = \underset{\theta_\rho\in\R}{\mathrm{Argsup}\ }\ \left( \ell_{n,\gamma}^{\alpha_0,\beta_0}(n^{-(1+\gamma)/2}\theta_\rho) + \sup_{-L\leq \theta_\alpha,\theta_\beta\leq L}\ell_{n,\gamma}^3(n^{-(1+\gamma)/2}\theta_\rho,\theta_\alpha/\sqrt{n},\theta_\beta/\sqrt{n}) \right).
\end{align*}
This representation motivates the next Proposition which establishes convergence of the function on the left-hand side of which $\mathrm{Argsup}_{\theta_\rho}$ is taken over towards $\ell_{n,\gamma}^{\alpha_0,\beta_0}$. Its proof is deferred to Section~\ref{Section:App_approximation_ell} in the supplementary material. Note that each summand of $\ell_{n,\gamma}^2(\theta_\rho,\theta_\alpha,\theta_\beta)$ is small when $\theta_\alpha,\theta_\beta=\mathcal{O}(n^{-1/2})$ as the density ratio is then approximately one. The main difficulty here is to find a representation of $\ell_{n,\gamma}^2(\theta_\rho,\theta_\alpha,\theta_\beta)$ for $\theta_\alpha,\theta_\beta=\mathcal{O}(n^{-1/2})$ that is explicit enough to bound the difference when evaluated in $\theta_\rho=0$ and $\theta_\rho=zn^{-(1+\gamma)/2}$.

\begin{proposition}\label{prop:approximation_ell}
Let $\gamma\in [0,1)$. Then for every $K>0$,
\[ \sup_{z\in [-K,K]}  \sup_{-L\leq \theta_\alpha,\theta_\beta\leq L} \left|\ell_{n,\gamma}^3\left(z n^{-(1+\gamma)/2}, \theta_\alpha/\sqrt{n},\theta_\beta/\sqrt{n}\right)  \right| \longrightarrow_{\Pr_{0}} 0.\]
\end{proposition}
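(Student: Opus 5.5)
The plan is to exploit that $\ell_{n,\gamma}^3$ is a sum of \emph{differences} of log-density ratios, each multiplied by a factor of order $n^{-1/2}$ coming from $(\theta_\alpha,\theta_\beta)/\sqrt{n}$. Only observations near $\rho_0$, on the diffusive scale $\sqrt{\Delta_n^\gamma}$, can contribute non-negligibly. By the strong Markov property and the coupling of Step~II, observations before the coupling time are handled as in (1A), so it suffices to work under $x_0=\rho_0$. By Lemma~\ref{lemma:self_similarity} and \eqref{eq:distributional_identity} I then transfer everything to the infill scheme. Under this rescaling the triple $(zn^{-(1+\gamma)/2},\theta_\alpha/\sqrt n,\theta_\beta/\sqrt n)$ becomes $(z/n,\theta_\alpha/\sqrt n,\theta_\beta/\sqrt n)$ applied to $(X_{k/n})_{k\le n}$ with step $1/n$, exactly as in Lemma~\ref{lemma:distributional_identity_normalized_logLikelihood}. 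The dilation $x\mapsto \rho_0+c^{1/2}(x-\rho_0)$ leaves the volatility parameters unchanged, since the density is invariant under it up to the Jacobian, and the Jacobian cancels in the ratios.

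For each $k$ write
\[
g_k(\rho):=\log\frac{p_{1/n}^{\rho,\alpha_0+a,\beta_0+b}(X_{(k-1)/n},X_{k/n})}{p_{1/n}^{\rho,\alpha_0,\beta_0}(X_{(k-1)/n},X_{k/n})},
\qquad a=\theta_\alpha/\sqrt n,\quad b=\theta_\beta/\sqrt n .
\]
The quantity to bound is $\sum_k (g_k(\rho_0+z/n)-g_k(\rho_0))$. I split the indices into two sets:
\[
\mathcal{J}_1=\bigl\{k:\ \text{some of }X_{(k-1)/n},X_{k/n}\text{ lies between }\rho_0\text{ and }\rho_0+z/n\bigr\},\qquad \mathcal{J}_2=\mathcal{J}_1^c .
\]

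\emph{Indices in $\mathcal{J}_1$.} Here the case of \eqref{eq:transition_density} may change. However, each $g_k$ is bounded by $C n^{-1/2}\bigl(1+n(X_{k/n}-X_{(k-1)/n})^2\bigr)$ uniformly in $|\theta_\alpha|,|\theta_\beta|\le L$ and $\rho$. This uses that the bracket in the first two cases is bounded below by $(1-|\alpha-\beta|/(\alpha+\beta))$ times the leading exponential, so the logarithm is smooth in $(\alpha,\beta)$. Moreover, $\#\{k:X_{k/n}\in[\rho_0-K/n,\rho_0+K/n]\}=O_{\Pr}(1)$ by the occupation-time/local-time argument already used for the infill result (cf.\ \cite{Brutsche/Rohde}). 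Together these give a contribution $O_{\Pr}(n^{-1/2})$.

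\emph{Indices in $\mathcal{J}_2$.} Here the case is the same for $\rho_0$ and $\rho_0+z/n$, and I use the mean value theorem in $\rho$. Differentiating the explicit formulas, $\partial_\rho g_k$ has the form $n^{-1/2}\cdot\sqrt n\, h\bigl(\sqrt n(X_{(k-1)/n}-\rho),\sqrt n(X_{k/n}-\rho)\bigr)$. The function $h$ is a polynomial times Gaussian factors in $(x-\rho)+(y-\rho)$ (reflection terms) or in the scaled variables (crossing cases). It is Lipschitz in $(a,b)$ uniformly on $[-L,L]^2$, and it vanishes at $a=b=0$. Hence
\[
\sup_{|z|\le K,\ |\theta_\alpha|,|\theta_\beta|\le L}\Bigl|\sum_{k\in\mathcal{J}_2}\dots\Bigr|\le \frac{CK}{n}\sum_{k=1}^n \bar h\bigl(\sqrt n(X_{(k-1)/n}-\rho_0)\bigr)\bigl(1+n(X_{k/n}-X_{(k-1)/n})^2\bigr),
\]
with $\bar h$ integrable and rapidly decaying. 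The sum is $O_{\Pr}(\sqrt n)$, because $n^{-1/2}\sum_k \bar h(\sqrt n(X_{(k-1)/n}-\rho_0))\to c\,L_1^{\rho_0}(X)$, which is the standard Jacod-type functional for local times, or can be controlled via the local-time bounds of Lemma~\ref{lemma:probability_A3}. The whole term is therefore $O_{\Pr}(n^{-1/2})\to0$.

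The main obstacle is the explicit control of $\partial_\rho g_k$ uniformly over the parameters. This is delicate in the same-side reflection cases, where $\log$ of a difference of exponentials appears. I would first try to write $\log p^{\rho,\alpha,\beta}$ in those cases as the Gaussian part plus $\log\bigl(1\mp c\,e^{-2(x-\rho)(y-\rho)/(t\alpha^2)}\bigr)$. The second term and its $\rho$-derivative are bounded by the lower bound $1-|c|>0$ and decay exponentially when $(x-\rho)(y-\rho)\gg t$. This yields the envelope $\bar h$ above, and the differences in $(a,b)$ are then handled by one more mean value step.
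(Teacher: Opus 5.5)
Your strategy is sound, and at its core it is organized like the paper's. Indices at which the regime of \eqref{eq:transition_density} switches between $\rho_0$ and $\rho_0+z\psi_n^\gamma$ are only $O(1)$ on average, each costing $O(n^{-1/2})$. They are separated from indices where only the smooth $\rho$-dependence within a fixed regime matters. The routes differ in two respects.

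First, the paper never reduces to $x_0=\rho_0$ or to the infill scale. It works with the original observations and arbitrary $x_0$, splits $\ell_{n,\gamma}^3$ along the four regimes $J_{m,k}(\rho)$ into terms $T_n^m$, and bounds everything in $L^1$. For this it uses \eqref{eq:bound_transition_density} and Corollary~\ref{cor:bound_exp_X^2}, which is uniform in the starting point and yields bounds of order $n^{-1}\sum_{k\le n}k^{-1/2}$. Your estimates are of exactly this type: on $\mathcal{J}_2$ the mean-value step costs $K\psi_n^\gamma\cdot n^{-1/2}(\Delta_n^\gamma)^{-1/2}=Kn^{-1}$ per index, times an envelope with expectation $\lesssim k^{-1/2}$. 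So they go through verbatim in the original scale for any $x_0$. The coupling and self-similarity detour is therefore superfluous, including the loosely justified appeal to (1A) for the pre-coupling indices.

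Second, within a fixed regime you use one mean-value bound for $\partial_\rho g_k$. The paper instead does explicit Taylor algebra: $S_{n,k}^{1,1}$--$S_{n,k}^{1,3}$ for the reflection factor $b_{N,k}(z)$, and $T_n^{3,1}$--$T_n^{3,4}$ for the crossing exponent. Your version is shorter; the paper's never needs a global envelope.

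That envelope is where your write-up needs repair. Write $\tilde u=\sqrt n(X_{(k-1)/n}-\rho_0)$, $\tilde v=\sqrt n(X_{k/n}-\rho_0)$ and $\tilde r=\tilde v-\tilde u$. Then no bound of the form $\bar h(\tilde u)(1+\tilde r^2)$ with $\bar h$ integrable can hold, for two reasons:
\begin{itemize}
\item In the crossing regimes, $\partial_\rho g_k$ is of order $|\tilde r|$ on $\{|\tilde r|\ge|\tilde u|\}$.
\item In a same-side regime with $|\tilde u|=M$ large but $\tilde v\approx 0$, the product $\tilde u\tilde v$ is small. The reflection factor then gives no decay, and $\partial_\rho g_k$ can be of order $M$.
\end{itemize}
Both force $\bar h(M)\gtrsim M^{-1}$. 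There are several fixes:
\begin{itemize}
\item Use a weight $e^{c\tilde r^2}$ with $c<(2\max\{\alpha_0^2,\beta_0^2\})^{-1}$, or $1+|\tilde r|^p$ with $p$ large. This exploits that $|\tilde r|\ge|\tilde u|/2$ whenever the reflection factor does not already decay like $e^{-c'\tilde u^2}$, and always in the crossing regimes.
\item Integrate against the transition density before summing, as the paper does.
\end{itemize}
Even $\bar h(w)=\min\{1,|w|^{-1}\}$ with your quadratic weight yields $O_{\Pr}(n^{-1/2}\log n)$, so the conclusion survives.

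Relatedly, Lemma~\ref{lemma:probability_A3} gives lower bounds on local time. You need the upper bound $O_{\Pr}(\sqrt n)$, which follows directly from the expectation estimate. Finally, on $\mathcal{J}_1$ the relevant quantity is not the plain count but the weighted count $\sum_{k\in\mathcal{J}_1}(1+n(X_{k/n}-X_{(k-1)/n})^2)$. This is $O(1)$ in expectation by conditioning, as in \eqref{eq_proof:T12_bound_squared_increment}--\eqref{eq_proof:T12_bound_squared_increment_indicator}.
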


Finally, the claim of Theorem~\ref{thm:weak_convergence} follows by using Proposition~\ref{prop:rate_of_triple_MLE} as an analogue of (2A) and Proposition~\ref{prop:approximation_ell} as an analogue of (1A) and arguing along the lines of Step II.

\subsection{Proof of Theorem~\ref{thm:properties_limit}}\label{Subsection:Proof_Theorem2}
Proving that the limiting distribution in~\eqref{eq:weak_limit} is absolutely continuous with respect to Lebesgue measure requires the observation that the argsup of $\ell^{\rho_0,\alpha_0,\beta_0}$ is only attained at a jump point of this process. Then, by disintegration, the statement follows from the fact that Poisson processes jump at a given point in time with probability zero.

As~\eqref{eq:SDE_OBM} possesses a unique strong solution (cf.~\cite{LeGall}), we assume without loss of generality that all stochastic processes $X^{\rho_n,\alpha_n,\beta_n}$ are living on the same probability space $(\Omega,\mathcal{A},\Pr)$ and are driven by the same Brownian motion $W$. A first ingredient to proving continuity in the topology of weak convergence is a stochastic approximation result for the local time $L_1^{\rho_n,\alpha_n,\beta_n}(X^{\rho_n,\alpha_n,\beta_n})$.

\begin{lemma}\label{lemma:approx_local_time}
Let $(\rho_n)_{n\in\N}$, $(\alpha_n)_{n\in\N}$ and $(\beta_n)_{n\in\N}$ be sequences with $\rho_n\rightarrow\rho$, $\alpha_n\rightarrow\alpha$ and $\beta_n\rightarrow\beta$ for $n\to\infty$. Then
\[ L_1^{\rho_n}(X^{\rho_n,\alpha_n,\beta_n}) \longrightarrow_\Pr L_1^\rho(X^{\rho,\alpha,\beta}).\]
\end{lemma}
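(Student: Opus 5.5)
Throughout, all processes are driven by the same Brownian motion $W$. I would start from the fact that each $X^{\rho_n,\alpha_n,\beta_n}$ starts in its own threshold $\rho_n$. Then $Y^n:=X^{\rho_n,\alpha_n,\beta_n}-\rho_n$ is the strong solution of
\[ dY_t^n=\sigma_{0,\alpha_n,\beta_n}(Y^n_t)\,dW_t,\qquad Y^n_0=0, \]
and $L_1^{\rho_n}(X^{\rho_n,\alpha_n,\beta_n})=L_1^0(Y^n)$, because local time is invariant under translation. So the statement reduces to: $Y^n\to Y$ implies $L_1^0(Y^n)\to L_1^0(Y)$ in probability, where $Y$ solves the same SDE with $(\alpha,\beta)$. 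Only the volatility levels vary, and the threshold is fixed at $0$.

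\textbf{Step 1: pathwise convergence of $Y^n$.} I would show $\E\sup_{t\le 1}|Y^n_t-Y_t|\to0$, following Le Gall's argument for pathwise uniqueness with discontinuous coefficients. Write
\[ \sigma_n(Y^n)-\sigma(Y)=\bigl(\sigma_n(Y^n)-\sigma(Y^n)\bigr)+\bigl(\sigma(Y^n)-\sigma(Y)\bigr). \]
The first term is bounded by $|\alpha_n-\alpha|+|\beta_n-\beta|$. The second term is handled by Le Gall's local-time technique, since $\sigma$ is bounded below and of bounded variation. Concretely, I would show that the local time at $0$ of $Y^n-Y$ vanishes, then apply Tanaka's formula to $|Y^n-Y|$, and close with a Yamada–Watanabe type approximation of $|x|$.

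An alternative that avoids this machinery uses the scale function. Set
\[ Z^n=\frac{Y^n}{\alpha_n}\1_{\{Y^n<0\}}+\frac{Y^n}{\beta_n}\1_{\{Y^n\ge0\}}. \]
Then $Z^n$ solves an SDE with unit diffusion and a local time drift proportional to $\frac{\beta_n-\alpha_n}{\alpha_n+\beta_n}L^0(Z^n)$, i.e.\ it is a skew Brownian motion, which can be represented explicitly.

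\textbf{Step 2: express local time continuously.} By Tanaka's formula,
\[ L_1^0(Y^n)=|Y_1^n|-\int_0^1\mathrm{sgn}(Y^n_s)\,\sigma_n(Y^n_s)\,dW_s \]
(using the symmetric sign convention, which causes no problem since the time spent at $0$ has Lebesgue measure zero). The term $|Y^n_1|$ converges in probability by Step 1. Since $\mathrm{sgn}(y)\sigma_n(y)=\beta_n\1_{\{y>0\}}-\alpha_n\1_{\{y<0\}}$, it remains to show
\[ \int_0^1\bigl|\mathrm{sgn}(Y^n_s)\sigma_n(Y^n_s)-\mathrm{sgn}(Y_s)\sigma(Y_s)\bigr|^2ds\to0 \]
in probability; Itô's isometry then gives convergence of the stochastic integrals. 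Apart from an $O(|\alpha_n-\alpha|+|\beta_n-\beta|)$ contribution, the integrand is nonzero only when $Y^n_s$ and $Y_s$ have different signs. By Step 1, on an event of high probability this forces $|Y_s|\le\delta$. Hence the integral is bounded by a constant times $\int_0^1\1_{\{|Y_s|\le\delta\}}ds$ plus a vanishing term. By the occupation times formula, $\int_0^1\1_{\{|Y_s|\le\delta\}}ds$ tends to $0$ almost surely as $\delta\to0$. Letting first $n\to\infty$ and then $\delta\to0$ gives the claim.

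\textbf{Main obstacle.} The hard step is Step 1: uniform convergence of solutions of an SDE with discontinuous diffusion coefficient whose levels change with $n$. I would first try the skew Brownian motion route, since it makes the dependence on $(\alpha_n,\beta_n)$ explicit through the skewness parameter $\frac{\beta_n-\alpha_n}{\alpha_n+\beta_n}$, and I would fall back on Le Gall's argument if that route proves awkward.
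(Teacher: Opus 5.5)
Your overall architecture is the paper's: Tanaka's formula, convergence of the solutions driven by the same $W$, and control of the stochastic integrals through the time the two processes spend on opposite sides of their thresholds, bounded via the occupation times formula. The translation $Y^n=X^{\rho_n,\alpha_n,\beta_n}-\rho_n$ is a genuine simplification, since it removes the $|\rho_n-\rho|$ terms the paper has to carry. Step~2 is correct as written. You state the right criterion, $\int_0^1|\mathrm{sgn}(Y^n_s)\sigma_n(Y^n_s)-\mathrm{sgn}(Y_s)\sigma(Y_s)|^2\,ds\to0$ in probability. The paper phrases this as convergence of $\langle X^n\rangle_1$, but the indicator estimate it actually proves is exactly yours. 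Because the integrand is bounded, convergence in probability upgrades to $L^1$ and It\^o's isometry applies.

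The problem is your concrete plan for Step~1. The local time at $0$ of $Y^n-Y$ does \emph{not} vanish when $(\alpha_n,\beta_n)\neq(\alpha,\beta)$. If it did, Tanaka's formula would make $|Y^n-Y|$ a martingale started at $0$ (the integrand $\mathrm{sgn}(Y^n-Y)(\sigma_n(Y^n)-\sigma(Y))$ is bounded), hence $Y^n\equiv Y$. Then $\int_0^t\sigma_n^2(Y_s)\,ds=\int_0^t\sigma^2(Y_s)\,ds$ would force $(\alpha_n,\beta_n)=(\alpha,\beta)$, because $Y$ spends positive time on both sides of $0$. Vanishing local time is Le Gall's \emph{uniqueness} argument; for stability you need its quantitative version.

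Take Yamada--Watanabe functions $\phi_k$, so that $|x|-a_{k-1}\le\phi_k(x)\le|x|$ and $0\le\phi_k''(x)\le\frac{2}{k|x|}\1_{\{a_k<|x|<a_{k-1}\}}$, and set $Z=Y^n-Y$. It\^o's formula gives $\E\phi_k(Z_t)=\frac12\E\int_0^t\phi_k''(Z_s)(\sigma_n(Y^n_s)-\sigma(Y_s))^2\,ds$. Split this as you did:
\begin{itemize}
\item The parameter error contributes at most $C\varepsilon_n^2/(ka_k)$, with $\varepsilon_n=|\alpha_n-\alpha|+|\beta_n-\beta|$.
\item The jump part is at most $\frac{C}{k}\E\int_0^t\frac{|\1_{\{Y^n_s\ge0\}}-\1_{\{Y_s\ge0\}}|}{|Z_s|}\1_{\{Z_s\neq0\}}\,ds$, and Le Gall's lemma bounds this expectation uniformly in $n$. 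To see this, write the difference quotient as an integral over $u\in[0,1]$ along the interpolations $Y+uZ$. Their diffusion coefficients are bounded below uniformly in $n$, so the occupation times formula and Tanaka's formula control their local times.
\end{itemize}
Letting $n\to\infty$ and then $k\to\infty$ gives $\E|Y^n_t-Y_t|\to0$ for each $t$. This already suffices for Step~2 if you bound the opposite-side time by $\int_0^1\1_{\{|Y_s|\le\delta\}}\,ds+\int_0^1\1_{\{|Y^n_s-Y_s|>\delta\}}\,ds$.

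Alternatively, simply cite Le Gall's stability theorem (Theorem~1.5 of his 1983 paper), as the paper does. Your skew Brownian motion route is not a shortcut. $Z^n$ is indeed a skew Brownian motion driven by the same $W$, but it is not an explicit functional of $W$, so you would still need a stability or comparison argument in the skewness parameter.
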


The proof of this result is deferred to Section~\ref{Section:proof_thm2} and is based on Tanaka's formula. To proceed, we prove the following two items for sequences as in Lemma~\ref{lemma:approx_local_time}.
\begin{itemize}
\item[(1B)] The sequence $(\arg\sup_{z\in\R}\ell^{\rho_n,\alpha_n,\beta_n}(z))_{n\in\N}$ is tight (where each $\arg\sup$ is unique by Lemma~$5.1$ in~\cite{Brutsche/Rohde}).
\item[(2B)] For each $K>0$ we have
\[ \left(\ell^{\rho_n,\alpha_n,\beta_n}(z)\right)_{z\in [-K,K]} \stackrel{\mathcal{D}}{\longrightarrow} \left(\ell^{\rho,\alpha,\beta}(z)\right)_{z\in [-K,K]}\]
in the Skorohod space $\mathcal{D}([-K,K])$.
\end{itemize}
As in Step II of the proof of Theorem~\ref{thm:weak_convergence}, (1B) and (2B) together reveal
\[ \underset{z\in\R}{\arg\sup\ } \ell^{\rho_n,\alpha_n,\beta_n}(z) \stackrel{\mathcal{D}}{\longrightarrow} \underset{z\in\R}{\arg\sup\ }\ell^{\rho,\alpha,\beta}(z),\]
establishing continuity w.r.t the topology of weak convergence.
Although the items (1B) and (2B) resemble (1A) and (2A), they are statements about very different objects: In (1A) and (2A), the rescaled likelihood function $\ell_n^{\alpha,\beta}$ is treated for fixed parameters $\alpha$ and $\beta$, whereas in (1B) and (2B) properties of the sequence of limiting processes $((\ell^{\rho_n,\alpha_n,\beta_n}(z))_{z\in\R})_{n\in\N}$ indexed in $(\rho_n)_{n\in\N}, (\alpha_n)_{n\in\N}$ and $(\beta_n)_{n\in\N}$ are discussed. Therefore, their proof is completely different to that outlined in Step II in Subsection~\ref{Subsection:Proof_Theorem1} and uses the specific form of $\ell^{\rho_n,\alpha_n,\beta_n}$ as a, conditioned on $L_1^{\rho_n}(X^{\rho_n,\alpha_n,\beta_n})$, compensated Poisson process with deterministic drift. Exploiting this explicit structure, a slicing argument as in Section~$3.2.2$ of~\cite{Vaart/Wellner} together with Doob's maximal inequality for martingales yields tightness of the maximizers, given the sequence of local times $(L_1^{\rho_n}(X^{\rho_n,\alpha_n,\beta_n}))_{n\in\N}$ is bounded away from zero on a set with high probability. We construct this set using $L_1^{\rho}(X^{\rho,\alpha,\beta})$ for the limits $\rho,\alpha,\beta$ of the respective sequence and Lemma~\ref{lemma:approx_local_time}. Details are provided in Section~\ref{Section:proof_thm2}.

For (2B), we abbreviate $L_1^{\rho_n}:=L_1^{\rho_n}(X^{\rho_n,\alpha_n,\beta_n})$ and $L_1^{\rho}:=L_1^{\rho}(X^{\rho,\alpha,\beta})$ and split
\[ \ell^{\rho,\alpha,\beta}(z) = B^{\rho,\alpha,\beta}(z) + M^{\rho,\alpha,\beta}(z),\]
into a drift and martingale part, meaning
\begin{align*}
B^{\rho,\alpha,\beta}(z) &:= \1_{\{z< 0\}}  \left(\frac{\beta^2-\alpha^2}{\alpha^2\beta^2} + \frac{1}{\alpha^2}\log\left(\frac{\alpha^2}{\beta^2}\right) \right)|z|L_1^{\rho} \\
&\hspace{1.5cm} + \1_{\{z\geq 0\}}  \left( \frac{\alpha^2-\beta^2}{\alpha^2\beta^2}+\frac{1}{\beta^2}\log\left(\frac{\beta^2}{\alpha^2}\right)\right) |z|L_1^{\rho}
\end{align*} 
and
\begin{align*}
M^{\rho,\alpha,\beta}(z) &:= \1_{\{z< 0\}}\log\left(\frac{\alpha^2}{\beta^2}\right) \left( N'\left(\frac{|z|L_1^{\rho}}{\alpha^2}-\right) - \frac{|z|L_1^\rho}{\alpha^2}\right) \\
&\hspace{1.5cm} + \1_{\{z\geq 0\}}\log\left(\frac{\beta^2}{\alpha^2}\right) \left( N\left(\frac{|z|L_1^{\rho}}{\beta^2}\right)-\frac{|z|L_1^\rho}{\beta^2}\right).
\end{align*}
With Lemma~\ref{lemma:approx_local_time} it is straightforward to see that
\[ \sup_{z\in [-K,K]}\left| B^{\rho_n,\alpha_n,\beta_n}(z)-B^{\rho,\alpha,\beta}(z)\right| \longrightarrow_\Pr 0\]
and (2B) will follow by proving
\begin{align}\label{eq_proof:2B}
\left(M^{\rho_n,\alpha_n,\beta_n}(z)\right)_{z\in [-K,K]} \stackrel{\mathcal{D}}{\longrightarrow} \left(M^{\rho,\alpha,\beta}(z)\right)_{z\in [-K,K]}
\end{align}  
in the Skorohod space $\mathcal{D}([-K,K])$. As the Poisson processes $N$ and $N'$ are fixed in the definition of $M^{\rho,\alpha,\beta}$, convergence of finite dimensional distributions can be deduced from Lemma~\ref{lemma:approx_local_time} and by proving uniform integrability of $(L_1^{\rho_n})_{n\in\N}$ using Tanaka's formula. Tightness in the Skorohod space is proven by the moment criterion provided in Remark~$(13.14)$ after Theorem~$13.5$ in~\cite{Billingsley_2}. This will mainly be a consequence of the special structure of $M^{\rho,\alpha,\beta}$ being a compensated Poisson process with random intensity. The details are presented in Section~\ref{Section:proof_thm2}.

\section{Details on Part I of the proof of Theorem~\ref{thm:weak_convergence}}\label{Section:StepI}
This Section contains the proof of Lemma~\ref{lemma:self_similarity} and~\ref{lemma:distributional_identity_normalized_logLikelihood}. Both of them will make use of the identities
\begin{align}\label{eq_proof:self_similarity_1}
p_t^{\rho+\theta,\alpha,\beta}(x,y)=p_t^{\theta,\alpha,\beta}(x-\rho,y-\rho)
\end{align}
and 
\begin{align}\label{eq_proof:self_similarity_2}
p_t^{\theta,\alpha,\beta}\left(\sqrt{c}x,\sqrt{c}y\right) = \frac{1}{\sqrt{c}} p_{t/c}^{\theta/\sqrt{c},\alpha,\beta}\left(x,y\right),
\end{align} 
which are valid for all $c,t,\alpha,\beta> 0$ and $x,y,\rho,\theta\in\R$ and can be read off from the explicit form of the transition density given in~\eqref{eq:transition_density}.

\begin{proof}[Proof of Lemma~\ref{lemma:self_similarity}]
Denote $Y_t=c^{-1/2}(X_{ct}-\rho_0)$ and $Z_t = X_t-\rho_0$. For $A\subset\R$ measurable we denote $\sqrt{c}A+\rho_0 := \{\sqrt{c}a+\rho_0| a\in A\}$. Then we find for $s\leq t$,
\begin{align*}
\Pr\left( Y_t\in A| Y_s=y\right) &= \Pr\left( X_{ct} \in \sqrt{c}A + \rho_0 | X_{cs} = \sqrt{c}y+\rho_0\right) & \textrm{(definition)}  \\
&= \int_{\sqrt{c}A + \rho_0} p_{c(t-s)}^{\rho_0,\alpha_0,\beta_0}\left( \sqrt{c}y+\rho_0, z\right) dz & \textrm{(Markov semigroup)}\\
&= \int_A \sqrt{c} p_{c(t-s)}^{\rho_0,\alpha_0,\beta_0}\left( \sqrt{c}y + \rho_0, \sqrt{c}z + \rho_0\right) dz & \textrm{(change of variable)} \\
&= \int_A\sqrt{c} p_{c(t-s)}^{0,\alpha_0,\beta_0}\left( \sqrt{c}y, \sqrt{c}z\right) dz & \eqref{eq_proof:self_similarity_1}\\
&= \int_A p_{t-s}^{0,\alpha_0,\beta_0}(y,z) dz. &\eqref{eq_proof:self_similarity_2} &
\end{align*}
On the other hand, we obtain the transition density for $Z=(Z_t)_{t\geq 0}$ with similar arguments
\begin{align*}
\Pr\left( Z_t\in A| Z_{s}=y\right)  = \int_A p_{t-s}^{\rho_0,\alpha_0,\beta_0}(y+\rho_0,z+\rho_0) dz = \int_A p_{t-s}^{0,\alpha_0,\beta_0}(y,z) dz,
\end{align*}
such that both transition kernels coincide. As $Y_0=Z_0=0$, the distributions of both Markov processes coincide.
\end{proof}

\begin{proof}[Proof of Lemma~\ref{lemma:distributional_identity_normalized_logLikelihood}]
We have the following (distributional) identities:
\begin{align*}
&\ell_{n,\gamma}^{\alpha_0,\beta_0}\left(zn^{-(1+\gamma)/2}\right) & \\
&\hspace{0.3cm} =  \sum_{k=1}^n \log\left(\frac{p_{\Delta_n^\gamma}^{zn^{-(1+\gamma)/2}, \alpha_0,\beta_0}(X_{(k-1)\Delta_n^\gamma}-\rho_0,X_{k\Delta_n^\gamma}-\rho_0)}{p_{\Delta_n^\gamma}^{0,\alpha_0,\beta_0}(X_{(k-1)\Delta_n^\gamma}-\rho_0,X_{k\Delta_n^\gamma}-\rho_0)}\right) & \hspace{-0.5cm}\eqref{eq_proof:self_similarity_1} \\
&\hspace{0.3cm} \stackrel{\mathcal{D}}{=} \sum_{k=1}^n \log\left(\frac{p_{\Delta_n^\gamma}^{zn^{-(1+\gamma)/2}, \alpha_0,\beta_0}\left(n^{(1-\gamma)/2}(X_{(k-1)/n}-\rho_0),n^{(1-\gamma)/2}(X_{k/n}-\rho_0)\right)}{p_{\Delta_n^\gamma}^{0,\alpha_0,\beta_0}\left(n^{(1-\gamma)/2}(X_{(k-1)/n}-\rho_0),n^{(1-\gamma)/2}(X_{k/n}-\rho_0)\right)}\right) & \hspace{-0.5cm}\eqref{eq:distributional_identity}\\
&\hspace{0.3cm} = \sum_{k=1}^n \log\left(\frac{p_{1/n}^{\rho_0+z/n}\left(X_{(k-1)/n},X_{k/n}\right)}{p_{1/n}^{\rho_0}\left(X_{(k-1)/n},X_{k/n}\right)}\right). & \hspace{-0.5cm}\eqref{eq_proof:self_similarity_1},\eqref{eq_proof:self_similarity_2}
\end{align*}
Along the same lines, this identity can be shown for any finite vector
\[ \left( \ell_{n,\gamma}^{\alpha_0,\beta_0}\left(z_1 n^{-(1+\gamma)/2}\right), \dots, \ell_{n,\gamma}^{\alpha_0,\beta_0}\left(z_N n^{-(1+\gamma)/2}\right)\right)\] 
and $z_1,\dots, z_N\in\R$. As the law of a process is determined by its finite dimensional marginals, the claim of Lemma~\ref{lemma:distributional_identity_normalized_logLikelihood} follows.
\end{proof}

\section{Details on Step II of the proof of Theorem~\ref{thm:weak_convergence}}\label{Section:StepII}

As outlined in Step II, the general idea for proving (1A) and (2A) is to reduce the assertion to the case of infill asymptotics via the self-similarity established in Step I. The reason for this is that for such observations the result can be derived using stable convergence for semimartingales which is no longer true if $\gamma\in [0,1)$. For each $z\in [-K,K]$ we have the decomposition
\begin{align}\label{eq_proof:prop_start_in_x0_1}
\begin{split}
\ell_{n,\gamma}^{\alpha_0,\beta_0}(zn^{-(1+\gamma)/2}, \overline{X}^c) &= \sum_{k=1}^{\lceil T_{\rho_0}^{x_0}/\Delta_n^\gamma\rceil} \log\left(\frac{p_{\Delta_n^\gamma}^{\rho_0+zn^{-(1+\gamma)/2}, \alpha_0,\beta_0}(\overline{X}_{(k-1)\Delta_n^\gamma}^c,\overline{X}_{k\Delta_n^\gamma}^c)}{p_{\Delta_n^\gamma}^{\rho_0,\alpha_0,\beta_0}(\overline{X}_{(k-1)\Delta_n^\gamma}^c,\overline{X}_{k\Delta_n^\gamma}^c)}\right) \\
&\hspace{0.5cm} + \sum_{k=\lceil T_{\rho_0}^{x_0}/\Delta_n^\gamma\rceil+1}^n \log\left(\frac{p_{\Delta_n^\gamma}^{\rho_0+zn^{-(1+\gamma)/2}, \alpha_0,\beta_0}(\overline{X}_{(k-1)\Delta_n^\gamma}^c,\overline{X}_{k\Delta_n^\gamma}^c)}{p_{\Delta_n^\gamma}^{\rho_0,\alpha_0,\beta_0}(\overline{X}_{(k-1)\Delta_n^\gamma}^c,\overline{X}_{k\Delta_n^\gamma}^c)}\right), 
\end{split} 
\end{align} 
where by the structure of our coupling $\overline{X}^c$ the second summand is given as
\begin{align*}
\sum_{k=\lceil T_{\rho_0}^{x_0}/\Delta_n^\gamma\rceil+1}^n \log\left(\frac{p_{\Delta_n^\gamma}^{\rho_0+zn^{-(1+\gamma)/2}, \alpha_0,\beta_0}(X_{(k-1)\Delta_n^\gamma}^{\rho_0},X_{k\Delta_n^\gamma}^{\rho_0})}{p_{\Delta_n^\gamma}^{\rho_0,\alpha_0,\beta_0}(X_{(k-1)\Delta_n^\gamma}^{\rho_0},X_{k\Delta_n^\gamma}^{\rho_0})}\right). 
\end{align*} 
This means that from~\eqref{eq_proof:prop_start_in_x0_1} we get a decomposition of processes
\begin{align}\label{eq_proof:1A_distributional_identity}
\begin{split}
&\left(\ell_{n,\gamma}^{\alpha_0,\beta_0}(zn^{-(1+\gamma)/2}, \overline{X}^c)\right)_{z\in [-K,K]} - \left(Y\left(z,(\overline{X}_{k\Delta_n^\gamma}^c)_{k=0,\dots,n}\right)\right)_{z\in [-K,K]} \\
&\hspace{2cm} = \left(\ell_{n,\gamma}^{\alpha_0,\beta_0}(zn^{-(1+\gamma)/2}, X^{\rho_0})\right)_{z\in [-K,K]} - \left(Y\left(z,(X_{k\Delta_n^\gamma}^{\rho_0})_{k=0,\dots,n}\right)\right)_{z\in [-K,K]}, 
\end{split}
\end{align} 
where
\[ Y\left(z,(X_{k\Delta_n^\gamma})_{k=0,\dots,n}\right) = \sum_{k=1}^{\lceil T_{\rho_0}^{x_0}/\Delta_n^\gamma\rceil} \log\left(\frac{p_{\Delta_n^\gamma}^{\rho_0+zn^{-(1+\gamma)/2}, \alpha_0,\beta_0}(X_{(k-1)\Delta_n^\gamma},X_{k\Delta_n^\gamma})}{p_{\Delta_n^\gamma}^{\rho_0,\alpha_0,\beta_0}(X_{(k-1)\Delta_n^\gamma},X_{k\Delta_n^\gamma})}\right).\]
The crucial idea for proving (1A) and (2A) is now to start with the decomposition~\eqref{eq_proof:1A_distributional_identity}. Here, we apply the self-similarity argument to the process $(\ell_{n,\gamma}^{\alpha_0,\beta_0}(zn^{-(1+\gamma)/2}, X^{\rho_0}))_{z\in [-K,K]}$, whereas both $Y$-terms are treated as a remainder. For the low-frequency asymptotics $\gamma=0$, this program can directly put into action, since the number $\lceil T_{\rho_0}^{x_0}/\Delta_n^\gamma\rceil$ in the definition of $Y$ is in fact independent of $n$, as well as the time difference of two consecutive $X_{(k-1)\Delta_n^\gamma}$ and $X_{k\Delta_n^\gamma}$. These proofs are given below. For $\gamma\in (0,1)$, the proof is technically more involved and combines the self-similarity and coupling argument with techniques developed in~\cite{Brutsche/Rohde} for the high-frequency setting $\gamma=1$. Whereas the order of numbers of summands is known, the obstacle consists in proving that the summands are (uniformly in $z$) not too large. Here, the exact reasoning is deferred to the proof of (1A) for $\gamma\in (0,1)$ in Appendix~\ref{Section:Proof_1A}. It should be noted that the arguments for the high-frequency case $\gamma=1$ need to be developed seperately as the coupling argument reveals a bound of stochastic order $n^{-(1-\gamma)/2}$ on the remainder $Y$. The intuition behind this rate is that the larger $\gamma$, the more observations are made before the coupling time $T_{\rho_0}^{x_0}$. 
The proof of (2A) relies crucially on Lemma~\ref{lemma:probability_A3} which together with its proof is given at the end of this section and is of independent interest. Based on this Lemma, the proof of (2A) is given Section~\ref{Section:Proof_2A}.

\begin{proof}[Proof of (1A) for $\gamma=0$]
First, note that for $\gamma=0$ we have $\Delta_n^\gamma=1$. By Step I, in particular~\eqref{eq:limit_known_alpha/beta}, we know that
\[ \left(\ell_{n,0}^{\alpha_0,\beta_0}(zn^{-1/2}, X^{\rho_0})\right)_{z\in [-K,K]} \stackrel{\mathcal{D}}{\longrightarrow} \left( \ell^{\rho_0,\alpha_0,\beta_0}(z)\right)_{z\in [-K,K]}.\]
The convergence in (1A) then is a consequence from~\eqref{eq_proof:1A_distributional_identity} if we show
\begin{align}\label{eq_proof:1A_vanishing_term}
\sup_{z\in [-K,K]} \left| Y\left(z,Z_{k})_{k=0,\dots,n}\right)\right|\longrightarrow_{\Pr}0\qquad \textrm{ for } Z=\overline{X}^c,X^{\rho_0}.
\end{align} 
In what follows, we will establish~\eqref{eq_proof:1A_vanishing_term} for $Z=\overline{X}^c$ and the supremum over $z\in [0,K]$. The supremum over $[-K,0]$ can be dealt with analogously and the reasoning is then exactly the same for $Z=X^{\rho_0}$. Recall that $T_{\rho_0}^{x_0}$ is independent of $n$ and satisfies $\Pr_0(T_{\rho_0}^{x_0}<\infty)=1$. Let $\epsilon>0$ be arbitrary and choose a corresponding $T_0\in\N$ such that $\Pr(T_{\rho_0}^{x_0}\leq T_0)>1-\epsilon$. Then we have
\begin{align}\label{eq_proof:1A_bound_Y}
\left| Y\left(z,(\overline{X}_{k})_{k=0,\dots,n}^c\right)\right|\1_{\{T\leq T_0\}} \leq \sum_{k=1}^{T_0} \left| \log\left(\frac{p_{1}^{\rho_0+zn^{-(1+\gamma)/2}, \alpha_0,\beta_0}(\overline{X}_{k-1}^c,\overline{X}_{k}^c)}{p_{1}^{\rho_0,\alpha_0,\beta_0}(\overline{X}_{k-1}^c,\overline{X}_{k}^c)}\right) \right|. 
\end{align} 
We abbreviate $\psi_n^\gamma=n^{-(1+\gamma)/2}$. When considering the different cases $I_{j,k}^{0,z\psi_n^\gamma}$, $j=1,\dots, 9$, that occur in the likelihood ratio~\eqref{eq:def_ell_rho}, namely
\begin{align}\label{def:indicators}
\begin{split}
&I_{1,k}^{\theta',\theta} := \{X_{(k-1)\Delta_n^\gamma}< \rho_0+\theta', X_{k\Delta_n^\gamma} \leq \rho_0+\theta'\},\\
&I_{2,k}^{\theta',\theta} := X_{(k-1)\Delta_n^\gamma}<\rho_0+\theta'< X_{k\Delta_n^\gamma}\leq \rho_0+\theta\},\\
&I_{3,k}^{\theta',\theta} := \{X_{(k-1)\Delta_n^\gamma}<\rho_0+\theta'\leq\rho_0+\theta <X_{k\Delta_n^\gamma}\}, \\
&I_{4,k}^{\theta',\theta} := \{X_{k\Delta_n^\gamma}\leq\rho_0+\theta'\leq X_{(k-1)\Delta_n^\gamma}<\rho_0+\theta \}, \\
&I_{5,k}^{\theta',\theta} :=  \{\rho_0+\theta'\leq X_{(k-1)\Delta_n^\gamma}<\rho_0+\theta, \rho_0+\theta'< X_{k\Delta_n^\gamma}\leq\rho_0+\theta \}, \\
&I_{6,k}^{\theta',\theta} :=  \{ \rho_0+\theta'\leq X_{(k-1)\Delta_n^\gamma} < \rho_0+\theta < X_{k\Delta_n^\gamma} \}, \\
&I_{7,k}^{\theta',\theta} :=  \{ X_{k\Delta_n^\gamma}\leq\rho_0+\theta'\leq \rho_0+\theta\leq X_{(k-1)\Delta_n^\gamma}\}, \\
&I_{8,k}^{\theta',\theta} :=  \{ \rho_0+\theta'< X_{k\Delta_n^\gamma}\leq \rho_0+\theta\leq X_{(k-1)\Delta_n^\gamma}\}, \\
&I_{9,k}^{\theta',\theta} :=  \{ \rho_0+\theta \leq X_{(k-1)\Delta_n^\gamma}, \rho_0+\theta <X_{k\Delta_n^\gamma}\},
\end{split}
\end{align} 
one can observe that for $j=2,8$, the logarithmic term in one summand in~\eqref{eq_proof:1A_bound_Y} behaves approximately like a constant, whereas for all others it is as multiple of $\psi_n^\gamma$. However, the events for $j=2,8$ are unlikely and we exclude them in the following way: as $\overline{X}^c$ has a transition density with respect to Lebesgue measure, we can find a constant $\kappa>0$ small enough such that $\Pr(E_\kappa)\geq 1-\epsilon$ for the event
\[ E_\kappa := \left\{ \overline{X}_{k}^c \notin [\rho_0,\rho_0+\kappa] \quad \textrm{ for all } k=0,1,\dots, T_0\right\}.\]
Then, we let $n$ be large enough such that $K\psi_n^\gamma\leq \kappa$, for which the bound~\eqref{eq_proof:1A_bound_Y} reduces to
\begin{align*}
& \left| Y\left(z,(\overline{X}_{k}^c)_{k=0,\dots,n}\right)\right|\1_{\{T\leq T_0\}}\1_{E_\kappa} \\
&\hspace{0.3cm} \leq \sum_{k=1}^{ T_0} \left| \log\left(\frac{p_{1}^{\rho_0+zn^{-(1+\gamma)/2}, \alpha_0,\beta_0}(\overline{X}_{k-1}^c,\overline{X}_{k}^c)}{p_{1}^{\rho_0,\alpha_0,\beta_0}(\overline{X}_{k-1}^c,\overline{X}_{k}^c)}\right)\right| \left( \1_{I_{1,k}^{0,z\psi_n^\gamma}} + \1_{I_{3,k}^{0,z\psi_n^\gamma}} + \1_{I_{7,k}^{0,z\psi_n^\gamma}} + \1_{I_{9,k}^{0,z\psi_n^\gamma}} \right).
\end{align*}
Note that on $E_{\kappa}$ the terms for $j=4,5,6$ also disappear which can be immediately seen from~\eqref{def:indicators}. This bound is now useful, because for $j=1,3,7,9$ we have
\[ \left| \log\left(\frac{p_{1}^{\rho_0+zn^{-(1+\gamma)/2}, \alpha_0,\beta_0}(\overline{X}_{k-1}^c,\overline{X}_{k}^c)}{p_{1}^{\rho_0,\alpha_0,\beta_0}(\overline{X}_{k-1}^c,\overline{X}_{k}^c)}\right)\right|\1_{I_{j,k}^{0,z\psi_n^\gamma}} \leq z\psi_n^\gamma G_j^{\alpha_0,\beta_0}(\overline{X}_{k-1}^c,\overline{X}_k^c),\]
where for any $a>0$, $\sup_{-a\leq x,y\leq a} G_j^{\alpha_0,\beta_0}(x,y)\leq C_{\alpha_0,\beta_0}(a)$ for $j=1,3,7,9$ and a suitable constant $C_{\alpha_0,\beta_0}(a)>0$, see (D.1), (D.7), (D.14) and (D.17) in~\cite{Brutsche/Rohde}. By the Burkholder-Davis-Gundy inequality, we can choose $a>0$ large enough such that $\Pr(E_a')>1-\epsilon$ for the event
\begin{align}\label{eq_proof:1A_setEa}
E_a' := \left\{ -a \leq \overline{X}_0^c,\overline{X}_1^c,\dots, \overline{X}_{\lceil T_0\rceil}^c \leq a\right\}.
\end{align} 
Hence, we finally obtain the bound
\[ \sup_{z\in [0,K]}\left| Y\left(z,(\overline{X}_{k}^c)_{k=0,\dots,n}\right)\right|\1_{\{T_{\rho_0}^{x_0}\leq T_0\}}\1_{E_\kappa}\1_{E_a'} \leq C_{\alpha_0,\beta_0}(a)K\psi_n^\gamma \longrightarrow 0 \]
and~\eqref{eq_proof:1A_vanishing_term} follows since $\Pr(\{T\leq T_0\}\cap E_\kappa\cap E_a')>1-3\epsilon$.
\end{proof}

\begin{proof}[Proof of (2A) for $\gamma=0$] Let $\epsilon>0$. The identity~\eqref{eq_proof:1A_distributional_identity} gives 
\[ \ell_{n,0}^{\alpha_0,\beta_0}(zn^{-1/2}, \overline{X}^c) = \ell_{n,0}^{\alpha_0,\beta_0}(zn^{-1/2}, X^{\rho_0})  + Y(n,z,T_{\rho_0}^{x_0})\]
with
\[ Y(n,z,T_{\rho_0}^{x_0})= Y\left(z,\overline{X}_{k\Delta_n^\gamma}^c)_{k=0,\dots,n}\right) - Y\left(z,X_{k\Delta_n^\gamma}^{\rho_0})_{k=0,\dots,n}\right).\]
By the explicit form of the transition density in~\eqref{eq:transition_density}, there exist constants $c_1=c_1(\alpha_0,\beta_0,a,K)$ and $c_2=c_2(\alpha_0,\beta_0,a,K)$ such that for any $x,y\in [-a,a]$ and $z\in[-K,K]$, 
\[ c_1\leq  p_1^{zn^{-(1+\gamma)/2},\alpha_0,\beta_0}(x,y) \leq c_2.\]
As a consequence, 
\begin{align*}
\1_{E_a'}\1_{\{T_{\rho_0}^{x_0}\leq T_0\}}\left|Y(n,z,T)\right| \leq T_0 \log(c_2/c_1), 
\end{align*} 
where $E_a'$ is defined in~\eqref{eq_proof:1A_setEa}. We assume that $a$ and $T_0$ are chosen as in the proof of (1A) to ensure $\Pr(\{T_{\rho_0}^{x_0}\leq T_0\}\cap E_a')>1-2\epsilon$. Using $\ell_{n,0}^{\alpha_0,\beta_0}(0)=0$ and $n^{1/2}(\hat{\rho}_n-\rho_0)\in\mathrm{Argsup}_{z\in\R} \ell_{n,0}^{\alpha_0,\beta_0}(zn^{-1/2})$ by definition, we obtain
\begin{align*}
&\Pr\left( \sqrt{n}\left(\hat{\rho}_n(\overline{X}^c) - \rho_0\right) >K\right)\\
&\hspace{1cm} \leq \Pr\left( \sup_{|z|>K} \ell_{n,0}^{\alpha_0,\beta_0}(zn^{-1/2}, \overline{X}^c) \geq 0\right) \\
&\hspace{1cm} \leq \Pr\left( \sup_{|z|>K} \ell_{n,0}^{\alpha_0,\beta_0}(zn^{-1/2}, X^{\rho_0}) \geq - T_0 \log(c_2/c_1)\right) +2\epsilon, \\
&\hspace{1cm} =  \Pr\left( \sup_{|z|>K} \overline{\ell}_n^{\alpha_0,\beta_0}(z/n, X^{\rho_0}) \geq -T_0 \log(c_2/c_1) \right) +2\epsilon,
\end{align*}
where the last step uses the self-similarity of Lemma~\ref{lemma:distributional_identity_normalized_logLikelihood} and the notation
\[ \overline{\ell}_n^{\alpha_0,\beta_0}(\theta, X) =\sum_{k=1}^n \log\left(\frac{p_{1/n}^{\rho_0+\theta,\alpha_0,\beta_0}\left(X_{(k-1)/n},X_{k/n}\right)}{p_{1/n}^{\rho_0,\alpha_0,\beta_0}\left(X_{(k-1)/n},X_{k/n}\right)}\right).\]
At this point, we reached our goal to transfer the problem into the framework of infill asymptotics. The claim now follows by repeating line-by-line the consistency proof given in Section~$3$ in~\cite{Brutsche/Rohde} from (3.7) onwards, replacing there $0$ by $-T_0\log(c_2/c_1)$.
\end{proof}

\begin{lemma}\label{lemma:probability_A3}
Let $\epsilon>0$ and $X$ be a solution to~\eqref{eq:SDE_OBM} with $X_0=x_0$. Then there exists $\xi=\xi_\epsilon>0$ and $\zeta=\zeta_\epsilon>0$ such that 
\[ \liminf_{n\to\infty}\Pr_{\rho_0,\alpha_0,\beta_0}\left(\inf_{y\in [\rho_0-\zeta\sqrt{n\Delta_n^\gamma}, \rho_0+\zeta\sqrt{n\Delta_n^\gamma}]} L_{n\Delta_n^\gamma}^y(X) >\sqrt{n\Delta_n^\gamma} \xi \right)>1-\epsilon.\]
\end{lemma}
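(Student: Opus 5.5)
The plan is to reduce to the case $X_0=\rho_0$ with the strong Markov property, and then to use the self-similarity of Lemma~\ref{lemma:self_similarity} to turn the statement into one about local times over the unit interval. Write $T_n:=n\Delta_n^\gamma$, which tends to $\infty$ for $\gamma\in[0,1)$.

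\emph{Step 1: reduction to $x_0=\rho_0$.} Let $\tau:=\inf\{t\geq 0: X_t=\rho_0\}$. By recurrence $\tau<\infty$ a.s., so I choose $M>0$ with $\Pr(\tau\leq M)>1-\epsilon/2$. By the strong Markov property, $\widetilde X_t:=X_{\tau+t}$ is an OBM with the same coefficient, started in $\rho_0$ and independent of $\mathcal F_\tau$. Local time is additive in time and nonnegative. Hence on $\{\tau\leq M\}$ and for $n$ so large that $T_n>M$, we have for all $y$
\[ L_{T_n}^y(X)\;\geq\; L_{T_n-\tau}^y(\widetilde X)\;\geq\; L_{T_n-M}^y(\widetilde X).\]
Since $\sqrt{T_n-M}/\sqrt{T_n}\to 1$, it suffices to prove the claim for $\widetilde X$ on the horizon $T_n':=T_n-M$, with $\xi$ replaced by $2\xi$ and $\zeta$ by $2\zeta$. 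Concretely, it is enough that
\[ \Pr\Bigl(\inf_{|y-\rho_0|\leq 2\zeta\sqrt{T_n'}} L_{T_n'}^y(\widetilde X)>2\xi\sqrt{T_n'}\Bigr)>1-\epsilon/2 \]
for all large $n$. The enlarged window contains the original one once $T_n'\geq T_n/4$.

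\emph{Step 2: scaling of local time for $X_0=\rho_0$.} Apply the occupation formula~\eqref{eq:local_time} and Lemma~\ref{lemma:self_similarity} with $c=T$. For bounded measurable $f$,
\[ \int_0^T f(X_s)\,ds=T\int_0^1 f(X_{Ts})\,ds\;\stackrel{\mathcal D}{=}\;T\int_0^1 f\bigl(\rho_0+\sqrt T(X_s-\rho_0)\bigr)\,ds, \]
jointly in $f$, since the identity holds at the level of paths. Identifying occupation densities then gives
\[ \bigl(L_T^{\rho_0+\sqrt T u}(X)\bigr)_{u\in\R}\;\stackrel{\mathcal D}{=}\;\bigl(\sqrt T\,L_1^{\rho_0+u}(X)\bigr)_{u\in\R}. \]
Consequently the probability in Step 1 equals $\Pr\bigl(\inf_{|u|\leq 2\zeta}L_1^{\rho_0+u}(X)>2\xi\bigr)$, which does not depend on $n$.

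\emph{Step 3: positivity near $\rho_0$.} This is the main obstacle, namely regularity of $u\mapsto L_1^{\rho_0+u}(X)$ across the discontinuity of $\sigma$. Since $X$ is a continuous martingale, Revuz--Yor VI.1.7 gives a version of its semimartingale local time $\Lambda^a$ (the density w.r.t.\ $d\langle X\rangle$) that is jointly continuous in $(t,a)$: the jump $\Lambda^a-\Lambda^{a-}=2\int \1_{\{X=a\}}dV$ vanishes because the finite-variation part is $V=0$. The local time in~\eqref{eq:local_time} is taken w.r.t.\ $ds=d\langle X\rangle_s/\sigma^2(X_s)$. It therefore equals $\Lambda^a/\alpha_0^2$ for $a<\rho_0$ and $\Lambda^a/\beta_0^2$ for $a>\rho_0$, and is a convex combination of the two at $a=\rho_0$. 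In particular it is continuous on each side of $\rho_0$, with one-sided limits that are positive multiples of $\Lambda^{\rho_0}_1$.

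It remains to see that $\Lambda_1^{\rho_0}>0$ a.s. when $X_0=\rho_0$. This follows, as remarked after Theorem~\ref{thm:weak_convergence}, from Dambis--Dubins--Schwarz: $X-\rho_0$ is a time-changed Brownian motion $B$ started at $0$, with $\langle X\rangle_1\geq\min(\alpha_0,\beta_0)^2>0$, and $B$ has positive local time at $0$ at every positive time. Hence
\[ \inf_{|u|\leq\eta}L_1^{\rho_0+u}(X)\longrightarrow \min\Bigl(\tfrac{1}{\alpha_0^2},\tfrac{1}{\beta_0^2}\Bigr)\Lambda_1^{\rho_0}>0 \quad\text{a.s. as }\eta\searrow 0. \]
So I first choose $\zeta$ small and then $\xi$ small such that $\Pr\bigl(\inf_{|u|\leq 2\zeta}L_1^{\rho_0+u}>2\xi\bigr)>1-\epsilon/2$. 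Combining this with Step 1 yields the $\liminf$ bound $>1-\epsilon$.
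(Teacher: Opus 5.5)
Your proof is correct and follows the paper's own route. For $X_0=\rho_0$, the self-similarity of Lemma~\ref{lemma:self_similarity} turns the event into an $n$-independent statement about $\inf_{|u|\leq \zeta}L_1^{\rho_0+u}(X)$; positivity of the local time at $\rho_0$ together with spatial regularity then lets you choose first $\zeta$ and then $\xi$; and for general $x_0$ the strong Markov property at the hitting time of $\rho_0$, plus nonnegativity and additivity of local time, reduces to that case. Your Step~3 is in fact slightly more careful than the paper, which simply invokes a continuous version of $y\mapsto L_1^y(X)$, whereas for the $ds$-based definition~\eqref{eq:local_time} this map has distinct one-sided limits $\Lambda_1^{\rho_0}/\alpha_0^2$ and $\Lambda_1^{\rho_0}/\beta_0^2$ at $\rho_0$, and your one-sided argument shows this does no harm.
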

\begin{proof}
First, assume $X_0=\rho_0$. Then, by the self-similarity of Lemma~\ref{lemma:self_similarity} and the definition of the local time in~\eqref{eq:local_time}, we have for any $c>0$ and $y\in\R$,
\begin{align}\label{eq_proof:distributional_identity_local_time}
\begin{split}
L_1^{y}(X) &= L_1^{y-\rho_0}(X-\rho_0) \\
&= \lim_{\epsilon\searrow 0} \frac{1}{2\epsilon} \int_0^1 \1_{(y-\rho_0-\epsilon,y-\rho_0+\epsilon)}(X_t-\rho_0) dt \\
&\stackrel{\mathcal{D}}{=} \lim_{\epsilon\searrow 0} \frac{1}{2\epsilon} \int_0^1 \1_{(y-\rho_0-\epsilon,y-\rho_0+\epsilon)}\big( c^{-1/2}(X_{ct}-\rho_0)\big) dt \\
&= \lim_{\epsilon\searrow 0} \frac{1}{2\epsilon} \frac1c \int_0^c \1_{(\sqrt{c}(y-\rho_0-\epsilon),\sqrt{c}(y-\rho_0+\epsilon))}\big( (X_{t}-\rho_0)\big) dt \\
&= \frac{1}{\sqrt{c}}  \lim_{\epsilon\searrow 0} \frac{1}{2\epsilon}\int_0^c \1_{(\sqrt{c}(y-\rho_0)-\epsilon,\sqrt{c}(y-\rho_0)+\epsilon)}\big( (X_{t}-\rho_0)\big) dt \\
&= \frac{1}{\sqrt{c}} L_c^{\sqrt{c}(y-\rho_0)}(X-\rho_0) = \frac{1}{\sqrt{c}} L_c^{\rho_0+\sqrt{c}(y-\rho_0)}(X).
\end{split}
\end{align}
This reasoning also applies to vectors, i.e. for $y_1,\dots, y_N\in\R$ we find
\[ \left( L_1^{y_1}(X),\dots, L_1^{Y_N}(X)\right) \stackrel{\mathcal{D}}{=} \frac{1}{\sqrt{c}}\left( L_c^{\rho_0+\sqrt{c}(y_1-\rho_0)}(X), \dots, L_c^{\rho_0+\sqrt{c}(y_N-\rho_0)}(X)\right). \]
As the process $L_t^\bullet(X)$ is a continuous process for each $t\geq 0$, we therefore obtain the distributional identity
\[ \left( L_1^y(X)\right)_{y\in\R} \stackrel{\mathcal{D}}{=} \left( \frac{1}{\sqrt{c}} L_c^{\rho_0+\sqrt{c}(y-\rho_0)}(X)\right)_{y\in\R}.\]
Setting $c=n\Delta_n^\gamma$, we then find
\[ \Pr_{\rho_0,\alpha_0,\beta_0}\left(\inf_{y\in [\rho_0-\zeta\sqrt{n\Delta_n^\gamma}, \rho_0+\zeta\sqrt{n\Delta_n^\gamma}]} L_{n\Delta_n^\gamma}^y(X) >\sqrt{n\Delta_n^\gamma} \xi\right) = \Pr\left(\inf_{y\in [\rho_0-\zeta, \rho_0+\zeta]} L_{1}^y(Y) > \xi \right) \]
for an OBM $Y$ started in $Y_0=\rho_0$ and with diffusion coefficient $\sigma_{\rho_0,\alpha_0,\beta_0}$. Because $L_1^\cdot(X)$ has a continuous version (as $X$ is a continuous martingale) and every continuous function on a compact set attains its minimum and maximum, Corollary~$29.18$ in~\cite{Kallenberg} about range and support of continuous local martingales reveals
\[ \Pr\left( \left.\inf_{y\in [\rho_0-\zeta, \rho_0+\zeta]} L_1^y(Y) >0 \ \right| E_1 \right) =1\]
for the event $E_1 =\{\overline{Y}_1\geq \rho_0+2\zeta, \underline{Y}_1\leq \rho_0+2\zeta\}$. We now prove that $\Pr(E_1)>1-\epsilon/2$ for $\zeta>0$ small enough. To this aim, define $B_k = \{ L_1^y(Y) >0 \textrm{ for all } y\in [\rho_0-1/k, \rho_0+1/k]\}$. Then $B_{k}\subset B_{k+1}$ for all $k\in\N$ and by continuity of measures from below, we have $\lim_{K\to\infty} \Pr(\bigcup_{k=1}^K B_k) = \Pr(L_1^{\rho_0}(Y)>0)=1$. Consequently, for every $\epsilon>0$ there exists $K_\epsilon$ such that
\[ \Pr\left(\bigcup_{k=1}^{K_\epsilon} B_k\right) = \Pr\left( B_{K_\epsilon}\right) >1-\epsilon/2,\]
and on $B_{K_\epsilon}$ we have $L_1^y(Y)>0$ for all $y\in [\rho_0-1/K_\epsilon, \rho_0+1/K_\epsilon]$ which by Corollary~$29.18$ in~\cite{Kallenberg} gives $\overline{Y}_1\geq\rho_0+1/K_\epsilon$ and $\underline{Y}_1\leq\rho_0-1/K_\epsilon$ on $B_{K_\epsilon}$. Finally, set $\zeta=K_\epsilon^{-1}/2$. 
By continuity of measures from below (analogously to the previous argument), we conclude the existence of $\xi>0$ such that
\[ \Pr\left(\inf_{y\in [\rho_0-\zeta, \rho_0+\zeta]} L_{1}^y(Y) > \xi \right)=\Pr\left(\left. \inf_{y\in [\rho_0-\zeta, \rho_0+\zeta]} L_{1}^y(Y) > \xi\ \right| E_1 \right)\Pr(E_1)>1-\epsilon, \]
giving the claim for for the case that $X$ starts in $\rho_0$.\\
Now, we generalize to $X_0=x_0\in\R$ arbitrary. Define the stopping time $\tau_{\rho_0}=\inf\{t\geq 0\mid X_t=\rho_0\}$. By the strong Markov property of $X$, we have
\[ \left( L_{n\Delta_n}^{y}(X)\right)_{y\in\R} \stackrel{\mathcal{D}}{=} \left( L_{\tau_{\rho_0}}^{y}(X) + L_{n\Delta_n^\gamma -\tau_{\rho_0}}^{y}(Y) \right)_{y\in\R},\]
where $Y$ is an OBM started in $Y_0=\rho_0$. By null-recurrence of $X$, there exists $T_0>0$ such that $\Pr(\tau_{\rho_0}\geq T_0)\leq \epsilon/2$ and together with non-negativity of the local time, we then find for $I_n(\xi,\zeta) = [\rho_0-\zeta\sqrt{n\Delta_n^\gamma}, \rho_0+\zeta\sqrt{n\Delta_n^\gamma}]$,
\[ \Pr_{\rho_0,\alpha_0,\beta_0}\left(\inf_{y\in I_n(\xi,\zeta)} L_{n\Delta_n^\gamma}^y(X) >\sqrt{n\Delta_n^\gamma} \xi \right)\geq \Pr\left(\inf_{y\in I_n(\xi,\zeta)} L_{n\Delta_n^\gamma -T_0}^{y}(Y) > \sqrt{n\Delta_n^\gamma} \xi \right) - \frac{\epsilon}{2}. \]
By the argumens used before, the remaining probability can be lower bounded by $1-\epsilon/2$ for $\xi$ small enough since $n\Delta_n^\gamma\rightarrow\infty$.
\end{proof}

\section{Details of the proof of Theorem~\ref{thm:properties_limit}}\label{Section:proof_thm2}

In this section, we will prove both properties of the limiting distribution in Theorem~\ref{thm:weak_convergence}, i.e. the distribution of $\arg\sup_{z\in\R}\ell^{\rho_0,\alpha_0,\beta_0}(z)$, that are stated in Theorem~\ref{thm:properties_limit}. The existence of a density is shown in Proposition~\ref{prop:density} and the remaining details of Subsection~\ref{Subsection:Proof_Theorem1} are presented thereafter.

\begin{proposition}\label{prop:density}
Let $\rho\in\R$, $\alpha,\beta>0$. Then, the random variable $\arg\sup_{z\in\R}\ell^{\rho,\alpha,\beta}(z)$ admits a density with respect to Lebesgue measure.
\end{proposition}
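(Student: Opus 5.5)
I will condition on the local time. Write $L:=L_1^{\rho}(X^{\rho,\alpha,\beta})$. By construction $N$, $N'$ are independent of $X^{\rho,\alpha,\beta}$, hence of $L$, and $\Pr(L>0)=1$ because the OBM starts in $\rho$. For $\alpha=\beta$ there is nothing to prove beyond the convention, so assume $\alpha\neq\beta$.

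\textbf{Step 1: reduction to the jump times.} Conditionally on $L=\lambda>0$, the process $\ell^{\rho,\alpha,\beta}$ is piecewise linear with deterministic slopes between jumps. On $z\geq 0$ it equals
\[ -a|z|\lambda+\log(\beta^2/\alpha^2)\,N(|z|\lambda/\beta^2), \qquad a=\frac{\beta^2-\alpha^2}{\alpha^2\beta^2}, \]
and the analogous expression holds on $z<0$. A short case distinction shows that on each side of $0$ the drift and the jump direction have opposite signs. For instance, if $\beta>\alpha$, then on $z\geq0$ the drift is negative and the jumps are upward. In that case the supremum over an inter-jump interval is attained at its left end, which is a jump time of $N(\cdot\,\lambda/\beta^2)$, or at $0$. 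If instead the drift is increasing and the jumps are downward, the supremum is approached just before a jump. By the Argsup convention with left limits, the argsup is then again a jump location. On $z<0$ the role of $N'(\cdot-)$ is symmetric.

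The supremum is a.s. finite and attained. This follows from the law of large numbers for Poisson processes together with the strict inequality $x-1-x\log x<0$ for $x\neq1$, which makes the drift of the compensated representation strictly negative. Uniqueness is Lemma~5.1 in \cite{Brutsche/Rohde}. Hence, a.s., the argsup lies in the countable set $\{0\}\cup\{-\beta^2 S_i/\lambda\}\cup\{\alpha^2 S_i'/\lambda\}$ (signs adjusted to the case). Here $S_i$ and $S_i'$ are the jump times of $N$ and $N'$.

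\textbf{Step 2: excluding the atom at $0$.} It remains to show that $\Pr(\arg\sup=0)=0$. On the side where the drift is negative and the jumps are upward, the Poisson process a.s. has a jump at some $z$ close enough to $0$ that the jump size $|\log(\beta^2/\alpha^2)|$ exceeds the drift loss $|a|\lambda z$. So $0$ is a.s. strictly beaten on that side. I expect this to be the most delicate point, since one must handle both sign cases and the left-limit convention carefully.

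\textbf{Step 3: disintegration.} Let $A\subset\R$ be a Lebesgue null set. Then
\[ \Pr(\arg\sup\in A)\leq\sum_i \E\big[\Pr(c_\pm S_i/L\in A\mid L)\big]+\text{(same for }S_i'). \]
Given $L=\lambda$, the variable $S_i$ has a Gamma$(i,1)$ density. Therefore $c S_i/\lambda$ is absolutely continuous and assigns zero mass to $A$. Summing over the countably many $i$ gives $\Pr(\arg\sup\in A)=0$, so the law is absolutely continuous by Radon--Nikodym.

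Beyond Step 2, the main obstacle is making Step 1 rigorous: verifying measurably that the argsup coincides a.s. with one specific jump location. For this I would use the explicit structure and the uniqueness from \cite{Brutsche/Rohde}, so that the countable union bound above is legitimate.
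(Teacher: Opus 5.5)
\paragraph{The gap is in Step 2.} On the side you chose, the claim is false.

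Take $\beta>\alpha$, set $y=\beta^2/\alpha^2>1$ and $t=zL/\beta^2$. Then on $z\ge0$,
\[
\ell^{\rho,\alpha,\beta}(z)=-(y-1)t+\log(y)N(t).
\]
The jump locations increase in $i$. So ``some jump close enough to $0$ outweighs the accumulated drift loss'' holds exactly when the first jump does, i.e.\ when $S_1<\log y/(y-1)$. This event has probability $1-e^{-\log y/(y-1)}<1$, not one.

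More fundamentally, the compensated drift $1-y+\log y$ on this side is strictly negative, so this side stays $\le 0$ with positive probability. By the Cram\'er--Lundberg formula with zero initial capital, it ever becomes positive only with probability $\log y/(y-1)<1$. Hence $0$ is \emph{not} a.s.\ beaten there. Your Step 2 therefore does not give $\Pr(\arg\sup_z\ell^{\rho,\alpha,\beta}(z)=0)=0$, which Step 3 needs whenever $0\in A$.

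\paragraph{The fix.} Use the other side, where the slope moving away from $0$ is positive and the jumps are downward. For $\beta>\alpha$ this is $z<0$. With $s=|z|L/\alpha^2$,
\[
\ell^{\rho,\alpha,\beta}(z)=(1-y^{-1})s-\log(y)N'(s-).
\]
This is strictly positive for $-\alpha^2S_1'/L\le z<0$, a nonempty range because $L>0$ and $S_1'>0$ a.s. The case $z>0$ when $\alpha>\beta$ is symmetric. Thus $\sup_z\ell^{\rho,\alpha,\beta}(z)>0=\ell^{\rho,\alpha,\beta}(0)$ a.s., and $0$ is never the argsup; no Poisson estimate is needed.

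\paragraph{Comparison with the paper.} With this replacement your argument is complete and follows the paper's route: disintegrate over $L_1^\rho$, place the argsup at a jump location, and use that jump times are Gamma distributed. Your Step 3 is the better ending.
\begin{itemize}
\item The paper only checks $\Pr(\arg\sup_z\ell^{\rho,\alpha,\beta}(z)=y)=0$ for each fixed $y$. This shows the law has no atoms, but does not by itself give a Lebesgue density.
\item The paper also tacitly skips $y=0$, which is exactly where your Step 2 is needed.
\end{itemize}
Your union bound over an arbitrary Lebesgue null set $A$ settles both points.

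One minor slip: the jump locations are $\beta^2S_i/L>0$ for $N$ and $-\alpha^2S_i'/L<0$ for $N'$, regardless of which of $\alpha,\beta$ is larger.
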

\begin{proof}
Let $y\in\R$ be fixed. As the Poisson processes $N,N'$ and the oscillating Brownian motion $X^{\rho,\alpha,\beta}$ are independent, we have by disintegration
\begin{align*}
&\Pr\left( \underset{z\in\R}{\arg\sup\ }\ell^{\rho,\alpha,\beta}(z)=y\right) \\
&\hspace{2cm} =  \int_0^\infty \Pr\left(\left. \underset{z\in\R}{\arg\sup\ }\tilde{\ell}^{\alpha,\beta}(cz)=y\ \right|\ L_1^{\rho}(X^{\rho,\alpha,\beta})=c\right) d\Pr^{L_1^\rho(X^{\rho,\alpha,\beta})}(dc),
\end{align*}
where
\begin{align*}
\tilde{\ell}^{\alpha,\beta}(z) &= \1_{\{z< 0\}} \left(\frac{\beta^2-\alpha^2}{\alpha^2\beta^2}  |z| +\log\left(\frac{\alpha^2}{\beta^2}\right) N'\left(\frac{|z|}{\alpha^2}-\right)\right) \\
&\hspace{1cm} +  \1_{\{z\geq 0\}} \left( \frac{\alpha^2-\beta^2}{\alpha^2\beta^2} |z| +\log\left(\frac{\beta^2}{\alpha^2}\right)N\left(\frac{|z|}{\beta^2}\right)\right).
\end{align*}
Because $\tilde{\ell}^{\alpha,\beta}$ is a piecewise linear function, we either find $N'(c|y|/\alpha^2)-N'(c|y|/\alpha^2-)\neq 0$ or $N(c|y|/\beta^2)-N(c|y|/\beta^2-)\neq 0$ for the point $y$ at which is the $\mathrm{argsup}$ is attained. Hence,
\begin{align*}
&\Pr\left(\left. \underset{z\in\R}{\arg\sup\ }\tilde{\ell}^{\alpha,\beta}(cz)=y\ \right|\ L_1^{\rho}(X^{\rho,\alpha,\beta})=c\right) \\
&\hspace{1cm} = \Pr\left( N' \textrm{ jumps at } c|y|/\alpha^2 \textrm{ or } N \textrm{ jumps at } c|y|/\beta^2\right) \\
&\hspace{1cm} \leq \sum_{m=1}^\infty \Pr\left(m\textrm{-th jump of } N'\textrm{ is at } c|y|/\alpha^2 \textrm{ or } m\textrm{-th jump of } N' \textrm{ is at }c|y|/\beta^2\right) =0,
\end{align*}
where the last step uses that the time of the $m$-th jump of a Poisson process is gamma distributed and hence has probability zero.
\end{proof}


\begin{proof}[Proof of (1B)]
Recall that for fixed Brownian motion $W$, $X^{\rho_n,\alpha_n,\beta_n}$ is a strong solution solving~\eqref{eq:SDE_OBM} with diffusion coefficient $\sigma_{\rho_n,\alpha_n,\beta_n}$ and $x_0=\rho_n$. First, we introduce the event
\[ E_n := \left\{\frac12 L_1^{\rho}(X^{\rho,\alpha,\beta}) \leq L_1^{\rho_n}(X^{\rho_n,\alpha_n,\beta_n}) \leq \frac32 L_1^{\rho}(X^{\rho,\alpha,\beta})\right\}\]
and abbreviate $L_1^{\rho_n}:=L_1^{\rho_n}(X^{\rho_n,\alpha_n,\beta_n}), L_1^{\rho}:=L_1^{\rho}(X^{\rho,\alpha,\beta})$ in the following. Let $\epsilon>0$ be arbitrary. As $X_0^{\rho,\alpha,\beta}=\rho$, Corollary~$29.18$ in~\cite{Kallenberg} gives $\Pr(L_1^\rho>0)=1$ and by continuity of measures, there exists a constant $\kappa_\epsilon>0$ such that $\Pr(L_1^\rho >\kappa_\epsilon)\geq 1-\epsilon$. Then, 
\begin{align*}
\Pr(E_n^c) \leq \Pr(E_n^c, L_1^\rho>\kappa_\epsilon) + \epsilon \leq \Pr\left( \left| L_1^\rho - L_1^{\rho_n}\right| > \frac{\kappa_\epsilon}{2}\right) + \epsilon
\end{align*}
and by Lemma~\ref{lemma:approx_local_time}, disintegration and Fatou's lemma,
\begin{align*}
&\limsup_{K\to\infty} \limsup_{n\to\infty}\Pr\left( \left|\underset{z\in\R}{\arg\sup\ }\ell^{\rho_n,\alpha_n,\beta_n}(z)\right| > K\right) \\
&\hspace{1cm} \leq \limsup_{K\to\infty} \limsup_{n\to\infty}\Pr\left( \left|\underset{z\in\R}{\arg\sup\ }\ell^{\rho_n,\alpha_n,\beta_n}(z)\right| > K, E_n\right) +\epsilon \\
&\hspace{1cm} \leq \E\left[ \limsup_{K\to\infty}\limsup_{n\to\infty} \Pr\left( \left.\left|\underset{z\in\R}{\arg\sup\ }\ell^{\rho_n,\alpha_n,\beta_n}(z)\right| > K, E_n\right| L_1^{\rho} \right)\right] +\epsilon.
\end{align*}
We now show that the integrand vanishes by proving this for $\mathrm{argsup}_{z\geq 0}$. The general case then follows by an analogous reasoning for $z<0$ and the union bound. For $M=\log_2(K)$, using $\ell^{\rho_n,\alpha_n,\beta_n}(0)=0$,
\begin{align}\label{eq_proof:1B}
\begin{split}
&\Pr\left(\left. \left|\underset{z\in\R}{\arg\sup\ }\ell^{\rho_n,\alpha_n,\beta_n}(z)\right| > K, E_n\right| L_1^\rho\right) \\
&\hspace{1cm} \leq \Pr\left(\left. \sup_{z>2^M} \ell^{\rho_n,\alpha_n\beta_n}(z) \geq 0,E_n\right| L_1^\rho\right) \\
&\hspace{1cm} = \E\left[ \left. \1_{E_n} \Pr\left(  \left. \sup_{z>2^M} \ell^{\rho_n,\alpha_n\beta_n}(z) \geq 0 \right| L_1^{\rho_n}, L_1^\rho\right) \right| L_1^\rho\right].
\end{split}
\end{align}
Here, the last step uses the tower property of conditional expectation and measurability of $E_n$ with respect to the $\sigma$-algebra generated by $L_1^\rho$ and $L_1^{\rho_n}$. Because the Poisson processes $N,N'$ in the definition of $\ell^{\rho_n,\alpha_n,\beta_n}$ are independent of the Brownian motion $W$ and hence of $L_1^\rho$, we have
\[ \Pr\left(  \left. \sup_{z>2^M} \ell^{\rho_n,\alpha_n\beta_n}(z) \geq 0 \right| L_1^{\rho_n}, L_1^\rho\right)=\Pr\left(  \left. \sup_{z>2^M} \ell^{\rho_n,\alpha_n\beta_n}(z) \geq 0 \right| L_1^{\rho_n}\right).\]
In order to evaluate this term, we observe that
\begin{align*}
\ell^{\rho_n,\alpha_n\beta_n}(z) &=  b_{\alpha_n,\beta_n} L_1^{\rho_n} |z| +  \log\left(\frac{\beta_n^2}{\alpha_n^2}\right)\left( N\left(\frac{zL_1^{\rho_n}}{\beta_n^2}\right) - \frac{zL_1^{\rho_n}}{\beta_n^2}\right)
\end{align*} 
for $z\geq 0$ and $b_{\alpha,\beta} = (\alpha^2-\beta^2)/(\alpha\beta)^2 + \log(\beta^2/\alpha^2)/\beta^2$, which is a decomposition into a deterministic drift and a compensated Poisson process given $L_1^{\rho_n}$. Note that $b_{\alpha,\beta}<0$ for $\alpha\neq\beta$. To continue, we set $S_j=\{2^j<z\leq 2^{j+1}\}$ and recall that on $E_n$ we have $L_1^{\rho_n}>L_1^\rho/2$ and $\Pr(L_1^\rho>0)=1$. Then, applying the union bound and Doob's maximal inequality yields
\begin{align*}
&\1_{E_n}\Pr\left(\left. \sup_{z>2^M} \ell^{\rho_n,\alpha_n\beta_n}(z) \geq 0\right| L_1^{\rho_n}\right) \\
&\hspace{0.5cm} \leq \sum_{j\geq M} \1_{E_n} \Pr\left( \left. \sup_{z\in S_j} \log\left(\frac{\beta_n^2}{\alpha_n^2}\right) \left( N\left(\frac{zL_1^{\rho_n}}{\beta_n^2}\right) - \frac{zL_1^{\rho_n}}{\beta_n^2}\right) \geq -b_{\alpha_n,\beta_n}2^jL_1^{\rho_n}\right| L_1^{\rho_n}\right)\\
&\hspace{0.5cm} \leq   \log\left(\frac{\beta_n^2}{\alpha_n^2}\right) \frac{1}{b_{\alpha_n,\beta_n}^2 (L_1^{\rho_n})^2} \sum_{j\geq M} 2^{-2j} \1_{E_n} \E\left[\left. \left( N\left(\frac{2^{j+1}L_1^{\rho_n}}{\beta_n^2}\right) - \frac{2^{j+1}L_1^{\rho_n}}{\beta_n^2}\right)^2\right| L_1^{\rho_n}\right] \\
&\hspace{0.5cm} =  \1_{E_n}\log\left(\frac{\beta_n^2}{\alpha_n^2}\right) \frac{1}{b_{\alpha_n,\beta_n}^2 \beta_n^2 L_1^{\rho_n}} \sum_{j\geq M} 2^{-2j} 2^{j+1}.
\end{align*}
Consequently, the last term in~\eqref{eq_proof:1B} is bounded as
\begin{align*}
\E\left[ \left. \1_{E_n} \Pr\left(  \left. \sup_{z>2^M} \ell^{\rho_n,\alpha_n\beta_n}(z) \geq 0 \right| L_1^{\rho_n}, L_1^\rho\right) \right| L_1^\rho\right] \leq \log\left(\frac{\beta_n^2}{\alpha_n^2}\right) \frac{4}{b_{\alpha_n,\beta_n}^2\beta_n^2 L_1^\rho} \sum_{j\geq M} 2^{-j}.
\end{align*}
As $\alpha_n\to\alpha$ and $\beta_n\to\beta$, 
\[ \limsup_{n\to\infty} \left(\log\left(\frac{\beta_n^2}{\alpha_n^2}\right) \frac{4}{b_{\alpha_n,\beta_n}^2\beta_n^2 L_1^\rho} \sum_{j\geq M} 2^{-j}\right) \]
converges to zero for $M\to\infty$. Hence, as $M=\log_2(K)$, (1B) follows from this and~\eqref{eq_proof:1B}.
\end{proof}

\begin{proof}[Proof of \eqref{eq_proof:2B}]
By using Tanaka's formula and It\^{o}'s isometry twice, we obtain
\begin{align}\label{eq_proof:2B_second_moment_local_time}
\begin{split}
\E\left[ \left( L_1^{\rho_n}\right)^2\right] &\leq 2\E\left[ |X_1^{\rho_n,\alpha_n,\beta_n}-\rho_n|^2\right] + 2\E\left[ \left(\int_0^1 \mathrm{sgn}((X_s^{\rho_n,\alpha_n,\beta_n}-\rho_n)-) dX_s^{\rho_n,\alpha_n,\beta_n}\right)^2\right] \\
&\leq 2\E\left[\left( \int_0^1 \sigma_{\rho_n,\alpha_n,\beta_n}(X_2^{\rho_n,\alpha_n,\beta_n}) dW_s\right)^2\right] + 2\E\left[ \int_0^1 d\langle X^{\rho_n,\alpha_n,\beta_n}\rangle_s\right] \\
&\leq 2\E\left[ \int_0^1 \sigma_{\rho_n,\alpha_n,\beta_n}^2(X_2^{\rho_n,\alpha_n,\beta_n}) ds\right] + 2\max\{\alpha_n^2,\beta_n^2\} \\
&\leq 4\max\{\alpha_n^2,\beta_n^2\}.
\end{split}
\end{align} 
As $\alpha_n\rightarrow\alpha$ and $\beta_n\rightarrow\beta$, we can conlude that $(L_1^{\rho_n})_{n\in\N}$ is uniformly integrable. We now define the compensated Poisson process
\[ \overline{N}(t) = N_t - t\]
Then, because the Poisson processes $N$ and $N'$ are fixed in the definition of $M^{\rho_n,\alpha_n,\beta_n}$, by the above uniform integrability and Lemma~\ref{lemma:approx_local_time}, 
\begin{align*}
\E\left[\left| \overline{N}\left(\frac{|z|L_1^{\rho_n}}{\beta_n^2}\right) - \overline{N}\left(\frac{|z|L_1^\rho}{\beta^2}\right)\right|\right] &\leq \E\left[\E\left[\left. \left( \overline{N}\left(\frac{|z|L_1^{\rho_n}}{\beta_n^2}\right) - \overline{N}\left(\frac{|z|L_1^\rho}{\beta^2}\right)\right)^2 \right| L_1^{\rho_n},L_1^{\rho}\right] \right]^\frac12 \\
&= |z|\E\left[\left|\frac{L_1^{\rho_n}}{\beta_n^2} - \frac{L_1^\rho}{\beta^2}\right|\right]^\frac12 \stackrel{n\to\infty}{\longrightarrow} 0.
\end{align*} 
A similar argument works for $N'$ such that convergence of finite dimensional distributions in~\eqref{eq_proof:2B} follows, i.e. $(13.11)$ in Theorem~$13.5$ in~\cite{Billingsley_2}. By the same reasoning, condition~$(13.12)$ of this result also holds true. Tightness in the Skorohod space then follows by additionally checking the moment condition in~$(13.14)$ in the same reference, i.e. we prove for $z_1\leq z_2\leq z_3$
\[ \E\left[ \left(M^{\rho_n,\alpha_n,\beta_n}(z_1)-M^{\rho_n,\alpha_n,\beta_n}(z_2)\right)^2 \left(M^{\rho_n,\alpha_n,\beta_n}(z_2)-M^{\rho_n,\alpha_n,\beta_n}(z_3)\right)^2\right] \leq C|z_1-z_3|^2. \]
Once this is done, \eqref{eq_proof:2B} follows by Theorem~$13.5$ in~\cite{Billingsley_2}. In case $0<z_1$, we have
\begin{align*}
&\E\left[ \left(M^{\rho_n,\alpha_n,\beta_n}(z_1)-M^{\rho_n,\alpha_n,\beta_n}(z_2)\right)^2 \left(M^{\rho_n,\alpha_n,\beta_n}(z_2)-M^{\rho_n,\alpha_n,\beta_n}(z_3)\right)^2\right] \\
&\hspace{0.2cm} = \log\left(\frac{\beta_n^2}{\alpha_n^2}\right)^4 \E\left[\left( \overline{N}(z_1 L_1^{\rho_n}\beta_n^{-2}) - \overline{N}(z_2 L_1^{\rho_n}\beta_n^{-2})\right)^2 \left(\overline{N}(z_2 L_1^{\rho_n}\beta_n^{-2})-\overline{N}(z_3 L_1^{\rho_n}\beta_n^{-2})\right)^2 \right] \\
&\hspace{0.2cm} = \log\left(\frac{\beta_n^2}{\alpha_n^2}\right)^4 \E\left[ \E\left[\left.\left( \overline{N}(z_1 L_1^{\rho_n}\beta_n^{-2}) - \overline{N}(z_2 L_1^{\rho_n}\beta_n^{-2})\right)^2\right| L_1^{\rho_n}\right] \right. \\
&\hspace{4cm} \left. \cdot \E\left[\left.\left( \overline{N}(z_2 L_1^{\rho_n}\beta_n^{-2}) - \overline{N}(z_3 L_1^{\rho_n}\beta_n^{-2})\right)^2\right| L_1^{\rho_n}\right] \right] \\
&\hspace{0.2cm} = \log\left(\frac{\beta_n^2}{\alpha_n^2}\right)^4 \beta_n^{-4} (z_2-z_1)(z_3-z_2) \E\left[ \left( L_1^{\rho_n}\right)^2\right] \leq C(\rho,\alpha,\beta) |z_3-z_1|^2,
\end{align*}
where the second step uses conditional independence of the increments given $L_1^{\rho_n}$, the third one that $N$ is a Poisson process given $L_1^{\rho_n}$ and the last estimate is valid sind $\alpha_n\to\alpha$, $\beta_n\to\beta$ and $\rho_n\to\rho$ together with~\eqref{eq_proof:2B_second_moment_local_time}
Now assume $z_1<0$ and $0\leq z_2\leq z_3$. Then using $\overline{N}(0)=\overline{N}'(0)=0$ and independence of $N,N'$ given $L_1^{\rho_n}$,
\begin{align*}
&\E\left[ \left(M^{\rho_n,\alpha_n,\beta_n}(z_1)- M^{\rho_n,\alpha_n,\beta_n}(z_2)\right)^2 \left(M^{\rho_n,\alpha_n,\beta_n}(z_2)-M^{\rho_n,\alpha_n,\beta_n}(z_3)\right)^2\right] \\
&\hspace{0.2cm} = \E\left[ \left( \log\left(\frac{\alpha_n^2}{\beta_n^2}\right)^2\E\left[\left. \overline{N}'(z_1L_1^{\rho_n} \alpha_n^{-2})^2 \right| L_1^{\rho_n}\right] + \log\left(\frac{\beta_n^2}{\alpha_n^2}\right)^2\E\left[\left. \overline{N}(z_2 L_1^{\rho_n} \beta_n^{-2})^2 \right| L_1^{\rho_n}\right] \right) \right. \\
&\hspace{2cm} \left. \cdot \log\left(\frac{\beta_n^2}{\alpha_n^2}\right)^2  \E\left[\left.\left( \overline{N}(z_2 L_1^{\rho_n}\beta_n^{-2}) - \overline{N}(z_3 L_1^{\rho_n}\beta_n^{-2})\right)^2\right| L_1^{\rho_n}\right] \right] \\
&\hspace{0.2cm} = \E\left[ \left(\log\left(\frac{\alpha_n^2}{\beta_n^2}\right)^2 |z_1| L_1^{\rho_n} \alpha_n^{-2} + \log\left(\frac{\beta_n^2}{\alpha_n^2}\right)^2 z_2 L_1^{\rho_n}\beta_n^{-2}\right) \log\left(\frac{\beta_n^2}{\alpha_n^2}\right)^2 (z_3-z_2)L_1^{\rho_n}\beta_n^{-2}\right] \\
&\hspace{0.2cm}\leq C(\rho,\alpha,\beta) |z_3-z_1|^2.
\end{align*}
Here, the last step works the same as before and uses $|z_1|,|z_2|\leq |z_3-z_1|$. All other cases, i.e. $z_1,z_2<0, z_3\geq 0$ and $z_3<0$ follow analogously to the two cases considered.
\end{proof}

\begin{proof}[Proof of Lemma~\ref{lemma:approx_local_time}]
By Tanaka's formula and the triangle inequality,
\begin{align*}
&\left|L_1^{\rho_n}(X^{\rho_n,\alpha_n,\beta_n}) - L_1^\rho(X^{\rho,\alpha,\beta})\right|\\
&\hspace{0.5cm} \leq \left| \int_0^1 \mathrm{sgn}((X_s^{\rho_n,\alpha_n,\beta_n}-\rho_n)-) dX_s^{\rho_n,\alpha_n,\beta_n} - \int_0^1\mathrm{sgn}((X_s^{\rho,\alpha,\beta}-\rho) dX_s^{\rho,\alpha,\beta}\right| \\
&\hspace{1.5cm} + \left| |X_1^{\rho_n,\alpha_n,\beta_n}-\rho_n| - |X_1^{\rho,\alpha,\beta} -\rho|\right|.
\end{align*}
The second summand can be bounded as
\[ \left| |X_1^{\rho_n,\alpha_n,\beta_n}-\rho_n| - |X_1^{\rho,\alpha,\beta} -\rho|\right| \leq \left| X_1^{\rho_n,\alpha_n,\beta_n}-X_1^{\rho,\alpha,\beta}\right| + \left|\rho_n-\rho\right|.\]
Here, the second summand clearly converges to zero, whereas this follows for the first one, since $\sup_{s\leq 1}|X_s^{\rho_n,\alpha_n,\beta_n}-X_s^{\rho,\alpha,\beta}|\rightarrow_\Pr 0$ by Theorem~$1.5$ in~\cite{LeGall}. By Lemma~$18.12$ in~\cite{Kallenberg}, the convergence in probability of the stochastic integrals follows once we establish the convergence $\langle X^{\rho_n,\alpha_n,\beta_n}\rangle_1 \longrightarrow_\Pr \langle X^{\rho,\alpha,\beta}\rangle_1$ of the respective quadratic variations. By definition, we have
\begin{align*}
\left|\langle X^{\rho_n,\alpha_n,\beta_n}\rangle_1-\langle X^{\rho,\alpha,\beta}\rangle_1 \right| &= \left| \int_0^1 \sigma_{\rho_n,\alpha_n,\beta_n}^2(X_s^{\rho_n,\alpha_n,\beta_n}) ds - \int_0^1 \sigma_{\rho,\alpha,\beta}^2 (X_s^{\rho,\alpha,\beta}) ds\right| \\
&\leq \alpha^2\int_0^1\left| \1_{(-\infty,\rho_n)}(X_s^{\rho_n,\alpha_n,\beta_n}) - \1_{(-\infty,\rho)}(X_s^{\rho,\alpha,\beta})\right| ds \\
&\hspace{0.5cm} + \beta^2\int_0^1\left| \1_{[\rho_n,\infty)}(X_s^{\rho_n,\alpha_n,\beta_n}) - \1_{[\rho,\infty)}(X_s^{\rho,\alpha,\beta})\right| ds \\
&\hspace{0.5cm} + \left| \alpha_n^2-\alpha^2\right| + \left|\beta_n^2-\beta^2\right|.
\end{align*}
Clearly, both summands in the last line converge to zero. We now prove this for the first summand on the right-hand side, the second one can be dealt with analogously. To this aim, let $\epsilon>0$ and $n$ large enough such that for $A_n:=\{\sup_{s\leq 1}|X_s^{\rho_n,\alpha_n,\beta_n}-X_s^{\rho,\alpha,\beta}| \leq \epsilon\}$ we have $\Pr(A_n)\geq 1-\epsilon$. On this set, we have
\begin{align*}
&\1_{A_n} \int_0^1\left| \1_{(-\infty,\rho_n)}(X_s^{\rho_n,\alpha_n,\beta_n}) - \1_{(-\infty,\rho)}(X_s^{\rho,\alpha,\beta})\right| ds \\
&\hspace{1cm} \leq \1_{A_n}\int_0^1\left| \1_{(-\infty,\rho_n)}(X_s^{\rho_n,\alpha_n,\beta_n}) - \1_{(-\infty,\rho_n)}(X_s^{\rho,\alpha,\beta})\right| ds \\
&\hspace{2cm} + \1_{A_n}\int_0^1\left| \1_{(-\infty,\rho_n)}(X_s^{\rho,\alpha,\beta}) - \1_{(-\infty,\rho)}(X_s^{\rho,\alpha,\beta})\right| ds \\
&\hspace{1cm} \leq \int_0^1 \1_{[\rho_n-\epsilon,\rho_n+\epsilon]}(X_s^{\rho,\alpha,\beta}) ds + \int_0^1\left| \1_{(-\infty,\rho_n)}(X_s^{\rho,\alpha,\beta}) - \1_{(-\infty,\rho)}(X_s^{\rho,\alpha,\beta})\right| ds \\
&\hspace{1cm} \leq \frac{1}{\min\{\alpha^2,\beta^2\}}\left( 2\epsilon + |\rho_n-\rho|\right) \sup_{\rho-|\rho-\rho_n|\leq y\leq \rho+|\rho-\rho_n|} L_1^y(X^{\rho,\alpha,\beta}).
\end{align*}
Now, the claim follows by tightness of $\sup_{\rho-|\rho-\rho_n|\leq y\leq \rho+|\rho-\rho_n|} L_1^y(X^{\rho,\alpha,\beta})$, which in turn is a consequence of the fact that $L_1^\bullet(X^{\rho,\alpha,\beta})$ is continuous by Corollary~$1.8$ in~\cite{Revuz/Yor} and thus attains its maximum and minimum on compact sets.
\end{proof}

\section{Proof of Proposition~\ref{prop:rate_of_triple_MLE}}\label{Section:Proof_sketch_triple}

For the proof of Proposition~\ref{prop:rate_of_triple_MLE} recall the normalized log-likelihood function $\ell_{n,\gamma}(\theta_\rho,\theta_\alpha,\theta_\beta)$ from~\eqref{eq:log-likelihood_triple} and its decomposition~\eqref{eq:decomposition_joint_ell} into two parts, where the first one equals $\ell_{n,\gamma}^{\alpha_0,\beta_0}(\theta_\rho)$ given in~\eqref{eq:def_ell_rho} and only depends on $\theta_\rho$. Let $\epsilon>0$, the set $E^\gamma$ be defined as
\begin{align}\label{eq:event_Egamma}
E^\gamma = \begin{cases} \{L_1^{\rho_0}(X)>0\}, &\textrm{ if } \gamma=1, \\ \Omega, &\textrm{ else},\end{cases}
\end{align} 
and $B_n$ be an event with $\Pr_{\rho_0,\alpha_0,\beta_0}(B_n)\geq 1-\epsilon$ for $n$ large enough that will be specified later and consists of paths with typical behavior. In the rest of this section, we tacitly assume $(\theta_\rho,\theta_\alpha,\theta_\beta)\in \R\times (-\alpha_0,\infty)\times (-\beta_0,\infty)$. The statement of Proposition~\ref{prop:rate_of_triple_MLE} will follow once we show that
\[ \Pr_{\rho_0,\alpha_0,\beta_0}\left( n^{(1+\gamma)/2}|\hat{\rho}_n-\rho_0| >K_1, \sqrt{n}\max\{|\hat{\alpha}_n-\alpha_0|,|\hat{\beta}_n-\beta_0|\} >K_2,  E^\gamma \right)\]
can be bounded by a multiple of $\epsilon$ for $K_1,K_2$ large enough. As $\ell_{n,\gamma}(0,0,0)=0$, this probability is bounded by
\begin{align*}
&\Pr_{\rho_0,\alpha_0,\beta_0}\left( \sup_{n^{(1+\gamma)/2}|\theta_\rho|>K_1,\sqrt{n}\max\{|\theta_\alpha|, |\theta_\beta|\}>K_2} \ell_{n,\gamma}(\theta_\rho,\theta_\alpha,\theta_\beta)\geq 0, B_n\cap E^\gamma\right) + \epsilon \\
&\hspace{0.5cm} \leq \sum_{j=1}^3 \Pr_{\rho_0,\alpha_0,\beta_0}\left( \sup_{(\theta_\rho,\theta_\alpha,\theta_\beta)\in \mathcal{T}_{n,\gamma}^j(K)} \left( \ell_{n,\gamma}^1(\theta_\rho) + \ell_{n,\gamma}^2(\theta_\rho,\theta_\alpha,\theta_\beta)\right) \geq 0, B_n\cap E^\gamma\right) +\epsilon,
\end{align*}
where $K=(K_1,K_2,K_3)\in\R_{\geq 0}^3$ and the three index sets for the parameter vector $(\theta_\rho,\theta_\alpha,\theta_\beta)$ are given as
\begin{align*}
\mathcal{T}_{n,\gamma}^1(K) &:= \left\{(\theta_\rho,\theta_\alpha,\theta_\beta): \max\{|\theta_\alpha|,|\theta_\beta|\}\leq K_2/\sqrt{n}, K_1 n^{-(1+\gamma)/2}<|\theta_\rho|<K_3n^{-\gamma/2}\right\} ,\\
\mathcal{T}_{n,\gamma}^2(K) &:= \left\{(\theta_\rho,\theta_\alpha,\theta_\beta): \max\{|\theta_\alpha|,|\theta_\beta|\}> K_2/\sqrt{n}, |\theta_\rho|\leq K_3n^{-\gamma/2}\right\} ,\\
\mathcal{T}_{n,\gamma}^3(K) &:= \left\{(\theta_\rho,\theta_\alpha,\theta_\beta): \theta_\rho > K_3n^{-\gamma/2} \right\}.
\end{align*}
In Subsection~\ref{Subsection:tripleMLE_1} in Appendix~\ref{Section:triple_details} we then prove that $K_3$ can be chosen large enough such that the corresponding probability is bounded by $\epsilon$, then for this $K_3$ we construct the corresponding $K_2(K_3)$ to bound the probability with $\mathcal{T}_{n,\gamma}^2(K)$ by $\epsilon$ and finally for given values $K_2,K_3$ we choose $K_1(K_2,K_3)$ appropriately, i.e. the strategy is
\begin{align*}
\textrm{choose } K_3\ \longrightarrow \textrm{ choose } K_2=K_2(K_3)\ \longrightarrow \textrm{ choose } K_1=K_1(K_2,K_3).
\end{align*}
As can be expected by the definition of these sets, the term $\ell_{n,\gamma}^1$ will dominate on the set $\mathcal{T}_{n,\gamma}^1$ and $\ell_{n,\gamma}^2$ turns out to be negligible on this set. On $\mathcal{T}_{n,\gamma}^2$ this swaps and $\ell_{n,\gamma}^2$ dominates $\ell_{n,\gamma}^1$. On the remaining set $\mathcal{T}_{n,\gamma}^3$, however, there exists a subtle interplay of both of them that is described later. A crucial step to formally establish the aforementioned claims is to split both likelihood functions into a dominating term that can be understood reasonably well and a remainder (the form of this decomposition varies with $\mathcal{T}_{n,\gamma}^j$). Similar to the consistency proof under the assumption of known $\alpha_0,\beta_0$ this is done on a set ensuring typical behavior of the paths of $X$. Note that this reasoning applies both for the high-frequency scheme $\gamma=1$ and all intermediate regimes $\gamma\in (0,1)$. In particular, we set
\begin{align}\label{eq_proof:setAn}
\begin{split}
A_n &= \left\{ -\Gamma \sqrt{n\Delta_n^\gamma} < \underline{X}_{n\Delta_n^\gamma},\overline{X}_{n\Delta_n^\gamma}< \Gamma \sqrt{n\Delta_n^\gamma} \right\} \cap \left\{ \sup_{|t-s|<\Delta_n^\gamma} |X_t - X_s| \leq (\Delta_n^\gamma)^{4/9} \right\}\\
&\hspace{1.5cm}  \cap  \left\{\inf_{y\in [\rho_0-\zeta\sqrt{n\Delta_n^\gamma}, \rho_0+\zeta\sqrt{n\Delta_n^\gamma}]} L_{n\Delta_n^\gamma}^y(X) >\sqrt{n\Delta_n^\gamma} \xi\right\},
\end{split}
\end{align}
where the constants $\Gamma,\xi,\zeta$ are chosen such that $\Pr_{\rho_0,\alpha_0,\beta_0}(A_n)>1-\epsilon$ for $n\geq n_0$ large enough. This is possible for $\Gamma$ by the Burkholder-Davis-Gundy inequality and follows for $\xi,\zeta$ by Lemma~\ref{lemma:probability_A3}, whereas it is a consequence of Theorem~$1$ in~\cite{Fischer/Nappo} for the second set in this intersection. On the set $A_n$ we have the following result that allows to estimate the number of $X_{(k-1)\Delta_n^\gamma}$ falling into a given interval.

\begin{lemma}\label{lemma:estimate_counts_interval}
Let $-\zeta\sqrt{n\Delta_n^\gamma}\leq a,b\leq \zeta\sqrt{n\Delta_n^\gamma}$ such that $\rho_0-a\leq \rho_0+b$ and $a+b>3(\Delta_n^\gamma)^{4/9}$. Then,
\[ \1_{A_n} \sum_{k=1}^n \1_{\{\rho_0-a< X_{(k-1)\Delta_n^\gamma}<\rho_0+b\}} \geq \1_{A_n}\frac{\xi(a+b)}{\max\{\alpha_0^2,\beta_0^2\}} \sqrt{\frac{n}{\Delta_n^\gamma}}\]
\end{lemma}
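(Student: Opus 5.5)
We compare the discrete count with the occupation time of a slightly shrunk interval and then use the occupation times formula together with the local time lower bound built into $A_n$. Throughout we work on $A_n$, write $\Delta=\Delta_n^\gamma$ and $\delta=\Delta^{4/9}$, and set $J=(\rho_0-a+\delta,\rho_0+b-\delta)$. This interval is non-empty, with length $a+b-2\delta>(a+b)/3$, because $a+b>3\delta$.

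\textbf{Step 1 (discretisation).} Fix $k\in\{1,\dots,n\}$ and suppose $X_t\in J$ for some $t\in[(k-1)\Delta,k\Delta)$. The modulus-of-continuity part of $A_n$ gives $|X_{(k-1)\Delta}-X_t|\le\delta$, so $\rho_0-a<X_{(k-1)\Delta}<\rho_0+b$. Hence the time spent in $J$ during each step is at most $\Delta$ times the indicator of the $k$-th observation lying in $(\rho_0-a,\rho_0+b)$. Summing over $k$,
\[
\Delta\sum_{k=1}^n \1_{\{\rho_0-a<X_{(k-1)\Delta}<\rho_0+b\}}\ \ge\ \int_0^{n\Delta}\1_J(X_s)\,ds .
\]

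\textbf{Step 2 (occupation times formula).} Since $d\langle X\rangle_s=\sigma^2_{\rho_0,\alpha_0,\beta_0}(X_s)\,ds\le \max\{\alpha_0^2,\beta_0^2\}\,ds$, the occupation times formula with the normalisation in \eqref{eq:local_time} gives
\[
\int_0^{n\Delta}\1_J(X_s)\,ds\ \ge\ \frac{1}{\max\{\alpha_0^2,\beta_0^2\}}\int_0^{n\Delta}\1_J(X_s)\,d\langle X\rangle_s\ =\ \frac{c}{\max\{\alpha_0^2,\beta_0^2\}}\int_J L^y_{n\Delta}(X)\,dy ,
\]
where $c$ is $1$ or $2$ depending on the local time convention; either value only helps the bound.

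\textbf{Step 3 (local time bound).} We have $J\subset[\rho_0-\zeta\sqrt{n\Delta},\rho_0+\zeta\sqrt{n\Delta}]$ because $|a|,|b|\le\zeta\sqrt{n\Delta}$. The last set in $A_n$ then gives $L^y_{n\Delta}(X)>\xi\sqrt{n\Delta}$ for all $y\in J$, so
\[
\int_J L^y_{n\Delta}(X)\,dy\ \ge\ |J|\,\xi\sqrt{n\Delta}.
\]
Dividing the chain of inequalities from Steps 1--3 by $\Delta$ yields
\[
\sum_{k=1}^n \1_{\{\rho_0-a<X_{(k-1)\Delta}<\rho_0+b\}}\ \ge\ \frac{\xi\,|J|}{\max\{\alpha_0^2,\beta_0^2\}}\sqrt{\frac{n}{\Delta}} .
\]

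\textbf{Main obstacle: the constant.} Steps 1--3 produce $|J|=a+b-2\delta$, while the statement has $a+b$. The hypothesis $a+b>3\delta$ only gives $a+b-2\delta>(a+b)/3$, so as set up the argument yields the claimed bound with $\xi/3$ in place of $\xi$. Two routes recover the exact constant:
\begin{itemize}
\item use the factor $2$ from the symmetric local time convention in Step 2 to absorb part of the loss;
\item shrink $\xi$ in the definition of $A_n$ from the start, which Lemma~\ref{lemma:probability_A3} permits, since any smaller $\xi$ still gives $\Pr(A_n)>1-\epsilon$.
\end{itemize}
Since the lemma is only used for order-of-magnitude lower bounds, a constant-factor adjustment of this kind is harmless.
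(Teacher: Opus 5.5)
Your Steps 1--3 are exactly the paper's proof. Both shrink the interval by $(\Delta_n^\gamma)^{4/9}$ at each end and use the modulus-of-continuity part of $A_n$ to bound the occupation time of the shrunk interval by $\Delta_n^\gamma$ times the count. Both then apply the occupation times formula with $\sigma^2_{\rho_0,\alpha_0,\beta_0}\le\max\{\alpha_0^2,\beta_0^2\}$ and the local time lower bound contained in $A_n$.

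Your diagnosis of the constant is correct, and the paper has the same slip. It ends with $\xi(a+b-2(\Delta_n^\gamma)^{4/9})$ and asserts that the claim follows from $a+b>3(\Delta_n^\gamma)^{4/9}$, which only yields $\xi(a+b)/3$.

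However, neither of your two remedies recovers the stated constant.
\begin{itemize}
\item With the normalisation $\frac{1}{2\epsilon}\int\1_{(z-\epsilon,z+\epsilon)}$ of \eqref{eq:local_time}, the occupation times formula holds with $c=1$, so there is no factor $2$ to spend. Even $c=2$ would require $a+b\geq 4(\Delta_n^\gamma)^{4/9}$, which the hypothesis does not give.
\item Shrinking $\xi$ in $A_n$ is backwards. The same $\xi$ sits on the right-hand side of the lemma, so the factor-$3$ loss persists.
\end{itemize}

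The correct repair is to decouple the two constants: prove the inequality with $\xi/3$ on the right-hand side. Equivalently, put the threshold $3\xi\sqrt{n\Delta_n^\gamma}$ into $A_n$, taking $3\xi$ to be the constant furnished by Lemma~\ref{lemma:probability_A3}. As you say, this is harmless. Every downstream use needs only some positive constant in front of $(a+b)\sqrt{n/\Delta_n^\gamma}$: the constants $C_2$, $C_4$, $C_9$, and the $n^{3/4}$ lower bound on the count in the step $m=2$.
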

\begin{proof}
As $X_s<\rho_0- (\Delta_n^\gamma)^{4/9}$ for some $s\in [(k-1)\Delta_n^\gamma,k\Delta_n^\gamma]$ implies $X_{(k-1)\Delta_n^\gamma}<\rho_0$ on $A_n$, we obtain using the occupation times formula 
\begin{align*}
\1_{A_n} \sum_{k=1}^n \1_{\{\rho_0-a< X_{(k-1)\Delta_n^\gamma}<\rho_0+b\}} & \geq  \1_{A_n} \frac{1}{\Delta_n^\gamma} \int_0^{n\Delta_n^\gamma} \1_{(\rho_0-a+(\Delta_n^\gamma)^{4/9}, \rho_0+b -(\Delta_n^\gamma)^{4/9}]}(X_s) ds \\
&\geq \frac{1}{\max\{\alpha_0^2,\beta_0^2\}}\1_{A_n} \frac{1}{\Delta_n^\gamma} \int_{\rho_0-a+(\Delta_n^\gamma)^{4/9}}^{\rho_0+b-(\Delta_n^\gamma)^{4/9}} L_{n\Delta_n^\gamma}^y(X) dy \\
&\geq \frac{1}{\max\{\alpha_0^2,\beta_0^2\}}\1_{A_n} \frac{1}{\Delta_n^\gamma} \xi \sqrt{n\Delta_n^\gamma} (a+b -2(\Delta_n^\gamma)^{4/9})
\end{align*}
and the claim follows by the assumption $a+b>3(\Delta_n^\gamma)^{4/9}$. 
\end{proof}

In what follows, we describe the decomposition of both $\ell_{n,\gamma}^1$ and $\ell_{n,\gamma}^2$ into leading terms and remainders. To this aim, let
\begin{align}\label{def:Theta_j} 
\Theta_{n,\gamma}^1 = \left[ 0, n^{1/4-\gamma/2}\right] \quad \textrm{ and }\quad \Theta_{n,\gamma}^2 = \left( n^{1/4-\gamma/2},\infty\right).
\end{align}
According to straightforward modification of Lemma~\ref{lemma:bound_L_sqrt(n)} for $\gamma\in (0,1)$ and Lemma~$4.1$ in~\cite{Brutsche/Rohde} for $\gamma=1$ (see Remark~\ref{Remark_LK=0}), there exists a sequence $(B_n)_{n\in\N}$ with $A_n\subset B_n$ and $\Pr_{\rho_0,\alpha_0,\beta_0}(B_n^c)<\epsilon$ for which we have
\begin{align}\label{eq:decomposition_ell_1}
\ell_{n,\gamma}^1(\theta_\rho)\1_{B_n} = \left[\log\left(\frac{\beta_0}{\alpha_0}\right) - \frac12\left(\frac{\beta_0^2}{\alpha_0^2}-1\right)\right]\1_{B_n}  \sum_{k=1}^n \1_{\{\rho_0\leq X_{(k-1)\Delta_n^\gamma} \leq \rho_0+\theta_\rho\}} + R_{n,\gamma}^1(\theta_\rho),
\end{align}
where 
\begin{align}\label{eq_proof:order_remainder_ell1}
 \sup_{\theta_\rho\in\Theta_{n,\gamma}^j} \left| R_{n,\gamma}^1(\theta_\rho)\right]\1_{B_n} \leq C_1 n^{1/2+\gamma(1-j)/6}\1_{B_n}.
\end{align} 
For $\ell_{n,\gamma}^2$, we introduce the decomposition
\begin{align}\label{eq:decomposition_ell_2}
\ell_{n,\gamma}^2(\theta_\rho,\theta_\alpha,\theta_\beta) = H_{n,\gamma}^{\leq}(\theta_\rho,\theta_\alpha) + H_{n,\gamma}^{>}(\theta_\rho,\theta_\beta)+ R_{n,\gamma}^{\leq}(\theta_\rho,\theta_\alpha,\theta_\beta)+ R_{n,\gamma}^{>}(\theta_\rho,\theta_\alpha,\theta_\beta),
\end{align} 
where the leading terms $H_{n,\gamma}^{\leq}, H_{n,\gamma}^{>}$ are given as
\begin{align*}
H_{n,\gamma}^{\leq}(\theta_\rho,\theta_\alpha) &:= \sum_{k=1}^n \left[\log\left(\frac{\alpha_0}{\alpha_0+\theta_\alpha}\right) -\frac12\left(\frac{\alpha_0^2}{(\alpha_0+\theta_\alpha)^2}-1\right)\frac{(X_{k\Delta_n^\gamma}-X_{(k-1)\Delta_n^\gamma})^2}{\alpha_0^2\Delta_n^\gamma}\right]\\
&\hspace{2cm} \cdot \1_{\{X_{(k-1)\Delta_n^\gamma} \leq \rho_0+\theta_\rho\}} \\
H_{n,\gamma}^{>}(\theta_\rho,\theta_\beta) &:= \sum_{k=1}^n \left[\log\left(\frac{\beta_0}{\beta_0+\theta_\beta}\right) -\frac12\left(\frac{\beta_0^2}{(\beta_0+\theta_\beta)^2}-1\right)\frac{(X_{k\Delta_n^\gamma}-X_{(k-1)\Delta_n^\gamma})^2}{\beta_0^2\Delta_n^\gamma}\right] \\
&\hspace{2cm} \cdot\1_{\{X_{(k-1)\Delta_n^\gamma} > \rho_0+\theta_\rho\}} 
\end{align*} 
and the remainder terms $R_{n,\gamma}^{\leq}, R_{n,\gamma}^{>}$ as
\begin{align*}
R_{n,\gamma}^{\leq}(\theta_\rho,\theta_\alpha,\theta_\beta) &:= \sum_{k=1}^n \left( \ell_n^2(\theta_\rho,\theta_\alpha,\theta_\beta) - H_{n,\gamma}^{\leq}(\theta_\rho,\theta_\alpha,\theta_\beta)\right) \1_{\{X_{(k-1)\Delta_n^\gamma\leq \rho_0+\theta_\rho}\}} \\
R_{n,\gamma}^{>}(\theta_\rho,\theta_\alpha,\theta_\beta) &:= \sum_{k=1}^n \left( \ell_n^2(\theta_\rho,\theta_\alpha,\theta_\beta) - H_{n,\gamma}^{>}(\theta_\rho,\theta_\alpha,\theta_\beta)\right) \1_{\{X_{(k-1)\Delta_n^\gamma> \rho_0+\theta_\rho}\}}.
\end{align*}
Heuristically, for the squared increments we have $(X_{k\Delta_n^\gamma}-X_{(k-1)\Delta_n^\gamma})^2\approx \alpha_0^2\Delta_n^\gamma$ in case $X_{(k-1)\Delta_n^\gamma},X_{k\Delta_n^\gamma}\ll\rho_0$ and analogously $(X_{k\Delta_n^\gamma}-X_{(k-1)\Delta_n^\gamma})^2\approx \beta_0^2\Delta_n^\gamma$ for observations $X_{(k-1)\Delta_n^\gamma},X_{k\Delta_n^\gamma}\gg\rho_0$. Hence, for $\theta_\rho\geq 0$, the terms $H_n^{\leq}(\theta_\rho,\theta_\alpha)$ and $H_n^{>}(\theta_\rho,\theta_\beta)$ will often be substituted with their counterparts
\begin{align*}
\overline{H}_{n,\gamma}^{\leq} (\theta_\rho,\theta_\alpha) & :=  \left[\log\left(\frac{\alpha_0}{\alpha_0+\theta_\alpha}\right) -\frac12\left(\frac{\alpha_0^2}{(\alpha_0+\theta_\alpha)^2}-1\right)\right] \sum_{k=1}^n\1_{\{X_{(k-1)\Delta_n^\gamma} \leq \rho_0\}} \\
&\hspace{0.5cm} +  \left[\log\left(\frac{\alpha_0}{\alpha_0+\theta_\alpha}\right) -\frac12\left(\frac{\alpha_0^2}{(\alpha_0+\theta_\alpha)^2}-1\right)\frac{\beta_0^2}{\alpha_0^2}\right] \sum_{k=1}^n\1_{\{\rho_0 <X_{(k-1)\Delta_n^\gamma} \leq \rho_0+\theta_\rho\}}\\
\overline{H}_{n,\gamma}^{>}(\theta_\rho,\theta_\beta)& := \left[\log\left(\frac{\beta_0}{\beta_0+\theta_\beta}\right) -\frac12\left(\frac{\beta_0^2}{(\beta_0+\theta_\beta)^2}-1\right)\right] \sum_{k=1}^n \1_{\{X_{(k-1)\Delta_n^\gamma}> \rho_0+\theta_\rho\}}.
\end{align*} 
Note that the first and third square bracket are non-positive and equal to zero if and only if $\theta_\alpha=0$ and $\theta_\beta=0$, respectively. Moreover, a second order Taylor expansion of the function $f(x)=\log(x)-(x^2-1)/2$ gives $f(x) = -(1-\xi^{-2})(x-1)^2$ for some intermediate value $\xi$ between $1$ and $x$. In case the sums counting the cases where $X_{(k-1)\Delta_n^\gamma}\leq\rho_0$ and that where $X_{(k-1)\Delta_n^\gamma}>\rho_0+\theta_\rho$ are both of order $n$, this means that both leading terms are of constant order for $\theta_\alpha,\theta_\beta = K_2/\sqrt{n}$ and negative. For $\theta_\rho$ not being too small, this order $n$ is given for both summands by Lemma~\ref{lemma:estimate_counts_interval} and the number of $k$ with $\rho_0<X_{(k-1)\Delta_n^\gamma}\leq\rho_0+\theta_\rho$ turns out to be small, which together is the reason why $\ell_{n,\gamma}^2$ is the dominant term on the set $\mathcal{T}_{n,\gamma}^2$. Note in particular, that a negative contribution emerges if either $|\theta_\alpha|>K_2/\sqrt{n}$ or $|\theta_\beta|>K_2/\sqrt{n}$. To formally give the proof in Appendix~\ref{Subsection:tripleMLE_1}, the next result will be used that specifies the error when switching by the different quantities as outlined above. Its proof is deferred to Appendix~\ref{Subsection:tripleMLE_2}.

\begin{lemma}\label{lemma:estimates_tripleMLE}
Let $\epsilon,L>0$, and $g(x) = |\log(x)| + \frac12| x^2-1|$. Then there exists $n_0=n_0(L)\in\N$, a constant $C=C(\epsilon,\alpha_0,\beta_0,L,\Gamma,\xi,\zeta)>0$ and a sequence of sets $(B_n)_{n\in\N}$ with $B_n\subset A_n$ and $\Pr_{\rho_0,\alpha_0,\beta_0}(B_n^c)<\epsilon$ for all $n\in\N$ such that for all $n\geq n_0$:
\begin{itemize}
\item[(a)] For small values of $\theta_\alpha,\theta_\beta$ we have the uniform estimates
\begin{align*}
&\sup_{|\theta_\alpha|\leq L/\sqrt{n}}\sup_{0\leq \theta_\rho\leq Ln^{-\gamma/2} } \left| H_{n,\gamma}^{\leq}(\theta_\rho,\theta_\alpha)\right|\1_{B_n} \leq CL\quad \textrm{ and } \\
&\sup_{|\theta_\beta|\leq L/\sqrt{n}}\sup_{0\leq \theta_\rho\leq Ln^{-\gamma/2} } \left| H_{n,\gamma}^{>}(\theta_\rho,\theta_\beta)\right|\1_{B_n} \leq CL.
\end{align*} 
\item[(b)] For small values of $\theta_\rho$, we have the uniform estimates
\begin{align*}
& \sup_{|\theta_\rho|\leq Ln^{-\gamma/2}}\left| H_{n,\gamma}^{\leq}(\theta_\rho,\theta_\alpha) - H_{n,\gamma}^{\leq}(0,\theta_\alpha)\right|\1_{B_n} \leq CL\sqrt{n} g\left(\frac{\alpha_0}{\alpha_0+\theta_\alpha}\right)   \quad\textrm{ and }\\
& \sup_{|\theta_\rho|\leq Ln^{-\gamma/2}}\left| H_{n,\gamma}^{>}(\theta_\rho,\theta_\beta) - H_{n,\gamma}^{>}(0,\theta_\beta)\right| \1_{B_n} \leq CL\sqrt{n} g\left(\frac{\beta_0}{\beta_0+\theta_\beta}\right).
\end{align*}
\item[(c)] For $\theta_\rho\geq 0$ we have the difference estimates
\begin{align*}
& \left| H_{n,\gamma}^{\leq}(\theta_\rho,\theta_\alpha) - \overline{H}_{n,\gamma}^{\leq}(\theta_\rho,\theta_\alpha)\right|\1_{B_n} \leq C\sqrt{n}g\left(\frac{\alpha_0}{\alpha_0+\theta_\alpha}\right)\quad\textrm{ and } \\
&\left| H_{n,\gamma}^{>}(\theta_\rho,\theta_\beta) - \overline{H}_{n,\gamma}^{>}(\theta_\rho,\theta_\beta)\right|\1_{B_n} \leq C\sqrt{n}g\left(\frac{\beta_0}{\beta_0+\theta_\beta}\right).
\end{align*}
\item[(d)] Let $R>0$ and $d_{\alpha_0,\beta_0}:=\log(2)-\log(1-|\alpha_0-\beta_0|/(\alpha_0+\beta_0))$. For $R_{n,\gamma}^{\leq}$, $R_{n,\gamma}^{>}$ we find function $T_{n,\gamma}^{\leq},T_{n,\gamma}^{>}$ and a constant $C(R)>0$ such that for $\star\in\{\leq,>\}$ and $n\geq n_0(R)$,
\begin{align*}
R_{n,\gamma}^{\star}(\theta_\rho,\theta_\alpha,\theta_\beta) &\leq C(R)\left( g\left(\frac{\alpha_0}{\alpha_0+\theta_\alpha}\right)+g\left(\frac{\beta_0}{\beta_0+\theta_\beta}\right) \right) T_{n,\gamma}^{\star}(\theta_\rho,\theta_\alpha,\theta_\beta) \\
&\hspace{1cm}+ nd_{\alpha_0,\beta_0}\left(\1_{\{\theta_\alpha>R\}} +\1_{\{\theta_\beta>R\}}\right),
\end{align*}
where the functions $T_{n,\gamma}^{\leq},T_{n,\gamma}^{>}$ satisfy the bound
\[  \sup_{\theta_\rho\in \Theta_{n,\gamma}^j} \sup_{\theta_\alpha,\theta_\beta} \left|T_{n,\gamma}^{\star}(\theta_\rho,\theta_\alpha,\theta_\beta)\right| \1_{B_n} \leq C n^{1/2+\gamma(1-j)/6} \1_{B_n}.\]
\end{itemize}
\end{lemma}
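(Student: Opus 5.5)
We will prove the four items of Lemma~\ref{lemma:estimates_tripleMLE} on a good set $B_n\subset A_n$. Besides the path regularity built into $A_n$ from \eqref{eq_proof:setAn}, $B_n$ will control the normalized squared increments
\[\eta_k:=\frac{(X_{k\Delta_n^\gamma}-X_{(k-1)\Delta_n^\gamma})^2}{\sigma_{\rho_0,\alpha_0,\beta_0}(X_{(k-1)\Delta_n^\gamma})^2\Delta_n^\gamma}.\]
The basic algebraic fact is the following. Each bracket in $H^{\leq}_{n,\gamma},H^{>}_{n,\gamma}$ has the form $\log(x)-\tfrac12(x^2-1)\eta$ with $x=\alpha_0/(\alpha_0+\theta_\alpha)$ or $x=\beta_0/(\beta_0+\theta_\beta)$. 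It can therefore be written as
\[f(x)-\tfrac12(x^2-1)(\eta-1), \qquad f(x)=\log x-\tfrac12(x^2-1),\]
and both $|f(x)|$ and $\tfrac12|x^2-1|$ are bounded by $g(x)$. Every estimate thus splits into two parts. The first is a counting term (a number of indices $k$ in some set, multiplied by $g$). The second is a martingale-type term $\sum_k(\eta_k-1)\1_{\{\cdot\}}$, again multiplied by $g$. The common factor $g(\cdot)$ then appears automatically in (b) and (c).

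For the martingale parts, we will first treat observations $X_{(k-1)\Delta_n^\gamma}$ at distance more than $(\Delta_n^\gamma)^{4/9}$ from $\rho_0$. On $A_n$ the path does not cross $\rho_0$ within such a step. Hence $\eta_k-1$ equals $\chi^2_1-1$ up to an exponentially small Gaussian correction coming from the reflection term in \eqref{eq:transition_density}. The sums $\sum_k(\eta_k-1)\1_{\{X_{(k-1)\Delta_n^\gamma}\le\rho_0+\theta_\rho\}}$ are martingales in $n$ for fixed $\theta_\rho$. We need uniformity in $\theta_\rho$ up to $Ln^{-\gamma/2}$. For this we will use a chaining or bracketing argument over a grid of mesh $n^{-1}$ in $\theta_\rho$, combined with Doob's and Bernstein's inequalities. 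This gives $\sup_{\theta_\rho}|\cdot|=O_\Pr(\sqrt n)$ off a set of probability $\epsilon$, and that set is added to $B_n^c$.

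Next we count the observations near $\rho_0$. Using the occupation times formula in the reverse direction of Lemma~\ref{lemma:estimate_counts_interval}, together with an upper bound on $\sup_y L^y_{n\Delta_n^\gamma}(X)\le \Gamma'\sqrt{n\Delta_n^\gamma}$ (valid with high probability by self-similarity, Lemma~\ref{lemma:self_similarity}), we get
\[\#\{k:X_{(k-1)\Delta_n^\gamma}\in[\rho_0-a,\rho_0+b]\}\lesssim (a+b+(\Delta_n^\gamma)^{4/9})\sqrt{n/\Delta_n^\gamma}.\]
With $a+b\le Ln^{-\gamma/2}$ this count is $O(L\sqrt n)$, which is what (b) needs. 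The boundary layer of width $(\Delta_n^\gamma)^{4/9}$ only contributes $o(\sqrt n)$.

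We now derive the four items. For (b), the difference of the $H$-terms at $\theta_\rho$ and at $0$ consists only of indices in the strip, so the count bound applies. For (c), we replace $\eta_k$ by $1$ below $\rho_0$, by $\beta_0^2/\alpha_0^2$ in the strip $(\rho_0,\rho_0+\theta_\rho]$, and by $1$ above. The error is the martingale sum bounded above, plus the near-$\rho_0$ crossing indices, which number $O(\sqrt n)$. For (a), we use the Taylor bound $|f(x)|\lesssim (x-1)^2\lesssim L^2/n$ to show that the counting term is $O(L^2)$. The martingale term is $g\cdot O(\sqrt n)=O(L)$. The constant $L^2$ is absorbed into $C=C(L)$.

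For (d), we write $\ell^2_{n,\gamma}-H$ explicitly via \eqref{eq:transition_density}. Away from $\rho_0+\theta_\rho$, the remainder is the log of the ratio of the reflection-corrected brackets, and it is Lipschitz in $(\alpha,\beta)$. Its size is therefore $g(\cdot)$ times a sum of summands that are nonzero only when $X_{(k-1)\Delta_n^\gamma}$ lies within $O((\Delta_n^\gamma)^{1/2}\log n)$ of $\rho_0+\theta_\rho$ (Gaussian tails). We define $T^\star_{n,\gamma}$ as this weighted count, restricted to $\{\theta_\alpha,\theta_\beta\le R\}$. Its bound $n^{1/2+\gamma(1-j)/6}$ is obtained exactly as for $R^1_{n,\gamma}$ in \eqref{eq_proof:order_remainder_ell1}. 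We reuse the $\Theta^j$-splitting of Lemma~\ref{lemma:bound_L_sqrt(n)}, in which the local-time upper bound near $\rho_0+\theta_\rho$ improves once $\theta_\rho>n^{1/4-\gamma/2}$. For $\theta_\alpha>R$ or $\theta_\beta>R$, we bound each log-ratio crudely. The prefactors $\tfrac{2}{\alpha+\beta}$ versus the untransformed densities produce at most $d_{\alpha_0,\beta_0}$ per summand, hence the term $n\,d_{\alpha_0,\beta_0}$.

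We expect the main obstacle to be the uniformity in $\theta_\rho$ (and in $\theta_\alpha,\theta_\beta$ in (d)) of the martingale and remainder bounds when $\gamma\in(0,1)$. There the number of summands near the threshold grows, and a naive union bound loses powers of $n$. The plan is to exploit monotonicity of the indicator sets in $\theta_\rho$, reducing the supremum to a grid. The $\theta_\alpha,\theta_\beta$ dependence is handled entirely through the factored-out $g(\cdot)$, so that no supremum over them is needed for the random sums.
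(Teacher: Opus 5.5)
\textbf{There is a genuine gap in your counting step, and (b) and the near-threshold part of (c) rest on it.}

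The strip in (b) has width $Ln^{-\gamma/2}=L(\Delta_n^\gamma)^{1/2}$. This is \emph{narrower} than the buffer $(\Delta_n^\gamma)^{4/9}$ you need on $A_n$ to pass from sample counts to occupation time. Your own bound therefore contains the term $(\Delta_n^\gamma)^{4/9}\sqrt{n/\Delta_n^\gamma}=n^{1/2+\gamma/18}$. This is not $o(\sqrt n)$; for every $\gamma>0$ it dominates $L\sqrt n$.

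Sharpening the modulus does not rescue this comparison, because the strip lives on the scale of a single increment. Even L\'evy's modulus $\sqrt{\Delta_n^\gamma\log n}$ would leave a term of order $\sqrt{n\log n}$.

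So (b) is not obtained at the stated order, and that order is exactly what is needed later. With $n^{1/2+\gamma/18}g(\cdot)$ in place of $\sqrt n\,g(\cdot)$, the expression $C_2nf(x)+C_1n^{1/2+\gamma/18}g(x)$ in the $\mathcal{T}^2_{n,\gamma}$ step is positive for $|\theta_\alpha|$ up to order $n^{-1/2+\gamma/18}$. The $\sqrt n$-rate of $(\hat\alpha_n^\gamma,\hat\beta_n^\gamma)$ is then lost.

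The same defect appears in (c). The bias $\E_0[\eta_k-1\mid X_{(k-1)\Delta_n^\gamma}]$ is of order one whenever $X_{(k-1)\Delta_n^\gamma}$ lies within $O((\Delta_n^\gamma)^{1/2})$ of $\rho_0$, whether or not a crossing occurs. It decays like a Gaussian in $|X_{(k-1)\Delta_n^\gamma}-\rho_0|/(\Delta_n^\gamma)^{1/2}$. Cutting at distance $(\Delta_n^\gamma)^{4/9}$ and counting the observations inside again produces $n^{1/2+\gamma/18}$, even in expectation. Moreover, (b) involves $\eta_k$-weighted strip sums, not just counts, and $\eta_k$ has no useful pathwise bound.

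\textbf{The missing idea is to argue in expectation and only then define $B_n$ by Markov's inequality.} This is what the paper does, using the Gaussian domination \eqref{eq:bound_transition_density}.

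\begin{itemize}
\item For (b), the domination gives $\Pr_0(\rho_0\le X_{(k-1)\Delta_n^\gamma}\le\rho_0+Ln^{-\gamma/2})\le CL/\sqrt{k}$. Conditioning on $X_{(k-1)\Delta_n^\gamma}$ and Corollary~\ref{cor:increment_moments} give the same bound for the $\eta_k$-weighted indicators. Hence every strip sum in (b) has expectation at most $CL\sqrt n$. The supremum over $\theta_\rho$ costs nothing, by monotonicity of the strip indicators.
\item For (c), the bias is bounded by $C\exp(-(X_{(k-1)\Delta_n^\gamma}-\rho_0)^2/(4\alpha_0^2\Delta_n^\gamma))$. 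Corollary~\ref{cor:bound_exp_X^2} shows that its sum has expectation $O(\sqrt n)$, so the effective width is $(\Delta_n^\gamma)^{1/2}$ rather than $(\Delta_n^\gamma)^{4/9}$. The centred part is $O_{\Pr_0}(\sqrt n)$ by a second-moment bound.
\end{itemize}

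Two further remarks. Your split $f(x)-\frac12(x^2-1)(\eta-1)$ in (a) is sound and correctly accounts for the $\asymp n$ indices below $\rho_0$. In (d), however, your justification that the bound improves on $\Theta^2_{n,\gamma}$ is backwards. What is available there (Lemma~\ref{lemma:bound_L_sqrt(n)}) and what is used later (step $m=2$) is the weaker order $n^{1/2+\gamma/6}$.
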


It remains to deal with the set $\mathcal{T}_{n,\gamma}^3$, where both $\theta_\rho$ and $\max\{|\theta_\alpha|,\theta_\beta|\}$ deviate from zero. Here, the reasoning outlined above does not apply and longer for two different reasons:
\begin{itemize}
\item[(P1)] The second summand in $\overline{H}_{n,\gamma}^\leq$ is not necessarily negligible any longer as for large values of $\theta_\rho$ the number of $k$ for which $\rho_0<X_{(k-1)\Delta_n^\gamma}\leq\rho_0+\theta_\rho$ is also large and the factor in front is not necessarily negative.
\item[(P2)] In case $\theta_\alpha\approx 0$, we automatically have $|\theta_\beta|>K_2/\sqrt{n}$ and therefore the square bracket in the definition of $\overline{H}_{n,\gamma}^{>}$ is negative. However, for large values of $\theta_\rho$, there might be few $k$ with $X_{(k-1)\Delta_n^\gamma}>\rho_0+\theta_\rho$.
\end{itemize}
Nevertheless, for large $\theta_\rho$, also $\ell_{n,\gamma}^1$ is quite negative and we try to balance this out with the above mentioned effects of $\ell_{n,\gamma}^2$. To overcome these issues, we combine the expansions~\eqref{eq:decomposition_ell_1} and~\eqref{eq:decomposition_ell_2} to
\begin{align}\label{eq:decomposition_ell_1+2}
\ell_{n,\gamma}^1(\theta_\rho) + \ell_{n,\gamma}^2(\theta_\rho,\theta_\alpha,\theta_\beta)  &= \overline{H}_{n,\gamma}^{\leq}(0,\theta_\alpha) + \overline{H}_{n,\gamma}^{>}(\theta_\rho,\theta_\beta) + T_{n,\gamma}(\theta_\rho,\theta_\alpha) + R_{n,\gamma}^{all}(\theta_\rho,\theta_\alpha,\theta_\beta),
\end{align}
where
\begin{align*}
T_{n,\gamma}(\theta_\rho,\theta_\alpha) &= \overline{H}_{n,\gamma}^{\leq}(\theta_\rho,\theta_\alpha) - \overline{H}_{n,\gamma}^{\leq}(0,\theta_\alpha) \\
&\hspace{1cm} +\left[\log\left(\frac{\beta_0}{\alpha_0}\right) - \frac12\left(\frac{\beta_0^2}{\alpha_0^2}-1\right)\right] \sup_{\theta_\rho\in\Theta_{n,\gamma}^j} \sum_{k=1}^n \1_{\{\rho_0\leq X_{(k-1)\Delta_n^\gamma} \leq \rho_0+\theta_\rho\}} \\
&= \left[ \log\left(\frac{\beta_0}{\alpha_0}\right) - \frac12\left(\frac{\beta_0^2}{\alpha_0^2}-1\right) + \log\left(\frac{\alpha_0}{\alpha_0+\theta_\alpha}\right) - \frac12\left( \frac{\alpha_0^2}{(\alpha_0+\theta_\alpha)^2}-1\right)\frac{\beta_0^2}{\alpha_0^2}\right] \\
&\hspace{1cm} \cdot\sum_{k=1}^n \1_{\{\rho_0\leq X_{(k-1)\Delta_n^\gamma} \leq \rho_0+\theta_\rho\}}
\end{align*}
and
\begin{align*}
R_{n,\gamma}^{all}(\theta_\rho,\theta_\alpha,\theta_\beta) &= R_{n,\gamma}^1(\theta_\rho) + R_{n,\gamma}^{\leq}(\theta_\rho,\theta_\alpha,\theta_\beta) +R_{n,\gamma}^{>}(\theta_\rho,\theta_\alpha,\theta_\beta)\\
&\hspace{0.8cm} + \left( H_{n,\gamma}^{\leq}(\theta_\rho,\theta_\alpha) - \overline{H}^{\leq}(\theta_\rho,\theta_\alpha) \right)+ \left( H_{n,\gamma}^{>}(\theta_\rho,\theta_\beta) - \overline{H}_{n,\gamma}^{>}(\theta_\rho,\theta_\beta) \right).
\end{align*}
As discussed earlier, both $\overline{H}_{n,\gamma}^{\leq}$ and $\overline{H}_{n,\gamma}^{>}$ are non-positive. Surprisingly, this also turns out to be true for the term $T_{n,\gamma}$ that combines terms of $\ell_{n,\gamma}^1$ and $\ell_{n,\gamma}^2$, see Lemma~\ref{lemma:function_negative} in Appendix~\ref{Subsection:tripleMLE_3} that establishes
\[ \log\left(\frac{\beta_0}{\alpha_0}\right) - \frac12\left(\frac{\beta_0^2}{\alpha_0^2}-1\right) + \log\left(\frac{\alpha_0}{\alpha_0+\theta_\alpha}\right) - \frac12\left( \frac{\alpha_0^2}{(\alpha_0+\theta_\alpha)^2}-1\right)\frac{\beta_0^2}{\alpha_0^2} \leq 0\]
with equality in case $\theta_\alpha=\beta_0-\alpha_0$. The crucial observation is now that either $\overline{H}_{n,\gamma}^{\leq}$ is close to zero (which happens for $\theta_\alpha\approx 0$), in which case $T_{n,\gamma}$ is significantly negative or the other way around. All details for these heuristics are fully provided in Appendix~\ref{Subsection:tripleMLE_1}. When working these out, the only remaining obstacle is that balancing the leading terms with the remainders is difficult for $\theta_\beta\to -\beta_0$ or $\theta_\beta\to\infty$ and at the same time $\theta_\rho$ being very large (i.e. close to $\sup_{t\leq n\Delta_n^\gamma} X_t$). In this case the bound provided by Lemma~\ref{lemma:estimates_tripleMLE}(d) is no longer useful as $g(\beta_0/(\beta_0+\theta_\beta))$ explodes. However, the problem occurs only for rare events in the sense of atypical path behavior and the probability of these can be controlled reasonably well. This is formally established by substituting the decomposition~\eqref{eq:decomposition_ell_1+2} with that given in Lemma~\ref{lemma:bound_small_theta_beta}. The crucial step in its proof is to construct an injective mapping
\begin{align*}
&\left\{ k: X_{(k-1)\Delta_n^\gamma}\leq \rho_0+\theta_\rho\leq X_{k\Delta_n^\gamma}\right\}\\
&\hspace{0.2cm} \longrightarrow \left\{ k:X_{k\Delta_n^\gamma}<\rho_0+\theta_\rho<X_{(k-1)\Delta_n^\gamma} \right\} \cup \left\{ k:X_{(k-1)\Delta_n^\gamma}\geq \rho_0+\theta_\rho, X_{k\Delta_n^\gamma}>\rho_0+\theta_\rho\right\},
\end{align*}
which turns out to be possible for all observations $X_0,X_{\Delta_n^\gamma},\dots, X_{n\Delta_n^\gamma}$ for which $X_{n\Delta_n^\gamma}$ is bounded awaw from the global maximum of the path $(X_t)_{0\leq t\leq n\Delta_n^\gamma}$. This event is seen to have high probability by means of the Dambis--Dubins--Schwarz theorem. All details are provided in Subsection~\ref{Subsection:tripleMLE_3}.


%
%

\begin{funding}
This work was supported by the DFG Research Unit $5381$, RO $3766/8$-$1$ and CRC$ 1597$, Project-ID $499552394$.
\end{funding}




\bibliographystyle{imsart-nameyear} 
\bibliography{Bibliography}       



\newpage
\setcounter{section}{0}
\setcounter{page}{1}
\renewcommand*\thesection{\Alph{section}}

\begin{frontmatter}

\title{\vspace{-0.3cm} Supplement to "Self-organized regime switching in null-recurrent dynamics"}
\runtitle{Supplement "Self-organized regime switching in null-recurrent dynamics"}

\begin{aug}
\author[A]{\fnms{Johannes}~\snm{Brutsche}}, 
\author[A]{\fnms{Sebastian}~\snm{Hahn}}
\and
\author[A]{\fnms{Angelika}~\snm{Rohde}}
\address[A]{Mathematical Institute, University of Freiburg\printead[presep={,\ }]{e1}\printead[presep={,\ }]{e2}\printead[presep={,\ }]{e3}}
\end{aug}

\end{frontmatter}

\vspace{-0.1cm}
This supplementary material is organized as follows:\\
\vspace{-0.2cm}

{\small
\tableofcontents
\addtocontents{toc}{\protect\setcounter{tocdepth}{3}} 
}

\section{Notation}
Thoughout the whole technical supplement, $C_{\alpha,\beta}$ denotes some real and positive constant that only depends on $\alpha$ and $\beta$ but may change from line to line. Any other dependencies are highlighed explicitly, for example by writing $C_{\alpha,\beta}(K)$ in case the constant additionally depends on $K$. Sometimes we want to highlight that the constant $C_{\alpha_0,\beta_0}$ only depends on one of the parameters in which case the other one is omitted. 
We frequently use the notation
\[ \Pr( A,B ) := \Pr(A\cap B).\]
We denote $\Pr_0=\Pr_{\rho_0,\alpha_0,\beta_0}$, where the subscript $(\rho_0,\alpha_0,\beta_0)$ indicates that $X$ solves~\eqref{eq:SDE_OBM} with $\sigma_{\rho_0,\alpha_0,\beta_0}$ and arbitrary starting point $x_0$. In cases where an argument involves two solutions of~\eqref{eq:SDE_OBM} with different starting points, this is explicitly highlighted by writing $X^{x_0}$. In all other cases, an arbitrary starting point is tacitly assumed.

\section{Some preliminary results on OBM}

By a straightforward case-by-case study, one obtains that the transition density \eqref{eq:transition_density} is dominated by a Gaussian density, i.e.
\begin{align}\label{eq:bound_transition_density}
p_t^{\rho,\alpha,\beta}(x,y)\leq \frac{2}{\alpha+\beta}\frac{1}{\sqrt{2\pi t}}\frac{\max\{\alpha,\beta\}}{\min\{\alpha,\beta\}} \exp\left( - \frac{(y-x)^2}{2t\max\{\alpha^2,\beta^2\}}\right). 
\end{align} 
From this, one can easily deduce the following result for an OBM solving~\eqref{eq:SDE_OBM} with $\sigma_{\rho_0,\alpha_0,\beta_0}$ and $X_0=x_0$.

\begin{cor}\label{cor:bound_exp_X^2}
Let $a\in\R$, $b\in\R_{>0}$ and $0\leq l<k$. Then we have
\[ \E_0\left[\left. \exp\left( - \frac{(X_{k\Delta_n^\gamma} - a)^2}{2b\Delta_n^\gamma}\right)\right| X_{l\Delta_n^\gamma}\right] \leq c_{\alpha_0,\beta_0}(b)\frac{1}{\sqrt{k-l}}\]
for some constant $c_{\alpha_0,\beta_0}(b)>0$ not depending on $n$ and $a$.
\end{cor}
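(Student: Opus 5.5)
The plan is to condition on $X_{l\Delta_n^\gamma}$ and use the Markov property together with the Gaussian domination \eqref{eq:bound_transition_density}. With $t=(k-l)\Delta_n^\gamma$ and $x=X_{l\Delta_n^\gamma}$ the conditional expectation equals
\[ \int_\R p_t^{\rho_0,\alpha_0,\beta_0}(x,y)\exp\left(-\frac{(y-a)^2}{2b\Delta_n^\gamma}\right)dy. \]
Write $m:=\max\{\alpha_0,\beta_0\}$. By \eqref{eq:bound_transition_density} this integral is at most $C_{\alpha_0,\beta_0}\,m$ times
\[ \int_\R \frac{1}{\sqrt{2\pi t m^2}}\exp\left(-\frac{(y-x)^2}{2tm^2}\right)\exp\left(-\frac{(y-a)^2}{2b\Delta_n^\gamma}\right)dy. \]

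Next, bound the second exponential by $\sqrt{2\pi b\Delta_n^\gamma}$ times the $N(a,b\Delta_n^\gamma)$ density. The integral then becomes $\sqrt{2\pi b\Delta_n^\gamma}$ times the convolution of two Gaussian densities evaluated at $x-a$. That convolution is the $N(0,tm^2+b\Delta_n^\gamma)$ density at $x-a$, which is bounded by $(2\pi(tm^2+b\Delta_n^\gamma))^{-1/2}$ uniformly in $x$ and $a$.

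Combining, the conditional expectation is at most
\[ C_{\alpha_0,\beta_0}\,m\sqrt{\frac{b\Delta_n^\gamma}{(k-l)\Delta_n^\gamma m^2+b\Delta_n^\gamma}}\leq C_{\alpha_0,\beta_0}\sqrt{b}\,\frac{1}{\sqrt{k-l}}. \]
The factor $\Delta_n^\gamma$ cancels, so the constant depends on neither $n$ nor $a$ and can be taken as $c_{\alpha_0,\beta_0}(b)$. The bound holds pointwise in the conditioning value, and hence almost surely.

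The only point needing care is that \eqref{eq:bound_transition_density} holds uniformly in $x$, $y$ and $t$, which the paper asserts via the case-by-case study. Given that, the argument is an explicit Gaussian computation.
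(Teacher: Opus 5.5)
Your proof is correct and is essentially the paper's argument: the paper only says the corollary follows easily from the Gaussian domination \eqref{eq:bound_transition_density}, and your Markov-property plus Gaussian-convolution computation supplies those details, with $\Delta_n^\gamma$ cancelling so that the constant depends only on $\alpha_0,\beta_0,b$. You could shorten it by skipping the convolution: bound $p_t^{\rho_0,\alpha_0,\beta_0}(x,\cdot)$ by its supremum $C_{\alpha_0,\beta_0}t^{-1/2}$ and integrate the remaining Gaussian kernel, which gives $C_{\alpha_0,\beta_0}\sqrt{b}/\sqrt{k-l}$ directly.
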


Corollary~\ref{cor:bound_exp_X^2} is often used in combination with the inequalities
\[ \sum_{k=1}^n \frac{1}{\sqrt{k}} \leq 2\sqrt{n}, \quad \textrm{ and }\quad \sum_{k=1}^n \sum_{l=1}^{k-1} \frac{1}{\sqrt{k(k-l)}}\leq 4n.\]
Both of them are easily obtained by comparison with corresponding integral and applied without further notice.

\begin{cor}\label{cor:increment_moments}
Let $p\in\N$. Then there exists a constant $C_{\alpha_0,\beta_0}(p)$ such that
\[ \E_0\left[\left. \frac{|X_{k\Delta_n^\gamma}-X_{(k-1)\Delta_n^\gamma}|^p}{(\Delta_n^\gamma)^{p/2}}\right| X_{(k-1)\Delta_n^\gamma}\right] \leq C_{\alpha_0,\beta_0}(p).\]
\end{cor}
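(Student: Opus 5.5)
The statement to prove is Corollary~\ref{cor:increment_moments}, and it follows directly from the Gaussian domination~\eqref{eq:bound_transition_density}. Write $t=\Delta_n^\gamma$ and $x=X_{(k-1)\Delta_n^\gamma}$. By the Markov property, the conditional expectation equals
\[ \int_\R \frac{|y-x|^p}{t^{p/2}}\, p_t^{\rho_0,\alpha_0,\beta_0}(x,y)\,dy .\]
I bound the transition density by the right-hand side of~\eqref{eq:bound_transition_density} and substitute $u=(y-x)/\sqrt{t}$. The factor $t^{-1/2}$ from the density cancels against the Jacobian $\sqrt{t}$, and the weight $t^{-p/2}$ cancels the power $t^{p/2}$ coming from $|y-x|^p$. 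The integral is therefore at most
\[ \frac{2}{\alpha_0+\beta_0}\frac{\max\{\alpha_0,\beta_0\}}{\min\{\alpha_0,\beta_0\}}\int_\R \frac{|u|^p}{\sqrt{2\pi}}\exp\Big(-\frac{u^2}{2\max\{\alpha_0^2,\beta_0^2\}}\Big)du .\]
The remaining integral is finite and equal to a Gaussian absolute moment: it is $\max\{\alpha_0,\beta_0\}^{p+1}$ times the $p$-th absolute moment of a standard normal. The resulting constant depends only on $p$, $\alpha_0$ and $\beta_0$. It does not depend on $x$, $k$ or $n$, so the bound holds uniformly.

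The only point requiring care is the domination~\eqref{eq:bound_transition_density} itself, which the paper takes from a case-by-case study of~\eqref{eq:transition_density}; I use it as given.

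If I wanted to verify it, the four cases would go as follows. In the same-side cases the reflected term either has a negative coefficient and can be dropped, or it has a positive coefficient and is bounded by the direct term, since $|y-2\rho+x|\ge|y-x|$ when $x,y$ lie on the same side of $\rho$. In the crossing cases one compares $|(y-\rho)/\beta-(x-\rho)/\alpha|$ with $|y-x|/\max\{\alpha,\beta\}$, using that $y-\rho$ and $x-\rho$ have opposite signs.

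Note also that the conditional law depends only on $x$, so no issue arises from conditioning on the past beyond $X_{(k-1)\Delta_n^\gamma}$.
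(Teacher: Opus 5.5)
Your proof is correct and follows the paper's argument: dominate the transition density by the Gaussian bound \eqref{eq:bound_transition_density}, rescale by $\sqrt{\Delta_n^\gamma}$, and reduce to a finite Gaussian absolute moment independent of $x$, $k$ and $n$. Your case-by-case sketch of the domination itself is also sound, and the paper likewise leaves it as a case study.
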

\begin{proof}
Using the bound~\eqref{eq:bound_transition_density} directly reveals
\begin{align*}
&\E_0\left[\left. \frac{|X_{k\Delta_n^\gamma}-X_{(k-1)\Delta_n^\gamma}|^p}{(\Delta_n^\gamma)^{p/2}} \right| X_{(k-1)\Delta_n^\gamma}\right]\\
&\hspace{1cm}  \leq C_{\alpha_0,\beta_0} \frac{1}{\sqrt{\Delta_n^\gamma}} \int_\R \frac{|y-X_{(k-1)\Delta_n^\gamma}|^p}{(\Delta_n^\gamma)^{p/2}} \exp\left(-\frac{(y-X_{(k-1)\Delta_n^\gamma})^2}{2\max\{\alpha_0^2,\beta_0^2\}\Delta_n^\gamma}\right) dy  \\
&\hspace{1cm} \leq C_{\alpha_0,\beta_0}\int_\R |y|^p \exp\left(-\frac{y^2}{2\max\{\alpha_0^2,\beta_0^2\}}\right) dy
\end{align*}
and the integral is bounded by a constant depending on $p$.
\end{proof}

\section{Proof of (1A) for $\gamma\in (0,1)$ and minimax lower bound}\label{Section:Proof_1A}

The proof of (1A) for $\gamma\in (0,1)$ will be based on the decomposition
\begin{align}\label{eq:decomposition_drift_martingale}
\ell_{n,\gamma,t}^{\alpha_0,\beta_0}(\theta) = B_{n,,\gamma,t}^{\alpha_0,\beta_0}(\theta) + M_{n,\gamma,t}^{\alpha_0,\beta_0}(\theta),
\end{align} 
of the sequential version 
\[ \ell_{n,\gamma,t}^{\alpha_0,\beta_0}(\theta) = \sum_{k=1}^{\lceil nt\rceil} \log\left(\frac{p_{\Delta_n^\gamma}^{\rho_0+\theta,\alpha_0,\beta_0}(X_{(k-1)\Delta_n^\gamma},X_{k\Delta_n^\gamma})}{p_{\Delta_n^\gamma}^{\rho_0,\alpha_0,\beta_0}(X_{(k-1)\Delta_n^\gamma},X_{k\Delta_n^\gamma}}\right) \]
of the log-likelihood in~\eqref{eq:def_ell_rho}, where
\[ B_{n,\gamma,t}^{\alpha_0,\beta_0}(\theta) := \sum_{k=1}^{\lceil nt\rceil} \E_0\left[\left. \log\left(\frac{p_{\Delta_n^\gamma}^{\rho_0+\theta,\alpha_0,\beta_0}(X_{(k-1)\Delta_n^\gamma},X_{k\Delta_n^\gamma})}{p_{\Delta_n^\gamma}^{\rho_0,\alpha_0,\beta_0}(X_{(k-1)\Delta_n^\gamma},X_{k\Delta_n^\gamma}}\right)\right| X_{(k-1)\Delta_n^\gamma}\right]\]
and $M_{n,\gamma,t}^{\alpha_0,\beta_0}(\theta)=\ell_{n,\gamma,t}^{\alpha_0,\beta_0}(\theta)-B_{n,\gamma,t}^{\alpha_0,\beta_0}(\theta)$. This decomposition is useful, as $M_{n,\gamma,t}^{\alpha_0,\beta_0}(\theta)$ is now a sum of martingale increments. Moreover, we have the following expansion of the drift term $B_{n,\gamma,t}^{\alpha_0,\beta_0}$.

\begin{lemma}\label{lemma:expansion_drift}
Let $|\theta|\leq K\sqrt{\Delta_n^\gamma}$ and $X=(X_t)_{t\geq 0}$ an OBM started in $x_0\in\R$ with diffusion coefficient $\sigma_{\rho_0,\alpha_0,\beta_0}$. Then we have
\[ B_{n,\gamma,t}^{\alpha_0,\beta_0}(\theta) = -|\theta|\left( \1_{\{\theta\geq 0\}}F_{\alpha_0,\beta_0} + \1_{\{\theta<0\}}\tilde{F}_{\alpha_0,\beta_0}\right) \Lambda_{n,\gamma}^{\alpha_0,\beta_0}\left((X_{k\Delta_n^\gamma})_{k=0,\dots,\lceil nt\rceil}\right) + r_{n,\gamma}(t,\theta),\]
where
\begin{align*}
\Lambda_{n,\gamma}^{\alpha_0,\beta_0}\left((X_{k\Delta_n^\gamma})_{k=0,\dots,\lceil nt\rceil}\right) &= \frac{1}{\sqrt{2\pi\Delta_n^\gamma}}\sum_{k=1}^{\lceil nt\rceil} \1_{\{X_{(k-1)\Delta_n^\gamma}<\rho_0\}}\exp\left(-\frac{(X_{(k-1)\Delta_n^\gamma}-\rho_0)^2}{2\Delta_n^\gamma \alpha_0^2}\right) \\
&\hspace{0.5cm} +\frac{1}{\sqrt{2\pi\Delta_n^\gamma}}\sum_{k=1}^{\lceil nt\rceil} \1_{\{X_{(k-1)\Delta_n^\gamma}\geq\rho_0\}}\exp\left(-\frac{(X_{(k-1)\Delta_n^\gamma}-\rho_0)^2}{2\Delta_n^\gamma \beta_0^2}\right).
\end{align*}
\[ F_{\alpha_0,\beta_0} = -\left(\frac{2(\alpha_0-\beta_0)}{\alpha_0\beta_0} + \frac{\alpha_0}{\beta_0}\frac{2}{\alpha_0+\beta_0}\log\left(\frac{\beta_0^2}{\alpha_0^2}\right)\right),\qquad \tilde{F}_{\alpha_0,\beta_0} = F_{\beta_0,\alpha_0}\]
and
\[ \E_{0}\left[ \sup_{|\theta'|\leq |\theta|} \sup_{s\leq t} |r_{n,\gamma}(s,\theta')|\right] \leq C_{\alpha_0,\beta_0}(K) \frac{\theta^2\sqrt{n}}{\Delta_n^\gamma}\sqrt{t}\]
for some constant $C_{\alpha_0,\beta_0}(K)>0$ that is independent of $n$ and $\theta$.
\end{lemma}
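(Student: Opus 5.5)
We compute the one-step drift explicitly and then sum it. Fix $\theta\geq 0$; the case $\theta<0$ is symmetric with $\alpha_0,\beta_0$ interchanged, which produces $\tilde F_{\alpha_0,\beta_0}=F_{\beta_0,\alpha_0}$. Write $x=X_{(k-1)\Delta_n^\gamma}$ and
\[
b(x,\theta):=\int_\R p_{\Delta_n^\gamma}^{\rho_0,\alpha_0,\beta_0}(x,y)\log\frac{p_{\Delta_n^\gamma}^{\rho_0+\theta,\alpha_0,\beta_0}(x,y)}{p_{\Delta_n^\gamma}^{\rho_0,\alpha_0,\beta_0}(x,y)}\,dy .
\]
This is minus a Kullback--Leibler divergence, so $b\le 0$. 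By the Markov property, $B_{n,\gamma,t}^{\alpha_0,\beta_0}(\theta)=\sum_{k\le\lceil nt\rceil}b(X_{(k-1)\Delta_n^\gamma},\theta)$.

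By \eqref{eq_proof:self_similarity_1} and \eqref{eq_proof:self_similarity_2}, a change of variables gives
\[
b(x,\theta)=\bar b\bigl((x-\rho_0)/\sqrt{\Delta_n^\gamma},\ \theta/\sqrt{\Delta_n^\gamma}\bigr),
\]
where $\bar b(u,\vartheta)$ is the analogous quantity for the unit-time density with threshold $0$ versus $\vartheta$. With $\vartheta=\theta/\sqrt{\Delta_n^\gamma}\in[0,K]$ this reduces everything to a fixed, $n$-free function.

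Next we expand $\bar b(u,\vartheta)$ to first order in $\vartheta$, splitting $y$ according to the nine configurations in \eqref{def:indicators}. On the regions where $x$ and $y$ stay on the same sides of both thresholds ($j=1,3,7,9$), the log-ratio is smooth in $\vartheta$. Its derivative at $\vartheta=0$ involves only the reflected Gaussian terms $\exp(-(y+x-2\rho)^2/\ldots)$ and the crossing exponents in \eqref{eq:transition_density}. On the strip regions ($j=2,4,5,6,8$), $y$ or $x$ lies in $[\rho_0,\rho_0+\theta]$. There the log-ratio is $O(1)$ (for instance $\log(\alpha_0^2/\beta_0^2)$-type constants appear), while the $y$-measure of the strip is of order $\vartheta$ times the density at the threshold. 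This produces the $\log(\beta_0^2/\alpha_0^2)$ term in $F_{\alpha_0,\beta_0}$ with prefactor $\frac{2}{\alpha_0+\beta_0}\frac{\alpha_0}{\beta_0}$.

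Collecting terms, the plan is to show
\[
\bar b(u,\vartheta)=-\vartheta F_{\alpha_0,\beta_0}\,\frac{1}{\sqrt{2\pi}}e^{-u^2/(2\sigma(u)^2)}+\bar r(u,\vartheta),
\]
where $\sigma(u)=\alpha_0$ for $u<0$ and $\beta_0$ otherwise, and
\[
|\bar r(u,\vartheta)|\le C(K)\,\vartheta^2 e^{-u^2/(2c)}
\]
uniformly for $\vartheta\le K$ and some $c>0$. Verifying the Gaussian factor $e^{-u^2/2\sigma^2}$ in the leading term, and the constant $F_{\alpha_0,\beta_0}$, is a direct but lengthy Gaussian-integral computation. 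The same case analysis was done in \cite{Brutsche/Rohde} (their (D.1)--(D.17)), and we would reuse those bounds.

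Undoing the scaling, the leading terms sum exactly to
\[
-\theta F_{\alpha_0,\beta_0}\Lambda_{n,\gamma}^{\alpha_0,\beta_0},
\]
since $\frac{1}{\sqrt{\Delta_n^\gamma}}\cdot\frac{1}{\sqrt{2\pi}}=\frac{1}{\sqrt{2\pi\Delta_n^\gamma}}$. The remainder is
\[
r_{n,\gamma}(s,\theta')=\sum_{k\le\lceil ns\rceil}\bar r\bigl(u_k,\ \theta'/\sqrt{\Delta_n^\gamma}\bigr),\qquad u_k=(X_{(k-1)\Delta_n^\gamma}-\rho_0)/\sqrt{\Delta_n^\gamma}.
\]
Because the bound on $\bar r$ is monotone in $\vartheta$ and nonnegative, the suprema over $|\theta'|\le|\theta|$ and $s\le t$ are dominated by
\[
C(K)\,\frac{\theta^2}{\Delta_n^\gamma}\sum_{k\le\lceil nt\rceil}\exp\Bigl(-\frac{(X_{(k-1)\Delta_n^\gamma}-\rho_0)^2}{2c\Delta_n^\gamma}\Bigr).
\]
By Corollary~\ref{cor:bound_exp_X^2}, each summand has expectation at most $c_{\alpha_0,\beta_0}(c)/\sqrt{k-1}$; the $k=1$ term is bounded by $1$. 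Summing with $\sum_{k\le nt}k^{-1/2}\le 2\sqrt{nt}$ yields the claimed bound $C\theta^2\sqrt n\sqrt t/\Delta_n^\gamma$.

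The main obstacle is the uniform second-order bound on $\bar r$ with Gaussian decay in $u$, particularly near $u\in[0,\vartheta]$. There $x$ itself lies between the two thresholds, so the log-ratio is not small and the decomposition changes type. This case only contributes $O(\vartheta)$ in $u$-measure, and it must be checked that it is absorbed. For a single fixed $u$ in the strip this is impossible: $\bar b$ is then $O(1)$, not $O(\vartheta^2)$. I would therefore bound these indices separately by $C\1_{\{0\le u_k\le\vartheta\}}$. Corollary~\ref{cor:bound_exp_X^2}, or the Gaussian domination \eqref{eq:bound_transition_density}, gives
\[
\Pr_0\bigl(0\le u_k\le K\bigr)\lesssim \frac{K}{\sqrt{k}}\,\frac{|\theta|}{K\sqrt{\Delta_n^\gamma}},
\]
and summing yields a contribution of order $|\theta|\sqrt{n}/\sqrt{\Delta_n^\gamma}$. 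Since $|\theta|\le K\sqrt{\Delta_n^\gamma}$ only gives $\theta^2/\Delta_n^\gamma\le K|\theta|/\sqrt{\Delta_n^\gamma}$, this is in general larger than the claimed $\theta^2\sqrt n/\Delta_n^\gamma$ bound and is not absorbed by it. I would first try to fold these strip indices into the leading term rather than treat them as remainder. If that fails, the expectation bound for $r_{n,\gamma}$ may only hold with the weaker order $|\theta|\sqrt n/\sqrt{\Delta_n^\gamma}$ unless the statement is read accordingly.
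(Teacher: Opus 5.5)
Your route is the paper's route. The paper's proof only says that the lemma is Proposition~2.1 of \cite{Brutsche/Rohde} with $1/n$ replaced by $\Delta_n^\gamma$. Your rescaling to the $n$-free function $\bar b$ is exactly why that substitution is legitimate, and your summation step is fine.

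The genuine problem is your final paragraph, where you conclude that indices with $u_k\in[0,\vartheta]$ cannot be absorbed and that the stated bound may fail. This rests on a false premise: that the log-ratio is $O(1)$ when the starting point lies between the two thresholds. The same error appears earlier, where you list $j=4,6$ among the $O(1)$ configurations. By \eqref{eq:transition_density}, $x\mapsto p_t^{\rho,\alpha,\beta}(x,y)$ is continuous at $x=\rho$. For $y>\rho$, both the crossing branch and the second branch equal $\frac{2\alpha}{(\alpha+\beta)\beta}\frac{1}{\sqrt{2\pi t}}e^{-(y-\rho)^2/(2t\beta^2)}$ at $x=\rho$, and analogously for $y\le\rho$. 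So moving the threshold across the starting point perturbs the density only by a factor $1+O(\vartheta(1+|y|))$. The $O(1)$ log-ratio $\log(\alpha_0^2/\beta_0^2)$ comes solely from the jump of $y\mapsto p$ at the threshold. It occurs only for $y\in S:=(0,\vartheta]$ (configurations $j=2,5,8$), a set of mass $O(\vartheta)$.

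Consequently the strip needs no separate treatment. Write $p^0=p_1^{0,\alpha_0,\beta_0}(u,\cdot)$ and $p^\vartheta=p_1^{\vartheta,\alpha_0,\beta_0}(u,\cdot)$. Off $S$ the ratio $p^\vartheta/p^0$ is $1+O(\vartheta(1+|y|))$, and both densities integrate to one. Hence
\[
-\int_{S^c}p^0\log\frac{p^\vartheta}{p^0}=\int_{S^c}\bigl(p^0-p^\vartheta\bigr)+O(\vartheta^2)=\int_S\bigl(p^\vartheta-p^0\bigr)+O(\vartheta^2).
\]
On $S$, up to $O(\vartheta)$, one has $p^0\approx\frac{2\alpha_0}{(\alpha_0+\beta_0)\beta_0}\frac{1}{\sqrt{2\pi}}$ and $p^\vartheta\approx\frac{2\beta_0}{(\alpha_0+\beta_0)\alpha_0}\frac{1}{\sqrt{2\pi}}$. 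Therefore, for every $u\in[0,\vartheta]$,
\[
-\bar b(u,\vartheta)=\int_S p^0\log\frac{p^0}{p^\vartheta}+\int_S\bigl(p^\vartheta-p^0\bigr)+O(\vartheta^2)
=\frac{\vartheta}{\sqrt{2\pi}}\Bigl[\frac{2\alpha_0}{(\alpha_0+\beta_0)\beta_0}\log\frac{\alpha_0^2}{\beta_0^2}+\frac{2(\beta_0-\alpha_0)}{\alpha_0\beta_0}\Bigr]+O(\vartheta^2)
=\frac{\vartheta}{\sqrt{2\pi}}F_{\alpha_0,\beta_0}+O(\vartheta^2),
\]
regardless of where $u$ sits in the strip. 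This agrees with the leading term $-\vartheta F_{\alpha_0,\beta_0}(2\pi)^{-1/2}e^{-u^2/(2\beta_0^2)}$ up to $O(\vartheta^2)$. For $\vartheta$ bounded away from $0$ the bound is trivial. So $|\bar r(u,\vartheta)|\le C(K)\vartheta^2e^{-u^2/(2c)}$ holds uniformly in $u$, strip included, and the rest of your argument goes through unchanged.

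This uniformity is not optional. If $x_0=\rho_0$, which is exactly the case used in Step~I, then $u_1=0$ lies in the strip for every $\vartheta$. For $nt=1$ the lemma asserts $\E_0|r_{n,\gamma}(t,\theta)|\le C(K)\vartheta^2$, so even an $O(\vartheta)$ bound at $k=1$ would not suffice.

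For $k\ge 2$, the crude bound $C(K)\vartheta\1_{\{0\le u_k\le\vartheta\}}$ would already be enough. Since $\Pr_0(0\le u_k\le\vartheta)\lesssim\vartheta/\sqrt{k-1}$, these indices contribute a total of order $\vartheta^2\sqrt{nt}=\theta^2\sqrt{nt}/\Delta_n^\gamma$. Bounding each strip index by $C$ instead of $C\vartheta$ is exactly where you lost a factor $\vartheta$.
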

\begin{proof}
The proof is completely analogous to that of Proposition~$2.1$ in~\citeSM{App:Brutsche/Rohde} with the generalized step size $\Delta_n^\gamma$ instead of $1/n$.
\end{proof}

\noindent
Secondly, we need a bound on the second moment of the (modified) summands of $\ell_{n,\gamma,t}^{\alpha_0,\beta_0}$. To this aim we define
\begin{align}\label{def:Z_k_other}
Z_k^j(\theta',\theta) := \log\left(\frac{p_{\Delta_n^\gamma}^{\rho_0+\theta,\alpha_0,\beta_0}(X_{(k-1)\Delta_n^\gamma},X_{k\Delta_n^\gamma})}{p_{\Delta_n^\gamma}^{\rho_0+\theta',\alpha_0,\beta_0}(X_{(k-1)\Delta_n^\gamma},X_{k\Delta_n^\gamma})}\right) \1_{I_{j,k}^{\theta',\theta}}
\end{align}
for $j\in\{1,3,4,5,6,7,9\}$, whereas
\begin{align}\label{def:Z_28}
\begin{split}
Z_k^2(\theta',\theta) &:= \left[\log\left(\frac{p_{\Delta_n^\gamma}^{\rho_0+\theta,\alpha_0,\beta_0}(X_{(k-1)\Delta_n^\gamma},X_{k\Delta_n^\gamma})}{p_{\Delta_n^\gamma}^{\rho_0+\theta',\alpha_0,\beta_0}(X_{(k-1)\Delta_n^\gamma},X_{k\Delta_n^\gamma})}\right) - \log\left(\frac{\beta_0^2}{\alpha_0^2}\right)\right] \1_{I_{2,k}^{\theta',\theta}}, \\
Z_k^8(\theta',\theta) &:= \left[ \log\left(\frac{p_{\Delta_n^\gamma}^{\rho_0+\theta,\alpha_0,\beta_0}(X_{(k-1)\Delta_n^\gamma},X_{k\Delta_n^\gamma})}{p_{\Delta_n^\gamma}^{\rho_0+\theta',\alpha_0,\beta_0}(X_{(k-1)\Delta_n^\gamma},X_{k\Delta_n^\gamma})}\right)- \log\left(\frac{\beta_0^2}{\alpha_0^2}\right)\right] \1_{I_{8,k}^{\theta',\theta}}.
\end{split}
\end{align} 
where the indicator sets are given in~\eqref{def:indicators}. For these $Z_k^j$, we have the following result.

\begin{lemma}\label{lemma:moment_Z}
Let $K>0$, $-K\sqrt{\Delta_n^\gamma} \leq \theta'<\theta\leq K\sqrt{\Delta_n^\gamma}$ and $X=(X_t)_{t\geq 0}$ an OBM started in $x_0\in\R$ with diffusion coefficient $\sigma_{\rho_0,\alpha_0,\beta_0}$. Then for every $j=1,\dots, 9$ and $1\leq k\leq n$,
\[ \E_0\left[ \left( Z_k^j(\theta',\theta)\right)^2\right] \leq C_{\alpha_0,\beta_0} |\theta'-\theta|^2\frac{1}{\Delta_n^\gamma\sqrt{k}}.\]
\end{lemma}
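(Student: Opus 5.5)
We reduce everything to a pointwise bound on $Z_k^j$ in terms of the pre-jump position and the normalized increment, and then integrate against the law of $X_{(k-1)\Delta_n^\gamma}$ using the Gaussian domination~\eqref{eq:bound_transition_density} and Corollary~\ref{cor:bound_exp_X^2}. Throughout write $\Delta=\Delta_n^\gamma$, $x=X_{(k-1)\Delta}$, $y=X_{k\Delta}$, and use~\eqref{eq_proof:self_similarity_1} to shift so that $\rho_0+\theta'$ plays the role of the threshold. All indicator sets $I_{j,k}^{\theta',\theta}$ force $x$ or $y$ (and in the cases $j=2,4,5,6,8$ even $x$ or $y$ inside $[\rho_0+\theta',\rho_0+\theta]$) to be close to the threshold. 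This localization produces the factor $1/\sqrt{k}$.

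\textbf{Step 1 (pointwise bounds).} For $j=1,3,7,9$ both densities use the same branch of~\eqref{eq:transition_density}. For instance, for $j=1$ the ratio is
\[\bigl[e^{-(y-x)^2/2\Delta\alpha^2}-c\,e^{-(y+x-2\rho_0-2\theta)^2/2\Delta\alpha^2}\bigr]\big/\bigl[\cdots\theta'\cdots\bigr].\]
Differentiating in the threshold, the log-ratio is bounded by
\[\frac{|\theta-\theta'|}{\sqrt\Delta}\,G\Bigl(\frac{x-\rho_0}{\sqrt\Delta},\frac{y-\rho_0}{\sqrt\Delta}\Bigr),\qquad G(u,v)\lesssim (1+|u|+|v|)\,e^{-c\min(u^2,v^2)}.\]
Here the reflected Gaussian is only non-negligible when both points are within $O(\sqrt\Delta)$ of the threshold, and the denominator stays bounded below relative to the numerator because $|c|=|\alpha-\beta|/(\alpha+\beta)<1$ and $(y-x)^2\le (y+x-2\rho)^2$ on same-side configurations. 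This is the content of (D.1), (D.7), (D.14), (D.17) in~\cite{Brutsche/Rohde} rescaled from $1/n$ to $\Delta$. In the cases $j=2,8$, after subtracting $\log(\beta_0^2/\alpha_0^2)$, and in the cases $j=4,5,6$, the log-ratio is bounded by a constant times $1+(y-x)^2/\Delta+|\theta-\theta'|^2/\Delta$, which is $O(1+(y-x)^2/\Delta)$ since $|\theta-\theta'|\le 2K\sqrt\Delta$. In these cases, however, the indicator additionally forces $x$ or $y$ into an interval of length $|\theta-\theta'|$.

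\textbf{Step 2 (taking expectations).} For $j=1,3,7,9$, square the bound and condition on $X_{0}$. Corollary~\ref{cor:bound_exp_X^2}, applied with $a=\rho_0+\theta'$ and $l=0$, together with Corollary~\ref{cor:increment_moments} (the polynomial factors in the increment are absorbed by Cauchy--Schwarz) gives
\[\E_0\bigl[G^2\bigr]\lesssim k^{-1/2}.\]
Multiplying by $|\theta-\theta'|^2/\Delta$ yields the claim. For $j=2,4,5,6,8$, bound the square by $C(1+(y-x)^4/\Delta^2)\1\{x\text{ or }y\in[\rho_0+\theta',\rho_0+\theta]\}$. Condition on $x$ and use Corollary~\ref{cor:increment_moments} with Cauchy--Schwarz. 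What remains is $\Pr_0\bigl(X_{(k-1)\Delta}\in J\bigr)$ for an interval $J$ of length $|\theta-\theta'|$. By~\eqref{eq:bound_transition_density} this probability is at most $C|\theta-\theta'|/\sqrt{k\Delta}$ (the same holds with $k$ in place of $k-1$). Hence these cases give
\[C\,\frac{|\theta-\theta'|}{\sqrt{k\Delta}}.\]

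\textbf{Main obstacle.} The bound from Step 2 for $j=2,4,5,6,8$ is linear rather than quadratic in $|\theta-\theta'|$. The way out is to use that the interval constraint is two-sided: the transition from $x$ to $y$ must land in, or cross, a short interval. For $j=5$ both points lie in $J$, so the transition probability itself carries a factor $|\theta-\theta'|/\sqrt\Delta$. For $j=2,8$ the point $x$ lies on one side while $y\in J$, so again $\Pr(y\in J\mid x)\le C|\theta-\theta'|/\sqrt\Delta$. Combining this with the factor $|\theta-\theta'|/\sqrt{k\Delta}$ from the marginal of $x$ gives $|\theta-\theta'|^2/(\Delta\sqrt{k})$. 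In cases $j=4,6$ one point is in $J$ and the other crosses a threshold, so the same product structure should apply, with the crossing handled via the Gaussian tail in $(y-x)/\sqrt\Delta$.

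I expect verifying these conditional two-point estimates uniformly in the configuration, particularly for $j=4,6$, to be the delicate part. If the two-point argument fails there, the fallback is to exploit $|\theta-\theta'|\le 2K\sqrt\Delta$, which only gives the weaker linear bound; the quadratic bound would then have to come from the squared log-ratio itself being $O(|\theta-\theta'|^2/\Delta)$ in those cases.
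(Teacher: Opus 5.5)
Your framework (case-by-case bounds on the log-ratio from \eqref{eq:transition_density}, then Gaussian domination and Corollary~\ref{cor:bound_exp_X^2} for the factor $k^{-1/2}$) is the right one. However, you misidentify where the quadratic factor comes from in the cases $j=2,4,6,8$, and the two-point argument you propose there fails. Put $\eta=|\theta-\theta'|/\sqrt{\Delta_n^\gamma}\le 2K$ and $J=[\rho_0+\theta',\rho_0+\theta]$.

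\textbf{Cases $j=2,8$.} On $I_{2,k}^{\theta',\theta}$ and $I_{8,k}^{\theta',\theta}$ only $X_{k\Delta_n^\gamma}$ is confined to $J$. The point $X_{(k-1)\Delta_n^\gamma}$ merely lies on one side of the thresholds. Its marginal therefore contributes only the localization $k^{-1/2}$, not a second factor $|\theta-\theta'|/\sqrt{k\Delta_n^\gamma}$.

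\textbf{Cases $j=4,6$.} Here $X_{(k-1)\Delta_n^\gamma}\in J$ lies within $\eta\sqrt{\Delta_n^\gamma}$ of the threshold that $X_{k\Delta_n^\gamma}$ must cross. So the crossing has probability of order one, and no Gaussian tail is gained.

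In all four cases $\Pr_0(I_{j,k}^{\theta',\theta})$ is in general of order $\eta/\sqrt{k}$. With your Step~1 bound $O(1+(X_{k\Delta_n^\gamma}-X_{(k-1)\Delta_n^\gamma})^2/\Delta_n^\gamma)$ you cannot get beyond $\eta/\sqrt{k}$.

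\textbf{The missing idea.} It is the one you list only as a fallback, and it is what actually drives these cases: the (centered) log-ratio is itself $O(\eta)$.
\begin{itemize}
\item For $j=4$ and $j=6$, the first and fourth (resp.\ third and second) lines of \eqref{eq:transition_density} coincide when the starting point sits at the threshold. This is because $(1-\frac{\alpha_0-\beta_0}{\alpha_0+\beta_0})/\alpha_0=\frac{2}{\alpha_0+\beta_0}\frac{\beta_0}{\alpha_0}$ and $(1+\frac{\alpha_0-\beta_0}{\alpha_0+\beta_0})/\beta_0=\frac{2}{\alpha_0+\beta_0}\frac{\alpha_0}{\beta_0}$.
\item For $j=2$ and $j=8$, the first and third (resp.\ fourth and second) lines differ exactly by the factor $\beta_0^2/\alpha_0^2$ when the endpoint sits at the threshold. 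This is precisely why $\log(\beta_0^2/\alpha_0^2)$ is subtracted in \eqref{def:Z_28}.
\end{itemize}
The constrained point is within $\eta\sqrt{\Delta_n^\gamma}$ of both thresholds. The factors $1\pm\frac{\alpha_0-\beta_0}{\alpha_0+\beta_0}e^{-s}$ with $s\geq 0$ stay above $1-\frac{|\alpha_0-\beta_0|}{\alpha_0+\beta_0}>0$. A first-order expansion therefore gives, for $j=2,4,6,8$,
\[
|Z_k^j|\le C\eta\,\bigl(1+|X_{k\Delta_n^\gamma}-X_{(k-1)\Delta_n^\gamma}|/\sqrt{\Delta_n^\gamma}\bigr)\1_{I_{j,k}^{\theta',\theta}}.
\]
Squaring supplies $\eta^2$, and the event only has to supply $k^{-1/2}$. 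Only for $j=5$ is the log-ratio genuinely of order one: it tends to $\log(\beta_0^2/\alpha_0^2)$ and is not centered. Only there is your two-point argument the right mechanism.

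\textbf{Smaller points.}
\begin{itemize}
\item Your pointwise form $G(u,v)\lesssim(1+|u|+|v|)e^{-c\min(u^2,v^2)}$ is fine for $j=1,9$. In the crossing cases $j=3,7$, however, the log-ratio is a difference of quadratic exponents of size $O(\eta(1+|u|+|v|))$ with no exponential factor. There the localization must come from the probability of the crossing itself.
\item Cauchy--Schwarz applied unconditionally to the polynomial weight and the Gaussian factor only yields $k^{-1/4}$. Instead, condition on $X_{(k-1)\Delta_n^\gamma}$ and integrate the weight against \eqref{eq:bound_transition_density} to produce $\exp(-c'(X_{(k-1)\Delta_n^\gamma}-\rho_0-\theta')^2/\Delta_n^\gamma)$. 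Only then apply Corollary~\ref{cor:bound_exp_X^2}, as is done in \eqref{eq_proof:S_n11_2}.
\item For $k=1$ the point $X_0=x_0$ is deterministic, so the marginal factor you use for $j=5$ is unavailable. Indeed, for $x_0=\rho_0$ and $\theta'=0$ one gets $\E_0[(Z_1^5)^2]\asymp\eta$. So for $j=5$ the claimed bound can only hold for $k\geq 2$, and the $k=1$ term must be split off and bounded directly.
\end{itemize}
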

\begin{proof}
The proof is analogous to that of Proposition~$2.4$ in~\citeSM{App:Brutsche/Rohde}. In each step, the stepsize $1/n$ of this article has to be replaced by $\Delta_n^\gamma$, yielding the desired result.
\end{proof}

With Lemma~\ref{lemma:expansion_drift} and~\ref{lemma:moment_Z} we are now in a position to give the missing proof of (1A) in the intermediate regime $\gamma\in(0,1)$.

\begin{proof}[Proof of (1A) for $\gamma\in(0,1)$]
Recall that the coupling time $T_{\rho_0}^{x_0}$ given in~\eqref{eq:coupling_time} is independent of $n$ with $\Pr(T_{\rho_0}^{x_0}<\infty)=1$. Hence, for any $\epsilon>0$ we can choose $T_0\in\N$ such that $\Pr(T_{\rho_0}^{x_0}\leq T_0)>1-\epsilon$. It then remains to prove that for $\gamma\in (0,1)$,
\begin{align*}
\sup_{z\in [-K,K]}\sum_{k=1}^{\lceil T_0/\Delta_n^\gamma\rceil} \log\left(\frac{p_{\Delta_n^\gamma}^{\rho_0+zn^{-(1+\gamma)/2}, \alpha_0,\beta_0}(\overline{X}_{(k-1)\Delta_n^\gamma}^c,\overline{X}_{k\Delta_n^\gamma}^c)}{p_{\Delta_n^\gamma}^{\rho_0,\alpha_0,\beta_0}(\overline{X}_{(k-1)\Delta_n^\gamma}^c,\overline{X}_{k\Delta_n^\gamma}^c)}\right) \longrightarrow_{\Pr} 0.
\end{align*}
By using exactly the same reasoning, the same holds true for $\overline{X}^c$ replaced by $X^{\rho_0}$. For the proof, we use the drift and martingale decomposition~\eqref{eq:decomposition_drift_martingale}
\begin{align*}
&\sum_{k=1}^{\lceil T_0/\Delta_n^\gamma\rceil} \log\left(\frac{p_{\Delta_n^\gamma}^{\rho_0+zn^{-(1+\gamma)/2}, \alpha_0,\beta_0}(\overline{X}_{(k-1)\Delta_n^\gamma}^c,\overline{X}_{k\Delta_n^\gamma}^c)}{p_{\Delta_n^\gamma}^{\rho_0,\alpha_0,\beta_0}(\overline{X}_{(k-1)\Delta_n^\gamma}^c,\overline{X}_{k\Delta_n^\gamma}^c)}\right) \\
&\hspace{2cm} = B_{n,\gamma,(T_0/\Delta_n^\gamma+1)/n)}^{\alpha_0,\beta_0}\left(zn^{-(1+\gamma)/2}\right) + M_{n,\gamma,(T_0/\Delta_n^\gamma+1)/n}^{\alpha_0,\beta_0}\left(zn^{-(1+\gamma)/2}\right)
\end{align*} 
and prove that the supremum over $z\in [-K,K]$ of both summands converges to zero in probability.\smallskip

\textbf{Drift part. } By Lemma~\ref{lemma:expansion_drift} we directly obtain that
\begin{align*}
&\sup_{z\in [-K,K]} \left| B_{n,\gamma,(T_0/\Delta_n^\gamma+1)/n}^{\alpha_0,\beta_0}\left(zn^{-(1+\gamma)/2}\right)\right|\\
&\hspace{1cm}\leq C_{\alpha_0,\beta_0}Kn^{-(1+\gamma)/2} \frac{1}{\sqrt{\Delta_n^\gamma}}\sum_{k=1}^{\lceil T_0/\Delta_n^\gamma\rceil} \exp\left(-\frac{(\overline{X}^c_{(k-1)\Delta_n^\gamma}-\rho_0)^2}{2\Delta_n^\gamma \max\{\alpha_0^2,\beta_0^2\}}\right)\\
&\hspace{2cm} + \sup_{z\in [-K,K]}\left| r_{n,\gamma}\left( (T_0/\Delta_n^\gamma+1)/n, zn^{-(1+\gamma)/2}\right)\right|.
\end{align*}
Taking expectations on both side gives with Lemma~\ref{lemma:expansion_drift} and Corollary~\ref{cor:bound_exp_X^2}
\begin{align*}
&\E\left[ \sup_{z\in [-K,K]} \left| B_{n,\gamma,(T_0/\Delta_n^\gamma+1)/n}^{\alpha_0,\beta_0}\left(zn^{-(1+\gamma)/2}\right)\right|\right]\\ &\hspace{1cm}\leq C_{\alpha,\beta}Kn^{-(1+\gamma)/2} \frac{1}{\sqrt{\Delta_n^\gamma}}\sum_{k=1}^{\lceil T_0/\Delta_n^\gamma\rceil}\frac{1}{\sqrt{k}} + C_{\alpha_0,\beta_0} \frac{K^2 n^{-(1+\gamma)}}{\Delta_n^\gamma} \sqrt{\frac{T_0}{\Delta_n^\gamma}+1} \\
&\hspace{1cm}\leq C_{\alpha_0,\beta_0}(T_0,K) \left( n^{-(1-\gamma)/2} + n^{-1+\gamma/2}\right) \longrightarrow 0.
\end{align*}
\smallskip

\textbf{Martingale part. } Here, we could split the martingale part $M_{n,t}$ into nine different terms according to the nine regimes $I_{j,k}^{0,\theta}$ given in~\eqref{def:indicators}. Lemma~\ref{lemma:moment_Z} entails that the second moments of individual summands is then different for $j=1,3,4,5,6,7,9$ and $j=2,8$. For this reason, we split $M_{n,\gamma,t}$ into two parts, namely
\[ M_{n,\gamma,(T_0/\Delta_n^\gamma+1)/n}^{\alpha_0,\beta_0}(z\psi_n^\gamma) = M_{n,\gamma}^1(z) + M_{n,\gamma}^2(z),\]
where both $M_{n,\gamma}^1(z)$ and $M_{n,\gamma}^2(z)$ are sums of martingale differences and given as
\begin{align*}
M_{n,\gamma}^1(z) &= \sum_{j=1}^9 \sum_{k=1}^{\lceil T_0/\Delta_n^\gamma\rceil} \left( Z_k^j(0,z \psi_n^\gamma)  - \E\left[ \left. Z_k^j(0,z \psi_n^\gamma)  \right| \overline{X}^c_{(k-1)\Delta_n^\gamma}\right]\right)
\end{align*}
and
\begin{align*}
M_{n,\gamma}^2(z) &= \log\left(\frac{\beta_0^2}{\alpha_0^2}\right)  \sum_{k=1}^{\lceil T_0/\Delta_n^\gamma\rceil} \left( \1_{I_{2,k}^{0,z\psi_n^\gamma}} + \1_{I_{8,k}^{0,z\psi_n^\gamma}} - \E\left[ \left. \1_{I_{2,k}^{0,z\psi_n^\gamma}} + \1_{I_{8,k}^{0,z\psi_n^\gamma}}\right| \overline{X}^c_{(k-1)\Delta_n^\gamma}\right] \right),
\end{align*}
where $Z_k^j$ are given in~\eqref{def:Z_k_other} and~\eqref{def:Z_28} and $I_{2,k}^{0,\theta}$ and $I_{8,k}^{0,\theta}$ are defined in~\eqref{def:indicators}. In what follows, we will prove $\sup_{0\leq z\leq K} |M_{n}^i(z)|\rightarrow_{\Pr} 0$ for $i=1,2$. \smallskip

\begin{itemize}
\item[$\bullet\ \mathbf{M_{n,\gamma}^1}$.] We show stochastic convergence of $\sup_{z\in [0,K]}|M_{n,\gamma}^1(z)|$ to zero by proving
\[ \E\left[\sup_{z\in [0,K]}|M_{n,\gamma}^1(z)|\right] \longrightarrow 0, \]
which is done by chaining and variance bounds. By the special structure of the terms $Z_k^j$ we have for $0\leq z'\leq z\leq K$
\begin{align*}
M_{n,\gamma}^1(z)-M_{n,\gamma}^1(z') &= \sum_{j=1}^9 \sum_{k=1}^{\lceil T_0/\Delta_n^\gamma\rceil} \left( Z_k^j(z'\psi_n,z\psi_n)- \E\left[ \left. Z_k^j(z'\psi_n,z\psi_n) \right| \overline{X}^c_{(k-1)\Delta_n^\gamma}\right] \right).
\end{align*}  
Because the indicators for $Z_k^j,Z_k^l$ are disjoint for $j\neq l$ and the difference $M_{n,\gamma}^1(z)-M_{n,\gamma}^1(z')$ is a sum of martingale increments, iterative conditioning reveals
\begin{align}\label{eq_proof:M1_second_moment_difference}
\begin{split}
&\E\left[ \left( M_{n,\gamma}^1(z)-M_{n,\gamma}^1(z')\right)^2\right] \\
&\hspace{0.5cm} = \sum_{j=1}^9 \sum_{k=1}^{\lceil T_0/\Delta_n^\gamma\rceil} \E\left[ \left( Z_k^j(z'\psi_n^\gamma,z\psi_n^\gamma)- \E\left[ \left. Z_k^j(z'\psi_n^\gamma,z\psi_n^\gamma) \right| \overline{X}^c_{(k-1)\Delta_n^\gamma}\right] \right)^2\right] \\
&\hspace{0.5cm} \leq \sum_{j=1}^9 \sum_{k=1}^{\lceil T_0/\Delta_n^\gamma\rceil} \E\left[  Z_k^j(z'\psi_n^\gamma,z \psi_n^\gamma)^2\right] \\
&\hspace{0.5cm} \leq C_{\alpha_0,\beta_0} |z-z'|^2\psi_n^2 \frac{1}{\Delta_n^\gamma}\sum_{k=1}^{\lceil T_0/\Delta_n^\gamma\rceil} \frac{1}{\sqrt{k}} \leq C_{\alpha_0,\beta_0}(T_0)|z-z'|^2 n^{-(1-\gamma/2)},
\end{split}
\end{align}
where the second last step applied Lemma~\ref{lemma:moment_Z}. Next, we apply a chaining with variance bounds, i.e. with the Young-Orlicz function $\psi(x)=x^2$. To this aim, we define
\[ \rho_n(z,z') := Cn^{-1/2+\gamma/4}|z-z'|,\]
where $C^2$ equals the constant on the right-hand side of~\eqref{eq_proof:M1_second_moment_difference}, ensuring the moment bound $\E[ ( M_{n,\gamma}^1(z)-M_{n,\gamma}^1(z'))^2]\leq \rho_n(z,z')^2$. Denoting 
\[ D(u,\mathcal{T},\rho_n) := \max\{\#\mathcal{T}_0: \mathcal{T}_0\subset\mathcal{T}, \rho_n(t,t')\geq u \textrm{ for different } t,t'\in\mathcal{T}_0\},\]
then yields
\begin{align*}
\E\left[ \sup_{0\leq z\leq K}\left|M_{n,\gamma}^1(z)\right|\right] &\leq \int_0^{CKn^{-1/2+\gamma/4}} \sqrt{ D(u, [0,K],\rho_n)} du \\
&\leq \int_0^{CKn^{-1/2+\gamma/4}} \sqrt{CKn^{-1/2+\gamma/4} u^{-1}} du \\
&= 2CKn^{-1/2+\gamma/4}\longrightarrow 0.
\end{align*}

\smallskip
\item[$\bullet\ \mathbf{M_{n,\gamma}^2}$.] In order to prove $\sup_{0\leq z\leq K}|M_{n,\gamma}(z)^2|\rightarrow_{\Pr}0$, we will establish that its expectation
\[  \E\left[\sup_{z\in [0,K]} \sum_{k=1}^{\lceil T_0/\Delta_n^\gamma\rceil} \left( \1_{I_{2,k}^{0,z\psi_n^\gamma}} + \1_{I_{8,k}^{0,z\psi_n^\gamma}} + \E\left[ \left. \1_{I_{2,k}^{0,z\psi_n^\gamma}} + \1_{I_{8,k}^{0,z\psi_n^\gamma}}\right| \overline{X}^c_{(k-1)\Delta_n^\gamma}\right] \right)\right] \]
converges to zero. In order to bound this term we first use $\1_{I_{2,k}^{0,z\psi_n^\gamma}} \leq \1_{I_{2,k}^{0,K\psi_n^\gamma}}$
which yields
\begin{align*}
&\sup_{z\in [0,K]}  \sum_{k=1}^{\lceil T_0/\Delta_n^\gamma\rceil} \left( \1_{I_{2,k}^{0,z\psi_n^\gamma}} + \E\left[ \left. \1_{I_{2,k}^{0,z\psi_n^\gamma}} \right| \overline{X}^c_{(k-1)\Delta_n^\gamma}\right] \right) \\
&\hspace{1cm} \leq \sup_{z\in [0,K]}  \sum_{k=1}^{\lceil T_0/\Delta_n^\gamma\rceil} \left( \1_{I_{2,k}^{0,K\psi_n^\gamma}} + \E\left[ \left. \1_{I_{2,k}^{0,K\psi_n^\gamma}} \right| \overline{X}^c_{(k-1)\Delta_n^\gamma}\right] \right).
\end{align*}
By the upper bound~\eqref{cor:bound_exp_X^2} on the transition density we obtain the moment bound
\begin{align*}
\E\left[ \1_{I_{2,k}^{0,K\psi_n^\gamma}} \right] &\leq C_{\alpha_0,\beta_0} \E\left[ \1_{\{X_{(k-1)\Delta_n^\gamma}<\rho_0\}}\int_{\rho_0}^{\rho_0+K\psi_n^\gamma} \exp\left(-\frac{(\overline{X}^c_{(k-1)\Delta_n^\gamma}-y)^2}{2\max\{\alpha_0^2,\beta_0^2\}\Delta_n^\gamma}\right) dy \right] \\
&\leq C_{\alpha_0,\beta_0}K \psi_n^\gamma \E\left[ \exp\left(-\frac{(\overline{X}^c_{(k-1)\Delta_n^\gamma}-\rho_0)^2}{2\max\{\alpha_0^2,\beta_0^2\}\Delta_n^\gamma}\right)\right]  \leq \frac{C_{\alpha_0,\beta_0}K\psi_n^\gamma}{\sqrt{k}},
\end{align*}
which then yields
\begin{align*}
&\E\left[ \sup_{z\in [0,K]}  \sum_{k=1}^{\lceil T_0/\Delta_n^\gamma\rceil} \left( \1_{I_{2,k}^{0,z\psi_n^\gamma}} + \E\left[ \left. \1_{I_{2,k}^{0,z\psi_n^\gamma}} \right| \overline{X}^c_{(k-1)\Delta_n^\gamma}\right] \right) \right] \\
&\hspace{1cm} \leq C_{\alpha_0,\beta_0}K\psi_n^\gamma \sum_{k=1}^{\lceil T_0/\Delta_n^\gamma\rceil} \frac{1}{\sqrt{k}} \leq C_{\alpha_0,\beta_0}Kn^{-1/2}\rightarrow 0.
\end{align*}
Next, we use the inequality
\[ \sup_{z\in [0,K]}\1_{I_{8,k}^{0,z\psi_n^\gamma}} \leq \1_{\{\rho_0<\overline{X}^c_{k\Delta_n^\gamma}\leq K\psi_n^\gamma\}}\1_{\{\overline{X}^c_{(k-1)\Delta_n^\gamma}\geq \rho_0\}}.\]
Then again by applying Corollary~\eqref{cor:bound_exp_X^2},
\begin{align*}
&\E\left[\1_{\{\rho_0<\overline{X}^c_{k\Delta_n^\gamma}\leq K\psi_n^\gamma\}}\1_{\{\overline{X}^c_{(k-1)\Delta_n^\gamma}\geq \rho_0\}}\right]\\
&\hspace{0.5cm} \leq C_{\alpha_0,\beta_0}\E\left[\1_{\{\overline{X}^c_{(k-1)\Delta_n^\gamma}\geq \rho_0\}} \int_{\rho_0}^{\rho_0+K\psi_n^\gamma} \exp\left(-\frac{(\overline{X}^c_{(k-1)\Delta_n^\gamma}-y)^2}{2\max\{\alpha_0^2,\beta_0^2\}\Delta_n^\gamma}\right) dy \right] \\
&\hspace{0.5cm} \leq C_{\alpha_0,\beta_0} K\psi_n^\gamma \E\left[ \exp\left(-\frac{(\overline{X}^c_{(k-1)\Delta_n^\gamma}-\rho_0-K\psi_n^\gamma)^2}{2\max\{\alpha_0^2,\beta_0^2\}\Delta_n^\gamma}\right)\right] \\
&\hspace{1cm} + C_{\alpha_0,\beta_0} \E\left[ \1_{\{\rho_0\leq \overline{X}^c_{(k-1)\Delta_n^\gamma}\leq \rho_0+K\psi_n^\gamma\}}\right] \\
&\hspace{0.5cm} \leq \frac{C_{\alpha_0,\beta_0}K\psi_n^\gamma}{\sqrt{k}} + C_{\alpha_0,\beta_0}\int_{\rho_0}^{\rho_0+K\psi_n^\gamma} \frac{1}{\sqrt{k}}\exp\left(-\frac{(y-x_0)^2}{2k\Delta_n^\gamma\max\{\alpha_0^2,\beta_0^2\}}\right) dy\\
&\hspace{0.5cm} \leq \frac{C_{\alpha_0,\beta_0}K\psi_n^\gamma}{\sqrt{k}}.
\end{align*}
Consequently, we also obtain
\begin{align*}
&\E\left[ \sup_{z\in [0,K]}  \sum_{k=1}^{\lceil T_0/\Delta_n^\gamma\rceil} \left( \1_{I_{8,k}^{0,z\psi_n^\gamma}} + \E\left[ \left. \1_{I_{8,k}^{0,z\psi_n^\gamma}} \right| \overline{X}^c_{(k-1)\Delta_n^\gamma}\right] \right) \right] \\
&\hspace{1cm} \leq C_{\alpha_0,\beta_0}K\psi_n^\gamma \sum_{k=1}^{\lceil T_0/\Delta_n^\gamma\rceil} \frac{1}{\sqrt{k}} \leq C_{\alpha_0,\beta_0}Kn^{-1/2}\rightarrow 0
\end{align*}
and the proof is complete.
\end{itemize}
\end{proof}

In the next result, we establish that the rates in Theorem~\ref{thm:weak_convergence} and Proposition~\ref{prop:rate_of_triple_MLE} are indeed minimax optimal. Note that the proof is quite short which is due to the fact that it makes use of the drift expansion provided by Lemma~\ref{lemma:expansion_drift} which itself is a highly non-trivial result.

\begin{proposition}\label{prop:minimax_lower}
Let $\gamma\in [0,1)$. Then the estimation rate $\psi_n^\gamma=n^{-(1+\gamma)/2}$ for $\rho$ is minimax optimal, i.e. for any $s>0$ we have
\[ \liminf_{n\to\infty} \inf_{T_n^\rho} \sup_{\rho} \Pr_{\rho,\alpha,\beta} \left( n^{(1+\gamma)/2}\left| T_n^\rho - \rho\right|>s\right) >0,\]
where the infimum is taken over all estimators $T_n^\rho$ based on $X_0,X_{\Delta_n^\gamma},\dots, X_{n\Delta_n^\gamma}$.
\end{proposition}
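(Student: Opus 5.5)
We reduce the minimax statement to a two-point testing problem (Le Cam's method), as in Theorem~2.2 of Tsybakov. Fix $\alpha,\beta$ and the starting point $x_0$. Take the hypotheses $\rho_0$ and $\rho_1=\rho_0+2s\psi_n^\gamma$ with $\psi_n^\gamma=n^{-(1+\gamma)/2}$. Any estimator $T_n^\rho$ that lies within $s\psi_n^\gamma$ of both $\rho_0$ and $\rho_1$ is impossible. Hence
\[ \sup_{\rho\in\{\rho_0,\rho_1\}}\Pr_{\rho,\alpha,\beta}\big(n^{(1+\gamma)/2}|T_n^\rho-\rho|>s\big) \geq \tfrac12\big(1-\|\Pr^n_{\rho_0}-\Pr^n_{\rho_1}\|_{TV}\big), \]
where $\Pr^n_\rho$ denotes the law of $(X_0,X_{\Delta_n^\gamma},\dots,X_{n\Delta_n^\gamma})$. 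It therefore suffices to show $\limsup_n\|\Pr^n_{\rho_0}-\Pr^n_{\rho_1}\|_{TV}<1$.

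We bound the total variation by Pinsker's inequality, $\|P-Q\|_{TV}\leq\sqrt{KL(Q\|P)/2}$. The likelihood ratio factorizes over transitions, so
\[ KL(\Pr^n_{\rho_0}\|\Pr^n_{\rho_1}) = -\E_0\big[\ell_{n,\gamma}^{\alpha,\beta}(\theta)\big] = -\E_0\big[B_{n,\gamma,1}^{\alpha,\beta}(\theta)\big], \qquad \theta=2s\psi_n^\gamma, \]
because the martingale part $M$ has mean zero. For $n$ large we have $|\theta|\le K\sqrt{\Delta_n^\gamma}$, since $\psi_n^\gamma/\sqrt{\Delta_n^\gamma}=n^{-1/2}\to0$. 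Lemma~\ref{lemma:expansion_drift} therefore applies and gives
\[ -\E_0[B_{n,\gamma,1}(\theta)] \le |\theta|\,C_{\alpha,\beta}\,\E_0\big[\Lambda_{n,\gamma}^{\alpha,\beta}\big] + C(K)\frac{\theta^2\sqrt n}{\Delta_n^\gamma}. \]

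It remains to bound the two terms. By Corollary~\ref{cor:bound_exp_X^2}, conditioning on $X_0$, each summand of $\E_0[\Lambda]$ is at most $C/\sqrt{\Delta_n^\gamma(k-1)}$; the $k=1$ term is bounded by $1/\sqrt{\Delta_n^\gamma}$. This gives $\E_0[\Lambda]\le C\sqrt{n/\Delta_n^\gamma}=Cn^{(1+\gamma)/2}$. Hence the first term is $\le C s$. The remainder equals $C s^2 n^{-(1+\gamma)}n^{1/2}n^{\gamma}=Cs^2n^{-1/2}\to0$. So $\limsup_n KL\le C_{\alpha,\beta}s$. For $s$ small this makes the total variation bounded away from $1$, which yields a positive $\liminf$.

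For general $s>0$, the bound $C_{\alpha,\beta}s$ need not be small, so Pinsker alone is not enough. Instead we use the bound $\|P-Q\|_{TV}\le1-\tfrac12e^{-KL}$ (Tsybakov, Lemma~2.6), which stays below $1$ for any finite $KL$. This handles every fixed $s$ directly.

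The main obstacle is the legitimacy of applying Lemma~\ref{lemma:expansion_drift} with the expectation. The sign conventions must be checked: $F$ and $\tilde F$ are constants and $\Lambda\ge0$, so only an upper bound on $|F|$ is needed. We also need the KL identity to hold even though the densities are discontinuous. This is fine, since each transition density is strictly positive and log-integrable by the Gaussian bound~\eqref{eq:bound_transition_density}.
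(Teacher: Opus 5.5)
Your proposal matches the paper's proof: a two-point Le Cam reduction with $\rho_0$ versus $\rho_0+2s\psi_n^\gamma$, the Kullback--Leibler divergence written as $-\E_0[B_{n,\gamma,1}^{\alpha,\beta}(2s\psi_n^\gamma)]$ and bounded by $Cs+Cs^2n^{-1/2}$ via Lemma~\ref{lemma:expansion_drift} and Corollary~\ref{cor:bound_exp_X^2}, and finally the $\tfrac14 e^{-\mathrm{KL}}$ lower bound on the testing error, which is what the paper gets by citing Tsybakov's Theorem~2.2 and (2.9). One small point, which the paper shares: with the strict event $\{n^{(1+\gamma)/2}|T_n^\rho-\rho|>s\}$ and separation exactly $2s\psi_n^\gamma$, the deterministic estimator $T_n^\rho\equiv\rho_0+s\psi_n^\gamma$ makes both probabilities zero, so your displayed two-point inequality fails as written; taking the separation to be, e.g., $3s\psi_n^\gamma$ fixes this and changes your KL bound only by constants.
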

\begin{proof}
By Theorem~$2.2$ and $(2.9)$ in~\citeSM{App:Tsybakov} it is enough to prove that the Kullback divergence $D_{KL}(\Pr_{\rho_0,\alpha_0,\beta_0},\Pr_{\rho_0+2s\psi_n^\gamma,\alpha_0,\beta_0})$ remains bounded as $n\to\infty$. Using Lemma~\ref{lemma:expansion_drift} and Corollary~\ref{cor:bound_exp_X^2}, we obtain
\begin{align*}
&D_{KL}\left(\Pr_{\rho_0,\alpha_0,\beta_0},\Pr_{\rho_0+2s\psi_n^\gamma,\alpha_0,\beta_0}\right)\\
&\hspace{0.3cm} = \E_{\rho_0,\alpha_0,\beta_0}\left[ \log\left(\frac{\prod_{k=1}^n p_{\Delta_n^\gamma}^{\rho_0,\alpha_0,\beta_0}(X_{(k-1)\Delta_n^\gamma},X_{(k-1)\Delta_n^\gamma})}{\prod_{k=1}^n p_{\Delta_n^\gamma}^{\rho_0+2s\psi_n^\gamma,\alpha_0,\beta_0}(X_{(k-1)\Delta_n^\gamma},X_{(k-1)\Delta_n^\gamma})}\right) \right]\\
&\hspace{0.3cm} = - \E_{\rho_0,\alpha_0,\beta_0}\left[ \sum_{k=1}^n \E_{\rho_0,\alpha_0,\beta_0}\left[\left.\log\left(\frac{p_{\Delta_n^\gamma}^{\rho_0+2s\psi_n^\gamma,\alpha_0,\beta_0}(X_{(k-1)\Delta_n^\gamma},X_{(k-1)\Delta_n^\gamma})}{p_{\Delta_n^\gamma}^{\rho_0,\alpha_0,\beta_0}(X_{(k-1)\Delta_n^\gamma},X_{(k-1)\Delta_n^\gamma})} \right) \right|X_{(k-1)\Delta_n^\gamma}\right]\right] \\
&\hspace{0.3cm} \leq 2s \max\{F_{\alpha_0,\beta_0}, \tilde{F}_{\alpha_0,\beta_0}\} \E_{\rho_0,\alpha_0,\beta_0}\left[ \psi_n^\gamma \Lambda_{n,\gamma}^{\alpha_0,\beta_0}\left( (X_{k\Delta_n^\gamma})_{k=0,1,\dots,n}\right)\right]\\
&\hspace{1.5cm}+ \E_{\rho_0,\alpha_0,\beta_0}\left[ r_{n,\gamma}(1,2s\psi_n^\gamma)\right] \\
&\hspace{0.3cm} \leq C_{\alpha_0,\beta_0}s \frac{\psi_n^\gamma}{\sqrt{\Delta_n^\gamma}} \sum_{k=1}^n \E_{\rho_0,\alpha_0,\beta_0}\left[ \exp\left(-\frac{(X_{(k-1)\Delta_n^\gamma}-\rho_0)^2}{2\max\{\alpha_0^2,\beta_0^2\}\Delta_n^\gamma}\right)\right] + C_{\alpha_0,\beta_0}s^2\frac{(\psi_n^\gamma)^2 \sqrt{n}}{\Delta_n^\gamma} \\
&\hspace{0.3cm} \leq C_{\alpha_0,\beta_0}(s) \left( \frac{1}{\sqrt{n}}\sum_{k=1}^n \frac{1}{\sqrt{k}} + \frac{1}{\sqrt{n}}\right),
\end{align*}
which remains bounded as $n\to\infty$.
\end{proof}

\section{Proof of (2A) for $\gamma\in (0,1)$}\label{Section:Proof_2A}

In this section, the tightness in (2A) is proven for the case $\gamma\in (0,1)$. As compared to~\citeSM{App:Brutsche/Rohde} which covers the high-frequency regime $\gamma=1$, the innovation consists of describing typical path properties of $(X_t)_{0\leq t\leq n\Delta_n^\gamma}$ via self-similarity and coupling that establishes a connection between the high-frequency regime and that for all other $\gamma\in (0,1)$. An instructive example of this was already given in the important Lemma~\ref{lemma:probability_A3} in the main paper. In the same spirit, we have the following preliminary result.

\begin{lemma}\label{lemma:probability_A6}
Let $\epsilon>0$, $0\leq L<K$ and $X$ be a solution of~\eqref{eq:SDE_OBM} with $\sigma_{\rho_0,\alpha_0,\beta_0}$ and $X_0=x_0$. Then there exists a constant $\eta=\eta_\epsilon>0$ independent of $L,K$ such that
\[ \liminf_{n\to\infty}\Pr_0\left(  \frac{1}{\sqrt{n}}\sum_{k=1}^n \1_{\{\rho_0+Ln^{-\gamma/2}< X_{(k-1)\Delta_n^\gamma}<\rho_0+Kn^{-\gamma/2}\}} > \left(K-L\right)\beta_0^{-2}\eta \right) >1-\epsilon.\]
\end{lemma}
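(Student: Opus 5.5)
The plan is to reduce the count to an occupation-time integral and then to local times, in the same way as Lemma~\ref{lemma:estimate_counts_interval}. After that, the required lower bound on the local time comes from Lemma~\ref{lemma:probability_A3}. The interval has length $(K-L)n^{-\gamma/2}=(K-L)\sqrt{\Delta_n^\gamma}$, and $\sqrt{\Delta_n^\gamma}\ll(\Delta_n^\gamma)^{4/9}$ for small $\Delta_n^\gamma$. So the discretisation trick of that lemma, which shrinks the interval by $(\Delta_n^\gamma)^{4/9}$, is not usable here. Instead we take a conditional-expectation (drift) plus martingale route, working on the scale of a single step.

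First, write $\1_{\{X_{(k-1)\Delta_n^\gamma}\in J_n\}}$ with $J_n=(\rho_0+Ln^{-\gamma/2},\rho_0+Kn^{-\gamma/2})$. Compare the sum $S_n=\sum_{k=1}^n \1_{\{X_{(k-1)\Delta_n^\gamma}\in J_n\}}$ with the smoothed version
\[
\tilde S_n=\sum_{k=1}^n \frac{1}{\Delta_n^\gamma}\int_{(k-1)\Delta_n^\gamma}^{k\Delta_n^\gamma}\Pr_0\bigl(X_{(k-1)\Delta_n^\gamma}\in J_n\,\big|\,X_{(k-2)\Delta_n^\gamma}\bigr)\,ds,
\]
or, more simply, compare $S_n$ with its predictable compensator $A_n=\sum_k \Pr_0(X_{(k-1)\Delta_n^\gamma}\in J_n\mid X_{(k-2)\Delta_n^\gamma})$. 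The difference $S_n-A_n$ is a martingale with quadratic variation at most $A_n$. By Corollary~\ref{cor:bound_exp_X^2}, $\E_0[A_n]\le C(K-L)\sum_k k^{-1/2}\le C(K-L)\sqrt n$, so $(S_n-A_n)/\sqrt n=O_\Pr(n^{-1/4})$ is negligible.

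Second, bound $A_n$ from below. Using the explicit density \eqref{eq:transition_density}, for $|x-\rho_0|\le M\sqrt{\Delta_n^\gamma}$ the one-step probability of landing in $J_n$ is at least $c_M(K-L)\beta_0^{-1}$. Here $c_M$ is a Gaussian lower bound, uniform in $0\le L<K\le$ fixed bounds; its dependence on $K$ enters only through $\exp(-(M+K)^2/\cdot)$, and we need to take care of that. The count of $k$ with $|X_{(k-2)\Delta_n^\gamma}-\rho_0|\le M\sqrt{\Delta_n^\gamma}$ is $\gtrsim \sqrt n$ with high probability. This follows by applying Lemma~\ref{lemma:estimate_counts_interval} with $a=b=M\sqrt{\Delta_n^\gamma}$, which is allowed because $2M\sqrt{\Delta_n^\gamma}>3(\Delta_n^\gamma)^{4/9}$ fails. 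That is precisely the obstacle noted above. So instead we get the $\sqrt n$ occupation count directly from self-similarity. By \eqref{eq:distributional_identity} and the coupling of Step~II, the count has the law of $\sum_{k}\1_{\{|X_{(k-1)/n}-\rho_0|\le M/\sqrt n\}}$ for an OBM on $[0,1]$ started at $\rho_0$, which by the infill local-time approximation (as in \cite{Brutsche/Rohde}, Lemma~4.1) behaves like $\sqrt n\,c\,M L_1^{\rho_0}(X)$. Since $L_1^{\rho_0}>0$ a.s., choosing $\eta$ from a lower quantile of $L_1^{\rho_0}$ gives probability $>1-\epsilon$.

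The main obstacle is to make $\eta$ independent of $L,K$, and to avoid the $(\Delta_n^\gamma)^{4/9}$ restriction. The cleanest route is to do the whole statement through the self-similarity: reduce via the coupling to $X_0=\rho_0$, with the pre-coupling observations contributing $O_\Pr(n^{\gamma/2})=o(\sqrt n)$ terms. Then rewrite the count as $\sum_k \1_{\{X_{(k-1)/n}-\rho_0\in (L/n,K/n)\}}$ in infill scale, and invoke the infill approximation of $\frac1{\sqrt n}\sum_k\1$ over shrinking intervals by $(K-L)\beta_0^{-2}L_1^{\rho_0}(X)$ times a constant. Then choose $\eta$ with $\Pr(L_1^{\rho_0}>\eta')>1-\epsilon$, which is independent of $L,K$.
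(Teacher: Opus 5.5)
Your final route is essentially the paper's proof, and it makes your compensator/martingale detour and the $M$-window with its $K$-dependent constant unnecessary. That route is: couple to reduce to $X_0=\rho_0$ (the pre-coupling indices contribute only $O_{\Pr}(n^{\gamma/2})$ to the count, by the Gaussian bound \eqref{eq:bound_transition_density}); pass to the infill scheme by self-similarity; apply the infill local-time approximation of \cite{Brutsche/Rohde} (their Lemma~3.2) to $\1_{[L,K]}$; and take $\eta$ from a lower quantile of $L_1^{\rho_0}$, which is automatically free of $L,K$. One slip: in infill scale the window is $(L/\sqrt n,K/\sqrt n)$, matching your earlier $M/\sqrt n$ computation, not $(L/n,K/n)$, for which the normalized count would tend to zero.
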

\begin{proof}
Assume that $X$ is started in $x_0$ and recall the coupling time $T_{\rho_0}^{x_0}$ given in~\eqref{eq:coupling_time}. Then, we have for the coupled process $\overline{X}^c$ defined in~\eqref{eq:coupled_process} the distributional identity
\begin{align}\label{eq_proof:A6}
\begin{split}
& \frac{1}{\sqrt{n}}\sum_{k=1}^n \1_{\{\rho_0+Ln^{-\gamma/2}< X_{(k-1)\Delta_n^\gamma}<\rho_0+Kn^{-\gamma/2}\}} \\
&\hspace{0.5cm} \stackrel{\mathcal{D}}{=}\frac{1}{\sqrt{n}}\sum_{k=1}^{\lceil T(\Delta_n^\gamma)^{-1}\rceil} \1_{\{\rho_0+Ln^{-\gamma/2}< \overline{X}_{(k-1)\Delta_n^\gamma}^c<\rho_0+Kn^{-\gamma/2}\}} -\1_{\{\rho_0+Ln^{-\gamma/2}< X_{(k-1)\Delta_n^\gamma}^{\rho_0}<\rho_0+Kn^{-\gamma/2}\}} \\
&\hspace{1.5cm} +\frac{1}{\sqrt{n}} \sum_{k=1}^{n} \1_{\{\rho_0+Ln^{-\gamma/2}< X_{(k-1)\Delta_n^\gamma}^{\rho_0}<\rho_0+Kn^{-\gamma/2}\}}
\end{split}
\end{align}
As $\Pr(T_{\rho_0}^{x_0}<\infty)=1$, there exists $T_0>0$ such that $\Pr(T_{\rho_0}^{x_0}<T_0)>1-\epsilon/4$. Using the upper bound of the transition density~\eqref{eq:bound_transition_density}, we find 
\begin{align*}
&\E\left[ \frac{1}{\sqrt{n}} \left|\sum_{k=1}^{\lceil T_0(\Delta_n^\gamma)^{-1}\rceil} \hspace{-0.3cm}\1_{\{\rho_0+Ln^{-\gamma/2}< \overline{X}_{(k-1)\Delta_n^\gamma}^c<\rho_0+Kn^{-\gamma/2}\}} -\1_{\{\rho_0+Ln^{-\gamma/2}< X_{(k-1)\Delta_n^\gamma}^{\rho_0}<\rho_0+Kn^{-\gamma/2}\}} \right| \right] \\
&\hspace{0.5cm} \leq \frac{C_{\alpha_0,\beta_0}}{\sqrt{n}}\sum_{k=1}^{\lceil T_0(\Delta_n^\gamma)^{-1}\rceil} \int_{Ln^{-\gamma/2}}^{Kn^{-\gamma/2}} \frac{1}{\sqrt{k\Delta_n^\gamma}} \exp\left(-\frac{\min\{(y-x_0)^2,(y-\rho_0)^2\}}{2\max\{\alpha_0^2,\beta_0^2\}k\Delta_n^\gamma}\right) dy\\
&\hspace{0.5cm} \leq \frac{C_{\alpha_0,\beta_0}}{\sqrt{n}}(K-L) n^{-\gamma/2} (\Delta_n^\gamma)^{-1/2} \sum_{k=1}^{\lceil T_0(\Delta_n^\gamma)^{-1}\rceil} \frac{1}{\sqrt{k}} \leq C_{\alpha_0,\beta_0}(K-L)(T_0+1) \frac{1}{\sqrt{n\Delta_n^\gamma}},
\end{align*}
which converges to zero, i.e. the first summand in~\eqref{eq_proof:A6} converges stochastically to zero and it suffices to prove that
\[ \Pr\left(\frac{1}{\sqrt{n}} \sum_{k=1}^{n} \1_{\{\rho_0+Ln^{-\gamma/2}< X_{(k-1)\Delta_n^\gamma}^{\rho_0}<\rho_0+Kn^{-\gamma/2}\}} > 2(K-L)\beta_0^{-2}\eta\right)>1-\epsilon/4.\] 
Applying the self-similarity of Lemma~\ref{lemma:self_similarity}, we have for an OBM $Y$ started in $Y_0=\rho_0$ with diffusion coefficient $\sigma_{\rho_0,\alpha_0,\beta_0}$ that this is a consequence of
\[ \Pr\left( \frac{1}{\sqrt{n}} \sum_{k=1}^n \1_{\{\rho_0+L/\sqrt{n}< Y_{(k-1)/n}<\rho_0+K/\sqrt{n}\}} > 2(K-L)\beta_0^{-2}\eta\right) >1-\epsilon/4,\]
which follows from Lemma~$3.2$ in~\citeSM{App:Brutsche/Rohde} applied to the function $\1_{[L,K]}$ for $n$ sufficiently large and the fact that for any $\epsilon>0$ we can choose $\eta$ small enough such that $\Pr(L_1^{\rho_0}(Y)>4\eta)>1-\epsilon/4$  as seen in the proof of Lemma~\ref{lemma:probability_A3} by continuity of measures and the fact that $\Pr(L_1^{\rho_0}(Y)>0)=1$. Note in particular that this choice of $\eta$ does not depend on $K$ and $L$.
\end{proof}

We now turn to the proof of (2A). Here, we first introduce three events that describe typical behavior of the process $(X_t)_{0\leq t\leq n\Delta_n^\gamma}$. Let $\epsilon>0$. 

\begin{itemize}
\item Define
\begin{align}\label{eq:set_A1}
A_1(n) := \left\{ -\Gamma \sqrt{n\Delta_n^\gamma} < \inf_{0\leq t\leq n\Delta_n^\gamma}\overline{X}_t^c, \sup_{0\leq t\leq n\Delta_n^\gamma}\overline{X}_t^c < \Gamma \sqrt{n\Delta_n^\gamma} \right\}.
\end{align}
As by Jensen's inequality, the Burkholder-Davis-Gundy inequality and It\^{o}'s isometry,
\[ \E\left[ \sup_{0\leq t\leq n\Delta_n^\gamma} |\overline{X}_t^c|\right] \leq \E\left[\left( \int_0^{n\Delta_n^\gamma} \sigma_{\rho_0,\alpha_0,\beta_0}(\overline{X}_s^c) dW_s\right)^2\right]^\frac12 \leq C_{\alpha_0,\beta_0}\sqrt{n\Delta_n^\gamma},\]
there exists a constant $\Gamma>0$ such that for large enough $n$ we have the bound $\Pr_0(A_1(n)) >1-\epsilon$.

\item We define the event
\begin{align}\label{eq:set_A3}
A_3(n) = \left\{\inf_{y\in [\rho_0-\zeta\sqrt{n\Delta_n^\gamma}, \rho_0+\zeta\sqrt{n\Delta_n^\gamma}]} L_{n\Delta_n^\gamma}^y(\overline{X}^c) >\sqrt{n\Delta_n^\gamma} \xi\right\}. 
\end{align} 
Proving that $\zeta$ and $\xi$ can be chosen such that $\Pr(A_3(n))>1-\epsilon$ is stated in Lemma~\ref{lemma:probability_A3}.

\item Motivated by the Hölder-continuity of each sampling path for any order strictly less than $1/2$, we define the event
\begin{align}\label{eq:set_A4}
A_4(n) := \left\{ \sup_{|t-s|<\Delta_n^\gamma} |\overline{X}_t^c - \overline{X}_s^c| \leq (\Delta_n^\gamma)^{4/9} \right\}.
\end{align}
Applying Markov's inequality and Theorem~$1$ in~\citeSM{App:Fischer/Nappo} yields existence of a constant $\tilde{C}>0$ such that
\begin{align*}
\Pr\left(\sup_{|t-s|<\Delta_n^\gamma} |\overline{X}_t^c - \overline{X}_s^c| > (\Delta_n^\gamma)^{4/9} \right) &\leq (\Delta_n^\gamma)^{-4/9} \E\left[ \sup_{|t-s|<\Delta_n^\gamma} |\overline{X}_t^c - \overline{X}_s^c|\right] \\
& \leq \tilde{C}(\Delta_n^\gamma)^{1/18}\sqrt{\log(2n)} \longrightarrow 0.
\end{align*} 
From this, we deduce that there exists $n_0\in\N$ such that $\Pr(A_4(n))>1-\epsilon$ for all $n\geq n_0$.
\end{itemize}

We omit an event $A_2$ that has no corresponding analogue in the high-frequency proof and is somehow absorbed into $A_3(n)$. This makes it easier to compare the proofs for different sampling schemes. As a first step towards (2A), we prove
\begin{align}\label{eq_proof:sqrt(n)_consistency}
\left| \hat{\rho}_n - \rho_0\right| = \mathcal{O}_{\Pr}\left(n^{-\gamma/2}\right).
\end{align}
To this aim, recall the normalized log-likelihood $\ell_n^{\alpha_0,\beta_0}(\theta)$ given in~\eqref{eq:def_ell_rho} and fix an arbitrary $\epsilon>0$. As $\ell_n^{\alpha_0,\beta_0}(0)=0$ and $n^{(1+\gamma)/2}(\hat{\rho}_n-\rho_0)\in\mathrm{Argsup}_{z\in\R}\ell_n^{\alpha_0,\beta_0}(zn^{-(1+\gamma)/2})$ by definition, the property $|\hat{\rho}_n -\rho_0|>Kn^{-\gamma/2}$ implies $\sup_{|\theta|>Kn^{-\gamma/2}}\ell_n^{\alpha_0,\beta_0}(\theta)\geq 0$. Consequently, we are going to show that
\begin{align*}
&\limsup_{K\to\infty} \limsup_{n\to\infty} \Pr\left( n^{\gamma/2}|\hat{\rho}_n - \rho_0|> K\right) \\
&\hspace{2cm} \leq \limsup_{K\to\infty} \limsup_{n\to\infty}\Pr\left( \sup_{n^{\gamma/2}|\theta|>K} \ell_{n,\gamma}^{\alpha_0,\beta_0}(\theta) \geq 0 \right) < \epsilon,
\end{align*} 
where both cases $\theta<0$ and $\theta\geq 0$ can be dealt with analogously and we only discuss the second one. The crucial observation is that $\ell_n(\theta_\rho)$ can be decomposed into a dominant term that and a remainder that can be bounded independently of $\theta_\rho$. The precise statement is given in Lemma~\ref{lemma:bound_L_sqrt(n)} and requires the definition of
\begin{align*}
N_n^L(\theta_\rho) &:= \sum_{k=1}^n \left[ \log\left(\frac{\beta_0}{\alpha_0}\right)-\frac{(\overline{X}_{k\Delta_n^\gamma}^c-\overline{X}_{(k-1)\Delta_n^\gamma}^c)^2}{2\Delta_n^\gamma}\left(\frac{1}{\alpha_0^2}-\frac{1}{\beta_0^2}\right)\right]\\
&\hspace{1.5cm} \cdot \1_{\{\rho_0+Ln^{-\gamma/2}\leq \overline{X}_{(k-1)\Delta_n^\gamma}^c< \rho_0+\theta_\rho\}},
\end{align*} 
where the parameter $L$ will be specified later. Then, we have the following result:

\begin{lemma}\label{lemma:bound_L_sqrt(n)}
Let $K,L\geq 0$, $\epsilon>0$ and $\Theta_n^1 := [Kn^{-\gamma/2},n^{1/4-\gamma/2}]$, $\Theta_n^2:=(n^{1/4-\gamma/2},\infty)$. Then there exists a sequence of sets $(A_n)_{n\in\N}$ with $\Pr_0(A_n^c)\leq\epsilon$ for $n\geq n_0$, such that for $i=1,2$,
\[ \sup_{\theta_\rho\in\Theta_n^{i}} \ell_{n,\gamma}^{\alpha_0,\beta_0}(\theta_\rho)\1_{A_n} \leq \sup_{\theta_\rho \in\Theta_n^{i}} N_n^L(\theta_\rho)\1_{A_n} + F_n^{i}(K,L),\]
where $F_n^{i}(K,L)\geq 0$ and the moment bounds $\E[F_n^1(K,L)n^{-1/2}] \leq C_{\alpha_0,\beta_0}(L)<\infty$ and $\E[F_n^2(K,L)n^{-1/2-\gamma/6}] \leq C_{\alpha_0,\beta_0}(L)<\infty$ with a constant $C_{\alpha_0,\beta_0}(L)$ independent of $K$ and $n$ hold true.
\end{lemma}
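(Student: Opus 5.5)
The plan is to decompose each summand of $\ell_{n,\gamma}^{\alpha_0,\beta_0}(\theta_\rho)$ for $\theta_\rho\geq Kn^{-\gamma/2}$ according to the nine regimes $I_{j,k}^{0,\theta_\rho}$ in~\eqref{def:indicators}, with $\theta'=0$, $\theta=\theta_\rho$. We work on $A_n:=A_1(n)\cap A_3(n)\cap A_4(n)$ (suitably intersected with the coupling event $\{T_{\rho_0}^{x_0}\leq T_0\}$), which has probability at least $1-\epsilon$ for $n\geq n_0$ by \eqref{eq:set_A1}, Lemma~\ref{lemma:probability_A3} and \eqref{eq:set_A4}.

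\textbf{Leading regime.} The dominant regime is $I_{5,k}^{0,\theta_\rho}$, where both observations lie in $[\rho_0,\rho_0+\theta_\rho]$, strictly away from the boundary. Here the density ratio compares an $\alpha_0$-Gaussian with reflection term against a $\beta_0$-Gaussian with reflection term. Expanding \eqref{eq:transition_density}, the log-ratio equals
\[
\log(\beta_0/\alpha_0)-\frac{(X_{k\Delta_n^\gamma}-X_{(k-1)\Delta_n^\gamma})^2}{2\Delta_n^\gamma}\Bigl(\frac{1}{\alpha_0^2}-\frac{1}{\beta_0^2}\Bigr),
\]
plus reflection corrections of size $\exp\bigl(-c\,\mathrm{dist}(X_{(k-1)\Delta_n^\gamma},\{\rho_0,\rho_0+\theta_\rho\})^2/\Delta_n^\gamma\bigr)$. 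The indicator in $N_n^L$ starts at $\rho_0+Ln^{-\gamma/2}$. The summands with $\rho_0\leq X_{(k-1)\Delta_n^\gamma}<\rho_0+Ln^{-\gamma/2}$ are moved into $F_n^i$; their expected count is $\lesssim \sum_k Ln^{-\gamma/2}(k\Delta_n^\gamma)^{-1/2}\lesssim L\sqrt n$ by Corollary~\ref{cor:bound_exp_X^2}, and each has bounded second moment by Corollary~\ref{cor:increment_moments}. This gives $\E[\cdot\, n^{-1/2}]\leq C(L)$, uniformly in $\theta_\rho$ since the range is fixed.

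\textbf{Remainders.} All other contributions must be bounded above, uniformly in $\theta_\rho$, by a nonnegative quantity independent of $\theta_\rho$.
\begin{itemize}
\item Regimes $I_1$ and $I_9$ (both points on the same side of both thresholds) and $I_3,I_7$ (crossing both thresholds) give log-ratios of the form Gaussian-reflection corrections. Bounding $|\log(1+u)|\lesssim |u|$ and taking the supremum over $\theta_\rho$ via the worst case $\exp(-c\min\{(X-\rho_0)^2,(X-\rho_0-\theta_\rho)^2\}/\Delta_n^\gamma)$ yields, after summation, terms controlled by $\Delta_n^{-\gamma/2}\sum_k\exp(-c(X_{(k-1)\Delta_n^\gamma}-y)^2/\Delta_n^\gamma)$ at $y=\rho_0$ or at $y=\rho_0+\theta_\rho$.
\item At $y=\rho_0$, Corollary~\ref{cor:bound_exp_X^2} gives expectation $O(\sqrt n)$.
\item At the moving point $\rho_0+\theta_\rho$, I would use occupation densities. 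On $A_4$, that sum is comparable to $\Delta_n^{-1}\int\exp(-c(X_s-y)^2/\Delta_n)\,ds\approx \Delta_n^{-1/2}\sup_y L^y_{n\Delta_n}$. By self-similarity (the identity \eqref{eq_proof:distributional_identity_local_time}) this is $O_{\Pr}(\sqrt{n})$ in expectation, because $\E\sup_y L_1^y<\infty$ for OBM via Tanaka and BDG.
\item Regimes $I_2,I_8$ (crossing only one threshold, log-ratio $\approx\pm\log(\beta_0^2/\alpha_0^2)$) and $I_4,I_6$ are handled by counting crossings of the level $\rho_0$, with expected count $O(\sqrt n)$. For the level $\rho_0+\theta_\rho$, crossings can only reduce the likelihood where the sign is favorable; where it is unfavorable, the number of crossings of a level is bounded on $A_n$ by the local-time argument above.
\end{itemize}

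\textbf{Splitting $\Theta_n^1$ versus $\Theta_n^2$.} This is where I expect the main obstacle. For $\theta_\rho\leq n^{1/4-\gamma/2}$, the squared-increment term in $I_5$ replacing $\beta_0^2\Delta_n^\gamma$ is exact, and the moving-boundary corrections cost $O(\sqrt n)$. For larger $\theta_\rho$, uniformity over an unbounded range of levels forces a supremum over $y$ of a discretised local time, with a discretisation error for $y$ ranging over $[-\Gamma\sqrt{n\Delta_n^\gamma},\Gamma\sqrt{n\Delta_n^\gamma}]$ on $A_1$. I would control this by a chaining or union bound over a grid of mesh $\Delta_n^{4/9}$, using the $A_4$ modulus. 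This loses a polynomial factor, which I expect to produce exactly the weaker $n^{1/2+\gamma/6}$ bound for $F_n^2$.

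Finally, to pass from $\overline X^c$ to $X^{\rho_0}$, the contributions before $T_0/\Delta_n^\gamma$ are bounded in expectation by $O(n^{-(1-\gamma)/2}\sqrt n)$ as in the proof of (1A), and can be absorbed into $F_n^i$.
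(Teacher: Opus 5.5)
Your skeleton is in line with the route the paper takes. That route transfers Lemma~3.1 of the infill paper to step size $\Delta_n^\gamma$, on the events $A_1(n),A_3(n),A_4(n)$ plus the upper local-time event $A_2(n)=\{\sup_y L^y_{n\Delta_n^\gamma}(\overline X^c)\le\xi_u\sqrt{n\Delta_n^\gamma}\}$. The skeleton consists of splitting by the regimes $I_{j,k}^{0,\theta_\rho}$, keeping the $I_5$-type summand as $N_n^L$, moving the strip $\rho_0\le X_{(k-1)\Delta_n^\gamma}<\rho_0+Ln^{-\gamma/2}$ into $F_n^i$, and controlling everything attached to the fixed level $\rho_0$ through Corollary~\ref{cor:bound_exp_X^2}.

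\textbf{The gap.} It sits in the one genuinely hard step. You need an upper bound, uniform over $\theta_\rho\in\Theta_n^1$ and of order $\sqrt n$ in expectation with a constant independent of $K$, for the terms attached to the \emph{moving} level $y=\rho_0+\theta_\rho$. These are the reflection corrections at $y$ in $I_5$ and $I_9$, and the crossings of $y$ in $I_6$ and $I_8$. The crossing log-ratios are of size $O(1+(\Delta X)^2/\Delta_n^\gamma)$ with non-definite sign, so no sign argument removes them. For $I_6$ you must also subtract the $N_n^L$-summand, since $N_n^L$ only restricts $X_{(k-1)\Delta_n^\gamma}$.

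Your justification is that on $A_4(n)$ the sum $\sum_k\exp(-c(X_{(k-1)\Delta_n^\gamma}-y)^2/\Delta_n^\gamma)$ is comparable to $(\Delta_n^\gamma)^{-1}\int_0^{n\Delta_n^\gamma}\exp(-c(X_s-y)^2/\Delta_n^\gamma)\,ds$. This is false. $A_4(n)$ only bounds one-step oscillations by $(\Delta_n^\gamma)^{4/9}\gg(\Delta_n^\gamma)^{1/2}$, so transporting a kernel of width $(\Delta_n^\gamma)^{1/2}$ from grid times to continuous time costs a factor $\exp(c(\Delta_n^\gamma)^{-1/9})$.

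The pathwise argument that does work counts grid points within $2(\Delta_n^\gamma)^{4/9}$ of $y$, then applies the occupation formula and $\sup_yL^y_{n\Delta_n^\gamma}\lesssim\sqrt{n\Delta_n^\gamma}$. It gives only $\sqrt n\,(\Delta_n^\gamma)^{-1/18}=n^{1/2+\gamma/18}$; even L\'evy's modulus would leave a factor $\sqrt{\log n}$. That suffices on $\Theta_n^2$ but not for $\E[F_n^1n^{-1/2}]\le C_{\alpha_0,\beta_0}(L)$. Your account of the split is correspondingly inconsistent. If the occupation-density claim held, it would give $O(\sqrt n)$ for every $\theta_\rho$ and no split would be needed. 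Meanwhile, the assertion that on $\Theta_n^1$ the moving-boundary corrections cost $O(\sqrt n)$ is not backed by any argument.

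\textbf{What is needed on $\Theta_n^1$.} You need a probabilistic rather than pathwise uniformity argument in $\theta_\rho$. For example, separate compensator and martingale parts of the moving-level terms, bound their increments in $\theta_\rho$ in moments, and chain as in the proof of (1A). The relevant scale is that of Lemma~\ref{lemma:moment_Z}: summed over $k$, second moments of increments are of order $|\theta-\theta'|^2\sqrt n/\Delta_n^\gamma$. On an interval of length $\delta$ the chaining cost is therefore of order $\delta\,n^{1/4}(\Delta_n^\gamma)^{-1/2}$, which is $\sqrt n$ exactly for $\delta=n^{1/4-\gamma/2}$. This matches the location of the cut. Beyond it, your cruder pathwise bound is enough, because by Lemma~\ref{lemma:estimate_counts_interval} the negative drift of $N_n^L$ is then of order at least $n^{3/4}$.

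\textbf{The closing coupling step.} It is unnecessary and, as stated, unjustified. The moment bounds from Corollary~\ref{cor:bound_exp_X^2} and \eqref{eq:bound_transition_density} hold for any starting point, so coupling and self-similarity enter only through the probabilities of the local-time events, as in Lemma~\ref{lemma:probability_A3}. Moreover, the (1A) estimate you quote is proved only for $\theta=O(n^{-(1+\gamma)/2})$ via Lemma~\ref{lemma:expansion_drift}, not for $\theta_\rho\ge Kn^{-\gamma/2}$.
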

\begin{proof}
The proof can be done analogously to that of Lemma~$3.1$ in~\citeSM{App:Brutsche/Rohde}. The only difference is that all steps have to be conducted for the stepsize $\Delta_n^\gamma$ instead of $1/n$ and the sets $A_1,A_3$ and $A_4$ in the proof have to be replaced by the suitable modified versions given in~\eqref{eq:set_A1}, \eqref{eq:set_A3} and~\eqref{eq:set_A4}. Moreover, the set 
\[ \left\{ \sup_{y\in\R} L_1^y(Y) \leq \xi_u\right\}\]
for an OBM $Y$ started in $\rho_0$ has to be replaced by
\[ A_2(n) := \left\{ \sup_{y\in\R} L_{n\Delta_n^\gamma}^y(\overline{X}^c) \leq \xi_u \sqrt{n\Delta_n^\gamma}\right\}.\]
Note that for any $\epsilon>0$ there exists $\xi_u>0$ such that $\Pr(A_2(n))>1-\epsilon$ for $n$ large enough, which can be deduce by the fact that $\Pr(\sup_{y\in\R} L_1^y(Y) >\xi_u)>1-\epsilon$, analogously as in the proof of Lemma~\ref{lemma:probability_A3}.
\end{proof}

Note that $\Theta_n^1$ and $\Theta_n^2$ in Lemma~\ref{lemma:bound_L_sqrt(n)} are disjoint and $\Theta_n^1\cup\Theta_n^2= \{\theta:\ \theta\geq Kn^{-\gamma/2}\}$. Moreover, the moment bound on $F_n^1$ of order $n^{1/2}$ provided by this result is optimal, whereas the order $n^{1/2+\gamma/6}$ for $F_n^2$ is not optimal, but sufficiently accurate for our purpose. To proceed, recall the sets $A_1(n), A_3(n)$ and $A_4(n)$ given in \eqref{eq:set_A1}-\eqref{eq:set_A4} and let the constants $\xi,\zeta,\Gamma$ be specified in such a way that the bounds of their respective probabilities are given for $\epsilon/6$ (instead of $\epsilon$). Additionally, we introduce the event
\[ A_5(n) := \left\{ |F_n^1(K,L)| \leq C_F \sqrt{n},\ |F_n^2(K,L)| \leq C_F n^{1/2+\gamma/6} \right\},\]
where $C_F>0$ is chosen large enough such that $\Pr(A_5(n))>1-\epsilon/6$ which is possible by Lemma~\ref{lemma:bound_L_sqrt(n)} and Markov's inequality. Last but not least, let $A_6(n)$ be the set from Lemma~\ref{lemma:probability_A6} with corresponding choice of $\eta$ such that $\Pr(A_6(n)^c)<\epsilon/6$. Finally, we introduce the event we will be working on during the proof of~\eqref{eq_proof:sqrt(n)_consistency} as
\begin{align}\label{eq:setA_n}
A(n) = A_1(n) \cap A_3(n)\cap A_4(n) \cap A_5(n) \cap A_6(n)\cap A_7(n),
\end{align} 
where $A_7(n)$ is the set $A_n$ in Lemma~\ref{lemma:bound_L_sqrt(n)} for $\epsilon/6$. Then by the findings above we have for $n\geq n_0$ large enough that
\begin{align*}
\Pr( A(n)^c) \leq \sum_{j=1,3,4,5,6,7} \Pr(A_j(n)^c) \leq \epsilon.
\end{align*}
The proof of~\eqref{eq_proof:sqrt(n)_consistency} then follows by arguing line-by-line as from $(4.6)$ in~\citeSM{App:Brutsche/Rohde} onwards, including Subsection~$4.2$ to extend from $n^{\gamma/2}$- to $n^{(1+\gamma)/2}$-consistency. When proving the analogue of Lemma~$4.3$, the application of Lemma~$3.2$ is is replaced by the same self-similarity and coupling argument used in the proof of Lemma~\ref{lemma:probability_A6}.

\section{Proof of Proposition~\ref{prop:approximation_ell}}\label{Section:App_approximation_ell}

We introduce the sets
\begin{align*}
J_{1,k}(\rho) := \left\{ X_{(k-1)\Delta_n^\gamma} < \rho, X_{k\Delta_n^\gamma} \leq \rho  \right\}, \\
J_{2,k}(\rho) := \left\{ X_{(k-1)\Delta_n^\gamma} \geq \rho, X_{k\Delta_n^\gamma} > \rho\right\}, \\
J_{3,k}(\rho) := \left\{ X_{(k-1)\Delta_n^\gamma} < \rho < X_{k\Delta_n^\gamma} \right\}, \\
J_{4,k}(\rho) := \left\{ X_{k\Delta_n^\gamma} \leq \rho \leq X_{(k-1)\Delta_n^\gamma}  \right\}. 
\end{align*}
We abbreviate $\psi_n^\gamma= n^{-(1+\gamma)/2}$ and set $\alpha_n=\alpha_0+\theta_\alpha/\sqrt{n}$, $\beta_n=\beta_0+\theta_\beta/\sqrt{n}$. Using these events, we can estimate
\begin{align*}
&\sup_{z\in [-K,K]}  \sup_{-L\leq \theta_\alpha,\theta_\beta\leq L} \left|\ell_{n,\gamma}^3\left(z n^{-(1+\gamma)/2}, \theta_\alpha/\sqrt{n},\theta_\beta/\sqrt{n}\right)  \right| \\
&\hspace{3cm} \leq  \sum_{m=1}^4 \sup_{z\in [-K,K]}  \sup_{|\alpha_n-\alpha_0|, |\beta_n-\beta_0|\leq L/\sqrt{n}} \left| T_n^m(z,\alpha_n,\beta_n) \right|
\end{align*}
where
\begin{align*}
T_n^m(z,\alpha_n,\beta_n) &:= \sum_{k=1}^n \left( \log\left(\frac{p_{\Delta_n^\gamma}^{\rho_0+z\psi_n^\gamma,\alpha_n,\beta_n}(X_{(k-1)\Delta_n^\gamma},X_{k\Delta_n^\gamma})}{p_{\Delta_n^\gamma}^{\rho_0+z\psi_n^\gamma,\alpha_0,\beta_0}(X_{(k-1)\Delta_n^\gamma},X_{k\Delta_n^\gamma})}\right) \1_{J_{m,k}(\rho_0+z\psi_n^\gamma)} \right. \\
&\hspace{2.5cm} - \left. \log\left(\frac{p_{\Delta_n^\gamma}^{\rho_0,\alpha_n,\beta_n}(X_{(k-1)\Delta_n^\gamma},X_{k\Delta_n^\gamma})}{p_{\Delta_n^\gamma}^{\rho_0,\alpha_0,\beta_0}(X_{(k-1)\Delta_n^\gamma},X_{k\Delta_n^\gamma})}\right) \1_{J_{m,k}(\rho_0)}  \right)
\end{align*}
We now prove stochastic convergence
\[ \sup_{z\in [-K,K]}  \sup_{|\alpha_n-\alpha_0|, |\beta_n-\beta_0|\leq L/\sqrt{n}} \left| T_n^m(z,\alpha_n,\beta_n) \right| \longrightarrow_{\Pr_0} 0 \]
for $m=1,2,3,4$. Here, we will only provide details for $m=1,3$ as $m=2$ follows along the lines of $m=1$ and $m=4$ is analogous to $m=3$. Moreover, we only treat the supremum over $z\in [0,K]$ as the argumens for $z\in [-K,0]$ are the same.

\medskip
\noindent
$\bullet\ \underline{m=1}:$ By explicitly plugging in the transition density~\eqref{eq:transition_density}, we obtain
\begin{align*}
T_n^1(z,\alpha_n,\beta_n) &= T_n^{1,1}(z,\alpha_n,\beta_n) + T_n^{1,2}(z,\alpha_n,\beta_n) + T_n^{1,3}(z,\alpha_n,\beta_n),
\end{align*}
where
\begin{align*}
T_n^{1,1}(z,\alpha_n,\beta_n) &:= \sum_{k=1}^n \log\left(\frac{\alpha_0}{\alpha_n}\right)\left( \1_{J_{1,k}(\rho_0+z\psi_n^\gamma)} - \1_{J_{1,k}(\rho_0)} \right), \\
T_n^{1,2}(z,\alpha_n,\beta_n) &:= \sum_{k=1}^n \frac{1}{2\Delta_n^\gamma} (X_{k\Delta_n^\gamma}-X_{(k-1)\Delta_n^\gamma})^2 \left( \frac{1}{\alpha_0^2}-\frac{1}{\alpha_n^2}\right)\left( \1_{J_{1,k}(\rho_0+z\psi_n^\gamma)} - \1_{J_{1,k}(\rho_0)} \right), \\
T_n^{1,3}(z,\alpha_n,\beta_n) \\
&\hspace{-2cm} := \sum_{k=1}^n \log\left( \frac{1 - \frac{\alpha_n-\beta_n}{\alpha_n+\beta_n}\exp\left( - \frac{2}{\alpha_n^2\Delta_n^\gamma} ( X_{k\Delta_n^\gamma}-\rho_0-z\psi_n^\gamma)(X_{(k-1)\Delta_n^\gamma}-\rho_0-z\psi_n^\gamma) \right)}{1 - \frac{\alpha_0-\beta_0}{\alpha_0+\beta_0}\exp\left( - \frac{2}{\alpha_0^2\Delta_n^\gamma} ( X_{k\Delta_n^\gamma}-\rho_0-z\psi_n^\gamma)(X_{(k-1)\Delta_n^\gamma}-\rho_0-z\psi_n^\gamma)\right) }\right) \1_{J_{1,k}(\rho_0+z\psi_n^\gamma)} \\
&\hspace{0.5cm} - \sum_{k=1}^n \log\left( \frac{1 - \frac{\alpha_n-\beta_n}{\alpha_n+\beta_n}\exp\left( - \frac{2}{\alpha_n^2\Delta_n^\gamma} ( X_{k\Delta_n^\gamma}-\rho_0)(X_{(k-1)\Delta_n^\gamma}-\rho_0) \right)}{1 - \frac{\alpha_0-\beta_0}{\alpha_0+\beta_0}\exp\left( - \frac{2}{\alpha_0^2\Delta_n^\gamma} ( X_{k\Delta_n^\gamma}-\rho_0)(X_{(k-1)\Delta_n^\gamma}-\rho_0)\right) }\right) \1_{J_{1,k}(\rho_0)}.
\end{align*}
Subsequently, we are going to prove
\[ \sup_{z\in [0,K]}  \sup_{|\alpha_n-\alpha_0|, |\beta_n-\beta_0|\leq L/\sqrt{n}} \left| T_n^{1,j}(z,\alpha_n,\beta_n) \right| \longrightarrow_{\Pr_0} 0 \]
for $j=1,2,3$. For the first two cases, we note that
\begin{align*}
\1_{J_{1,k}(\rho_0+z\psi_n^\gamma)} - \1_{J_{1,k}(\rho_0)} &= \1_{\{X_{(k-1)\Delta_n^\gamma}<\rho_0, \rho_0<X_{k\Delta_n^\gamma}<\rho_0+z\psi_n^\gamma\}}+\1_{\{X_{k\Delta_n^\gamma}\leq\rho_0, \rho_0\leq X_{(k-1)\Delta_n^\gamma}\leq \rho_0+z\psi_n^\gamma\}} 
\end{align*}
such that
\begin{align}\label{eq_proof:Approx_ell_1}
\begin{split}
&\sup_{z\in [0,K]} \left( \1_{J_{1,k}(\rho_0+z\psi_n^\gamma)} - \1_{J_{1,k}(\rho_0)}\right) \\
&\hspace{1cm}\leq \1_{\{\rho_0<X_{k\Delta_n^\gamma}<\rho_0+K\psi_n^\gamma\}}\1_{\{X_{(k-1)\Delta_n^\gamma} \leq \rho_0\}} +\1_{\{\rho_0\leq X_{(k-1)\Delta_n^\gamma}\leq \rho_0+K\psi_n^\gamma\}}. 
\end{split}
\end{align} 
Then, using the upper bound~\eqref{eq:bound_transition_density} on the transition density gives for $k\geq 2$,
\begin{align}\label{eq_proof:bound_expectation_indicator_T1}
\begin{split}
&\E_0\left[\1_{\{\rho_0<X_{(k-1)\Delta_n^\gamma}<\rho_0+K\psi_n^\gamma\}}\right]\\
&\hspace{1cm} \leq C_{\alpha_0,\beta_0}\int_{\rho_0}^{\rho_0+K\psi_n^\gamma} \frac{1}{\sqrt{(k-1)\Delta_n^\gamma}} \exp\left(-\frac{(y-x_0)^2}{2(k-1)\Delta_n^\gamma\max\{\alpha_0^2,\beta_0^2\}}\right) dy  \\
&\hspace{1cm} \leq C_{\alpha_0,\beta_0}K\psi_n^\gamma \frac{1}{\sqrt{k\Delta_n^\gamma}} = C_{\alpha_0,\beta_0}K \frac{1}{\sqrt{kn}}
\end{split}
\end{align} 
and the same bound holds when replacing $X_{(k-1)\Delta_n^\gamma}$ with $X_{k\Delta_n^\gamma}$. 

\begin{itemize}
\item[$\mathbf{-\ T_n^{1,1}}$.] First, we observe that by a Taylor expansion, 
\[ \left|\log(\alpha_0/\alpha_n)\right| \leq C_{\alpha_0}|\alpha_0-\alpha_n| \leq C_{\alpha_0}L/\sqrt{n}. \]
Then, we obtain
\begin{align*}
&\E_0\left[ \sup_{z\in [0,K]}  \sup_{|\alpha_n-\alpha_0|, |\beta_n-\beta_0|\leq L/\sqrt{n}} \left| T_n^{1,1}(z,\alpha_n,\beta_n) \right|\right] \\
&\hspace{1cm} \leq \frac{C_{\alpha_0}L}{\sqrt{n}} \E_0\left[ \sup_{z\in [0,K]}\left( \1_{J_{1,k}(\rho_0+z\psi_n^\gamma)} - \1_{J_{1,k}(\rho_0)}\right) \right] \\
&\hspace{1cm} \leq \frac{C_{\alpha_0}L}{\sqrt{n}} \sum_{k=1}^n \E_0\left[\1_{\{\rho_0<X_{k\Delta_n^\gamma}<\rho_0+K\psi_n^\gamma\}} +\1_{\{\rho_0\leq X_{(k-1)\Delta_n^\gamma}\leq \rho_0+K\psi_n^\gamma\}}\right] \\
&\hspace{1cm} \leq C_{\alpha_0,\beta_0}LK \frac1n \sum_{k=1}^n \frac{1}{\sqrt{k}} \longrightarrow 0. 
\end{align*}

\smallskip
\item[$\mathbf{-\ T_n^{1,2}}$.] First of all, we obtain
\begin{align}\label{eq_proof:T12_bound_squared_increment}
\begin{split}
&\E_0\left[\frac{(X_{k\Delta_n^\gamma}-X_{(k-1)\Delta_n^\gamma})^2}{\Delta_n^\gamma}\1_{\{\rho_0\leq X_{(k-1)\Delta_n^\gamma}\leq \rho_0+K\psi_n^\gamma\}}  \right] \\
&\hspace{0.5cm} =\E_0\left[\E_0\left[\left.\frac{(X_{k\Delta_n^\gamma}-X_{(k-1)\Delta_n^\gamma})^2}{\Delta_n^\gamma}\right| X_{(k-1)\Delta_n\gamma}\right] \1_{\{\rho_0\leq X_{(k-1)\Delta_n^\gamma}\leq \rho_0+K\psi_n^\gamma\}} \right] \\
&\hspace{0.5cm}  \leq C_{\alpha_0,\beta_0} \E_0\left[ \1_{\{\rho_0\leq X_{(k-1)\Delta_n^\gamma}\leq \rho_0+K\psi_n^\gamma\}} \right] \leq C_{\alpha_0,\beta_0}K\frac{1}{\sqrt{kn}},
\end{split}
\end{align}
where the last step uses the bound~\eqref{eq_proof:bound_expectation_indicator_T1} and Corollary~\ref{cor:increment_moments}.
Moreover, applying iterative conditioning, the bound~\eqref{eq:bound_transition_density} on the transition density and using boundedness of $x\mapsto x^2\exp(-x^2/c)$ yields
\begin{align}\label{eq_proof:T12_bound_squared_increment_indicator}
\begin{split}
&\E_0\left[\frac{(X_{k\Delta_n^\gamma}-X_{(k-1)\Delta_n^\gamma})^2}{\Delta_n^\gamma}\1_{\{\rho_0\leq X_{k\Delta_n^\gamma}\leq \rho_0+K\psi_n^\gamma\}} \1_{\{X_{(k-1)\Delta_n^\gamma}\leq \rho_0\}} \right] \\
&\hspace{0.5cm} \leq C_{\alpha_0,\beta_0}\E_0\left[ \1_{\{X_{(k-1)\Delta_n^\gamma}\leq \rho_0\}}\frac{1}{\sqrt{\Delta_n^\gamma}}\int_{\rho_0}^{\rho_0+K\psi_n^\gamma} \frac{(y-X_{(k-1)\Delta_n^\gamma})^2}{\Delta_n^\gamma} \right. \\
&\hspace{7cm} \left. \cdot\exp\left(-\frac{(y-X_{(k-1)\Delta_n^\gamma})^2}{2\max\{\alpha_0^2,\beta_0^2\}\Delta_n^\gamma}\right) dy \right] \\
&\hspace{0.5cm} \leq C_{\alpha_0,\beta_0}\E_0\left[ \1_{\{X_{(k-1)\Delta_n^\gamma}\leq \rho_0\}}\frac{1}{\sqrt{\Delta_n^\gamma}}\int_{\rho_0}^{\rho_0+K\psi_n^\gamma} \exp\left(-\frac{(y-X_{(k-1)\Delta_n^\gamma})^2}{4\max\{\alpha_0^2,\beta_0^2\}\Delta_n^\gamma}\right) dy \right] \\
&\hspace{0.5cm} \leq \frac{C_{\alpha_0,\beta_0}}{\sqrt{\Delta_n^\gamma}} K\psi_n^\gamma \E_0\left[ \exp\left(-\frac{(X_{(k-1)\Delta_n^\gamma}-\rho_0)^2}{4\max\{\alpha_0^2,\beta_0^2\}\Delta_n^\gamma}\right) \right] \\
&\hspace{0.5cm}  \leq C_{\alpha_0,\beta_0}K \frac{1}{\sqrt{nk}},
\end{split}
\end{align}
where the last step uses Corollary~\ref{cor:bound_exp_X^2}. Finally, \eqref{eq_proof:Approx_ell_1}, a direct evaluation of $|\alpha_0^{-2}-\alpha_n^{-2}|$ and the above two estimates reveal
\begin{align*}
&\E_0\left[ \sup_{z\in [-K,K]}  \sup_{|\alpha_n-\alpha_0|, |\beta_n-\beta_0|\leq L/\sqrt{n}} \left| T_n^{1,2}(z,\alpha_n,\beta_n) \right|\right] \leq C_{\alpha_0,\beta_0} LK \frac1n \sum_{k=1}^n \frac{1}{\sqrt{k}} \rightarrow 0.
\end{align*}

\smallskip
\item[$\mathbf{-\ T_n^{1,3}}$.] We abbreviate
\begin{align*}
a_N &:= \frac{\alpha_N-\beta_N}{\alpha_N+\beta_N}, \\
b_{N,k}(z) &:= \exp\left(-\frac{2}{\alpha_N^2\Delta_n^\gamma}\left( X_{k\Delta_n^\gamma}-\rho_0-z\psi_n^\gamma\right)\left(X_{(k-1)\Delta_n^\gamma}-\rho_0-z\psi_n^\gamma\right)\right).
\end{align*}
for $N\in\N\cup\{0\}$. Note that $b_{N,k}(z)$ also depends on the sample size $n$ and $\gamma$ which is suppressed in the notation. With these abbreviations, we can split
\[ T_n^{1,3}(z,\alpha_n,\beta_n) = S_n^1(z,\alpha_n,\beta_n) + S_n^2(z,\alpha_n,\beta_n)\]
with
\begin{align}\label{eq_proof:def_S1andS2}
\begin{split}
S_n^1(z,\alpha_n,\beta_n) &:= \sum_{k=1}^n \left[ \log\left(\frac{1-a_n b_{n,k}(z)}{1-a_0 b_{0,k}(z)}\right) - \log\left(\frac{1-a_n b_{n,k}(0)}{1-a_0 b_{0,k}(0)}\right)\right] \1_{J_{1,k}(\rho_0)}, \\
S_n^2(z,\alpha_n,\beta_n) &:= \sum_{k=1}^n  \log\left(\frac{1-a_n b_{n,k}(z)}{1-a_0 b_{0,k}(z)}\right)\left( \1_{J_{1,k}(\rho_0+z\psi_n^\gamma)} - \1_{J_{1,k}(\rho_0)}\right).
\end{split}
\end{align}
We will discuss these terms seperately. \smallskip

\noindent
$\mathbf{\blacktriangleright S_n^1(z,\alpha_n,\beta_n)}$. Using standard properties of the logarithm, one finds
\begin{align*}
&\log\left(\frac{1-a_n b_{n,k}(z)}{1-a_0 b_{0,k}(z)}\right) - \log\left(\frac{1-a_n b_{n,k}(0)}{1-a_0 b_{0,k}(0)}\right) = \log\left(\frac{(1-a_n b_{n,k}(z))(1-a_0 b_{0,k}(0))}{(1-a_0 b_{0,k}(z))(1-a_n b_{n,k}(0))}\right)
\end{align*}
On the event $J_{1,k}(\rho_0)$, we have $b_{N,k}(z) \leq 1$, meaning that for $N\in\N\cup\{0\}$,
\[ \left(1-\frac{|\alpha_N-\beta_N|}{\alpha_N+\beta_N}\right)\1_{J_{1,k}(\rho_0)} \leq \left(1- a_N b_{N,k}(z)\right)\1_{J_{1,k}(\rho_0)} \leq \left( 1+\frac{|\alpha_N-\beta_N|}{\alpha_N+\beta_N}\right)\1_{J_{1,k}(\rho_0)}.\]
As $\alpha_n\to\alpha_0$, $\beta_n\rightarrow\beta_0$, we conclude that there exists a constant $C=C(\alpha_0,\beta_0)>0$ such that
\[ C^{-1}\1_{J_{1,k}(\rho_0)}\leq \frac{(1-a_n b_{n,k}(z))(1-a_0 b_{0,k}(0))}{(1-a_0 b_{0,k}(z))(1-a_n b_{n,k}(0))}\1_{J_{1,k}(\rho_0)} \leq C\1_{J_{1,k}(\rho_0)}.\]
As $x/(1+x) \leq \log(1+x)\leq x$ for every $x>-1$, we can therefore conclude that
\begin{align*}
&\left| \log\left(\frac{1-a_n b_{n,k}(z)}{1-a_0 b_{0,k}(z)}\right) - \log\left(\frac{1-a_n b_{n,k}(0)}{1-a_0 b_{0,k}(0)}\right)\right|\1_{J_{1,k}(\rho_0)} \\
&\hspace{1cm} \leq C_{\alpha_0,\beta_0} \1_{J_{1,k}(\rho_0)} \left|a_0b_{0,k}(z) - a_nb_{n,k}(z) + a_nb_{n,k}(0) - a_0 b_{0,k}(0) \right.\\
&\hspace{5.5cm} \left. + a_0a_n( b_{n,k}(z)b_{0,k}(0) - b_{0,k}(z)b_{n,k}(0)) \right| \\
&\hspace{1cm}\leq C_{\alpha_0,\beta_0}\left( S_{n,k}^{1,1}(z,\alpha_n,\beta_n) + S_{n,k}^{1,2}(z,\alpha_n,\beta_n) +S_{n,k}^{1,3}(z,\alpha_n,\beta_n)\right),
\end{align*}
with
\begin{align*}
S_{n,k}^{1,1}(z,\alpha_n,\beta_n) &:= \left|(a_0 - a_n)(b_{0,k}(z) - b_{0,k}(0) \right|\1_{J_{1,k}(\rho_0)},\\
S_{n,k}^{1,2}(z,\alpha_n,\beta_n) &:= \left|a_n \left( b_{0,k}(z)-b_{0,k}(0) - b_{n,k}(z) + b_{n,k}(0)\right) \right|\1_{J_{1,k}(\rho_0)},\\
S_{n,k}^{1,3}(z,\alpha_n,\beta_n) &:= \left| a_0a_n( b_{n,k}(z)b_{0,k}(0) - b_{0,k}(z)b_{n,k}(0)) \right|\1_{J_{1,k}(\rho_0)}.
\end{align*}
\smallskip

\noindent
$\mathbf{-\ S_{n,k}^{1,1}(z,\alpha_n,\beta_n)}$. Using a Taylor expansion, we find
\begin{align}\label{eq_proof:S_n11}
\begin{split}
&\sup_{|\alpha_n-\alpha_0|, |\beta_n-\beta_0|\leq L/\sqrt{n}} \left| a_0-a_n\right|\\
&\hspace{1cm}=\sup_{|\alpha_n-\alpha_0|, |\beta_n-\beta_0|\leq L/\sqrt{n}} \left| \frac{\alpha_0-\beta_0}{\alpha_0+\beta_0} - \frac{\alpha_n-\beta_n}{\alpha_n+\beta_n}\right| \leq C_{\alpha_0,\beta_0}(L) \frac{1}{\sqrt{n}}.
\end{split}
\end{align}
To proceed, we first observe that
\begin{align*}
&\left|b_{0,k}(z)-b_{0,k}(0)\right|\\
&\hspace{0.5cm}= \exp\left(-\frac{2}{\alpha_0^2\Delta_n^\gamma}(X_{(k-1)\Delta_n^\gamma}-\rho_0)(X_{k\Delta_n^\gamma}-\rho_0)\right)\\
&\hspace{1.5cm} \cdot \left| \exp\left(\frac{2}{\alpha_0^2\Delta_n^\gamma} z\psi_n^\gamma (X_{k\Delta_n^\gamma}+ X_{(k-1)\Delta_n^\gamma}-2\rho_0) - \frac{2}{\alpha_0^2\Delta_n^\gamma}z^2 (\psi_n^\gamma)^2\right) -1\right|.
\end{align*}
As $X_{(k-1)\Delta_n^\gamma},X_{k\Delta_n^\gamma}\leq \rho_0$ on $J_{1,k}(\rho_0)$, we obtain using $1-e^{-x}\leq x$,
\begin{align*}
&\sup_{z\in [0,K]} \left| b_{0,k}(z)-b_{0,k}(0)\right|\1_{J_{1,k}(\rho_0)} \\
&\hspace{1cm}\leq C_{\alpha_0}(K)\1_{J_{1,k}(\rho_0)}\exp\left(-\frac{2}{\alpha_0^2\Delta_n^\gamma}(X_{(k-1)\Delta_n^\gamma}-\rho_0)(X_{k\Delta_n^\gamma}-\rho_0)\right)\\
&\hspace{2cm} \cdot \left( n^{-(1-\gamma)/2} \left|X_{k\Delta_n^\gamma}+ X_{(k-1)\Delta_n^\gamma}-2\rho_0 \right| + \frac1n\right).
\end{align*} 
Moreover, the explicit form of the transition density given in~\eqref{eq:transition_density} reveals the bound
\begin{align*}
&\1_{J_{1,k}(\rho_0)} p_{\Delta_n^\gamma}^{\rho_0}(X_{(k-1)\Delta_n^\gamma},X_{k\Delta_n^\gamma})\\
&\hspace{1cm} \leq \1_{J_{1,k}(\rho_0)} \frac{1}{\sqrt{2\pi\alpha^2\Delta_n^\gamma}} \left(1+\frac{|\alpha-\beta|}{\alpha+\beta}\right)\exp\left(-\frac{(X_{k\Delta_n^\gamma}-X_{(k-1)\Delta_n^\gamma})^2}{2\alpha^2\Delta_n^\gamma}\right).
\end{align*} 
Using this explicit bound of the transition density on the event $J_{1,k}(\rho_0)$, the upper bound $(1+x)\exp(-x^2/c)\leq C_c\exp(-x^2/(2c)$ in the second inequality and $(a+b)^2 \geq a^2+b^2$ for $ab\geq 0$ in the third one gives the bound
\begin{align}\label{eq_proof:S_n11_2}
\begin{split}
&\E_0\left[\left. \sup_{z\in [0,K]} \left| b_{0,k}(z)-b_{0,k}(0)\right|\1_{J_{1,k}(\rho_0)}\right| X_{(k-1)\Delta_n^\gamma}\right] \\
& \hspace{0.2cm} \leq C_{\alpha_0,\beta_0}  \int_{-\infty}^{\rho_0} \exp\left(-\frac{2}{\alpha^2\Delta_n^\gamma}(y-\rho_0)(X_{(k-1)\Delta_n^\gamma}-\rho_0)\right) \\
&\hspace{1cm} \cdot \left( n^{-(1-\gamma)/2} \left|y+ X_{(k-1)\Delta_n^\gamma}-2\rho_0 \right| + \frac1n\right)\frac{1}{\sqrt{\Delta_n^\gamma}}\exp\left(-\frac{(y-X_{(k-1)\Delta_n^\gamma})^2}{2\alpha^2\Delta_n^\gamma}\right) dy  \\
& \hspace{0.2cm} = C_{\alpha_0,\beta_0} \int_{-\infty}^{\rho_0}  \left( n^{-(1-\gamma)/2} \left|y+ X_{(k-1)\Delta_n^\gamma}-2\rho_0 \right| + \frac1n\right) \\
& \hspace{5cm}\cdot \frac{1}{\sqrt{\Delta_n^\gamma}}\exp\left(-\frac{(y-2\rho_0+X_{(k-1)\Delta_n^\gamma})^2}{2\alpha^2\Delta_n^\gamma}\right) dy \\
& \hspace{0.2cm} \leq C_{\alpha_0,\beta_0} n^{-(1-\gamma)/2}\sqrt{\Delta_n^\gamma}\int_{-\infty}^{\rho_0} \frac{1}{\sqrt{\Delta_n^\gamma}}\exp\left(-\frac{(y-2\rho_0+X_{(k-1)\Delta_n^\gamma})^2}{4\alpha^2\Delta_n^\gamma}\right) dy \\
&\hspace{0.2cm} \leq C_{\alpha_0,\beta_0} n^{-1/2} \exp\left(-\frac{(X_{(k-1)\Delta_n^\gamma}-\rho_0)^2}{4\alpha^2\Delta_n^\gamma}\right) \int_{-\infty}^{\rho_0} \frac{1}{\sqrt{\Delta_n^\gamma}}\exp\left(-\frac{(y-\rho_0)^2}{4\alpha^2\Delta_n^\gamma}\right) dy \\
&\hspace{0.2cm} \leq C_{\alpha_0,\beta_0} n^{-1/2}\exp\left(-\frac{(X_{(k-1)\Delta_n^\gamma}-\rho_0)^2}{4\alpha^2\Delta_n^\gamma}\right).
\end{split}
\end{align}
Taking expectation and using Corollary~\ref{cor:bound_exp_X^2} and~\eqref{eq_proof:S_n11} finally reveals
\begin{align}\label{eq_proof:Sn_11_final}
&\E_0\left[ \sup_{z\in [0,K]}\sup_{|\alpha_n-\alpha_0|, |\beta_n-\beta_0|\leq L/\sqrt{n}}\left| S_{n,k}^{1,1}(z,\alpha_n,\beta_n)\right| \right] \leq  C_{\alpha_0,\beta_0}(L) \frac1n \frac{1}{\sqrt{k}}.
\end{align}
\smallskip

\noindent
$\mathbf{-\ S_{n,k}^{1,2}(z,\alpha_n,\beta_n)}$. We introduce the abbreviation
\begin{align}\label{eq_proof:Sn_12_4}
d(z) := \frac{2}{\Delta_n^\gamma}\left( X_{k\Delta_n^\gamma}-\rho_0-z\psi_n^\gamma\right)\left( X_{(k-1)\Delta_n^\gamma}-\rho_0-z\psi_n^\gamma\right) \geq 0,
\end{align} 
where the inequality is valid on $J_{1,k}(\rho_0)$ which we are working on. By a second order Taylor expansion,
\begin{align}\label{eq_proof:Sn_12_Taylor}
\begin{split}
&b_{n,k}(z) - b_{0,k}(z)\\
&\hspace{0.5cm} = \exp\left( -\frac{1}{\alpha_n^2} d(z)\right) - \exp\left( - \frac{1}{\alpha_0^2}d(z)\right) \\
&\hspace{0.5cm} = \left( \frac{1}{\alpha_0^2}-\frac{1}{\alpha_n^2}\right) d(z)\exp\left(-\frac{1}{\alpha_0^2} d(z)\right) + \frac12 \exp\left(-\xi(z) d(z)\right) d(z)^2  \left( \frac{1}{\alpha_0^2}-\frac{1}{\alpha_n^2}\right)^2,
\end{split}
\end{align}
where $\xi(z)$ is some intermediate value between $\alpha_n^{-2}$ and $\alpha_0^{-2}$. Subsequently, let $n$ be large enough such that $\alpha_0/2\leq \xi(z)\leq 3/2\alpha_0$ uniformly in $z$. This entails that
\[ \exp\left(-\xi(z) d(z)\right) d(z)^2 \leq \exp\left(-\frac{\alpha_0}{2} d(z)\right)d(z)^2\leq C_{\alpha_0}\exp\left( -\frac{\alpha_0}{4} d(z)\right).\]
To proceed, note that $d(z) \geq d(0)$ on $J_{1,k}(\rho_0)$ such that 
\begin{align}\label{eq_proof:Sn_12_remainder}
\begin{split}
\E_0\left[  \sup_{z\in [0,K]} \1_{J_{1,k}(\rho_0)}\exp\left(-\xi(z) d(z)\right) d(z)^2 \right] &\leq C_{\alpha_0}\E_0\left[\exp\left( -\frac{\alpha_0}{4} d(0)\right) \right] \leq C_{\alpha_0,\beta_0}\frac{1}{\sqrt{k}}
\end{split}
\end{align} 
by an analogous derivation as in~\eqref{eq_proof:S_n11_2} with a subsequent application of Corollary~\ref{cor:bound_exp_X^2}. In total we obtain for the remainder
\[ \E_0\left[  \sup_{z\in [0,K]}\sup_{|\alpha_n-\alpha_0|\leq L/\sqrt{n}}  \1_{J_{1,k}(\rho_0)}\exp\left(-\xi(z) d(z)\right) d(z)^2  \left( \frac{1}{\alpha_0^2}-\frac{1}{\alpha_n^2}\right)^2 \right]  \leq \frac{ C_{\alpha_0,\beta_0}L^2}{n\sqrt{k}}.\]
The same reasoning can be applied to $b_{n,k}(0)-b_{0,k}(0)$. Additionally, we have the estimate $|\alpha_n^{-2}-\alpha_0^{-2}|\leq C_{\alpha_0}L/\sqrt{n}$ for $|\alpha_n-\alpha_0|\leq L/\sqrt{n}$, such that we obtain in total 
\begin{align}\label{eq_proof:Sn_12}
\begin{split}
&\E_0\left[\sup_{z\in [0,K]}\sup_{|\alpha_n-\alpha_0|\leq L/\sqrt{n}} \1_{J_{1,k}(\rho_0)}\left|b_{0,k}(z)-b_{0,k}(0) - b_{n,k}(z) + b_{n,k}(0)\right|\right] \\
&\hspace{0.2cm} \leq \frac{C_{\alpha_0}L}{\sqrt{n}} \E_0\left[ \sup_{z\in [0,K]}\1_{J_{1,k}(\rho_0)}\left|d(z)\exp\left(-\frac{1}{\alpha_0^2} d(z)\right) -d(0)\exp\left(-\frac{1}{\alpha_0^2} d(0)\right)\right|\right]\\
&\hspace{1.5cm} + C_{\alpha_0,\beta_0}(L)\frac1n \frac{1}{\sqrt{k}}. 
\end{split}
\end{align}
To evaluate the remaining expectation, first note that
\begin{align}\label{eq_proof:Sn_12_5}
\begin{split}
&\1_{J_{1,k}(\rho_0)}\left|d(z)\exp\left(-\frac{1}{\alpha_0^2} d(z)\right) -d(0)\exp\left(-\frac{1}{\alpha_0^2} d(0)\right)\right| \\
&\hspace{0.5cm} \leq \1_{J_{1,k}(\rho_0)}\left| d(z)-d(0)\right|\exp\left(-\frac{1}{\alpha_0^2} d(z)\right)  \\
&\hspace{1.5cm} + \1_{J_{1,k}(\rho_0)}d(0)\left|\exp\left(-\frac{1}{\alpha_0^2} d(z)\right)-\exp\left(-\frac{1}{\alpha_0^2} d(0)\right)\right| \\
&\hspace{0.5cm} \leq \1_{J_{1,k}(\rho_0)}\left| d(z)-d(0)\right|\exp\left(-\frac{1}{\alpha_0^2} d(0)\right) + \1_{J_{1,k}(\rho_0)}d(0)\left|b_{0,k}(z)-b_{0,k}(0)\right|,
\end{split}
\end{align}
where the last line again uses that $d(z)\geq d(0)$ on $J_{1,k}(\rho_0)$ and the definition of $b_{0,k}(z)$. For the first summand, we estimate
\begin{align*}
&\sup_{z\in [0,K]} \1_{J_{1,k}(\rho_0)}\left| d(z)-d(0)\right| \\
&\hspace{1cm} = \sup_{z\in [0,K]} \1_{J_{1,k}(\rho_0)}\left|\frac{2z\psi_n^\gamma}{\Delta_n^\gamma}\left( X_{k\Delta_n^\gamma}+X_{(k-1)\Delta_n^\gamma} -2\rho_0\right) + \frac{z^2(\psi_n^\gamma)^2}{\Delta_n^\gamma}\right| \\
&\hspace{1cm}\leq \1_{J_{1,k}(\rho_0)}\left( 2Kn^{-(1-\gamma)/2}\left|X_{k\Delta_n^\gamma}+X_{(k-1)\Delta_n^\gamma} -2\rho_0 \right| + \frac{K^2}{n}\right).
\end{align*} 
Then by an analogous computation as used in~\eqref{eq_proof:S_n11_2} with a subsequent application of Corollary~\ref{cor:bound_exp_X^2} we obtain
\begin{align}\label{eq_proof:Sn_12_2}
\E_0\left[ \sup_{z\in [0,K]}\1_{J_{1,k}(\rho_0)} \left| d(z)-d(0)\right|\exp\left(-\frac{1}{\alpha_0^2} d(0)\right) \right] \leq C_{\alpha_0,\beta_0}  \frac{1}{\sqrt{nk}}.
\end{align} 
As $ab\leq (a+b)^2$ for $ab>0$ we find
\[ \1_{J_{1,k}(\rho_0)}d(0) \leq \1_{J_{1,k}(\rho_0)}\frac{2}{\Delta_n^\gamma} \left( X_{k\Delta_n^\gamma}+X_{(k-1)\Delta_n^\gamma}-2\rho_0\right)^2\]
and then with an analogous reasoning as in~\eqref{eq_proof:S_n11_2} with a subsequent application of Corollary~\ref{cor:bound_exp_X^2} we find for the second summand in the right-hand side of~\eqref{eq_proof:Sn_12_5},
\begin{align}\label{eq_proof:Sn_12_3}
\E_0\left[ \sup_{z\in [0,K]}\1_{J_{1,k}(\rho_0)} d(0)|\left|b_{0,k}(z)-b_{0,k}(0)\right| \right] \leq  C_{\alpha_0,\beta_0}  \frac{1}{\sqrt{nk}}.
\end{align} 
Plugging in the estimates~\eqref{eq_proof:Sn_12_2} and~\eqref{eq_proof:Sn_12_3} into~\eqref{eq_proof:Sn_12} yields
\begin{align*}
&\E_0\left[\sup_{z\in [0,K]}\sup_{|\alpha_n-\alpha_0|\leq L/\sqrt{n}} \1_{J_{1,k}(\rho_0)}\left|b_{0,k}(z)-b_{0,k}(0) - b_{n,k}(z) + b_{n,k}(0)\right|\right]\leq \frac{C_{\alpha_0,\beta_0}(L)}{n\sqrt{k}} 
\end{align*}
such that because $|a_n|\leq C_{\alpha_0,\beta_0}$,
\begin{align}\label{eq_proof:Sn_12_final}
\E_0\left[ \sup_{z\in [0,K]}\sup_{|\alpha_n-\alpha_0|, |\beta_n-\beta_0|\leq L/\sqrt{n}}\left| S_{n,k}^{1,2}(z,\alpha_n,\beta_n)\right| \right] \leq  C_{\alpha_0,\beta_0}(L) \frac1n \frac{1}{\sqrt{k}}.
\end{align}
\smallskip

\noindent
$\mathbf{-\ S_{n,k}^{1,3}(z,\alpha_n,\beta_n)}$. Recall $d(z)$ given in~\eqref{eq_proof:Sn_12_4}. Inserting the the Taylor expansion~\eqref{eq_proof:Sn_12_Taylor} for $b_{n,k}(z) - b_{0,k}(z)$ and $ b_{n,k}(0) - b_{0,k}(0)$ and subsequently applying the triangle inequality reveals
\begin{align*}
&\left| b_{n,k}(z)b_{0,k}(0) - b_{0,k}(z)b_{n,k}(0) \right| \\
&\hspace{0.2cm} = \left| b_{0,k}(0)\left( b_{n,k}(z) - b_{0,k}(z)\right) - b_{0,k}(z)\left( b_{n,k}(0)-b_{0,k}(0)\right)\right| \\
&\hspace{0.2cm} \leq \left| \exp\left(-\frac{1}{\alpha_0^2}d(0)\right)\left[ \exp\left(-\frac{1}{\alpha_0^2}d(z)\right)d(z) -\exp\left(-\frac{1}{\alpha_0^2}d(0)\right)d(0)\right]\left(\frac{1}{\alpha_0^2}-\frac{1}{\alpha_n^2}\right)\right| \\
&\hspace{1cm} + \left|\exp\left(-\frac{1}{\alpha_0^2}d(0)\right)d(0) \left[ \exp\left(-\frac{1}{\alpha_0^2}d(0)\right)-\exp\left(-\frac{1}{\alpha_0^2}d(z)\right) \right]\left(\frac{1}{\alpha_0^2}-\frac{1}{\alpha_n^2}\right) \right| \\
&\hspace{1cm} + \left|\frac12 d(z)^2\exp\left(-\xi_1(z) d(z)\right)\exp\left(-\frac{1}{\alpha_0^2}d(0)\right) \left(\frac{1}{\alpha_0^2}-\frac{1}{\alpha_n^2}\right)^2 \right| \\
&\hspace{1cm}  +\left|\frac12 d(0)^2\exp\left(-\xi_2 d(0)\right)\exp\left(-\frac{1}{\alpha_0^2}d(z)\right) \left(\frac{1}{\alpha_0^2}-\frac{1}{\alpha_n^2}\right)^2\right|, 
\end{align*}
for two intermediate values $\xi_1(z),\xi_2$ between $\alpha_0^{-2}$ and $\alpha_n^{-2}$. Taking into account that $d(z)\geq 0$ and therefore $|\exp(-\alpha_0^{-2}d(z))|\leq 1$ on $J_{1,k}(\rho_0)$ for all $z\in\R$, the last two terms can be treated analogously as the remainder in $S_{n,k}^{1,2}$ and yield the same bound~\eqref{eq_proof:Sn_12_remainder}. The square bracket in the second summand equals $b_{0,k}(0)-b_{0,k}(z)$ and the square bracket within the first summand equals the left-hand side of~\eqref{eq_proof:Sn_12_5} and is in particular bounded as in~\eqref{eq_proof:Sn_12_5}. This means that in total the first two summands are bounded by 
\begin{align*}
&\left[  \left| d(z)-d(0)\right|\exp\left(-\frac{1}{\alpha_0^2} d(0)\right) + d(0)\left|b_{0,k}(z)-b_{0,k}(0)\right|\right] \\
&\hspace{1cm} \cdot \left(1+d(0)\right) \exp\left(-\frac{1}{\alpha_0^2}d(0)\right)  \left(\frac{1}{\alpha_0^2}-\frac{1}{\alpha_n^2}\right)
\end{align*}
Finally, we use $(1+d(0))\exp(-\alpha_0^{-2}d(0))\leq C_{\alpha_0}$, $|a_0a_n|\leq C_{\alpha_0,\beta_0}$ and the two uniform moment bounds~\eqref{eq_proof:Sn_12_2} and~\eqref{eq_proof:Sn_12_3} and arrive at
\begin{align}\label{eq_proof:Sn_13_final}
\E_0\left[ \sup_{z\in [0,K]}\sup_{|\alpha_n-\alpha_0|, |\beta_n-\beta_0|\leq L/\sqrt{n}}\left| S_{n,k}^{1,3}(z,\alpha_n,\beta_n)\right| \right] \leq  C_{\alpha_0,\beta_0}(L) \frac1n \frac{1}{\sqrt{k}}.
\end{align} 
\smallskip

Summing up the bounds~\eqref{eq_proof:Sn_11_final}, \eqref{eq_proof:Sn_12_final} and~\eqref{eq_proof:Sn_13_final}, one obtains
\begin{align*}
& \E_0\left[ \sup_{z\in [0,K]}\sup_{|\alpha_n-\alpha_0|, |\beta_n-\beta_0|\leq L/\sqrt{n}}\left| S_{n,k}^{1}(z,\alpha_n,\beta_n)\right| \right] \leq  C_{\alpha_0,\beta_0}(L) \frac1n \sum_{k=1}^n \frac{1}{\sqrt{k}}\rightarrow 0.
\end{align*}
\smallskip

\noindent
$\mathbf{\blacktriangleright S_n^2(z,\alpha_n,\beta_n)}$. Using $x/(1+x)\leq \log(1+x)\leq x$ for $x>-1$ we obtain (analogously to $S_{n,k}^1$)
\begin{align*}
&\left|\log\left(\frac{1-a_nb_{n,k}(z)}{1-a_0b_{0,k}(z)}\right) \right|\1_{J_{1,k}(\rho_0+z\psi_n^\gamma)}\\
&\hspace{1cm} \leq C_{\alpha_0,\beta_0}\left| a_0b_{0,k}(z) - a_nb_{n,k}(z)\right| \1_{J_{1,k}(\rho_0+z\psi_n^\gamma)}\\
&\hspace{1cm} \leq C_{\alpha_0,\beta_0} \left( |a_0-a_n||b_{0,k}(z)| + |a_n| |b_{0,k}(z)-b_{n,k}(z)|\right)\1_{J_{1,k}(\rho_0+z\psi_n^\gamma)}\\
&\hspace{1cm} \leq C_{\alpha_0,\beta_0} \left( |a_0-a_n| +  |b_{0,k}(z)-b_{n,k}(z)|\right)\1_{J_{1,k}(\rho_0+z\psi_n^\gamma)},
\end{align*} 
where the last step uses that $|b_{0,k}(z)|\leq 1$ on $J_{1,k}(\rho_0+z\psi_n^\gamma)\supset J_{1,k}(\rho_0)$ and uniform boundedness of $|a_n|$ for $|\alpha_n-\alpha_0|,|\beta_n-\beta_0|\leq L/\sqrt{n}$. It is clear by~\eqref{eq_proof:S_n11} that
\[ \sup_{|\alpha_n-\alpha_0|,|\beta_n-\beta_0|\leq L/\sqrt{n}} \left|a_0-a_n\right| \leq C_{\alpha_0,\beta_0}(L)\frac{1}{\sqrt{n}}\]
and by a first order Taylor expansion we find
\begin{align*}
\left|b_{0,k}(z)-b_{n,k}(z)\right| &= \left| \exp\left(-\frac{1}{\alpha_0^2}d(z)\right)-\exp\left(-\frac{1}{\alpha_n^2} d(z)\right)\right| \\
&= \left|\frac{1}{\alpha_0^2}-\frac{1}{\alpha_n^2}\right| d(z)\exp\left(-\xi(z) d(z)\right) \leq C_{\alpha_0,\beta_0}\left|\frac{1}{\alpha_0^2}-\frac{1}{\alpha_n^2}\right|,
\end{align*}
for some intermediate value $\xi(z)$ between $\alpha_0^{-2}$ and $\alpha_n^{-2}$. The last step uses boundedness of $x\mapsto x\exp(-x^2/c)$ the fact that $\xi(z)$ is uniformly (in $z$) bounded from below by a positive constant for $n$ large enough which we also tacitly assume in the sequel. Summing up these to estimates gives
\[ \sup_{z\in [0,K]}\sup_{|\alpha_n-\alpha_0|, |\beta_n-\beta_0|\leq L/\sqrt{n}}\1_{J_{1,k}(\rho_0+z\psi_n^\gamma)}\left|\log\left(\frac{1-a_nb_{n,k}(z)}{1-a_0b_{0,k}(z)}\right) \right| \leq C_{\alpha_0,\beta_0}(L)\frac{1}{\sqrt{n}}.\]
Then by definition of $S_{n}^2$ in~\eqref{eq_proof:def_S1andS2} we obtain
\begin{align*}
&\E_0\left[ \sup_{z\in [0,K]}\sup_{|\alpha_n-\alpha_0|, |\beta_n-\beta_0|\leq L/\sqrt{n}} \left| S_n^2(z,\alpha_n,\beta_n)\right|\right] \\
&\hspace{1cm} \leq C_{\alpha_0,\beta_0}(L)\frac{1}{\sqrt{n}} \sum_{k=1}^n \E_0\left[ \sup_{z\in [0,K]}\left( \1_{J_{1,k}(\rho_0+z\psi_n^\gamma)} - \1_{J_{1,k}(\rho_0)}\right)\right] \\
&\hspace{1cm} \leq C_{\alpha_0,\beta_0}(L) \frac1n\sum_{k=1}^n\frac{1}{\sqrt{k}}\rightarrow 0,
\end{align*}
where the last step follows as for $T_n^{1,1}$.
\end{itemize}

\medskip
\noindent
$\bullet\ \underline{m=3}:$ Here, we have the decomposition
\begin{align*}
T_n^3(z,\alpha_n,\beta_n) &= T_n^{3,1}(z,\alpha_n,\beta_n) + T_n^{3,2}(z,\alpha_n,\beta_n) + T_n^{3,3}(z,\alpha_n,\beta_n)+ T_n^{3,4}(z,\alpha_n,\beta_n),
\end{align*}
where
\begin{align*}
T_n^{3,1}(z,\alpha_n,\beta_n) &:= \log\left( \frac{(\alpha_n\beta_0(\alpha_0+\beta_0)}{\alpha_0\beta_n(\alpha_n+\beta_n)}\right) \sum_{k=1}^n \left( \1_{J_{3,k}(\rho_0+z\psi_n^\gamma)} - \1_{J_{3,k}(\rho_0)}\right), \\
T_n^{3,2}(z,\alpha_n,\beta_n) &:= \frac{1}{2\Delta_n^\gamma}\sum_{k=1}^n \left( \1_{J_{3,k}(\rho_0+z\psi_n^\gamma)} - \1_{J_{3,k}(\rho_0)}\right)\left( \left(\frac{(X_{k\Delta_n^\gamma}-\rho_0}{\beta_0} - \frac{X_{(k-1)\Delta_n^\gamma}-\rho_0}{\alpha_0}\right)^2 \right. \\
&\hspace{3cm} \left. - \left(\frac{(X_{k\Delta_n^\gamma}-\rho_0}{\beta_n} - \frac{X_{(k-1)\Delta_n^\gamma}-\rho_0}{\alpha_n}\right)^2 \right),\\
T_n^{3,3}(z,\alpha_n,\beta_n) &:= \frac{z\psi_n^\gamma}{\Delta_n^\gamma}\sum_{k=1}^n \left( \left(\frac{1}{\alpha_0}-\frac{1}{\beta_0}\right)\left( \frac{X_{k\Delta_n^\gamma}-\rho_0}{\beta_0}-\frac{X_{(k-1)\Delta_n^\gamma}-\rho_0}{\alpha_0}\right) \right.\\
&\hspace{1.5cm} - \left. \left(\frac{1}{\alpha_n}-\frac{1}{\beta_n}\right)\left( \frac{X_{k\Delta_n^\gamma}-\rho_0}{\beta_n}-\frac{X_{(k-1)\Delta_n^\gamma}-\rho_0}{\alpha_n}\right) \right) \1_{J_{3,k}(\rho_0+z\psi_n^\gamma)}, \\
T_n^{3,4}(z,\alpha_n,\beta_n) &:= \frac{(z\psi_n^\gamma)^2}{\Delta_n^\gamma} \left( \left(\frac{1}{\alpha_0}-\frac{1}{\beta_0}\right)^2 - \left(\frac{1}{\alpha_n}-\frac{1}{\beta_n}\right)^2\right) \sum_{k=1}^n \1_{J_{3,k}(\rho_0+z\psi_n^\gamma)}.
\end{align*}

First of all, we note that
\begin{align}\label{eq_proof:bound_indicators_k=3}
\begin{split}
&\sup_{z\in [0,K]}\left|\1_{J_{3,k}(\rho_0+z\psi_n^\gamma)} - \1_{J_{3,k}(\rho_0)} \right| \\
&\hspace{1cm}\leq \1_{\{\rho_0\leq X_{(k-1)\Delta_n^\gamma} \leq \rho_0+K\psi_n^\gamma, X_{(k-1)\Delta_n^\gamma}\leq X_{k\Delta_n^\gamma} \}} + \1_{\{\rho_0\leq X_{k\Delta_n^\gamma} \leq \rho_0+K\psi_n^\gamma, X_{(k-1)\Delta_n^\gamma}\leq X_{k\Delta_n^\gamma}\}}.
\end{split}
\end{align}
As in~\eqref{eq_proof:bound_expectation_indicator_T1}, we obtain from this
\[ \E_{\alpha_0,\beta_0,\rho_0}\left[\sup_{z\in [0,K]}\left|\1_{J_{3,k}(\rho_0+z\psi_n^\gamma)} - \1_{J_{3,k}(\rho_0)} \right|\right] \leq C_{\alpha_0,\beta_0}KL\frac{1}{\sqrt{kn}}. \]
\begin{itemize}
\item[$\mathbf{-\ T_n^{3,1}}$.] By a Taylor expansion, we obtain 
\[ \sup_{|\alpha_n-\alpha_0|, |\beta_n-\beta_0|\leq L/\sqrt{n}} \left|\log\left( \frac{(\alpha_n\beta_0(\alpha_0+\beta_0)}{\alpha_0\beta_n(\alpha_n+\beta_n)}\right)\right| \leq C_{\alpha_0,\beta_0}L\frac{1}{\sqrt{n}}.\]
Together with~\eqref{eq_proof:bound_indicators_k=3}, we then obtain
\begin{align*}
&\E_0\left[ \sup_{z\in [-K,K]}  \sup_{|\alpha_n-\alpha_0|, |\beta_n-\beta_0|\leq L/\sqrt{n}} \left| T_n^{3,1}(z,\alpha_n,\beta_n) \right|\right] \leq C_{\alpha_0,\beta_0}LK\frac{1}{n}\sum_{k=1}^n \frac{1}{\sqrt{k}}\rightarrow 0.
\end{align*}

\smallskip
\item[$\mathbf{-\ T_n^{3,2}}$.] We first bound
\[ \left|T_n^{3,2}(z,\alpha_n,\beta_n)\right|\leq R_n^1(z,\beta_n) + R_n^2(z,\alpha_n,\beta_n) + R_n^3(z,\alpha_n), \]
where
\begin{align*}
R_n^1(z,\beta_n) &= \frac{1}{2\Delta_n^\gamma} \sum_{k=1}^n \left(X_{k\Delta_n^\gamma}-\rho_0\right)^2\left|\frac{1}{\beta_0^2}-\frac{1}{\beta_n^2}\right|\left| \1_{J_{3,k}(\rho_0+z\psi_n^\gamma)} - \1_{J_{3,k}(\rho_0)}\right| \\
R_n^2(z,\alpha_n,\beta_n) &= \frac{1}{2\Delta_n^\gamma} \sum_{k=1}^n\left|X_{k\Delta_n^\gamma}-\rho_0\right|\left|X_{(k-1)\Delta_n^\gamma}-\rho_0\right|\left|\frac{1}{\alpha_0\beta_0}-\frac{1}{\alpha_n\beta_n}\right| \\
&\hspace{3cm} \cdot\left| \1_{J_{3,k}(\rho_0+z\psi_n^\gamma)} - \1_{J_{3,k}(\rho_0)}\right| \\
R_n^3(z,\alpha_n) &= \frac{1}{2\Delta_n^\gamma} \sum_{k=1}^n \left(X_{(k-1)\Delta_n^\gamma}-\rho_0\right)^2\left|\frac{1}{\alpha_0^2}-\frac{1}{\alpha_n^2}\right|\left| \1_{J_{3,k}(\rho_0+z\psi_n^\gamma)} - \1_{J_{3,k}(\rho_0)}\right|.
\end{align*}
By a Taylor argument and using the bound~\eqref{eq_proof:bound_indicators_k=3}, we obtain
\begin{align*}
&\sup_{z\in [-K,K]}\sup_{|\beta_n-\beta_0|\leq L/\sqrt{n}}\left|R_n^1(z,\beta_n) \right| \\
&\hspace{1cm}\leq \frac{C_{\beta_0}L}{2\Delta_n^\gamma\sqrt{n}} \sum_{k=1}^n \left(X_{k\Delta_n^\gamma}-\rho_0\right)^2\1_{\{\rho_0\leq X_{(k-1)\Delta_n^\gamma} \leq \rho_0+K\psi_n^\gamma, X_{(k-1)\Delta_n^\gamma}\leq X_{k\Delta_n^\gamma} \}} \\
&\hspace{1.5cm} + \frac{C_{\beta_0}L}{2\Delta_n^\gamma\sqrt{n}} \sum_{k=1}^n \left(X_{k\Delta_n^\gamma}-\rho_0\right)^2 \1_{\{\rho_0\leq X_{k\Delta_n^\gamma} \leq \rho_0+K\psi_n^\gamma, X_{(k-1)\Delta_n^\gamma}\leq X_{k\Delta_n^\gamma}\}}.
\end{align*}
Using the condition of the indicator, the second summand can be directly bounded as
\[ \frac{C_{\beta_0}L}{2\Delta_n^\gamma\sqrt{n}} \sum_{k=1}^n \left(X_{k\Delta_n^\gamma}-\rho_0\right)^2 \1_{\{\rho_0\leq X_{k\Delta_n^\gamma} \leq \rho_0+K\psi_n^\gamma, X_{(k-1)\Delta_n^\gamma}\leq X_{k\Delta_n^\gamma}\}} \leq C_{\beta_0}LK^2 \frac{\sqrt{n}(\psi_n^\gamma)^2}{\Delta_n^\gamma},\]
which converges to zero as $\sqrt{n}(\psi_n^\gamma)^2(\Delta_n^\gamma)^{-1} =n^{-1/2}$. For the first one, we estimate its expectation. Using the bound~\eqref{eq:bound_transition_density} on the transition density, this gives first for a single summand
\begin{align}\label{eq_proof:Tn_32}
\begin{split}
&\E_0\left[ \frac{(X_{k\Delta_n^\gamma}-\rho_0´)^2}{\Delta_n^\gamma}\1_{\{\rho_0\leq X_{(k-1)\Delta_n^\gamma} \leq \rho_0+K\psi_n^\gamma, X_{(k-1)\Delta_n^\gamma}\leq X_{k\Delta_n^\gamma} \}} \right] \\
&\hspace{0.5cm} \leq C_{\alpha_0,\beta_0} \E_0\left[\1_{\{\rho_0\leq X_{(k-1)\Delta_n^\gamma} \leq \rho_0+K\psi_n^\gamma\}} \int_{X_{(k-1)\Delta_n^\gamma}}^\infty \frac{(y-\rho_0)^2}{\Delta_n^\gamma} \frac{1}{\sqrt{\Delta_n^\gamma}} \right.\\
&\hspace{8cm}\left. \cdot \exp\left(-\frac{(y-X_{(k-1)\Delta_n^\gamma})^2}{2\Delta_n^\gamma\max\{\alpha_0^2,\beta_0^2\}}\right) dy \right] \\
&\hspace{0.5cm} \leq C_{\alpha_0,\beta_0} \E_0\left[\1_{\{\rho_0\leq X_{(k-1)\Delta_n^\gamma} \leq \rho_0+K\psi_n^\gamma\}} \left(\frac{(X_{(k-1)\Delta_n^\gamma}-\rho_0)^2}{\Delta_n^\gamma}+1\right)  \right]\\
&\hspace{3cm} \cdot \int_0^\infty \left(1+y^2\right) \exp\left(-\frac{y^2}{2}\right) dy \\
&\hspace{0.5cm} \leq C_{\alpha_0,\beta_0} \left( K^2 \frac{(\psi_n^\gamma)^2}{\Delta_n^\gamma} + \frac{1}{\sqrt{nk}}\right),
\end{split}
\end{align}
where the last step uses the bound~\eqref{eq_proof:bound_expectation_indicator_T1}. Hence, we also have
\begin{align*}
&\E_0\left[ \frac{C_{\beta_0}L}{2\Delta_n^\gamma\sqrt{n}} \sum_{k=1}^n \left(X_{k\Delta_n^\gamma}-\rho_0\right)^2 \1_{\{\rho_0\leq X_{(k-1)\Delta_n^\gamma} \leq \rho_0+K\psi_n^\gamma, X_{(k-1)\Delta_n^\gamma}\leq X_{k\Delta_n^\gamma} \}}\right] \\
&\hspace{1cm} \leq C_{\alpha_0,\beta_0}L \left( K^2 \frac{\sqrt{n}(\psi_n^\gamma)^2}{\Delta_n^\gamma} + \frac{1}{\sqrt{n}}\sum_{k=1}^n \frac{1}{\sqrt{nk}}\right) \rightarrow 0
\end{align*}
and thus we have shown
\[ \E_0\left[ \sup_{z\in [-K,K]}  \sup_{|\beta_n-\beta_0|\leq L/\sqrt{n}} \left| R_n^{1}(z,\beta_n) \right|\right] \rightarrow 0. \]
The term $R_n^3$ can be dealt with analogously. For $R_n^2$ we again apply a Taylor argument and~\eqref{eq_proof:bound_indicators_k=3} to obtain
\begin{align*}
&\sup_{z\in [-K,K]}\sup_{|\alpha_n-\alpha_0|,|\beta_n-\beta_0|\leq L/\sqrt{n}}\left|R_n^2(z,\beta_n) \right| \\
&\hspace{0.5cm} \leq \frac{C_{\alpha_0,\beta_0}L}{2\Delta_n^\gamma\sqrt{n}} \sum_{k=1}^n \left|X_{k\Delta_n^\gamma}-\rho_0\right|\left|X_{(k-1)\Delta_n^\gamma}-\rho_0\right| \1_{\{\rho_0\leq X_{(k-1)\Delta_n^\gamma} \leq \rho_0+K\psi_n^\gamma, X_{(k-1)\Delta_n^\gamma}\leq X_{k\Delta_n^\gamma} \}} \\
&\hspace{1cm} + \frac{C_{\alpha_0,\beta_0}L}{2\Delta_n^\gamma\sqrt{n}} \sum_{k=1}^n\left|X_{k\Delta_n^\gamma}-\rho_0\right|\left|X_{(k-1)\Delta_n^\gamma}-\rho_0\right|\1_{\{\rho_0\leq X_{k\Delta_n^\gamma} \leq \rho_0+K\psi_n^\gamma, X_{(k-1)\Delta_n^\gamma}\leq X_{k\Delta_n^\gamma}\}} \\
&\hspace{0.5cm} \leq \frac{C_{\alpha_0,\beta_0}L\psi_n^\gamma}{2\Delta_n^\gamma\sqrt{n}} \sum_{k=1}^n \left|X_{k\Delta_n^\gamma}-\rho_0\right| \1_{\{\rho_0\leq X_{(k-1)\Delta_n^\gamma} \leq \rho_0+K\psi_n^\gamma, X_{(k-1)\Delta_n^\gamma}\leq X_{k\Delta_n^\gamma} \}} \\
&\hspace{1cm} + \frac{C_{\alpha_0,\beta_0}L\psi_n^\gamma}{2\Delta_n^\gamma\sqrt{n}} \sum_{k=1}^n \left|X_{(k-1)\Delta_n^\gamma}-\rho_0\right|\1_{\{\rho_0\leq X_{k\Delta_n^\gamma} \leq \rho_0+K\psi_n^\gamma, X_{(k-1)\Delta_n^\gamma}\leq X_{k\Delta_n^\gamma}\}}. 
\end{align*}
Next, we bound the expectation of a single summand using the bound~\eqref{eq:bound_transition_density} on the transition density. Analogously to~\eqref{eq_proof:Tn_32},
\begin{align*}
&\E_0\left[ \frac{|X_{k\Delta_n^\gamma}-\rho_0|}{\sqrt{\Delta_n^\gamma}}\1_{\{\rho_0\leq X_{(k-1)\Delta_n^\gamma} \leq \rho_0+K\psi_n^\gamma, X_{(k-1)\Delta_n^\gamma}\leq X_{k\Delta_n^\gamma} \}} \right]  \leq C_{\alpha_0,\beta_0}\left( \frac{\psi_n^\gamma}{\sqrt{\Delta_n^\gamma}} + \frac{1}{\sqrt{nk}}\right)
\end{align*}
and
\begin{align*}
&\E_0\left[\frac{|X_{(k-1)\Delta_n^\gamma}-\rho_0|}{\sqrt{\Delta_n^\gamma}}\1_{\{\rho_0\leq X_{k\Delta_n^\gamma} \leq \rho_0+K\psi_n^\gamma, X_{(k-1)\Delta_n^\gamma}\leq X_{k\Delta_n^\gamma}\}}\right] \\
&\hspace{0.5cm} \leq C_{\alpha_0,\beta_0}\E_0\left[\frac{|X_{(k-1)\Delta_n^\gamma}-\rho_0|}{\sqrt{\Delta_n^\gamma}} \1_{\{X_{(k-1)\Delta_n^\gamma}\leq \rho_0\}} \right.\\
&\hspace{5cm} \left. \cdot\int_{\rho_0}^{\rho_0+K\psi_n^\gamma} \frac{1}{\sqrt{\Delta_n^\gamma}}\exp\left(-\frac{(y-X_{(k-1)\Delta_n^\gamma})^2}{2\Delta_n^\gamma\max\{\alpha_0^2,\beta_0^2\}}\right)dy\right] \\
&\hspace{0.5cm} \leq C_{\alpha_0,\beta_0}\E_0\left[\frac{|X_{(k-1)\Delta_n^\gamma}-\rho_0|}{\sqrt{\Delta_n^\gamma}}\frac{K\psi_n^\gamma}{\sqrt{\Delta_n^\gamma}} \exp\left(-\frac{(X_{(k-1)\Delta_n^\gamma}-\rho_0)^2}{2\Delta_n^\gamma\max\{\alpha_0^2,\beta_0^2\}}\right) \right] \\
&\hspace{0.5cm} \leq C_{\alpha_0,\beta_0}\frac{K\psi_n^\gamma}{\sqrt{\Delta_n^\gamma}} \E_0\left[\exp\left(-\frac{(X_{(k-1)\Delta_n^\gamma}-\rho_0)^2}{4\Delta_n^\gamma\max\{\alpha_0^2,\beta_0^2\}}\right) \right] \\
&\hspace{0.5cm} \leq C_{\alpha_0,\beta_0}\frac{K\psi_n^\gamma}{\sqrt{\Delta_n^\gamma}} \frac{1}{\sqrt{k}},
\end{align*}
where the last step follows by Corollary~\ref{cor:bound_exp_X^2}. In total, we obtain
\begin{align*}
&\E_0\left[\sup_{z\in [-K,K]}\sup_{|\alpha_n-\alpha_0|,|\beta_n-\beta_0|\leq L/\sqrt{n}}\left|S_n^2(z,\beta_n) \right| \right] \\
&\hspace{1cm} \leq C_{\alpha_0,\beta_0} L \frac{\psi_n^\gamma}{\sqrt{n\Delta_n^\gamma}} \sum_{k=1}^n \left( \frac{\psi_n^\gamma}{\sqrt{\Delta_n^\gamma}} + \frac{1}{\sqrt{nk}}+\frac{K\psi_n^\gamma}{\sqrt{\Delta_n^\gamma}} \frac{1}{\sqrt{k}}\right)\\
&\hspace{1cm} \leq C_{\alpha_0,\beta_0}L\frac1n \left( \sqrt{n} + K+1\right)\rightarrow 0.
\end{align*}

\smallskip
\item[$\mathbf{-\ T_n^{3,3}}$.] This can be done analogously to the treatment of $R_n^2$ in the step $T_n^{3,2}$ before.

\smallskip
\item[$\mathbf{-\ T_n^{3,4}}$.] By a Taylor expansion, we find
\[ \sup_{|\alpha_n-\alpha_0|, |\beta_n-\beta_0|\leq L/\sqrt{n}}\left|\left( \left(\frac{1}{\alpha_0}-\frac{1}{\beta_0}\right)^2 - \left(\frac{1}{\alpha_n}-\frac{1}{\beta_n}\right)^2\right) \right| \leq C_{\alpha_0,\beta_0}L\frac{1}{\sqrt{n}}. \]
Using the explicit form of $\psi_n^\gamma$ and bounding each indicator by one yields
\begin{align*}
&\E_0\left[ \sup_{z\in [-K,K]}  \sup_{|\alpha_n-\alpha_0|, |\beta_n-\beta_0|\leq L/\sqrt{n}} \left| T_n^{3,4}(z,\alpha_n,\beta_n) \right|\right] \\
&\hspace{1cm}\leq C_{\alpha_0,\beta_0}LK^2 \frac{1}{\sqrt{n}}\frac{(\psi_n^\gamma)^2}{\Delta_n^\gamma} n  = C_{\alpha_0,\beta_0}LK^2\frac{1}{\sqrt{n}} \rightarrow 0.
\end{align*}
\end{itemize}

\section{Details on the proof of Proposition~\ref{prop:rate_of_triple_MLE}}\label{Section:triple_details}
This section is split into three parts. In Subsection~\ref{Subsection:tripleMLE_1}, we provide all details left out in the sketch of proof in Section~\ref{Section:Proof_sketch_triple}, except for Lemma~\ref{lemma:estimates_tripleMLE} which is proven in Subsection~\ref{Subsection:tripleMLE_2}. Finally, Subsection~\ref{Subsection:tripleMLE_3} contains additional auxiliary results that are used within Subsection~\ref{Subsection:tripleMLE_1}.

\subsection{Details for Section~\ref{Section:Proof_sketch_triple}}\label{Subsection:tripleMLE_1}

We start with a small remark that explains why~\eqref{eq_proof:order_remainder_ell1} is valid. Thereafter, the rest of the details left out in Section~\ref{Section:Proof_sketch_triple} is given.

\begin{remark}\label{Remark_LK=0}
Lemma~$4.1$ in~\cite{Brutsche/Rohde} remains valid for $K,L\geq 0$. In this case, we obtain the moment bounds
$\E[F_n^1(K,L)n^{-1/2}]\leq C_{\alpha_0,\beta_0}(L)<\infty$ and $\E[F_n^2(K,L)n^{-2/3}]\leq C_{\alpha_0,\beta_0}(L)<\infty$ for the remainders $F_n^1$ and $F_n^2$, where in case $L=0$ the constant $C_{\alpha_0,\beta_0}(L)$ is replaced by $C_{\alpha_0,\beta_0}$. The proof of this does not change at all. Here, we no longer have the statement that these moment bounds scale linearly in $L$, but the decomposition~\eqref{eq:decomposition_ell_1} does not involve $L$ and hence we do not need such a statement for the remainder in~\eqref{eq_proof:order_remainder_ell1}.
\end{remark}

In what follows, we are going to prove that for an appropriate choice of $K=(K_1,K_2,K_3)\in (0,\infty)^3$,
\begin{align}\label{eq_proof:introduction_proof_triple}
\limsup_{n\to\infty}\Pr_0\left( \sup_{(\theta_\rho,\theta_\alpha,\theta_\beta)\in \mathcal{T}_{n,\gamma}^j(K)} \left( \ell_{n,\gamma}^1(\theta_\rho) + \ell_{n,\gamma}^2(\theta_\rho,\theta_\alpha,\theta_\beta)\right) \geq 0, B_n\cap E^\gamma\right) <\epsilon,
\end{align} 
for $j=1,2,3$ following the route of proof given in Section~\ref{Section:Proof_sketch_triple}. The event $E^\gamma$ is given in~\eqref{eq:event_Egamma} and $B_n$ is chosen to be the intersection of the events specified in Lemma~\ref{lemma:estimates_tripleMLE}, Lemma~\ref{lemma:bound_small_theta_beta} and Lemma~\ref{lemma:bound_L_sqrt(n)}, where the latter is replaced by Lemma~$4.1$ in~\citeSM{App:Brutsche/Rohde} in case $\gamma=1$. Throughout the proof, we use the notation
\[ f(x) = \log(x) - \frac12\left(x^2-1\right) \quad\textrm{ and }\quad g(x) = |\log(x)| + \frac12\left|x^2-1\right|.\]
Note that $f(x)\leq 0$ with equality if and only if $x=1$ and $f(x)\rightarrow -\infty$ for both $x\to 0$ and $x\to\infty$. During the proof, we will enumerate constants $C_j$ for an easier exposition. Moreover, we explicitly highlight their dependence on the parameters $K_1,K_2,K_3$ that have to be specified. Besides, the constants depend on $\alpha_0,\beta_0$ and the constants $\Gamma,\xi,\zeta$ appearing in~\eqref{eq_proof:setAn} which is suppressed in notation. \\
Although the constant $K=(K_1,K_2,K_3)$ has to be specified first for $j=3$ (which only depens on $K_3$), then for $j=2$ (that depends on $K_2,K_3$) and finally for $j=1$ (which depends on $K_1,K_2,K_3)$, we provide the derivation of~\eqref{eq_proof:introduction_proof_triple} in reversed order. The reason is that this order leads to a significantly more transparent proof.

\medskip
\noindent
\textbf{$\bullet\ \mathcal{T}_{n,\gamma}^1(K)$.} We assume $K_2,K_3$ to be given and construct $K_1$. By Lemma~\ref{lemma:estimates_tripleMLE}(a) with $L=K_2$ and Lemma~\ref{lemma:estimates_tripleMLE}(d) we have the bound
\[ \sup_{(\theta_\rho,\theta_\alpha,\theta_\beta)\in\mathcal{T}_{n,\gamma}^1} \ell_{n,\gamma}^2(\theta_\rho,\theta_\alpha,\theta_\beta)\1_{B_n} \leq C(K_2)\1_{B_n}\]
for $n$ sufficiently large, where we have used a Taylor expansion of $g$ in $x_0=1$ in the application of Lemma~\ref{lemma:estimates_tripleMLE}(d) which reveals $g(\alpha_0/(\alpha_0+\theta_\alpha)),g(\beta_0/(\beta_0+\theta_\beta))\leq C(K_2)/\sqrt{n}$ for $|\theta_\alpha|,|\theta_\beta|\leq K_2/\sqrt{n}$. Hence,
\begin{align*}
&\limsup_{n\to\infty} \Pr_0\left( \sup_{(\theta_\rho,\theta_\alpha,\theta_\beta)\in \mathcal{T}_{n,\gamma}^1(K)} \left( \ell_{n,\gamma}^1(\theta_\rho) + \ell_{n,\gamma}^2(\theta_\rho,\theta_\alpha,\theta_\beta)\right) \geq 0, B_n\cap E^\gamma\right)\\
&\hspace{0.5cm} \leq \limsup_{n\to\infty} \Pr_0\left( \sup_{K_1n^{-(1+\gamma)/2}<|\theta_\rho|<K_3n^{-\gamma/2}}  \ell_{n,\gamma}^1(\theta_\rho) \geq -C(K_2), B_n\cap E^\gamma\right).
\end{align*}
With this, we are back in the proof of consistency for known $\alpha_0,\beta_0$. In particular, showing that this term is bounded by $\epsilon$ for $K_1$ large enough is obtained for $\gamma=1$ by following the steps from~$(4.16)$ onwards in Subsection~$4.2$ of~\citeSM{App:Brutsche/Rohde}. For $\gamma\in(0,1)$ it works the same and is part of the proof of Lemma~\ref{lemma:bound_L_sqrt(n)}.

\medskip
\noindent
\textbf{$\bullet\ \mathcal{T}_{n,\gamma}^2(K)$.} We assume $K_3$ in $K=(K_1,K_2,K_3)\in (0,\infty)^3$ to be given and construct $K_2$. Note that $T_{n,\gamma}^2(K)$ does not depend on $K_1$. For constants $C',K'$ we first estimate
\begin{align}
&\Pr_0\Bigg( \sup_{(\theta_\rho,\theta_\alpha,\theta_\beta)\in\mathcal{T}_{n,\gamma}^2(K)}\ell_n(\theta_\rho,\theta_\alpha,\theta_\beta)\geq 0, B_n\cap E^\gamma \Bigg)\notag \\
&\hspace{0.5cm} \leq \Pr_0\Bigg( \sup_{K'n^{-(1+\gamma)/2}\leq |\theta_\rho| \leq K_3n^{-\gamma/2}}\ell_n^1(\theta_\rho)\geq 0, B_n\cap E^\gamma \Bigg) \label{eq_proof:P1}  \\
&\hspace{1cm} + \Pr_0\Bigg( \sup_{|\theta_\rho|\leq K'n^{-(1+\gamma)/2}} |\ell_n^1(\theta_\rho)| \geq C', E^\gamma\Bigg) \label{eq_proof:P2}\\
&\hspace{1cm} + 2\Pr_0\Bigg( \sup_{\substack{\max\{|\theta_\alpha|,|\theta_\beta|\} > K_2/\sqrt{n} \\|\theta_\rho|\leq K_3n^{-\gamma/2}}}\ell_n^2(\theta_\rho,\theta_\alpha,\theta_\beta) \geq -C', B_n\cap E^\gamma\Bigg). \label{eq_proof:P3}
\end{align}
To proceed, we first choose $K'$ large enough such that~\eqref{eq_proof:P1} is bounded by $\epsilon$. This is possible as already outlined in the discussion of $\mathcal{T}_{n,\gamma}^1$. For this $K'$, choose $C'$ large enough such that $\limsup_{n\to\infty}\eqref{eq_proof:P2}<\epsilon$. By Theorem~$13.2$ in~\citeSM{App:Billingsley_2}, this is possible as $(\ell_n^1(zn^{-(1+\gamma)/2}))_{[-K',K']}$ converges towards a tight limit in the Skorohod space $\mathcal{D}([-K',K'])$ by (1A) for $\gamma\in (0,1)$ and by Proposition~$1.2$ in~\citeSM{App:Brutsche/Rohde} for $\gamma=1$.
It is therefore sufficient to deal with~\eqref{eq_proof:P3} for given $C'$. By (b)-(d) of Lemma~\ref{lemma:estimates_tripleMLE} we find for any $R>0$ a constant $C_1=C_1(K_3,R)>0$ such that
\begin{align}\label{eq_proof:triple_consistency_3}
\begin{split}
\ell_{n,\gamma}^2(\theta_\rho,\theta_\alpha,\theta_\beta)\1_{B_n} &\leq \bigg(\overline{H}_{n,\gamma}^{\leq}(0,\theta_\alpha) + \overline{H}_{n,\gamma}^{>}(0,\theta_\beta) + nd_{\alpha_0,\beta_0}\left(\1_{\{\theta_\alpha>R\}}+\1_{\{\theta_\beta>R\}}\right) \\
&\hspace{1cm} \left. + C_1\left( g\left(\frac{\alpha_0}{\alpha_0+\theta_\alpha}\right) + g\left(\frac{\beta_0}{\beta_0+\theta_\beta}\right)\right)\sqrt{n}\right)\1_{B_n},
\end{split}
\end{align}
where the constant $d_{\alpha_0,\beta_0}>0$ is explicitly given in Lemma~\ref{lemma:estimates_tripleMLE}(d). Recalling $f(x)\leq 0$ and applying Lemma~\ref{lemma:estimate_counts_interval} for the parameters $a=\zeta\sqrt{n\Delta_n^\gamma}$ and $b=0$ yields
\begin{align}\label{eq_proof:triple_consistency_1}
\overline{H}_{n,\gamma}^{\leq}(0,\theta_\alpha)\1_{B_n} \leq \1_{B_n} C_2n f\left(\frac{\alpha_0}{\alpha_0+\theta_\alpha}\right)
\end{align} 
for $C_2=\xi\zeta/\max\{\alpha_0^2,\beta_0^2\}$ and with $a=0$, $b=\zeta\sqrt{n\Delta_n^\gamma}$ we also obtain
\begin{align}\label{eq_proof:triple_consistency_2}
\overline{H}_{n,\gamma}^{>}(0,\theta_\beta)\1_{B_n}\leq \1_{B_n} C_2n f\left(\frac{\beta_0}{\beta_0+\theta_\beta}\right). 
\end{align} 
This means that 
\[ \limsup_{n\to\infty}\Pr_0\Bigg( \sup_{\substack{\max\{|\theta_\alpha|,|\theta_\beta|\} > K_2/\sqrt{n} \\|\theta_\rho|\leq K_3n^{-\gamma/2}}}\ell_n^2(\theta_\rho,\theta_\alpha,\theta_\beta) \geq -C', B_n\cap E^\gamma\Bigg) =0 \]
follows by the estimates~\eqref{eq_proof:triple_consistency_3}, \eqref{eq_proof:triple_consistency_1} and~\eqref{eq_proof:triple_consistency_2} once we show that 
\begin{align}\label{eq_proof:triple_consistency_4}
\begin{split}
&\sup_{\substack{\max\{|\theta_\alpha|,|\theta_\beta|\} > K_2/\sqrt{n} \\|\theta_\rho|\leq K_3n^{-\gamma/2}}}  \left( C_2 n\left( f\left(\frac{\alpha_0}{\alpha_0+\theta_\alpha}\right) + f\left(\frac{\beta_0}{\beta_0+\theta_\beta}\right)\right)  \right. \\
&\hspace{0.5cm} \left. + nd_{\alpha_0,\beta_0}\left(\1_{\{\theta_\alpha>R\}}+\1_{\{\theta_\beta>R\}}\right)+ C_1\sqrt{n}\left( g\left(\frac{\alpha_0}{\alpha_0+\theta_\alpha}\right) + g\left(\frac{\beta_0}{\beta_0+\theta_\beta}\right)\right)\right) < -C'
\end{split}
\end{align}
for all $n\geq n_0$ with $n_0$ large enough, appropriate $R>0$ and $K_2$ sufficiently large. Note that the restriction for $\theta_\alpha$ and $\theta_\beta$ does not prevent the terms $g(\alpha_0/(\alpha_0+\theta_\alpha))$ and $g(\beta_0/(\beta_0+\theta_\beta))$ from exploding. However, these terms appear as a multiple of $\sqrt{n}$, compared to the leading terms involving the negative function $f$ that contain the factor $n$ and indeed turn out to compensate for the remainder involving $g$ for any $\theta_\alpha,\theta_\beta$.\\
We continue with proving~\eqref{eq_proof:triple_consistency_4}. Here, we first use $f(x)\rightarrow -\infty$ for $x\to 0$ in order to get rid of the term $nd_{\alpha_0,\beta_0}\1_{\{\theta_\alpha>R\}}$. To this aim, choose $0<r<1$ small enough such that for $x\leq r$ we have $f(x)<-2d_{\alpha_0,\beta_0}/C_2$. Then, 
\begin{align*}
C_2nf\left(\frac{\alpha_0}{\alpha_0+\theta_\alpha}\right) + nd_{\alpha_0,\beta_0} \leq \frac{C_2n}{2}f\left(\frac{\alpha_0}{\alpha_0+\theta_\alpha}\right) \quad\textrm{ for }\ \theta_\alpha > \frac{(1-r)\alpha_0}{r}.
\end{align*} 
An analogous estimate holds true when replacing $\alpha_0$ by $\beta_0$ and $\theta_\alpha$ by $\theta_\beta$ such that for the choice
\begin{align}\label{eq_proof:choiceR}
R := \frac{1-r}{r} \max\{\alpha_0,\beta_0\}
\end{align}
a sufficient condition for~\eqref{eq_proof:triple_consistency_4} is given by 
\begin{align*}
\sup_{\substack{\max\{|\theta_\alpha|,|\theta_\beta|\} > K_2/\sqrt{n} \\|\theta_\rho|\leq K_3n^{-\gamma/2}}} \sqrt{n} \left( F_{n,2C_1/C_2}\left( \frac{\alpha_0}{\alpha_0+\theta_\alpha}\right) + F_{n,2C_1/C_2}\left(\frac{\beta_0}{\beta_0+\theta_\beta}\right) \right) < -2C'/C_2,
\end{align*}
for all $n\geq n_0$ and $K_2$ sufficiently large, where the function $F_{n,C}$ is given as
\begin{align}\label{eq:function_FnC}
F_{n,C}(x) = \sqrt{n}\left( \log(x) - \frac12\left(x^2-1\right)\right) + C\left|\log(x)\right| + \frac{C}{2}\left|x^2-1\right|.
\end{align} 
Such a choice of $K_2$ is possible by properties (a) and (b) of $F_{n,C}$ given in Lemma~\ref{lemma:properties_F_nC}.

\medskip
\noindent
\textbf{$\bullet\ \mathcal{T}_{n,\gamma}^3(K)$.} As outlined in Section~\ref{Section:Proof_sketch_triple} we have to combine effects of $\ell_{n,\gamma}^1$ and $\ell_{n,\gamma}^2$. One crucial point is to understand the effect of the term $T_{n,\gamma}$ in the decomposition~\eqref{eq:decomposition_ell_1+2}. The stochastic order of this term is determined by the number of indices $k$ with $\rho_0\leq X_{(k-1)\Delta_n^\gamma}\leq \rho_0+\theta_\rho$ and therefore we need to distinguish further cases. Moreover, there appears another special case: If $\theta_\rho$ is chosen such that the number of indices $k$ for which $X_{(k-1)\Delta_n^\gamma}\geq \rho_0+\theta_\rho$ is no longer of order $n$, the negative compensator of the term $g(\beta_0/(\beta_0+\theta_\beta))$ is missing. This term emerges from an application of Lemma~\ref{lemma:estimates_tripleMLE}(d) and explodes as $\theta_\beta\to -\beta_0$ and $\theta_\beta\to\infty$. For these boundary cases, we therefore no longer use the decomposition~\eqref{eq:decomposition_ell_1+2} together with Lemma~\ref{lemma:estimates_tripleMLE} as we did for $\mathcal{T}_{n,\gamma}^2(K)$. Instead, we will base our analysis for $\theta_\beta\approx -\beta$ and $\theta_\beta\to\infty$ on the decomposition in Lemma~\ref{lemma:bound_small_theta_beta}. 

To make the above mentioned heuristics precise, we fix $\delta_l, \delta_u>0$ and recall $\zeta>0$ from the definition of the event $A_n$ in~\eqref{eq_proof:setAn} that was chosen such that $(X_t)_{t\leq n\Delta_n^\gamma}$ has crossed the value $\zeta\sqrt{n\Delta_n^\gamma}$ at least once. We then decompose
\[ \mathcal{T}_{n,\gamma}^3(K) = \bigcup_{m=1}^5 \mathcal{T}_{n,\gamma}^{3,m}(K_3,\delta_l,\delta_u)\]
with the disjoint sets
\begin{align*}
\mathcal{T}_{n,\gamma}^{3,1}(K_3,\delta_l,\delta_u) &:= \left\{(\theta_\rho,\theta_\alpha,\theta_\beta): K_3n^{-\gamma/2} <\theta_\rho <n^{1/4-\gamma/2}  \right\}, \\
\mathcal{T}_{n,\gamma}^{3,2}(K_3,\delta_l,\delta_u) &:= \left\{(\theta_\rho,\theta_\alpha,\theta_\beta): n^{1/4-\gamma/2} \leq \theta_\rho <\zeta\sqrt{n\Delta_n^\gamma}/2  \right\}, \\
\mathcal{T}_{n,\gamma}^{3,3}(K_3,\delta_l,\delta_u) &:= \left\{(\theta_\rho,\theta_\alpha,\theta_\beta): \zeta\sqrt{n\Delta_n^\gamma}/2 \leq \theta_\rho <\infty, \beta_0+\theta_\beta < \delta_l  \right\}, \\
\mathcal{T}_{n,\gamma}^{3,4}(K_3,\delta_l,\delta_u) &:= \left\{(\theta_\rho,\theta_\alpha,\theta_\beta): \zeta\sqrt{n\Delta_n^\gamma}/2 \leq \theta_\rho <\infty,  \beta_0+\theta_\beta > \delta_u  \right\}, \\
\mathcal{T}_{n,\gamma}^{3,5}(K_3,\delta_l,\delta_u) &:= \left\{(\theta_\rho,\theta_\alpha,\theta_\beta): \zeta\sqrt{n\Delta_n^\gamma}/2 \leq \theta_\rho <\infty, \delta_l \leq \beta_0+\theta_\beta \leq \delta_u  \right\}.\\
\end{align*} 
In what follows we bound the probability
\[ \Pr_0\left( \sup_{(\theta_\rho,\theta_\alpha,\theta_\beta)\in \mathcal{T}_{n,\gamma}^{3,m}(K_3,\delta_l,\delta_u)} \left( \ell_{n,\gamma}^1(\theta_\rho) + \ell_{n,\gamma}^2(\theta_\rho,\theta_\alpha,\theta_\beta)\right) \geq 0, B_n\cap E^\gamma\right)\]
by $\epsilon$ for $m=1,2,3,4,5$, $n\geq n_0$ and appropriate $K_3,\delta_l,\delta_u$. Hereby, it is important to choose $\delta_l,\delta_u$ such that the probability for $m=3,4$ satisfies the required bound and then afterwards prove that for this choice the term for $m=5$ can also be bounded by $\epsilon$ for $n$ sufficiently large.

\medskip
\noindent
\textbf{$\underline{m=1}$}: Parts (c) and (d) of Lemma~\ref{lemma:estimates_tripleMLE} together with the bound~\eqref{eq_proof:order_remainder_ell1} yield for any $R>0$ and a constant $C_3=C_3(R)$,
\begin{align*}
\sup_{\theta_\rho\in [K_3 n^{-\gamma/2}, n^{1/4-\gamma/2})} R_{n,\gamma}^{all}(\theta_\rho,\theta_\alpha,\theta_\beta)\1_{B_n} &\leq \1_{B_n}C_3\sqrt{n}\left(1+ g\left(\frac{\alpha_0}{\alpha_0+\theta_\alpha}\right) + g\left(\frac{\beta_0}{\beta_0+\theta_\beta}\right)\right) \\
&\hspace{1cm} +nd_{\alpha_0,\beta_0}\left(\1_{\{\theta_\alpha >R\}}+\1_{\{\theta_\beta>R\}}\right)
\end{align*} 
where the constant $d_{\alpha_0,\beta_0}$ is explicitly given in Lemma~\ref{lemma:estimates_tripleMLE}(d). Applying Lemma~\ref{lemma:estimate_counts_interval} to the parameters $a=-n^{1/4-\gamma/2}$, $b=\zeta\sqrt{n\Delta_n^\gamma}$ yields for a constant $C_4\leq C_2=\xi\zeta/\max\{\alpha_0^2,\beta_0^2\}$
\[ \sup_{\theta_\rho \in[Kn^{-\gamma/2}, n^{1/4-\gamma/2})}\overline{H}_{n,\gamma}^{>}(\theta_\rho,\theta_\alpha)\1_{B_n} \leq - \1_{B_n}C_4 n f\left(\frac{\beta_0}{\beta_0+\theta_\beta}\right).\]
Inserting these two estimates and~\eqref{eq_proof:triple_consistency_1} into the decompostion~\eqref{eq:decomposition_ell_1+2} then gives 
\begin{align}\label{eq_proof:T3_part1}
\begin{split}
&\left(\ell_{n,\gamma}^1(\theta_\rho) + \ell_{n,\gamma}^2(\theta_\rho,\theta_\alpha,\theta_\beta)\right)\1_{B_n} \\
&\hspace{0.5cm} \leq \1_{B_n} C_4 n\left( f\left(\frac{\alpha_0}{\alpha_0+\theta_\alpha}\right) + f\left(\frac{\beta_0}{\beta_0+\theta_\beta}\right)\right) + T_{n,\gamma}(\theta_\rho,\theta_\alpha) + \1_{B_n}nd_{\alpha_0,\beta_0}\1_{\{\theta_\alpha>R\}} \\
&\hspace{1.5cm} +\1_{B_n} nd_{\alpha_0,\beta_0}\1_{\{\theta_\beta>R\}}+ \1_{B_n}C_3\sqrt{n}\left(1+ g\left(\frac{\alpha_0}{\alpha_0+\theta_\alpha}\right) + g\left(\frac{\beta_0}{\beta_0+\theta_\beta}\right)\right).
\end{split}
\end{align} 
We recall at this point that
\[ T_{n,\gamma}(\theta_\rho,\theta_\alpha) = H_{\alpha_0,\beta_0}(\theta_\alpha)\sum_{k=1}^n \1_{\{\rho_0\leq X_{(k-1)\Delta_n^\gamma} \leq \rho_0+\theta_\rho\}}\]
for the function
\[ H_{\alpha_0,\beta_0}(\theta_\alpha) =  \log\left(\frac{\beta_0}{\alpha_0}\right) - \frac12\left(\frac{\beta_0^2}{\alpha_0^2}-1\right) + \log\left(\frac{\alpha_0}{\alpha_0+\theta_\alpha}\right) - \frac12\left( \frac{\alpha_0^2}{(\alpha_0+\theta_\alpha)^2}-1\right)\frac{\beta_0^2}{\alpha_0^2}\]
for which by Lemma~\ref{lemma:function_negative} we have $H_{\alpha_0,\beta_0}(\theta_\alpha) \leq 0$ with equality if and only if $\theta_\alpha=\beta_0-\alpha_0$. Compared to~\eqref{eq_proof:triple_consistency_4}, the supremum is taken over $\mathcal{T}_{n,\gamma}^{3,1}$ instead of $\mathcal{T}_{n,\gamma}^1$ and we have the additional terms $T_{n,\gamma}$ and $C_3\sqrt{n}$, where the first one is negative and the second one an additional remainder. The crucial observation is now that $T_{n,\gamma}(\theta_\rho,\theta_\alpha)$ and $f(\alpha_0/(\alpha_0+\theta_\alpha))$ which are both negative cannot be close to zero at the same time as for the first expression this happens for $\theta_\alpha\approx \beta_0-\alpha_0$ and for the second one in case $\theta_\alpha\approx 0$. To make this precise, we first choose $R$ as in~\eqref{eq_proof:choiceR} and then further bound the right-hand side of~\eqref{eq_proof:T3_part1} (recall $f(x)\leq 0$) by
\begin{align*}
\1_{B_n}\left[ \frac{C_4\sqrt{n}}{2}\left( F_{n,2C_3/C_4}\left(\frac{\alpha_0}{\alpha_0+\theta_\alpha}\right) + F_{n,2C_3/C_4}\left(\frac{\beta_0}{\beta_0+\theta_\beta}\right)\right) + T_{n,\gamma}(\theta_\rho,\theta_\alpha) + C_3\sqrt{n}\right].
\end{align*}
We now distinguish two cases and abbreviate $\kappa=|\beta_0-\alpha_0|/4$:
\begin{itemize}
\item[$(1)\ \mathbf{\theta_\alpha\in B_\kappa(\beta_0-\alpha_0)}$.] Since in this case $|\theta_\alpha|>\kappa$, which means $\alpha_0/(\alpha_0+\theta_\alpha)\notin B_{c(\kappa)}(1)$ for some constant $c(\kappa)>0$. Then by Lemma~\ref{lemma:properties_F_nC}(a) and (c), and $T_{n,\gamma}\leq 0$, we obtain for some constant $C_5=C_5(\kappa)$,
\begin{align*}
&\left(\ell_{n,\gamma}^1(\theta_\rho) + \ell_{n,\gamma}^2(\theta_\rho,\theta_\alpha,\theta_\beta)\right)\1_{B_n} \leq -\1_{B_n}C_5 n + \1_{B_n}C_3\sqrt{n},
\end{align*}
where the right-hand side is independent of $\theta_\rho,\theta_\alpha$ and $\theta_\beta$ and clearly converges to $-\infty$ for $n\to\infty$. \smallskip

\item[$(2)\ \mathbf{\theta_\alpha\notin B_\kappa(\beta_0-\alpha_0)}$.] In this case for $\theta_\alpha$, there exists a constant $C_6=C_6(\kappa)$ such that $H_{\alpha_0,\beta_0}(\theta_\alpha)\leq -C_6$. As by Lemma~\ref{lemma:properties_F_nC}(a) $\sqrt{n}F_{n,C}$ is bounded, we find some constant $C_7$ such that
\begin{align*}
&\sup_{K_3n^{-\gamma/2}\leq \theta_\rho <n^{1/4-\gamma/2}}\sup_{\theta_\alpha\notin B_{\delta(\beta_0-\alpha_0)}}\sup_{\theta_\beta}  \left(\ell_{n,\gamma}^1(\theta_\rho) + \ell_{n,\gamma}^2(\theta_\rho,\theta_\alpha,\theta_\beta)\right)\1_{B_n} \\
&\hspace{1cm} \leq \1_{B_n}C_7\sqrt{n} - C_6 \1_{B_n}\sum_{k=1}^n \1_{\{\rho_0\leq X_{(k-1)\Delta_n^\gamma} \leq \rho_0+K_3n^{-\gamma/2}\}}.
\end{align*}
On the event
\[ B_n' := \left\{ \frac{C_6}{\sqrt{n}}\sum_{k=1}^n \1_{\{\rho_0\leq X_{(k-1)\Delta_n^\gamma} \leq \rho_0+K_3n^{-\gamma/2}\}} >2C_7\right\} \]
this upper bound converges to $-\infty$ for $n\to\infty$. Taking $K_3>0$ large enough ensures $\inf_{n\geq n_0}\Pr_0(B_n')>1-\epsilon$ for appropriate $n_0$ by Lemma~\ref{lemma:probability_A6}.
\end{itemize}

\medskip
\noindent
\textbf{$\underline{m=2}$}: Here, part (c) and (d) of Lemma~\ref{lemma:estimates_tripleMLE} together with the bound~\eqref{eq_proof:order_remainder_ell1} yield for any $R>0$ and a constant $C_8=C_8(R)$,
\begin{align}\label{eq_proof:R_all}
\begin{split}
\sup_{\theta_\rho>n^{1/4-\gamma/2}} R_{n,\gamma}^{all}(\theta_\rho,\theta_\alpha,\theta_\beta)\1_{B_n}  &\leq \1_{B_n}C_8 n^{1/2 + \gamma/6} \left(1+ g\left(\frac{\alpha_0}{\alpha_0+\theta_\alpha}\right) + g\left(\frac{\beta_0}{\beta_0+\theta_\beta}\right)\right)\\
&\hspace{1cm} + nd_{\alpha_0,\beta_0}\left(\1_{\{\theta_\alpha >R\}}+\1_{\{\theta_\beta>R\}}\right).
\end{split}
\end{align}
Next, Lemma~\ref{lemma:estimate_counts_interval} for the parameters $a=-\zeta\sqrt{n\Delta_n^\gamma}/2$ and $b=\zeta\sqrt{n\Delta_n^\gamma}$ yields for the constant $C_9=\xi\zeta/(2\max\{\alpha_0^2,\beta_0^2\})$
\[ \sup_{\theta_\rho \in(n^{1/4-\gamma/2},\zeta\sqrt{n\Delta_n^\gamma}/2)}\overline{H}_{n,\gamma}^{>}(\theta_\rho,\theta_\beta)\1_{B_n} \leq  \1_{B_n}C_9 n f\left(\frac{\beta_0}{\beta_0+\theta_\beta}\right).\]
Let $R$ be chosen as in~\eqref{eq_proof:choiceR}. By Lemma~\ref{lemma:properties_F_nC}(a) we obtain for a constant $C_{10}=C_{10}(R)$,
\begin{align}\label{eq_proof:estimate_f+g}
\begin{split}
&C_8n^{1/2+\gamma/6} g\left(\frac{\beta_0}{\beta_0+\theta_\beta}\right) + C_9nf\left(\frac{\beta_0}{\beta_0+\theta_\beta}\right) +nd_{\alpha_0,\beta_0}\1_{\{\theta_\beta>R\}}\\
&\hspace{1cm} \leq \frac{C_9\sqrt{n}}{2} F_{n,(2C_8/C_9)n^{\gamma/6}}\left(\frac{\beta_0}{\beta_0+\theta_\beta}\right) \leq C_{10} n^{\gamma/3}
\end{split}
\end{align}
and with~\eqref{eq_proof:triple_consistency_1} it follows from the decomposition~\eqref{eq:decomposition_ell_1+2} that
\begin{align*}
&\left(\ell_{n,\gamma}^1(\theta_\rho) + \ell_{n,\gamma}^2(\theta_\rho,\theta_\alpha,\theta_\beta)\right)\1_{B_n} \\
&\hspace{1cm} \leq \1_{B_n}\left( C_2n f\left(\frac{\alpha_0}{\alpha_0+\theta_\alpha}\right) + C_8n^{1/2/+\gamma/6} g\left(\frac{\alpha_0}{\alpha_0+\theta_\alpha}\right) + (C_8+C_{10})n^{2/3} \right. \\
&\hspace{3cm} + T_{n,\gamma}(\theta_\rho,\theta_\alpha) + nd_{\alpha_0,\beta_0}\1_{\{\theta_\alpha>R\}} \bigg) \\
&\hspace{1cm} \leq \1_{B_n}\left( \frac{C_2\sqrt{n}}{2}F_{n,(2C_8/C_2)n^{\gamma/6}}\left(\frac{\alpha_0}{\alpha_0+\theta_\alpha}\right) + (C_8+C_{10})n^{2/3} + T_{n,\gamma}(\theta_\rho,\theta_\alpha) \right),
\end{align*}
where the last step uses again that $R$ was chosen as in~\eqref{eq_proof:choiceR}. We now distiguish cases, similar as we did in the study of $m=1$. We set $\kappa=|\beta_0-\alpha_0|/4$.
\begin{itemize}
\item[$(1)\ \mathbf{\theta_\alpha\in B_\kappa(\beta_0-\alpha_0)}$.]  Since $|\theta_\alpha|>\kappa$, which means $\alpha_0/(\alpha_0+\theta_\alpha)\notin B_{c(\kappa)}(1)$ for some constant $c(\kappa)>0$. Then by Lemma~\ref{lemma:properties_F_nC}(c), and $T_{n,\gamma}\leq 0$, we obtain for some constant $C_{11}=C_{11}(\kappa)$,
\begin{align*}
&\left(\ell_{n,\gamma}^1(\theta_\rho) + \ell_{n,\gamma}^2(\theta_\rho,\theta_\alpha,\theta_\beta)\right)\1_{B_n} \leq \1_{B_n} \left( -C_{11}n + (C_8+C_{10})n^{2/3}\right),
\end{align*}
where the right-hand side is independent of $\theta_\rho,\theta_\alpha$ and $\theta_\beta$ and converges to zero for $n\to\infty$. \smallskip
\item[$(2)\ \mathbf{\theta_\alpha\notin B_\kappa(\beta_0-\alpha_0)}$.] As in (2) for $m=1$, there exists the constant $C_6=C_6(\kappa)$ such that $H_{\alpha_0,\beta_0}(\theta_\alpha)\leq -C_6$. Using Lemma~\ref{lemma:estimate_counts_interval} with $a=0$ and $b=n^{1/4-\gamma/2}$ gives
\[ \sup_{\theta_\rho >n^{1/4-\gamma/2}} \sup_{\theta_\alpha \notin B_\kappa(\beta_0-\alpha_0)} T_{n,\gamma}(\theta_\rho,\theta_\alpha)\1_{B_n} \leq -\1_{B_n}\frac{C_6\xi}{\max\{\alpha_0^2,\beta_0^2\}} n^{3/4}.\]
Repeating the estimate~\eqref{eq_proof:estimate_f+g} for $\alpha_0$ instead of $\beta_0$ and $C_2$ instead of $C_9$ then yields for another constant $C_{12}$,
\begin{align*}
&\sup_{\theta_\rho >n^{1/4-\gamma/2}}\sup_{\theta_\alpha\notin B_{\kappa(\beta_0-\alpha_0)}}\sup_{\theta_\beta}  \left(\ell_{n,\gamma}^1(\theta_\rho) + \ell_{n,\gamma}^2(\theta_\rho,\theta_\alpha,\theta_\beta)\right)\1_{B_n} \\
&\hspace{1cm} \leq \1_{B_n}\left( (C_8+C_{10}+C_{12})n^{2/3} - \frac{C_6\xi}{\max\{\alpha_0^2,\beta_0^2\}}n^{3/4}\right),
\end{align*}
which converges to $-\infty$ for $n\to\infty$.
\end{itemize}

\medskip
\noindent
\textbf{$\underline{m=3}$}: We want to apply Lemma~\ref{lemma:bound_small_theta_beta} and denote the zwo constants on its right-hand side by $C_{13}$ and $C_{14}$. Let $r$ be small enough such that $f(x) < -2d_{\alpha_0,\beta_0}/C_{13}$ for $x\leq r$, then for $R= \alpha_0(1-r)/r$ we find
\begin{align*}
&\1_{B_n} \left(C_{13} n f\left(\frac{\alpha_0}{\alpha_0+\theta_\alpha}\right) + T_{n,\gamma}(\theta_\rho,\theta_\alpha) + C_{14} n^{1/2+\gamma/6} g\left(\frac{\alpha_0}{\alpha_0+\theta_\alpha}\right) +C_{14}n^{2/3} \right. \\
&\hspace{1.5cm} + nd_{\alpha_0,\beta_0}\1_{\{\theta_\alpha>R\}} \bigg)\\
&\hspace{0.5cm} \leq \1_{B_n}\left(\frac{C_{13}n}{2}f\left(\frac{\alpha_0}{\alpha_0+\theta_\alpha}\right) + T_{n,\gamma}(\theta_\rho,\theta_\alpha) + C_{14} n^{1/2+\gamma/6} g\left(\frac{\alpha_0}{\alpha_0+\theta_\alpha}\right) +C_{14}n^{2/3}\right) \\
&\hspace{0.5cm} = \1_{B_n}\left( \frac{C_{13}\sqrt{n}}{2} F_{n,(2C_{14}/C_{13})n^{\gamma/6}}\left(\frac{\alpha_0}{\alpha_0+\theta_\alpha}\right)+ T_{n,\gamma}(\theta_\rho,\theta_\alpha)+C_{14}n^{2/3}\right).
\end{align*}
Applying Lemma~\ref{lemma:estimate_counts_interval} to $a=0$ and $b=\zeta\sqrt{n\Delta_n^\gamma}/2$ yields for the already introduced constant $C_9=\xi\zeta/(2\max\{\alpha_0^2,\beta_0^2\})$,
\[ \sup_{\theta_\rho > \zeta\sqrt{n\Delta_n^\gamma}/2} T_{n,\gamma}(\theta_\rho,\theta_\alpha) \leq C_9 H_{\alpha_0,\beta_0}(\theta_\alpha) n.\]
Let $\kappa=|\beta_0-\alpha_0|/4$. Distinguishing cases for $\theta_\alpha$ as was already done for $m=2$ then yields $F_{n,(2C_{14}/C_{13})n^{\gamma/6}}(\alpha_0/(\alpha_0+\theta_\alpha)) \leq -C_{15}n$ in case $\theta_\alpha\in B_\kappa(\beta_0-\alpha_0)$ and $H_{\alpha_0,\beta_0}(\theta_\alpha)<-C_6$ for $\theta_\alpha\notin B_\kappa(\beta_0-\alpha_0)$ such that we have in total 
\[ \1_{B_n}\left( \frac{C_{13}\sqrt{n}}{2} F_{n,(2C_{14}/C_{13})n^{\gamma/6}}\left(\frac{\alpha_0}{\alpha_0+\theta_\alpha}\right)+ T_{n,\gamma}(\theta_\rho,\theta_\alpha)+C_{14}n^{2/3}\right) \leq -\1_{B_n}C_{16}n\]
for some constant $C_{13}>0$. Then, Lemma~\ref{lemma:bound_small_theta_beta}(i) yields
\begin{align*}
&\Pr_0\left( \sup_{\theta_\rho >\zeta\sqrt{n\Delta_n^\gamma}/2} \sup_{\theta_\alpha} \sup_{\theta_\beta\in (-\beta_0,-\beta_0+\delta_l)} \left( \ell_{n,\gamma}^1(\theta_\rho) + \ell_{n,\gamma}^2(\theta_\rho,\theta_\alpha,\theta_\beta)\right) \geq 0\right) \\
&\hspace{0.5cm} \leq \Pr_0\left(\1_{B_n} \sup_{\theta_\rho>\zeta\sqrt{n\Delta_n}/2}\sup_{\theta_\beta\in (-\beta_0,-\beta_0+\delta_l)} A_1(\theta_\rho,\theta_\beta) > C_{16} n\right),
\end{align*}
where
\begin{align*}
A_1(\theta_\rho,\theta_\beta) &:= \sum_{k=1}^n  \1_{\{X_{(k-1)\Delta_n^\gamma}\geq\rho_0+\theta_\rho, X_{k\Delta_n^\gamma}> \rho_0+\theta_\rho\}}\\
&\hspace{1.5cm} \cdot\left[2\log\left(\frac{\beta_0}{\beta_0+\theta_\beta}\right) -\frac12\left(\frac{\beta_0^2}{(\beta_0+\theta_\beta)^2}-1\right)\frac{(X_{k\Delta_n^\gamma}-X_{(k-1)\Delta_n^\gamma})^2}{\Delta_n^\gamma\beta_0^2}\right].
\end{align*}
To bound the remaining probability we first derive a deterministic upper bound in order to get rid of both suprema and subsequently apply Markov's inequality. To this aim, define $S_j(\delta_l) = (-\beta_0+2^{-(j+1)}\delta_l, -\beta_0+2^{-j}\delta_l)$. For $x\geq 1$,
\[ 2\log(x)-\frac12\left(x^2-1\right)y \geq 0 \quad \Leftrightarrow\quad y\leq \frac{4\log(x)}{x^2-1},\]
where the upper bound is a decreasing function for $x\to\infty$ and should be read as $\infty$ in case $x=1$. Hence, we obtain for any $\delta_l<\beta_0$,
\begin{align}\label{eq_proof:choosing_delta_l}
\begin{split}
\sup_{\theta_\rho>\zeta\sqrt{n\Delta_n^\gamma}/2} \sup_{\theta_\beta\in (-\beta_0,-\beta_0+\delta_l]} \sum_{k=1}^n A_1(\theta_\rho,\theta_\beta) &\leq  \sum_{j\geq 0} \sup_{\theta_\rho} \sup_{\theta_\beta\in S_j(\delta_l)} A_1(\theta_\rho,\theta_\beta) \\
& \leq  \sum_{j\geq 0} \sum_{k=1}^n \log\left(\frac{\beta_0}{2^{-(j+1)}\delta_l}\right) \1_{A_{k,j}(\delta_l)},
\end{split}
\end{align}
where
\[ A_{k,j}(\delta_l) = \left\{ \frac{(X_{k\Delta_n^\gamma}-X_{(k-1)\Delta_n^\gamma})^2}{\Delta_n^\gamma \beta_0^2} \leq \frac{4\log(\beta_0 /(2^{-j}\delta_l))}{(\beta_0/(2^{-j}\delta_l))^2 -1}\right\}.\]
Note that this bound does not include a supremum over $\theta_\rho$ or $\theta_\beta$ any longer. As the upper bound in $A_{k,j}(\delta_l)$ is small for large values of $j$ and $(X_{k\Delta_n^\gamma}-X_{(k-1)\Delta_n^\gamma})^2/\Delta_n^\gamma $ has an expectation bounded away from zero, we expect the probability of $A_{k,j}(\delta_l)$ to be small for large $j$. Indeed, for any $a>0$, we obtain with the upper bound~\eqref{eq:bound_transition_density} on the transition density,
\begin{align}\label{eq_proof:probability_estimate_square}
\begin{split}
&\Pr_0\left( \frac{(X_{k\Delta_n^\gamma}-X_{(k-1)\Delta_n^\gamma})^2}{\Delta_n^\gamma \beta_0^2} \leq a\right) \\
&\hspace{0.5cm} = \Pr_0\left(X_{(k-1)\Delta_n^\gamma}- \sqrt{\beta_0^2\Delta_n^\gamma a} \leq X_{k\Delta_n^\gamma} \leq X_{(k-1)\Delta_n^\gamma}+ \sqrt{\beta_0^2\Delta_n^\gamma a}\right) \\
&\hspace{0.5cm} \leq C_{\alpha_0,\beta_0} \E_0\left[ \int_{X_{(k-1)\Delta_n^\gamma}- \sqrt{\beta_0^2\Delta_n^\gamma a}}^{X_{(k-1)\Delta_n^\gamma}+ \sqrt{\beta_0^2\Delta_n^\gamma a}} \frac{1}{\sqrt{\Delta_n^\gamma}} \exp\left(-\frac{(y-X_{(k-1)\Delta_n^\gamma})^2}{2\max\{\alpha_0^2,\beta_0^2\}\Delta_n^\gamma}\right) dy \right] \\
&\hspace{0.5cm} \leq C_{\alpha_0,\beta_0}\int_{- \sqrt{\beta_0^2 a}}^{\sqrt{\beta_0^2 a}}  \exp\left(-\frac{y^2}{2\max\{\alpha_0^2,\beta_0^2\}}\right) dy \leq C_{\alpha_0,\beta_0} \sqrt{a}.
\end{split}
\end{align}
By the bound~\eqref{eq_proof:choosing_delta_l}, Markov's inequality and~\eqref{eq_proof:probability_estimate_square}, we obtain for $\delta_l$ small enough
\begin{align*}
&\Pr_0\left(\1_{B_n} \sup_{\theta_\rho>\zeta\sqrt{n\Delta_n}/2}\sup_{\theta_\beta\in (-\beta_0,-\beta_0+\delta_l)} A_1(\theta_\rho,\theta_\beta) > C_{16} n\right) \\
&\hspace{1cm} \leq \frac{1}{C_{16}n}\E_0\left[  \sum_{j\geq 0} \sum_{k=1}^n \log\left(\frac{\beta_0}{2^{-(j+1)}\delta_l}\right) \1_{A_{k,j}(\delta_l)}\right] \\
&\hspace{1cm} \leq C_{\alpha_0,\beta_0}  \sum_{j\geq 0}\log\left(\frac{\beta_0}{2^{-(j+1)}\delta_l}\right) \sqrt{\frac{4\log(\beta_0 /(2^{-j}\delta_l))}{(\beta_0/(2^{-j}\delta_l))^2 -1}} \\
&\hspace{1cm} \leq C_{\alpha_0,\beta_0}  \left( \delta_l  + \delta_l \log(1/\delta_l)^{3/2}\right) \sum_{j>0}  j^{3/2} 2^{-j}.
\end{align*}
As the series $\sum_{j>0} j^{3/2}2^{-j}$ converges and $x\log(1/x)\rightarrow 0$ for $x\searrow 0$, the last expression is smaller than $\epsilon$ if $\delta_l$ is chosen small enough.

\medskip
\noindent
\textbf{$\underline{m=4}$}: Repeating the first steps of the case $m=3$ and using Lemma~\ref{lemma:bound_small_theta_beta}(ii) reveals
\begin{align*}
&\Pr_0\left( \sup_{\theta_\rho >\zeta\sqrt{n\Delta_n^\gamma}/2} \sup_{\theta_\alpha} \sup_{\theta_\beta > \delta_u-\beta_0} \left( \ell_{n,\gamma}^1(\theta_\rho) + \ell_{n,\gamma}^2(\theta_\rho,\theta_\alpha,\theta_\beta)\right) \geq 0\right) \\
&\hspace{0.5cm} \leq \Pr_0\left(\sup_{\theta_\rho>\zeta\sqrt{n\Delta_n}/2}\sup_{\theta_\beta>\delta_u-\beta_0} A_2(\theta_\rho,\theta_\beta) \geq C_{16}n\right),
\end{align*}
where for $d_{\alpha_0,\beta_0}$ given explicitly in Lemma~\ref{lemma:estimates_tripleMLE}(d),
\begin{align*}
A_2(\theta_\rho,\theta_\beta) &= \sum_{k=1}^n  \1_{\{X_{(k-1)\Delta_n^\gamma}\geq\rho_0+\theta_\rho, X_{k\Delta_n^\gamma}> \rho_0+\theta_\rho\}}\\ &\hspace{1cm} \cdot\left[\log\left(\frac{\beta_0}{\beta_0+\theta_\beta}\right) +\frac{(X_{k\Delta_n^\gamma}-X_{(k-1)\Delta_n^\gamma})^2}{2\Delta_n^\gamma\beta_0^2} +d_{\alpha_0,\beta_0}\right].
\end{align*} 
For $\delta_u$ large enough such that $\log(\beta_0/\delta_u) +d_{\alpha_0,\beta_0} < \log(\beta_0/\delta_u)/2$, we obtain the estimates
\begin{align*}
&\sup_{\theta_\rho>\zeta\sqrt{n\Delta_n}/2}\sup_{\theta_\beta>\delta_u-\beta_0} A_2(\theta_\rho,\theta_\beta) \\
&\hspace{0.2cm} \leq \sup_{\theta_\rho>\zeta\sqrt{n\Delta_n}/2} \sum_{k=1}^n  \1_{\{X_{(k-1)\Delta_n^\gamma}\geq\rho_0+\theta_\rho, X_{k\Delta_n^\gamma}> \rho_0+\theta_\rho\}}\left[\frac12\log\left(\frac{\beta_0}{\delta_u}\right) +\frac{(X_{k\Delta_n^\gamma}-X_{(k-1)\Delta_n^\gamma})^2}{2\Delta_n^\gamma\beta_0^2}\right] \\
&\hspace{0.2cm} \leq  \sum_{k=1}^n \1_{E_k}\left[\frac12\log\left(\frac{\beta_0}{\delta_u}\right) +\frac{(X_{k\Delta_n^\gamma}-X_{(k-1)\Delta_n^\gamma})^2}{2\Delta_n^\gamma\beta_0^2}\right],
\end{align*}
where 
\[ E_k :=  \left\{ \frac{(X_{k\Delta_n^\gamma}-X_{(k-1)\Delta_n^\gamma})^2}{\Delta_n^\gamma\beta_0^2} > \log\left(\frac{\delta_u}{\beta_0}\right)\right\}.\]
Note that $E_k = \bigcup_{j=1}^\infty E_{k,j},$ where
\[ E_{k,j} = \left\{ j\log\left(\frac{\delta_u}{\beta_0}\right)< \frac{(X_{k\Delta_n^\gamma}-X_{(k-1)\Delta_n^\gamma})^2}{\Delta_n^\gamma\beta_0^2} \leq (j+1)\log\left(\frac{\delta_u}{\beta_0}\right)\right\}.\]
By Markov's inequality and Corollary~\ref{cor:increment_moments} we obtain
\begin{align*}
\Pr_0\left( E_{k,j}\right) &\leq \Pr_0\left( \frac{(X_{k\Delta_n^\gamma}-X_{(k-1)\Delta_n^\gamma})^2}{\Delta_n^\gamma\beta_0^2} \leq (j+1)\log\left(\frac{\delta_u}{\beta_0}\right)\right) \\
&\leq \left(\frac{1}{(j+1)\log(\delta_u/\beta_0)}\right)^3 \frac{1}{(\Delta_n^\gamma)^3\beta_0^6}\E_0\left[ \left( X_{k\Delta_n^\gamma}-X_{(k-1)\Delta_n^\gamma}\right)^6\right] \\
& \leq C_{\alpha_0,\beta_0} \left(\frac{1}{(j+1)\log(\delta_u/\beta_0)}\right)^3.
\end{align*}
Applying the union bound then reveals
\begin{align*}
&\Pr_0\left(\sup_{\theta_\rho>\zeta\sqrt{n\Delta_n}/2}\sup_{\theta_\beta>\delta_u-\beta_0} A_2(\theta_\rho,\theta_\beta) \geq C_{16}n\right) \\
&\hspace{0.5cm}\leq \Pr_0\left( \sum_{k=1}^n \1_{E_k}\left[\frac12\log\left(\frac{\beta_0}{\delta_u}\right) +\frac{(X_{k\Delta_n^\gamma}-X_{(k-1)\Delta_n^\gamma})^2}{2\Delta_n^\gamma\beta_0^2}\right] >C_{16}n\right) \\
&\hspace{0.5cm} \leq \sum_{j=1}^\infty \Pr_0\left( \sum_{k=1}^n \1_{E_{k,j}}\left[\log\left(\frac{\beta_0}{\delta_u}\right) +\frac{(X_{k\Delta_n^\gamma}-X_{(k-1)\Delta_n^\gamma})^2}{\Delta_n^\gamma\beta_0^2}\right] >2C_{16}n\right) \\
&\hspace{0.5cm} \leq \sum_{j=1}^\infty  \Pr_0\left( (j-1)\log\left(\frac{\delta_u}{\beta_0}\right) \sum_{k=1}^n \1_{E_{k,j}} >2C_{16}n\right) \\
&\hspace{0.5cm}\leq \sum_{j=1}^\infty \frac{j\log(\delta_u/\beta_0)}{2C_{16}n} \sum_{k=1}^n \E_0\left[\1_{E_{k,j+1}}\right] \\
&\hspace{0.5cm}\leq C_{\alpha_0,\beta_0} \sum_{j=1}^\infty \frac{j\log(\delta_u/\beta_0)}{2C_{16}n} \sum_{k=1}^n \left(\frac{1}{(j+2)\log(\delta_u/\beta_0)}\right)^3 \\
&\hspace{0.5cm} \leq  C_{\alpha_0,\beta_0} \frac{1}{2C_{16}\log(\delta_u/\beta_0)^2} \sum_{j=1}^\infty \frac{1}{j^2}.
\end{align*}
As the series converges, we can choose $\delta_u$ large enough such that the whole expression is bounded by $\epsilon$.

\medskip
\noindent
\textbf{$\underline{m=5}$}: We take $\delta_l,\delta_u$ as given from step $m=3,4$. First, note that $\overline{H}_{n,\gamma}^{>}(\theta_\rho,\theta_\beta)\leq 0$. Applying Lemma~\ref{lemma:estimates_tripleMLE}(d) with $R=\max\{\delta_u, \eqref{eq_proof:choiceR}\}$ gives the estimate~\eqref{eq_proof:R_all} on the remainder in~\eqref{eq:decomposition_ell_1+2} for which we additionally observe $g(\beta_0/(\beta_0+\theta_\beta))\leq C_{\alpha_0,\beta_0}(\delta_l,\delta_u)$. Moreover, we have~\eqref{eq_proof:triple_consistency_1} such that we obtain from decomposition~\eqref{eq:decomposition_ell_1+2} for some constant $C_{17}=C_{17}(\delta_l,\delta_u)$,
\begin{align*}
&\left(\ell_{n,\gamma}^1(\theta_\rho) + \ell_{n,\gamma}^2(\theta_\rho,\theta_\alpha,\theta_\beta)\right)\1_{B_n} \\
&\hspace{1cm} \leq  \1_{B_n}\left( C_2nf\left(\frac{\alpha_0}{\alpha_0+\theta_\alpha}\right) + C_{17}n^{1/2+\gamma/6}g\left(\frac{\alpha_0}{\alpha_0+\theta_\alpha}\right) + C_{17}n^{2/3} \right.\\
&\hspace{3cm} + T_{n,\gamma}(\theta_\rho,\theta_\alpha)+nd_{\alpha_0,\beta_0}\1_{\{\theta_\alpha>R\}} \bigg) \\
&\hspace{1cm} \leq \1_{B_n}\left( \frac{2C_2\sqrt{n}}{2}F_{n,(2C_{17}/C_2)n^{\gamma/6}}\left(\frac{\alpha_0}{\alpha_0+\theta_\alpha}\right) + C_{17}n^{2/3} + T_{n,\gamma}(\theta_\rho,\theta_\alpha) \right),
\end{align*}
where the last step combines $C_2nf(\alpha_0/(\alpha_0+\theta_\alpha))$ and $nd_{\alpha_0,\beta_0}\1_{\{\theta_\alpha>R\}}$ as it was already done in step $m=2$. From the final expression, we then argue as in (1) and (2) in step $m=2$ and obtain that this term tends to zero as $n\to\infty$.

\subsection{Proof of Lemma~\ref{lemma:estimates_tripleMLE}}\label{Subsection:tripleMLE_2}
Let $(Y_n)_{n\in\N}$ be a sequence of random variables with $\E[|Y_n|]\leq C\psi_n$. Then by Markov's inequality, we can choose $C'$ larg enough such that
\[ \Pr\left( |Y_n| > C'\psi_n\right) \leq \frac{C\psi_n}{C'\psi_n} <\epsilon,\]
and for the set $B_n=\{|Y_n|\leq C'\psi_n\}$ we have $|Y_n|\1_{B_n}\leq C'\psi_n$ and $\Pr(B_n^c)<\epsilon$. In what follows, we will only construct moment bounds for all random variables in (a)-(d). The family of sets $(B_n)_{n\in\N}$ in the statement can then be constructed by the procedure mentioned before.

\begin{proof}[Proof of Lemma~\ref{lemma:estimates_tripleMLE}$(a)$]
First of all, we observe that
\begin{align*}
\sup_{0\leq \theta_\rho\leq Ln^{-\gamma/2}} \left| H_{n,\gamma}^{\leq}(\theta_\rho,\theta_\alpha)\right| & \leq \left[\left|\log\left(\frac{\alpha_0}{\alpha_0+\theta_\alpha}\right)\right| + \frac12\left|\frac{\alpha_0^2}{(\alpha_0+\theta_\alpha)^2}-1\right|\right] \\
&\hspace{0.7cm} \cdot \sum_{k=1}^n \left( 1+\frac{(X_{k\Delta_n^\gamma}-X_{(k-1)\Delta_n^\gamma})^2}{\alpha_0^2\Delta_n^\gamma}\right) \1_{\{\rho_0\leq X_{(k-1)\Delta_n^\gamma}\leq Ln^{-\gamma/2}\}}.
\end{align*}
By a Taylor expansion, we find for $n\geq n_0=n_0(L,\alpha_0)$ large enough that the square bracket is bounded by $C_{\alpha_0}(L)/\sqrt{n}$, uniformly in $|\theta_\alpha|\leq L/\sqrt{n}$. Moreover, by applying iterative conditioning and Corollary~\ref{cor:increment_moments} and subsequently the bound~\eqref{eq:bound_transition_density} on the transition density,
\begin{align*}
&\E_0\left[ \sum_{k=1}^n \left( 1+\frac{(X_{k\Delta_n^\gamma}-X_{(k-1)\Delta_n^\gamma})^2}{\alpha_0^2\Delta_n^\gamma}\right) \1_{\{\rho_0\leq X_{(k-1)\Delta_n^\gamma}\leq Ln^{-\gamma/2}\}} \right] \\
&\hspace{1cm} \leq C_{\alpha_0,\beta_0} \sum_{k=1}^n \E_0\left[\1_{\{\rho_0\leq X_{(k-1)\Delta_n^\gamma}\leq Ln^{-\gamma/2}\}} \right]\\
&\hspace{1cm} \leq C_{\alpha_0,\beta_0} \sum_{k=1}^n \frac{1}{\sqrt{k\Delta_n^\gamma}}\int_{\rho_0}^{\rho_0+Ln^{-\gamma/2}} \exp\left(-\frac{(y-x_0)^2}{2(k-1)\Delta_n^\gamma\max\{\alpha_0^2,\beta_0^2\}}\right) dy\\
&\hspace{1cm} \leq C_{\alpha_0,\beta_0} \sum_{k=1}^n \frac{1}{\sqrt{k}} \leq C_{\alpha_0,\beta_0} \sqrt{n}
\end{align*}
and putting together both bounds gives the claim. The estimate for $H_{n,\gamma}^{>}(\theta_\rho,\theta_\beta)$ follows analogously.
\end{proof}

\begin{proof}[Proof of Lemma~\ref{lemma:estimates_tripleMLE}$(b)$]
First, we obtain the uniform estimate
\begin{align*}
&\sup_{|\theta_\rho|\leq Ln^{-\gamma/2}}\left| H_{n,\gamma}^{\leq}(\theta_\rho,\theta_\alpha) - H_{n,\gamma}^{\leq}(0,\theta_\alpha)\right|  \leq \left|\log\left(\frac{\alpha_0}{\alpha_0+\theta_\alpha}\right)\right| S_{n,\gamma}^1 + \frac12 \left|\frac{\alpha_0^2}{(\alpha_0+\theta_\alpha)^2}-1\right| S_{n,\gamma}^2,
\end{align*}
with 
\begin{align*}
S_{n,\gamma}^1 &:= \sum_{k=1}^n \left(\1_{\{X_{(k-1)\Delta_n^\gamma},X_{k\Delta_n^\gamma}> \rho_0\}} - \1_{\{X_{(k-1)\Delta_n^\gamma},X_{k\Delta_n^\gamma}> \rho_0+Ln^{-\gamma/2}\}}\right), \\
S_{n,\gamma}^2 &:= \sum_{k=1}^n \frac{(X_{k\Delta_n^\gamma}-X_{(k-1)\Delta_n^\gamma})^2}{\alpha_0^2\Delta_n^\gamma}\left(\1_{\{X_{(k-1)\Delta_n^\gamma},X_{k\Delta_n^\gamma}> \rho_0\}} - \1_{\{X_{(k-1)\Delta_n^\gamma},X_{k\Delta_n^\gamma}> \rho_0+Ln^{-\gamma/2}\}}\right).
\end{align*} 
Note that $S_{n,\gamma}^j\geq 0$ for $j=1,2$. The claim then follows once we prove the moment bound $\E_0[S_{n,\gamma}^j] \leq C_{\alpha_0,\beta_0}L\sqrt{n}$ for $j=1,2$. To this aim, we first bound
\begin{align}\label{eq_proof:H1(b)}
\begin{split}
&\left(\1_{\{X_{(k-1)\Delta_n^\gamma},X_{k\Delta_n^\gamma}> \rho_0\}} - \1_{\{X_{(k-1)\Delta_n^\gamma},X_{k\Delta_n^\gamma}> \rho_0+Ln^{-\gamma/2}\}}\right) \\
&\hspace{1cm} \leq \1_{\{\rho_0 \leq X_{(k-1)\Delta_n^\gamma} \leq \rho_0+Ln^{-\gamma/2}\}} + \1_{\{\rho_0 \leq X_{k\Delta_n^\gamma} \leq \rho_0+Ln^{-\gamma/2}\}}\1_{\{X_{(k-1)\Delta_n^\gamma}\geq \rho_0\}}.
\end{split}
\end{align}
Replacing $K\psi_n^\gamma$ with $Ln^{-\gamma}$ in~\eqref{eq_proof:bound_expectation_indicator_T1} gives
\begin{align*}
&\E_0\left[\1_{\{\rho_0 \leq X_{(k-1)\Delta_n^\gamma} \leq \rho_0+Ln^{-\gamma/2}\}} \right] \leq  \frac{C_{\alpha_0,\beta_0}L}{\sqrt{k}}.
\end{align*}
The same bound holds true for $X_{(k-1)\Delta_n^\gamma}$ replaced by $X_{k\Delta_n^\gamma}$ such that in total we obtain
\[ \E_0\left[ S_{n,\gamma}^1\right] \leq C_{\alpha_0,\beta_0}L \sum_{k=1}^n \frac{1}{\sqrt{k}} \leq C_{\alpha_0,\beta_0}L\sqrt{n}.\]
To find the required moment bound for $S_{n,\gamma}^2$, one first applies~\eqref{eq_proof:H1(b)}. Replacing $K\psi_n^\gamma$ with $Ln^{-\gamma/2}$ in the estimates~\eqref{eq_proof:T12_bound_squared_increment} and~\eqref{eq_proof:T12_bound_squared_increment_indicator} then gives
\[ \E_0\left[ S_{n,\gamma}^2\right] \leq C_{\alpha_0,\beta_0}L \sum_{k=1}^n \frac{1}{\sqrt{k}} \leq C_{\alpha_0,\beta_0}K\sqrt{n}\]
and the first bound in Lemma~\ref{lemma:estimates_tripleMLE}(b) follows. The second one can be shown analogously.
\end{proof}

\begin{proof}[Proof of Lemma~\ref{lemma:estimates_tripleMLE}$(c)$]
The claim follows, once we establish
\begin{align}\label{eq_proof:Lemma_Hc_1}
\left|  \sum_{k=1}^n \left(\frac{(X_{k\Delta_n^\gamma} - X_{(k-1)\Delta_n^\gamma})^2}{\Delta_n^\gamma} - \alpha_0^2\right) \1_{\{X_{(k-1)\Delta_n^\gamma}\leq \rho_0\}} \right| =\mathcal{O}_{\Pr_0}\left(\sqrt{n}\right)
\end{align} 
and
\begin{align}\label{eq_proof:Lemma_Hc_2}
\left|  \sum_{k=1}^n \left(\frac{(X_{k\Delta_n^\gamma} - X_{(k-1)\Delta_n^\gamma})^2}{\Delta_n^\gamma} - \beta_0^2\right) \1_{\{X_{(k-1)\Delta_n^\gamma}\geq \rho_0\}} \right| =\mathcal{O}_{\Pr_0}\left(\sqrt{n}\right)
\end{align} 
In what follows, we only give a detailed proof for~\eqref{eq_proof:Lemma_Hc_1} as~\eqref{eq_proof:Lemma_Hc_2} follows analogously. By the triangle inequality,
\begin{align*}
\left|  \sum_{k=1}^n \left(\frac{(X_{k\Delta_n^\gamma} - X_{(k-1)\Delta_n^\gamma})^2}{\Delta_n^\gamma} - \alpha_0^2\right) \1_{\{X_{(k-1)\Delta_n^\gamma}\leq \rho_0\}} \right| \leq T_1 + T_2,
\end{align*}
where
\begin{align*}
T_1 &:= \left|  \sum_{k=1}^n \left(\frac{(X_{k\Delta_n^\gamma} - X_{(k-1)\Delta_n^\gamma})^2}{\Delta_n^\gamma} - \E_0\left[\left. \frac{(X_{k\Delta_n^\gamma} - X_{(k-1)\Delta_n^\gamma})^2}{\Delta_n^\gamma}\right| X_{(k-1)\Delta_n^\gamma}\right]\right) \1_{\{X_{(k-1)\Delta_n^\gamma}\leq \rho_0\}} \right|\\
T_2 &:=\left|  \sum_{k=1}^n \left(\E_0\left[\left. \frac{(X_{k\Delta_n^\gamma} - X_{(k-1)\Delta_n^\gamma})^2}{\Delta_n^\gamma} \right| X_{(k-1)\Delta_n^\gamma}\right]  - \alpha_0^2\right) \1_{\{X_{(k-1)\Delta_n^\gamma}\leq \rho_0\}} \right|.
\end{align*}
In what follows, we establish the desired stochastic order for each summand seperately. For $T_1$ we first note that the sum inside the absolute value is a sum over martingale increments. Then using Markov's and Jensen's inequality,
\begin{align*}
&\Pr_0\left( T_1 >C\sqrt{n}\right) \\
&\hspace{0.2cm}\leq \frac{1}{C\sqrt{n}}\sqrt{\E_0[T_1^2]} \\
&\hspace{0.2cm}= \frac{1}{C\sqrt{n}} \left( \sum_{k=1}^n \E_0\left[\left(\frac{ (X_{k\Delta_n^\gamma} - X_{(k-1)\Delta_n^\gamma})^2}{\Delta_n^\gamma} - \E_0\left[\left. \frac{ (X_{k\Delta_n^\gamma} - X_{(k-1)\Delta_n^\gamma})^2}{\Delta_n^\gamma}\right| X_{(k-1)/n}\right]\right)^2 \right.\right. \\
&\hspace{2cm} \cdot\1_{\{X_{(k-1)\Delta_n^\gamma}\leq \rho_0\}} \Big] \bigg)^\frac12 \\
&\hspace{0.2cm} \leq \frac{1}{C\sqrt{n}} \left( \sum_{k=1}^n  \E_0\left[ \frac{(X_{k\Delta_n^\gamma} - X_{(k-1)\Delta_n^\gamma})^4}{(\Delta_n^\gamma)^2}\right] \right)^\frac12 \leq \frac{C_{\alpha_0,\beta_0}}{C} \longrightarrow 0
\end{align*}
for $C\to\infty$, where the last step uses Corollary~\ref{cor:increment_moments}. This proves that $T_1=\mathcal{O}_{\Pr_0}(\sqrt{n})$ and finishes the first part.
For the analysis of $T_2$, we note $\Pr_0(X_{(k-1)\Delta_n\gamma} =\rho_0)=0$ and only work on $B_k := \{X_{(k-1)\Delta_n^\gamma}<\rho_0\}$. Decomposing $1=\1_{\{X_{k\Delta_n^\gamma}\leq \rho_0\}}+\1_{\{X_{k\Delta_n^\gamma}>\rho_0\}}$ reveals by the triangle inequality
\begin{align}\label{eq_proof:E_squared_difference_1}
\begin{split}
&\left|\E_0\left[ \left. \frac{(X_{k\Delta_n^\gamma}-X_{(k-1)\Delta_n^\gamma})^2}{\Delta_n^\gamma}\right| X_{(k-1)\Delta_n^\gamma}\right]-\alpha_0^2\right|  \1_{B_k}  \\
&\hspace{1cm} \leq  \left|\E_0\left[ \left.\left(\frac{(X_{k\Delta_n^\gamma}-X_{(k-1)\Delta_n^\gamma})^2}{\Delta_n^\gamma}-\alpha_0^2 \right) \1_{B_k} \1_{\{X_{k\Delta_n^\gamma}\leq \rho_0\}} \right| X_{(k-1)\Delta_n^\gamma}\right] \right| \\
&\hspace{2cm} + \left|\E_0\left[ \left.\left(\frac{(X_{k\Delta_n^\gamma}-X_{(k-1)\Delta_n^\gamma})^2}{\Delta_n^\gamma} -\alpha_0^2\right) \1_{B_k} \1_{\{X_{k\Delta_n^\gamma}> \rho_0\}} \right| X_{(k-1)\Delta_n^\gamma}\right] \right|.
\end{split}
\end{align}
Subsequently, both summmands will be bounded seperately. The second expectation on the right-hand side of~\eqref{eq_proof:E_squared_difference_1} can be rewritten as
\begin{align*}
&\left| \E_0\left[ \left.\left(\frac{(X_{k\Delta_n^\gamma}-X_{(k-1)\Delta_n^\gamma})^2}{\Delta_n^\gamma} -\alpha_0^2\right) \1_{B_k}\1_{\{X_{k\Delta_n^\gamma}>\rho_0\}} \right| X_{(k-1)\Delta_n^\gamma}\right]\right| \\
&\hspace{0.1cm} = \1_{B_k}\left| \int_{\rho_0}^\infty \left(\frac{(y-X_{(k-1)\Delta_n^\gamma})^2}{\Delta_n^\gamma} -\alpha_0^2\right) \frac{1}{\sqrt{2\pi\Delta_n^\gamma}} \frac{2}{\alpha_0+\beta_0}\frac{\alpha_0}{\beta_0} \right.\\
&\hspace{3.5cm} \left. \cdot\exp\left(-\frac{1}{2\Delta_n^\gamma}\left(\frac{y-\rho_0}{\beta_0}-\frac{X_{(k-1)\Delta_n^\gamma}-\rho_0}{\alpha_0}\right)^2\right) dy \right| \\
&\hspace{0.1cm} =  \1_{B_k} \frac{2\alpha_0\beta_0^2}{\alpha_0+\beta_0} \int_{-(X_{(k-1)\Delta_n^\gamma}-\rho_0)/\sqrt{\alpha_0^2\Delta_n^\gamma}}^\infty \left[\bigg( y - \frac{(\alpha_0-\beta_0)(X_{(k-1)\Delta_n^\gamma}-\rho_0)}{\alpha_0\beta_0\sqrt{\Delta_n^\gamma}}\bigg)^2 \hspace{-0.1cm}-\frac{\alpha_0^2}{\beta_0^2}\right] \varphi(y) dy.
\end{align*}
where $\varphi(y)=\frac{1}{\sqrt{2\pi}}\exp(-y^2/2)$ is the Gaussian density. Now, using $x\mapsto (x^2+1)\exp(-x^2/4)$ is bounded together with a Gaussian-type tail inequality, we obtain
\begin{align*}
&\1_{B_k}\left| \int_{-(X_{(k-1)\Delta_n^\gamma}-\rho_0)/\sqrt{\alpha_0^2\Delta_n^\gamma}}^\infty \left(\left( y - \frac{(\alpha_0-\beta_0)(X_{(k-1)\Delta_n^\gamma}-\rho_0)}{\alpha_0\beta_0\sqrt{\Delta_n^\gamma}}\right)^2 - \frac{\alpha_0^2}{\beta_0^2}\right) \varphi(y) dy\right| \\
&\hspace{0.2cm} \leq C_{\alpha_0,\beta_0}\1_{B_k} \int_{-(X_{(k-1)\Delta_n^\gamma}-\rho_0)/\sqrt{\alpha_0^2\Delta_n^\gamma}}^\infty \left(y^2 +1\right) \exp\left(-\frac{y^2}{2}\right) dy  \\
&\hspace{1.2cm} +C_{\alpha_0,\beta_0}\1_{B_k} \frac{(X_{(k-1)/n}-\rho_0)^2}{\Delta_n^\gamma} \int_{-(X_{(k-1)\Delta_n^\gamma}-\rho_0)/\sqrt{\alpha_0^2\Delta_n^\gamma}}^\infty \exp\left(-\frac{y^2}{2}\right) dy \\
&\hspace{0.2cm} \leq C_{\alpha_0,\beta_0}\1_{B_k}  \int_{-(X_{(k-1)\Delta_n^\gamma}-\rho_0)/\sqrt{\alpha_0^2\Delta_n^\gamma}}^\infty \exp\left(-\frac{y^2}{4}\right) dy \\
&\hspace{1.2cm}  + C_{\alpha_0,\beta_0}\1_{B_k}\frac{(X_{(k-1)/n}-\rho_0)^2}{\Delta_n^\gamma} \exp\left(-\frac{(X_{(k-1)\Delta_n^\gamma}-\rho_0)^2}{2\alpha_0^2\Delta_n^\gamma}\right)\\
&\hspace{0.2cm} \leq C_{\alpha_0,\beta_0} \exp\left(-\frac{(X_{(k-1)\Delta_n^\gamma}-\rho_0)^2}{4\alpha_0^2\Delta_n^\gamma}\right).
\end{align*}
This bound will turn out to be sufficient for later purposes. For the first expectation on the right-hand side of~\eqref{eq_proof:E_squared_difference_1}, we find
\begin{align*}
& \left| \E_0\left[ \left.\left(\frac{(X_{k\Delta_n^\gamma}-X_{(k-1)\Delta_n^\gamma})^2}{\Delta_n^\gamma} -\alpha_0^2\right) \1_{B_k} \1_{\{X_{k\Delta_n^\gamma}\leq \rho_0\}} \right| X_{(k-1)\Delta_n^\gamma}\right] \right|\\
&\hspace{0.2cm} \leq  \1_{B_k}\left| \int_{-\infty}^{\rho_0} \left(\frac{(y-X_{(k-1)\Delta_n^\gamma})^2}{\Delta_n^\gamma} -\alpha_0^2\right) \frac{1}{\sqrt{2\pi\alpha_0^2\Delta_n^\gamma}}\exp\left(-\frac{(y-X_{(k-1)\Delta_n^\gamma})^2}{2\alpha_0^2\Delta_n^\gamma}\right) dy  \right|\\
&\hspace{0.5cm} +  \1_{B_k}\left| \int_{-\infty}^{\rho_0} \left(\frac{(y-X_{(k-1)\Delta_n^\gamma})^2}{\Delta_n^\gamma}-\alpha_0^2\right) \frac{1}{\sqrt{2\pi\alpha_0^2\Delta_n^\gamma}}\frac{\alpha_0-\beta_0}{\alpha_0+\beta_0} \right.\\
&\hspace{5.5cm} \left. \cdot\exp\left(-\frac{(y-2\rho_0+X_{(k-1)\Delta_n^\gamma})^2}{2\alpha_0^2\Delta_n^\gamma}\right) dy  \right|\\
&\hspace{0.2cm} = \1_{B_k}\alpha_0^2 \left|\int_{-\infty}^{(\rho_0-X_{(k-1)\Delta_n^\gamma})/\sqrt{\alpha_0^2\Delta_n^\gamma}} \left(y^2-1\right) \varphi(y) dy \right| \\
&\hspace{0.5cm} +  \1_{B_k} \alpha_0^2 \left| \frac{\alpha_0-\beta_0}{\alpha_0+\beta_0} \int_{-\infty}^{(X_{(k-1)\Delta_n^\gamma}-\rho_0)/\sqrt{\alpha_0^2\Delta_n^\gamma}} \left(\left(y-\frac{2(X_{(k-1)\Delta_n^\gamma}-\rho_0)}{\alpha_0\sqrt{\Delta_n^\gamma}}\right)^2-1\right) \varphi(y) dy  \right|,
\end{align*}
where again $\varphi$ denotes the Gaussian density. As $\int_\R y^2 \varphi(y) dy = \int_\R \varphi(y) dy$, we have
\begin{align*}
&\int_{-\infty}^{(\rho_0-X_{(k-1)\Delta_n^\gamma})/\sqrt{\alpha_0^2\Delta_n^\gamma}} \left(y^2-1\right)\varphi(y) dy  = -\int_{(\rho_0-X_{(k-1)\Delta_n^\gamma})/\sqrt{\beta_0^2/n}}^\infty \left(y^2-1\right) \varphi(y) dy .
\end{align*}
Then, as $\rho_0-X_{(k-1)\Delta_n^\gamma}\geq 0$, we can proceed as for the second summand in~\eqref{eq_proof:E_squared_difference_1}. Finally, 
\[ \left|\E_0\left[ \left. \frac{(X_{k\Delta_n^\gamma}-X_{(k-1)\Delta_n^\gamma})^2}{\Delta_n^\gamma} \right| X_{(k-1)\Delta_n^\gamma}\right]-\beta_0^2\right|  \1_{B_k}  \leq C_{\alpha_0,\beta_0} \exp\left(-\frac{(X_{(k-1)\Delta_n^\gamma}-\rho_0)^2}{4\alpha_0^2\Delta_n^\gamma}\right).\]
By Corollary~\ref{cor:bound_exp_X^2} we then obtain
\[ \sum_{k=1}^n \E_0\left[ \exp\left(-\frac{(X_{(k-1)\Delta_n^\gamma}-\rho_0)^2}{4\alpha_0^2\Delta_n^\gamma}\right)\right] \leq C_{\alpha_0,\beta_0}\sum_{k=1}^n \frac{1}{\sqrt{k}} \leq C_{\alpha_0,\beta_0}\sqrt{n} \]
which yields $T_2=\mathcal{O}_{\Pr_0}(1)$.
\end{proof}

\begin{proof}[Proof of Lemma~\ref{lemma:estimates_tripleMLE}$(d)$]
We prove the statement for $R_{n,\gamma}^{\leq}(\theta_\rho,\theta_\alpha,\theta_\beta)$, the other one follows analogously. First of all, we obtain the estimate
\[ R_{n,\gamma}^{\leq}(\theta_\rho,\theta_\alpha,\theta_\beta) = \sum_{j=1}^4 R_{n,\gamma}^{\leq, j}(\theta_\rho,\theta_\alpha,\theta_\beta),\]
where with the abbreviation $D_k=(X_{k\Delta_n^\gamma}-\rho_0-\theta_\rho)(X_{(k-1)\Delta_n^\gamma}-\rho_0-\theta_\rho)$ and suppression of the argument $(\theta_\rho,\theta_\alpha,\theta_\beta)$ for shorter displays,
\begin{align*}
R_{n,\gamma}^{\leq, 1}&:=  \sum_{k=1}^n \left[\log\left(\frac{\alpha_0}{\alpha_0+\theta_\alpha}\right) -\frac12\left(\frac{\alpha_0^2}{(\alpha_0+\theta_\alpha)^2}-1\right)\frac{(X_{k\Delta_n^\gamma}-X_{(k-1)\Delta_n^\gamma})^2}{\alpha_0^2}\right] \\
&\hspace{1.2cm} \cdot\left( \1_{\{X_{(k-1)\Delta_n^\gamma}<\rho_0+\theta_\rho, X_{k\Delta_n^\gamma}\leq \rho_0+\theta_\rho\}}- \1_{\{X_{(k-1)\Delta_n^\gamma} \leq \rho_0+\theta_\rho\}} \right) \\
R_{n,\gamma}^{\leq, 2}&:=  \sum_{k=1}^n \log\left(\frac{1-\frac{\alpha_0+\theta_\alpha - (\beta_0+\theta_\beta)}{\alpha_0+\theta_\alpha+\beta_0+\theta_\beta}\exp\left(-\frac{2}{(\alpha_0+\theta_\alpha)^2\Delta_n^\gamma}D_k\right)}{1-\frac{\alpha_0 - \beta_0}{\alpha_0+\beta_0}\exp\left(-\frac{2}{\alpha_0^2\Delta_n^\gamma}D_k\right)}\right)\1_{\{X_{(k-1)\Delta_n^\gamma}<\rho_0+\theta_\rho, X_{k\Delta_n^\gamma}\leq \rho_0+\theta_\rho\}} \\
R_{n,\gamma}^{\leq, 3}&:=  \sum_{k=1}^n \1_{\{ X_{(k-1)\Delta_n^\gamma}<\rho_0+\theta<X_{k\Delta_n^\gamma}\}} \log\left(\frac{\alpha_0+\beta_0}{\alpha_0+\theta_\alpha+\beta_0+\theta_\beta}\frac{\alpha_0+\theta_\alpha}{\alpha_0}\frac{\beta_0}{\beta_0+\theta_\beta}\right)\\
R_{n,\gamma}^{\leq, 4} &:=  \sum_{k=1}^n \1_{\{ X_{(k-1)\Delta_n^\gamma}<\rho_0+\theta<X_{k\Delta_n^\gamma}\}} \frac{1}{2\Delta_n^\gamma} \left[ \left(\frac{X_{k\Delta_n^\gamma}-\rho_0-\theta_\rho}{\beta_0} -\frac{X_{(k-1)\Delta_n^\gamma}-\rho_0-\theta_\rho}{\alpha_0} \right)^2 \right. \\
&\hspace{4cm} \left. -\left(\frac{X_{k\Delta_n^\gamma}-\rho_0-\theta_\rho}{\beta_0+\theta_\beta} -\frac{X_{(k-1)\Delta_n^\gamma}-\rho_0-\theta_\rho}{\alpha_0+\theta_\alpha} \right)^2 \right].
\end{align*}
In what follows, we will prove
\begin{align}\label{eq_proof:triple_estimates_d}
R_{n,\gamma}^{\leq,j}(\theta_\rho,\theta_\alpha,\theta_\beta) \leq C_{\alpha_0,\beta_0}\left( g\left(\frac{\alpha_0}{\alpha_0+\theta_\alpha}\right) + g\left(\frac{\beta_0}{\beta_0+\theta_\beta}\right)\right) \Upsilon_{n,\gamma}^{\leq, j}(\theta_\rho),
\end{align} 
for $j=1,3,4$ with
\begin{align}\label{eq_proof:Upsilon}
\Upsilon_{n,\gamma}^{\leq, j}(\theta_\rho) = \sum_{k=1}^n \left( 1+ \frac{(X_{k\Delta_n^\gamma}-X_{(k-1)\Delta_n^\gamma})^2}{\Delta_n^\gamma}\right)\1_{\{X_{(k-1)\Delta_n^\gamma}\leq \rho_0+\theta_\rho <X_{k\Delta_n^\gamma}\}}.
\end{align} 
In case $j=2$ we obtain for $\theta_\alpha\leq R$ with $R>0$ the bound~\eqref{eq_proof:triple_estimates_d} with
\begin{align*}
\Upsilon_{n,\gamma}^{\leq, 2}(\theta_\rho) &= \sum_{k=1}^n \bigg( \1_{\{\rho_0+\theta_\rho-\sqrt{\Delta_n^\gamma} \leq X_{k\Delta_n^\gamma}\leq \rho_0+\theta_\rho\}} \\
&\hspace{1.5cm} \left.+ \exp\left(\frac{1}{(\alpha_0+R)^2\sqrt{\Delta_n^\gamma}}\left(X_{(k-1)\Delta_n^\gamma}-\rho-\theta_\rho\right)\right)\1_{\{X_{(k-1)\Delta_n^\gamma}\leq \rho_0+\theta_\rho\}}\right)
\end{align*} 
and the constant then additionally depends on $R$. In case $\theta_\alpha>R$, we obtain 
\[ R_{n,\gamma}^{\leq}(\theta_\rho,\theta_\alpha,\theta_\beta) \leq nd_{\alpha_0,\beta_0}. \]
The required stochastic order of $\Upsilon_{n,\gamma}^{\leq,j}$ for $j=1,3,4$ can then be derived analogously as for the remainder terms $\Xi_n^{(5,4)}$ and $\tilde{\Xi}_n^{(5,4)}$ in the proof of Lemma~$4.1$ in~\citeSM{App:Brutsche/Rohde} and that for $j=2$ following the lines of $\Xi_n^{(5,1)},\tilde{\Xi}_n^{(5,1)}, \Xi_n^{(5,3)}$ and $\tilde{\Xi}_n^{(5,3)}$ in the same reference, both times working on the set $A_n$ given in~\eqref{eq_proof:setAn}. We omit the details.
\begin{itemize}
\item[$\mathbf{\bullet}\ R_{n,\gamma}^{\leq,1}$.] First, we note that
\[ \left( \1_{\{X_{(k-1)\Delta_n^\gamma}<\rho_0+\theta_\rho, X_{k\Delta_n^\gamma}\leq \rho_0+\theta_\rho\}}- \1_{\{X_{(k-1)\Delta_n^\gamma} \leq \rho_0+\theta_\rho\}} \right) = -\1_{\{ X_{(k-1)\Delta_n^\gamma}\leq \rho_0+\theta_\rho<X_{k\Delta_n^\gamma}\}}.\]
This entails
\begin{align*}
R_{n,\gamma}^{\leq, 1}(\theta_\rho,\theta_\alpha,\theta_\beta) &\leq \left(\left|\log\left(\frac{\alpha_0}{\alpha_0+\theta_\alpha}\right) \right| + \frac12 \left| \frac{\alpha_0^2}{(\alpha_0+\theta_\alpha)^2}-1\right| \right) \\
&\hspace{1cm} \cdot \sum_{k=1}^n \left( 1+ \frac{(X_{k\Delta_n^\gamma}-X_{(k-1)\Delta_n^\gamma})^2}{2\Delta_n^\gamma}\right)\1_{\{ X_{(k-1)\Delta_n^\gamma}\leq \rho_0+\theta_\rho<X_{k\Delta_n^\gamma}\}}.
\end{align*}

\smallskip
\item[$\mathbf{\bullet}\ R_{n,\gamma}^{\leq,2}$.] Recall the abbreviation
\[ D_k = (X_{k\Delta_n^\gamma}-\rho_0-\theta_\rho)(X_{(k-1)\Delta_n^\gamma}-\rho_0-\theta_\rho).\]
Using that on the event $\{X_{(k-1)\Delta_n^\gamma}<\rho_0+\theta_\rho, X_{k\Delta_n^\gamma}\leq \rho_0+\theta_\rho\}$
\[ 1-\frac{|\alpha_0-\beta_0|}{\alpha_0+\beta_0} \leq 1-\frac{\alpha_0 - \beta_0}{\alpha_0+\beta_0}\exp\left(-\frac{2}{\alpha_0^2\Delta_n^\gamma}D_k\right) \leq 1+\frac{|\alpha_0-\beta_0|}{\alpha_0+\beta_0}\]
and the inequality $\log(1+x)\leq x$ for $x>-1$ yields
\[ R_{n,\gamma}^{\leq, 2}(\theta_\rho,\theta_\alpha,\theta_\beta) \leq C_{\alpha_0,\beta_0} \left( R_{n,\gamma}^{\leq, 2,1}(\theta_\rho,\theta_\alpha,\theta_\beta) + R_{n,\gamma}^{\leq, 2,2}(\theta_\rho,\theta_\alpha,\theta_\beta)\right),\]
where
\begin{align}\label{eq_proof:d_R21}
\begin{split}
R_{n,\gamma}^{\leq, 2,1}(\theta_\rho,\theta_\alpha,\theta_\beta) &:= \left|\frac{\alpha_0+\theta_\alpha - (\beta_0+\theta_\beta)}{\alpha_0+\theta_\alpha+\beta_0+\theta_\beta}-\frac{\alpha_0-\beta_0}{\alpha_0+\beta_0}\right|\\
&\hspace{1cm} \cdot \sum_{k=1}^n \1_{\{X_{(k-1)\Delta_n^\gamma}<\rho_0+\theta_\rho, X_{k\Delta_n^\gamma}\leq \rho_0+\theta_\rho\}}\exp\left(-\frac{2}{\alpha_0^2\Delta_n^\gamma}D_k\right) \\
R_{n,\gamma}^{\leq, 2,2}(\theta_\rho,\theta_\alpha,\theta_\beta) &:= \left|\frac{\alpha_0+\theta_\alpha - (\beta_0+\theta_\beta)}{\alpha_0+\theta_\alpha+\beta_0+\theta_\beta}\right|\sum_{k=1}^n \1_{\{X_{(k-1)\Delta_n^\gamma}<\rho_0+\theta_\rho, X_{k\Delta_n^\gamma}\leq \rho_0+\theta_\rho\}} \\
&\hspace{1cm} \cdot \left|\exp\left(-\frac{2}{\alpha_0^2\Delta_n^\gamma}D_k\right)  -\exp\left(-\frac{2}{(\alpha_0+\theta_\alpha)^2\Delta_n^\gamma}D_k\right) \right|.
\end{split}
\end{align}
We are going to treat these terms seperately. For the first one, a direct evaluation in the frist step reveals
\begin{align*}
&\left|\frac{\alpha_0+\theta_\alpha - (\beta_0+\theta_\beta)}{\alpha_0+\theta_\alpha+\beta_0+\theta_\beta}-\frac{\alpha_0-\beta_0}{\alpha_0+\beta_0}\right|\\
&\hspace{1cm} = 2\left| \frac{\beta_0\theta_\alpha}{(\alpha_0+\theta_\alpha+\beta_0+\theta_\beta)(\alpha_0+\beta_0)} - \frac{\alpha_0\theta_\beta}{(\alpha_0+\theta_\alpha+\beta_0+\theta_\beta)(\alpha_0+\beta_0)} \right| \\
&\hspace{1cm} \leq \frac{2\beta_0 |\theta_\alpha|}{(\alpha_0+\theta_\alpha)(\alpha_0+\beta_0)} + \frac{2\alpha_0 |\theta_\beta|}{(\beta_0+\theta_\beta)(\alpha_0+\beta_0)} \\
&\hspace{1cm} \leq \frac{2|\theta_\alpha|}{\alpha_0+\theta_\alpha} + \frac{2|\theta_\beta|}{\beta_0+\theta_\beta}.
\end{align*}
Using that $|x-1|\leq |\log(x)|$ for $0<x\leq 1$ and $|x-1|\leq (x^2-1)/2$ for $x>1$, and recalling $g(x)=|\log(x)|+|x^2-1|/2$, we thus find
\begin{align}\label{eq_proof:R_21_d}
\begin{split}
&R_{n,\gamma}^{\leq, 2,1}(\theta_\rho,\theta_\alpha,\theta_\beta)\\
&\hspace{0.5cm}\leq 2\left( g\left(\frac{\alpha_0}{\alpha_0+\theta_\alpha}\right) + g\left(\frac{\beta_0}{\beta_0+\theta_\beta}\right)\right)\sum_{k=1}^n \1_{\{X_{(k-1)\Delta_n^\gamma}<\rho_0+\theta_\rho, X_{k\Delta_n^\gamma}\leq \rho_0+\theta_\rho\}} \\
&\hspace{2cm} \cdot  \exp\left(-\frac{2}{\alpha_0^2\Delta_n^\gamma}(X_{k\Delta_n^\gamma}-\rho_0-\theta_\rho)(X_{(k-1)\Delta_n^\gamma}-\rho_0-\theta_\rho)\right) \\
&\hspace{0.5cm}\leq 2\left( g\left(\frac{\alpha_0}{\alpha_0+\theta_\alpha}\right) + g\left(\frac{\beta_0}{\beta_0+\theta_\beta}\right)\right)\left[ \sum_{k=1}^n \1_{\{\rho_0+\theta_\rho-\sqrt{\Delta_n^\gamma} \leq X_{k\Delta_n^\gamma}\leq \rho_0+\theta_\rho\}} \right. \\
&\hspace{2cm} + \left. \sum_{k=1}^n \exp\left(\frac{2}{\alpha_0^2\sqrt{\Delta_n^\gamma}}(X_{(k-1)\Delta_n^\gamma}-\rho_0-\theta_\rho)\right)\1_{\{X_{(k-1)\Delta_n^\gamma} \leq \rho_0+\theta_\rho\}}\right].
\end{split}
\end{align}
For the discussion of $R_n^{\leq,2,2}$, let $R>0$ and assume that $\theta_\alpha\leq R<\infty$.
Then, we have for an intermediate value $\xi_k(n,\theta_\alpha)$ between $\alpha_0^{-2}$ and $(\alpha_0+\theta_\alpha)^{-2}$ which are both lower bounded by $(\alpha_0+R)^{-2}$,
\begin{align*}
&\left| \exp\left(-\frac{2}{\alpha_0^2\Delta_n^\gamma} D_k\right) - \exp\left(-\frac{2}{(\alpha_0+\theta_\alpha)^2\Delta_n^\gamma} D_k\right) \right| \\
&\hspace{1cm} = \left|\frac{1}{(\alpha_0+\theta_\alpha)^2}-\frac{1}{\alpha_0^2}\right| \frac{2 D_k}{\Delta_n^\gamma} \exp\left(-\frac{2\xi_k(n,\theta_\alpha)}{\Delta_n^\gamma} D_k\right) \\
&\hspace{1cm} \leq \frac{1}{\alpha_0^2}\left|\frac{\alpha_0^2}{(\alpha_0+\theta_\alpha)^2}-1\right| \frac{2 D_k}{\Delta_n^\gamma} \exp\left(-\frac{2}{(\alpha_0+R)^2\Delta_n^\gamma} D_k\right) \\
&\hspace{1cm} \leq C_{\alpha_0}(R)\left|\frac{\alpha_0^2}{(\alpha_0+\theta_\alpha)^2}-1\right|\exp\left(-\frac{1}{(\alpha_0+R)^2\Delta_n^\gamma} D_k\right),
\end{align*}
where the last step uses that $x\mapsto x\exp(-(\alpha_0+R)^{-2}x/2)$ is bounded. The required bound can then be obtained as in the second step of~\eqref{eq_proof:R_21_d}.

In case $\theta_\alpha>R$, we directly bound the logarithm in the definition of $R_{n,\gamma}^{\leq,2}$ using its monotonicity. On the event $\{X_{(k-1)\Delta_n^\gamma}<\rho_0+\theta_\rho, X_{k\Delta_n^\gamma}\leq \rho_0+\theta_\rho\}$ we have $D_k\geq 0$ and thus find on this event
\begin{align*}
\log\left(\frac{1-\frac{\alpha_0+\theta_\alpha - (\beta_0+\theta_\beta)}{\alpha_0+\theta_\alpha+\beta_0+\theta_\beta}\exp\left(-\frac{2}{(\alpha_0+\theta_\alpha)^2\Delta_n^\gamma}D_k\right)}{1-\frac{\alpha_0 - \beta_0}{\alpha_0+\beta_0}\exp\left(-\frac{2}{\alpha_0^2\Delta_n^\gamma}(X_{k\Delta_n^\gamma}-\rho_0-\theta_\rho)D_k\right)} \right) \leq \log\left( \frac{2}{1-\frac{|\alpha_0-\beta_0|}{\alpha_0+\beta_0}} \right) =d_{\alpha_0,\beta_0},
\end{align*}
which summed over $k=1,\dots, n$ directly gives the bound $nd_{\alpha_0,\beta_0}$.

\smallskip
\item[$\mathbf{\bullet}\ R_{n,\gamma}^{\leq,3}$.] By the mediant inequality, we have for $a,b,c,d>0$,
\[ \frac{a+b}{c+d} \leq \max\left\{\frac{a}{c},\frac{b}{d}\right\},\]
such that
\[ \log\left( \frac{\alpha_0+\beta_0}{\alpha_0+\theta_\alpha + \beta_0+\theta_\beta}\right) \leq \left| \log\left(\frac{\alpha_0}{\alpha_0+\theta_\alpha}\right)\right| + \left|\log\left(\frac{\beta_0}{\beta_0+\theta_\beta}\right)\right| \]
and in total
\[ \log\left(\frac{\alpha_0+\beta_0}{\alpha_0+\theta_\alpha+\beta_0+\theta_\beta}\frac{\alpha_0+\theta_\alpha}{\alpha_0}\frac{\beta_0}{\beta_0+\theta_\beta}\right)  \leq 2\left| \log\left(\frac{\alpha_0}{\alpha_0+\theta_\alpha}\right)\right| + 2\left|\log\left(\frac{\beta_0}{\beta_0+\theta_\beta}\right)\right|.\]
Hence, we obtain
\begin{align*}
R_{n,\gamma}^{\leq, 3}(\theta_\rho,\theta_\alpha,\theta_\beta)  &\leq \left( \left| \log\left(\frac{\alpha_0}{\alpha_0+\theta_\alpha}\right)\right| + \left|\log\left(\frac{\beta_0}{\beta_0+\theta_\beta}\right)\right| \right) \\
&\hspace{2cm} \cdot 2\sum_{k=1}^n\1_{\{ X_{(k-1)\Delta_n^\gamma}<\rho_0+\theta_\rho<X_{k\Delta_n^\gamma}\}}.
\end{align*}

\smallskip
\item[$\mathbf{\bullet}\ R_{n,\gamma}^{\leq,4}$.] First, we observe that 
\begin{align*}
&R_{n,\gamma}^{\leq, 4}(\theta_\rho,\theta_\alpha,\theta_\beta)\\
&\hspace{0.3cm}= \frac{1}{2\Delta_n^\gamma}\sum_{k=1}^n \1_{\{ X_{(k-1)\Delta_n^\gamma}<\rho_0+\theta_\rho<X_{k\Delta_n^\gamma}\}} \left(X_{k\Delta_n^\gamma}-\rho_0-\theta_\rho\right)^2 \left( \frac{1}{\beta_0^2}-\frac{1}{(\beta_0+\theta_\beta)^2}\right) \\
&\hspace{0.7cm} + \frac{1}{\Delta_n^\gamma}\sum_{k=1}^n \1_{\{ X_{(k-1)\Delta_n^\gamma}<\rho_0+\theta_\rho <X_{k\Delta_n^\gamma}\}} \left(X_{k\Delta_n^\gamma}-\rho_0-\theta_\rho\right)\left(X_{(k-1)\Delta_n^\gamma}-\rho_0-\theta_\rho\right) \\
&\hspace{3cm} \cdot\left(\frac{1}{(\alpha_0+\theta_\alpha)(\beta_0+\theta_\beta)^2}-\frac{1}{\alpha_0\beta_0}\right) \\
&\hspace{0.7cm} + \frac{1}{2\Delta_n^\gamma}\sum_{k=1}^n \1_{\{ X_{(k-1)\Delta_n^\gamma}<\rho_0+\theta_\rho <X_{k\Delta_n^\gamma}\}} \left(X_{(k-1)\Delta_n^\gamma}-\rho_0-\theta_\rho\right)^2 \left( \frac{1}{\alpha_0^2}-\frac{1}{(\alpha_0+\theta_\alpha)^2}\right).
\end{align*}
For $a,b\geq 0$ we have
\[ \left| 1-ab\right| \leq \left| 1-\min\{a,b\}^2\right| + \left| 1-\max\{a,b\}^2\right| = \left| 1-a^2\right| + \left| 1-b^2\right|,\]
such that we obtain
\begin{align*}
&R_{n,\gamma}^{\leq, 4}(\theta_\rho,\theta_\alpha,\theta_\beta)\\
&\hspace{0.5cm}\leq \frac12 \left( \left| \frac{\alpha_0^2}{(\alpha_0+\theta_\alpha)^2}-1\right| + \left|\frac{\beta_0^2}{(\beta_0+\theta_\beta)^2}-1\right|\right) \\
&\hspace{1.5cm} \cdot \left( \frac{1}{\alpha_0^2}+\frac{2}{\alpha_0\beta_0}+\frac{1}{\beta_0^2}\right) \sum_{k=1}^n  \1_{\{ X_{(k-1)\Delta_n^\gamma}<\rho_0+\theta_\rho<X_{k\Delta_n^\gamma}\}} \frac{(X_{k\Delta_n^\gamma}-X_{(k-1)\Delta_n^\gamma})^2}{\Delta_n^\gamma}.
\end{align*}
\end{itemize}
\end{proof}

\subsection{Proof of auxiliary results}\label{Subsection:tripleMLE_3}

\begin{lemma}\label{lemma:function_negative}
For all $x,y>0$ we have
\[ f(x,y):= \log(x) - \frac12 \left(x^2-1\right) y^2 + \log(y) - \frac12\left( y^2-1\right) \leq 0\]
with equality if and only if $y=1/x$.
\end{lemma}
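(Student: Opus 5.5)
Write $f(x,y)=\phi(x;y)+\phi_0(y)$ with $\phi_0(y)=\log(y)-\tfrac12(y^2-1)$, so the plan is to fix $y>0$ and maximise over $x$ first, then look at the resulting one-variable function of $y$. For fixed $y$ the map $x\mapsto \log(x)-\tfrac12(x^2-1)y^2$ is strictly concave on $(0,\infty)$ (second derivative $-x^{-2}-y^2<0$). Its derivative $1/x-xy^2$ vanishes exactly at $x=1/y$, and it tends to $-\infty$ both as $x\searrow0$ and as $x\to\infty$. Hence for every fixed $y$ the unique maximiser is $x^\ast(y)=1/y$, and $f(x,y)<f(1/y,y)$ whenever $x\neq 1/y$.

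Evaluating at the maximiser gives
\[ f(1/y,y)=-\log(y)-\tfrac12\left(y^{-2}-1\right)y^2+\log(y)-\tfrac12\left(y^2-1\right)=-\tfrac12(1-y^2)-\tfrac12(y^2-1)=0. \]
So $\sup_x f(x,y)=0$ for every $y>0$. This gives $f\le0$ on $(0,\infty)^2$, with equality precisely on the curve $\{y=1/x\}$ by the strict concavity. The same argument works with the roles of $x$ and $y$ exchanged, but that is not needed.

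There is no real obstacle; the only point to check is the cancellation of the logarithms at $x=1/y$. For the application in $\mathcal{T}_{n,\gamma}^3$, one substitutes $x=\beta_0/\alpha_0$ and $y=\alpha_0/(\alpha_0+\theta_\alpha)$, which appear there with the factor $\beta_0^2/\alpha_0^2=x^2$ multiplying the second bracket. The paper's $H_{\alpha_0,\beta_0}$ is then $\log x-\tfrac12(x^2-1)+\log y-\tfrac12(y^2-1)x^2$, which is $f$ with the roles of $x$ and $y$ swapped. Its zero set is still $xy=1$, i.e.\ $\theta_\alpha=\beta_0-\alpha_0$. Since this form is symmetric, I would note that the lemma applies to it after interchanging the variables.
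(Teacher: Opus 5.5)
Your proof is correct and is essentially the paper's argument: the paper fixes $x$ and notes that $\partial_y f(x,y)=\frac{1}{y}(1-x^2y^2)$ changes sign from positive to negative at $y=1/x$, where $f$ vanishes, whereas you fix $y$ and use strict concavity in $x$ with maximiser $x=1/y$, which is the same one-variable computation with the variables exchanged. Your closing remark is also right: $H_{\alpha_0,\beta_0}(\theta_\alpha)=f(y,x)$ with $x=\beta_0/\alpha_0$ and $y=\alpha_0/(\alpha_0+\theta_\alpha)$, so the lemma applies after swapping arguments (it is the zero set $\{xy=1\}$, not $f$ itself, that is symmetric).
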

\begin{proof}
It is clear that $f(x,1/x)=0$ by a direct calculation. Now fix $x>0$ and define $g_x(y)=f(x,y)$. Then,
\[ g_x'(y) = -x^2y + \frac{1}{y} = \frac{1}{y}\left( 1-x^2y^2\right).\]
From this it follows that $g_x'(y)>0$ for $y<1/x$ and $g_x'(y)<0$ for $y>1/x$, hence the statement of the lemma.
\end{proof}

The next result establishes important properties of the function $F_{n,C}$ given in~\eqref{eq:function_FnC}. In Figure~\ref{Fig_FnC}, this function is illustrated for different values of $n$.

\begin{center}
\begin{figure}[h]
\includegraphics[scale=0.25]{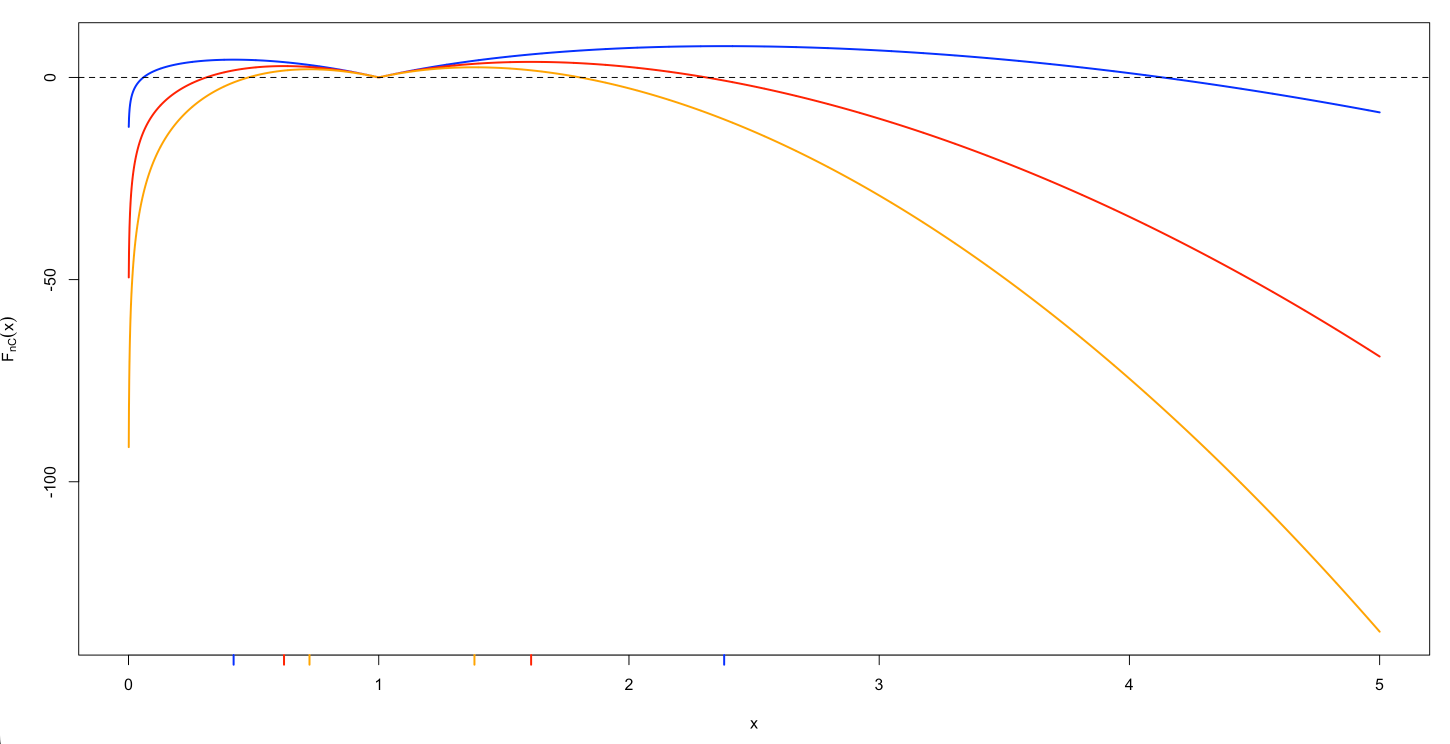}
\caption{\small{The function $F_{n_j,C}$ for $C=7$ and $n_1=100$ (blue), $n_2=250$ (red) and $n_3=500$ (orange). The locatio of the corresponding local maxima are depicted on the x-acis in the respective color.}}\label{Fig_FnC}
\end{figure}
\end{center}

\vspace{-1cm}

\begin{lemma}\label{lemma:properties_F_nC}
Let $C>0$ and 
\[ F_{n,C}(x) = \sqrt{n}\left( \log(x) - \frac12\left(x^2-1\right)\right) + C\left|\log(x)\right| + \frac{C}{2}\left|x^2-1\right|, \quad x>0.\]
Then, the following hold true:
\begin{enumerate}
\item[(a)] $\sup_{x>0} \sqrt{n}F_{n,C_n}(x) \leq \frac{2C_n^2}{1-1/\sqrt{2}}$ for every $C_n$ satisfying $C_n<\sqrt{n/2}$.
\item[(b)] Let $L>6C$ and $n$ large enough such that $(2CL+L^2+CL^2/\sqrt{n})/\sqrt{n} < C$. Then, we have
\[ \sqrt{n}F_{n,C}(x) \leq -\frac{CL}{9} \quad \textrm{ for every } x\notin\left[\frac{1}{1+L/\sqrt{n}},\frac{1}{1-L/\sqrt{n}}\right].\]
\item[(c)] For any $0<\delta<1$ and $C_n=o(\sqrt{n})$ there exists a constant $c=c(\delta,C)>0$ and $n_0\in\N$ such that for every $n\geq n_0$,
\[ F_{n,C_n}(x) \leq -c\sqrt{n} \quad \textrm{ for all } x\notin [1-\delta,1+\delta].\]
\end{enumerate}
\end{lemma}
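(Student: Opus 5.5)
The natural approach is to split $F_{n,C}$ according to whether $x\le 1$ or $x>1$, and to write everything in terms of the nonpositive function $f(x)=\log(x)-\frac12(x^2-1)$ and the companion $g(x)=|\log x|+\frac12|x^2-1|$. Then $F_{n,C}=\sqrt n f+Cg$. The key local facts, from a Taylor expansion at $x=1$, are $f(x)=-(x-1)^2+O((x-1)^3)$ and $g(x)=2|x-1|+O((x-1)^2)$. The globally useful comparison is that $|f|$ dominates a fixed fraction of $g$ away from a neighbourhood of $1$, since both grow like $|\log x|$ as $x\to0$ and like $x^2/2$ as $x\to\infty$. Concretely, I would first prove an inequality of the type
\[
 -f(x)\ \ge\ c_0\min\{(x-1)^2,\ g(x)\}
\]
with an explicit constant $c_0$. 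For $x\ge1$, $-f(x)=\frac12(x^2-1)-\log x$, and $g(x)=\frac12(x^2-1)+\log x$ is comparable to it once $x$ is bounded away from $1$. For $x<1$ the same holds after the substitution $x\mapsto 1/x$ in the $\log$ term, together with $1-x^2\le 2(1-x)$.

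For (a), write $\sqrt n F_{n,C_n}=n f+C_n\sqrt n\, g$ and maximize in two regimes. Near $1$, say $|x-1|\le 1/2$, use $f\le -(x-1)^2/2$ and $g\le 3|x-1|$, up to admissible constants. This gives a quadratic $-\frac n2 t^2+3C_n\sqrt n\, t$ in $t=|x-1|$, whose maximum is of order $C_n^2$. Far from $1$, the condition $C_n<\sqrt{n/2}$ combined with $-f\ge c_0 g$ gives $nf+C_n\sqrt n g\le \sqrt n\, g\,(C_n-c_0\sqrt n)\le 0$, once the constant from the comparison inequality is made explicit. The specific bound $2C_n^2/(1-1/\sqrt2)$ should come out of choosing the split point so that the factor $1-1/\sqrt2$ appears from $C_n/\sqrt n<1/\sqrt2$. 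I would tune the constants by computing $-f$ versus $g$ exactly, rather than via crude Taylor bounds.

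For (b), the excluded set corresponds to $|1/x-1|\ge L/\sqrt n$, i.e. $|x-1|\gtrsim L/\sqrt n$. On the region $L/\sqrt n\le|x-1|\le 1/2$, the quadratic bound from (a) gives $nf+C\sqrt n g\le -\frac n2 t^2+3C\sqrt n\,t$. This is decreasing for $t\ge 3C/\sqrt n$, so since $L>6C$ it is at most its value at $t\approx L/\sqrt n$, which is $-L^2/2+3CL\le -CL/9$ with room to spare. The condition $(2CL+L^2+CL^2/\sqrt n)/\sqrt n<C$ is what controls the third-order Taylor errors and the discrepancy between $1/(1\pm L/\sqrt n)$ and $1\mp L/\sqrt n$. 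I expect matching exactly the paper's constant $CL/9$ to be the fiddly part. The plan is to bound everything at the boundary points $x_\pm=1/(1\mp L/\sqrt n)$ directly, using exact expressions for $\log$ and $x^2-1$ there, and then use monotonicity of $F_{n,C}$ outward, which holds because $\sqrt n f'$ dominates $C g'$ once $|x-1|\ge L/\sqrt n$. Far from $1$ the argument is the same as in (a).

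For (c), outside $[1-\delta,1+\delta]$ the comparison inequality gives $f\le -c_1(\delta)\max\{1,g\}$ for some $c_1(\delta)>0$. Since $g$ is bounded below by a positive constant there, we get $F_{n,C_n}\le g\,(C_n-c_1\sqrt n)\le -c\sqrt n$ for $n$ large, because $C_n=o(\sqrt n)$. The main obstacle throughout is the uniform comparison of $-f$ and $g$ near $0$ and $\infty$, and the constant bookkeeping in (b).
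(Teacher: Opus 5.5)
\textbf{Gap in (a): the ``far from $1$'' step is false.} The step $nf+C_n\sqrt n\,g\le\sqrt n\,g\,(C_n-c_0\sqrt n)\le 0$ for $|x-1|>1/2$ needs $c_0\ge C_n/\sqrt n$. Under the hypothesis this ratio can be arbitrarily close to $1/\sqrt2$. But $-f/g$ is only about $0.21$ at $x=3/2$, since $-f(3/2)\approx0.22$ and $g(3/2)\approx1.03$. Concretely, for $C_n=0.7\sqrt n<\sqrt{n/2}$ you get $F_{n,C_n}(3/2)\approx0.50\sqrt n>0$.

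Sharpening $c_0$ cannot fix this. For $C_n/\sqrt n$ near $1/\sqrt2$ the maximisers $x_{\le1,n}=\sqrt{(\sqrt n-C_n)/(\sqrt n+C_n)}$ and $x_{>1,n}=\sqrt{(\sqrt n+C_n)/(\sqrt n-C_n)}$ lie inside your far region. They are about $0.42$ and $2.4$ when $C_n=0.7\sqrt n$. There $F_{n,C_n}$ is positive and has to be genuinely maximised. Your split could only be salvaged by moving it out to where $-f\ge g/\sqrt2$ (roughly $x\le0.05$ and $x\ge4.2$) and redoing the quadratic--linear bound on the enlarged middle range. In the paper's applications $C_n=O(n^{\gamma/6})=o(\sqrt n)$, so your argument would suffice there for large $n$, but it does not prove (a) as stated.

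The missing idea is to use the exact piecewise form. On $(0,1]$ and on $(1,\infty)$ one has $F_{n,C_n}(x)=A\log x-\frac B2(x^2-1)$ with $(A,B)=(\sqrt n\mp C_n,\sqrt n\pm C_n)$. Its only critical point is $\sqrt{A/B}$. Evaluating there and using $\log(1+y)\le y$ gives $\sqrt nF_{n,C_n}\le 2C_n^2\sqrt n/(\sqrt n-C_n)\le 2C_n^2/(1-1/\sqrt2)$. This is the paper's proof.

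\textbf{Part (b).} Your main estimate is miscalibrated. The inequality $-L^2/2+3CL\le-CL/9$ holds only for $L\ge56C/9$, so it fails for $L\in(6C,56C/9)$ and there is no ``room to spare''. The bound $g\le3|x-1|$ is too lossy: near $1$ the slope of $g$ is $2$. With slope $2$ you get $-L^2/2+2CL\le-CL$ for $L>6C$, and the remaining terms are controlled by the condition $(2CL+L^2+CL^2/\sqrt n)/\sqrt n<C$. Your fallback, exact evaluation at $1/(1\pm L/\sqrt n)$ plus monotonicity outward, works and is exactly the paper's argument. Monotonicity amounts to $1/(1+L/\sqrt n)\le x_{\le1,n}$ and $1/(1-L/\sqrt n)\ge x_{>1,n}$.

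\textbf{Part (c).} Your argument is correct and differs only mildly from the paper. You use the uniform comparison $-f\ge c_1(\delta)\max\{1,g\}$ off $[1-\delta,1+\delta]$. The paper instead uses monotonicity of $F_{n,C_n}$ outside $[x_{\le1,n},x_{>1,n}]\subset[1-\delta,1+\delta]$ and evaluates at $1\pm\delta$. Both work because $C_n=o(\sqrt n)$.
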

\begin{proof}
We start with part (a) and first note that $F_{n,C_n}(1)=0$ and
\[ F_{n,C_n}(x) = \begin{cases}
(\sqrt{n}-C_n)\log(x) - (\sqrt{n}+C_n)(x^2-1)/2, & \textrm{ if } 0<x\leq 1, \\
(\sqrt{n}+C_n)\log(x) - (\sqrt{n}-C_n)(x^2-1)/2, & \textrm{ if } x> 1.
\end{cases}\]
From this, we directly get
\[ F_{n,C_n}'(x) = \begin{cases}
(\sqrt{n}-C_n)\frac1x - (\sqrt{n}+C_n)x, & \textrm{ if } 0<x\leq 1, \\
(\sqrt{n}+C_n)\frac1x - (\sqrt{n}-C_n)x, & \textrm{ if } x> 1,
\end{cases}\]
such that for $C_n<\sqrt{n}$, $F_{n,C_n}'(x)=0$ is satisfied for
\[ x_{\leq 1,n} = \sqrt{\frac{\sqrt{n}-C_n}{\sqrt{n}+C_n}} \quad \textrm{ and } \quad x_{>1,n} = \sqrt{\frac{\sqrt{n}+C_n}{\sqrt{n}-C_n}}.\]
One easily verifies that $F_{n,C_n}''(x) < 0$ for $C<\sqrt{n}$ in which case both $x_{\leq 1,n}$ and $x_{>1,n}$ are local maxima. As it is easy to see that for $C_n<\sqrt{n}$, we have $F_{n,C_n}(x)\rightarrow - \infty$ for both $x\to 0$ and $x\to\infty$ and $F_{n,C_n}(x)\rightarrow 0$ for $x\to 1$, so these are in fact global maxima. Inserting then yields with $\log(1+x)\leq x$
\begin{align*}
F_{n,C_n}(x_{\leq 1,n}) &= \frac{\sqrt{n}-C_n}{2} \log\left( \frac{\sqrt{n}-C_n}{\sqrt{n}+C_n}\right) - \frac{\sqrt{n}+C_n}{2} \left(\frac{\sqrt{n}-C_n}{\sqrt{n}+C_n} -1\right) \\
&= \frac{\sqrt{n}-C_n}{2} \log\left( \frac{\sqrt{n}-C_n}{\sqrt{n}+C_n}\right) +C_n \\
&\leq \frac{\sqrt{n}-C_n}{2} \left( \frac{\sqrt{n}-C_n}{\sqrt{n}+C_n}-1\right) +C_n \\
&= C_n\left( 1 - \frac{\sqrt{n}-C_n}{\sqrt{n}+C_n}\right).
\end{align*}
In particular, 
\[ \sqrt{n} F_{n,C_n}(x_{\leq 1,n}) \leq  \frac{2C_n^2 \sqrt{n}}{\sqrt{n} + C_n} \leq 2C_n^2.\]
Along these lines, we also obtain for $C_n<\sqrt{n/2}$
\[ \sqrt{n} F_{n,C_n}(x_{>1,n}) \leq \frac{2C_n^2 \sqrt{n}}{\sqrt{n}-C_n} \leq \frac{2C_n^2}{1-1/\sqrt{2}}\]
and finish the proof of part (a). For part (b), we first observe that
\[ \frac{1}{(1+L/\sqrt{n})^2} - x_{\leq 1,n}^2 = \frac{2(C-L)  + (2CL -L^2 +CL^2/\sqrt{n})/\sqrt{n}}{(1+L/\sqrt{n})^2 (\sqrt{n}+C)} < 0\]
under the conditions in part (b). Since $F_{n,C}'(x)\geq 0$ for $x\leq x_{\leq 1,n}$, meaning that $F_{n,C}(x)$ is monotonously increasing on $(0,x_{\leq 1,n}]$, we find for any $x\leq 1/(1+L/\sqrt{n})$ that
\begin{align*}
F_{n,C}(x) &\leq F_{n,C}\left(\frac{1}{1+L/\sqrt{n}}\right)\\
&= \left(\sqrt{n}-C\right) \left( \log\left(\frac{1}{1+L/\sqrt{n}}\right) - \frac12 \left( \frac{1}{(1+L/\sqrt{n})^2} - 1\right) \right) \\
&\hspace{1.5cm} -C\left( \frac{1}{(1+L/\sqrt{n})^2}-1\right)
\end{align*} 
A second order Taylor expansion of $x\mapsto\log(x)-(x^2-1)/2$ in $x_0=1$ with intermediate value $\xi$ between $1$ and $x$ yields
\[ \log(x)-\frac12\left(x^2-1\right) = -\frac12\left(1+\frac{1}{\xi^2}\right) (x-1)^2 \leq -\frac{(x-1)^2}{2}.\]
This entails
\begin{align*}
F_{n,C}(x)  &\leq -\frac{\sqrt{n}-C}{2}\left( \frac{1}{1+L/\sqrt{n}}-1\right)^2 - C\left( \frac{1}{(1+L/\sqrt{n})^2}-1\right) \\
&= - \frac{L}{\sqrt{n}} \frac{1}{(1+L/\sqrt{n})^2} \left( \frac{L}{2} - 2C - \frac{3CL}{2\sqrt{n}}\right) \leq - \frac{CL}{9\sqrt{n}},
\end{align*}
which proves the second assertion for $x<(1+L/\sqrt{n})^{-1}$. The claim for $x>(1-L/\sqrt{n})^{-1}$ follows along the same lines by first observing $x_{>1,n}^2-(1-L/\sqrt{n})^{-2}>0$ and then verifying $F_{n,C}(x)\leq -4CL/\sqrt{n}$ for $x>(1-L/\sqrt{n})^{-1}$. This finishes (b) and we start to prove (c). From the form of $F_{n,C_n}'(x)$, we obtain that $F_{n,C_n}(x)$ is monotonically increasing for $x\in (0,x_{\leq 1,n}] \supset (0,1-\delta)$ and monotonically decreasing for $x\in [x_{>1,n},\infty)\supset (1+\delta,\infty)$. The inclusions are valid for $n$ sufficiently large because $x_{\leq 1,n},x_{>1,n}\rightarrow 1$ as $n\to\infty$. Hence, proving $F_{n,C_n}(1\pm\delta)\leq -c\sqrt{n}$ for $n$ large enough suffices to establish (c). For $1-\delta$ this follows from
\[ F_{n,C_n}(1-\delta) = \sqrt{n} \left( \log(1-\delta) - \frac12\left((1-\delta)^2-1\right)\right) - C_n\left( \log(1-\delta)+\frac12\left((1-\delta)^2-1\right)\right)\]
together with the fact that $\log(1-\delta)-((1-\delta)^2-1)/2\leq -c'(\delta)$ for an appropricate constant $c'(\delta)>0$ and $C_n=o(\sqrt{n})$. For $1+\delta$, the claim follows analogously.
\end{proof}

The next Lemma provides two alternative estimates to the decomposition~\eqref{eq:decomposition_ell_1+2}, one that is useful for $\theta_\beta\approx -\beta_0$ and the other for $\theta_\beta\to\infty$. These are used in steps $m=3,4$ in the proof of Proposition~\ref{prop:rate_of_triple_MLE} in Subsection~\ref{Subsection:tripleMLE_1}.\\
Recall the notation of Section~\ref{Section:Proof_sketch_triple}, the functions $f(x)=\log(x)-(x^2-1)/2$ and $g(x)=|\log(x)|+|x^2-1|/2$, the constant $d_{\alpha_0,\beta_0}$ from Lemma~\ref{lemma:estimates_tripleMLE}(d) and the set $A_n$ given in~\eqref{eq_proof:setAn} together with the constant $\zeta$ in the definition of this set. Recall that all constants may depend on the globally chosen parameters $\zeta,\xi,\Gamma$ of this set $A_n$ and the true levels of volatility $\alpha_0,\beta_0$.

\begin{lemma}\label{lemma:bound_small_theta_beta}
Let $\epsilon,R>0$. Then there exists a sequence of sets $(B_n)_{n\in\N}$ with $B_n\subset A_n$ and $\Pr_0(B_n^c)<\epsilon$ together with constants $C_1,C_2=C_2(R)>0$ such that the following statements hold true:
\begin{itemize}
\item[(i)] There exists $\delta_l\in (0,1]$ such that for all $\theta_\beta < -\beta_0+\delta_l$ and $\theta_\rho>\zeta\sqrt{n\Delta_n^\gamma}/2$,
\begin{align*}
&\1_{B_n}\left(\ell_{n,\gamma}^1(\theta_\rho)+\ell_{n,\gamma}^2(\theta_\rho,\theta_\alpha,\theta_\beta) \right)\\
&\hspace{0.2cm} \leq \1_{B_n} \left(C_1 n f\left(\frac{\alpha_0}{\alpha_0+\theta_\alpha}\right) + T_{n,\gamma}(\theta_\rho,\theta_\alpha) + C_2 n^{1/2+\gamma/6} g\left(\frac{\alpha_0}{\alpha_0+\theta_\alpha}\right)\right.\\
&\hspace{2cm} +C_2n^{2/3} + nd_{\alpha_0,\beta_0}\1_{\{\theta_\alpha>R\}}\bigg) \\
&\hspace{1cm} + \1_{B_n}\sum_{k=1}^n \left[2\log\left(\frac{\beta_0}{\beta_0+\theta_\beta}\right) -\frac12\left(\frac{\beta_0^2}{(\beta_0+\theta_\beta)^2}-1\right)\frac{(X_{k\Delta_n^\gamma}-X_{(k-1)\Delta_n^\gamma})^2}{\Delta_n^\gamma\beta_0^2}\right]\\
&\hspace{3cm} \cdot\1_{\{X_{(k-1)\Delta_n^\gamma}\geq\rho_0+\theta_\rho, X_{k\Delta_n^\gamma}> \rho_0+\theta_\rho\}}.
\end{align*}
\item[(ii)] There exists $\delta_u>0$ such that for $\theta_\beta > -\beta_0+\delta_u$ and $\theta_\rho>\zeta\sqrt{n\Delta_n^\gamma}/2$,
\begin{align*}
&\1_{B_n}\left(\ell_{n,\gamma}^1(\theta_\rho)+\ell_{n,\gamma}^2(\theta_\rho,\theta_\alpha,\theta_\beta) \right)\\
&\hspace{0.2cm} \leq \1_{B_n} \left(C_1 n f\left(\frac{\alpha_0}{\alpha_0+\theta_\alpha}\right) + T_{n,\gamma}(\theta_\rho,\theta_\alpha) + C_2 n^{1/2+\gamma/6} g\left(\frac{\alpha_0}{\alpha_0+\theta_\alpha}\right) +C_2n^{2/3} \right.  \\
&\hspace{2cm} + nd_{\alpha_0,\beta_0}\1_{\{\theta_\alpha>R\}}\bigg) + \1_{B_n}\1_{B_n} d_{\alpha_0,\beta_0} \sum_{k=1}^n\1_{\{X_{(k-1)\Delta_n^\gamma}\geq\rho_0+\theta_\rho, X_{k\Delta_n^\gamma}> \rho_0+\theta_\rho\}}\\
&\hspace{1cm} + \1_{B_n}\sum_{k=1}^n \left[\log\left(\frac{\beta_0}{\beta_0+\theta_\beta}\right) -\frac12\left(\frac{\beta_0^2}{(\beta_0+\theta_\beta)^2}-1\right)\frac{(X_{k\Delta_n^\gamma}-X_{(k-1)\Delta_n^\gamma})^2}{\Delta_n^\gamma\beta_0^2}\right]\\
&\hspace{3cm} \cdot \1_{\{X_{(k-1)\Delta_n^\gamma}\geq\rho_0+\theta_\rho, X_{k\Delta_n^\gamma}> \rho_0+\theta_\rho\}}. 
\end{align*}
\end{itemize}
\end{lemma}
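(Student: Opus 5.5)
The plan is to redo the decomposition \eqref{eq:decomposition_ell_1+2}, but to keep the contribution of the indices with both $X_{(k-1)\Delta_n^\gamma}$ and $X_{k\Delta_n^\gamma}$ above $\rho_0+\theta_\rho$ exact rather than controlling it through Lemma~\ref{lemma:estimates_tripleMLE}(d). That lemma produces $g(\beta_0/(\beta_0+\theta_\beta))$, which explodes as $\theta_\beta\to-\beta_0$ or $\theta_\beta\to\infty$. Concretely, we split the sum defining $\ell_{n,\gamma}^1+\ell_{n,\gamma}^2$ into three index classes:
\begin{itemize}
\item[(A)] $X_{(k-1)\Delta_n^\gamma}\le\rho_0+\theta_\rho$;
\item[(B)] $X_{(k-1)\Delta_n^\gamma}\ge\rho_0+\theta_\rho$ and $X_{k\Delta_n^\gamma}>\rho_0+\theta_\rho$;
\item[(C)] $X_{k\Delta_n^\gamma}\le\rho_0+\theta_\rho\le X_{(k-1)\Delta_n^\gamma}$, the downcrossings.
\end{itemize}
Class (A) contains only terms involving $\alpha_0+\theta_\alpha$, apart from the crossing terms $I_{3}$-type, where $\theta_\beta$ also enters. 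Those crossing terms are the only place where $\theta_\beta$ couples to the $\alpha$-side, and they are handled as follows.

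For class (A) without upcrossings, we repeat the argument for $m=2$. We use \eqref{eq:decomposition_ell_1}, Lemma~\ref{lemma:estimates_tripleMLE}(c), and the $\alpha$-part of (d), whose remainder is controlled by $\Upsilon_{n,\gamma}^{\le,j}$ and is of order $n^{1/2+\gamma/6}$ on $\Theta^2_{n,\gamma}$, since $\theta_\rho>\zeta\sqrt{n\Delta_n^\gamma}/2\ge n^{1/4-\gamma/2}$. We also use \eqref{eq_proof:triple_consistency_1} with $C_1=C_2$ from there. This yields $C_1nf(\alpha_0/(\alpha_0+\theta_\alpha))+T_{n,\gamma}+C_2n^{1/2+\gamma/6}g(\alpha_0/(\alpha_0+\theta_\alpha))+C_2n^{2/3}+nd_{\alpha_0,\beta_0}\1_{\{\theta_\alpha>R\}}$. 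The $\ell^1$-remainder $R^1_{n,\gamma}$ from \eqref{eq_proof:order_remainder_ell1} is of order $n^{1/2+\gamma/6}\le n^{2/3}$ and is absorbed.

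For class (B), the transition density ratio in the regime $x\ge\rho,y>\rho$ is explicit. We write it as
\[
\log\frac{\beta_0}{\beta_0+\theta_\beta}-\frac12\Big(\frac{\beta_0^2}{(\beta_0+\theta_\beta)^2}-1\Big)\frac{(\Delta X_k)^2}{\Delta_n^\gamma\beta_0^2}
\]
plus the logarithm of the ratio of the reflection factors $1+a\exp(\cdot)$. This logarithm is bounded above by $d_{\alpha_0,\beta_0}$, exactly as in the $\theta_\alpha>R$ case of (d). We also have to subtract the true-likelihood ratio terms, which are bounded on $B_n$. This gives (ii) directly.

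For (i), the extra $\log$-term (the factor $2$) has to absorb the crossing contributions. Here we use the injection
\[
\{k:\text{upcrossing of }\rho_0+\theta_\rho\}\to\{\text{downcrossings}\}\cup\{\text{class (B)}\}
\]
sketched in Section~\ref{Section:Proof_sketch_triple}. Each upcrossing term has a prefactor bounded by $\log\frac{2(\alpha_0+\theta_\alpha)}{\alpha_0+\theta_\alpha+\beta_0+\theta_\beta}+\log\frac{\beta_0}{\beta_0+\theta_\beta}+\text{const}$ (with $\alpha$-parts moved into the $g(\alpha)$ remainder), minus a nonnegative Gaussian exponent. For $\beta_0+\theta_\beta<\delta_l$, the $\log(\beta_0/(\beta_0+\theta_\beta))$ is the dangerous positive piece. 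Downcrossing terms carry the factor $\frac{\beta_0+\theta_\beta}{\alpha_0+\theta_\alpha}$ inside the logarithm, so their log is negative and of size $-\log(\beta_0/(\beta_0+\theta_\beta))$ up to constants. Pairing via the injection, each upcrossing's dangerous term is either cancelled by a downcrossing or doubled onto a class (B) index, which produces the factor $2$. The constant $\delta_l$ is chosen so that $\log(\beta_0/\delta_l)$ dominates all $\alpha$-dependent constants that appear.

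The existence of the injection requires that the last observation is not the running maximum above level $\rho_0+\theta_\rho$. In other words, we need $X_{n\Delta_n^\gamma}<\max_{t\le n\Delta_n^\gamma}X_t-c\sqrt{n\Delta_n^\gamma}$, or more precisely that every excursion above the level ending in an upcrossing is followed by a return or a step within class (B). We add this event to $B_n$, and its probability is at least $1-\epsilon$ by Dambis--Dubins--Schwarz, since the terminal value of a time-changed Brownian motion is a.s. strictly below its running maximum, together with self-similarity (Lemma~\ref{lemma:self_similarity}).

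The main obstacle is constructing this injection carefully. Between consecutive upcrossings at level $\rho_0+\theta_\rho$ the chain must either come back down (a downcrossing) or stay above, which gives a class (B) step. The one problematic upcrossing is the last one, if the chain stays above the level only at the terminal index. We would match each upcrossing with the next index $k'>k$ having $X_{(k'-1)\Delta_n^\gamma}\ge\rho_0+\theta_\rho$. Such an index exists unless the upcrossing occurs at the final step, and that case is excluded on $B_n$ for $\theta_\rho$ above $\zeta\sqrt{n\Delta_n^\gamma}/2$.
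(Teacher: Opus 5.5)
Your architecture is the paper's: split at the level $\rho_0+\theta_\rho$, keep the both-above block exact, and use an injection from upcrossings into downcrossings $\cup$ class (B) to produce the factor $2$. The gap is the one delicate case of the injection.

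You map each upcrossing $k$ to $k+1$ and assert that an upcrossing at the final step $k=n$ is excluded on $B_n$. It is not. $B_n$ must be a single event on which the bound holds for all $\theta_\rho>\zeta\sqrt{n\Delta_n^\gamma}/2$ at once. With probability bounded away from zero, $X_{n\Delta_n^\gamma}$ exceeds both $X_{(n-1)\Delta_n^\gamma}$ and $\rho_0+\zeta\sqrt{n\Delta_n^\gamma}/2$, and then some admissible level is crossed upward in the last step. An event such as $\{X_{n\Delta_n^\gamma}<\max_{t\le n\Delta_n^\gamma}X_t-c\sqrt{n\Delta_n^\gamma}\}$ says nothing about this. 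The unmatched summand $\log(\beta_0/(\beta_0+\theta_\beta))$ is unbounded as $\theta_\beta\downarrow-\beta_0$, so it cannot be absorbed into a remainder.

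The running-max event has to be used constructively, as the paper does. If $n$ is an upcrossing, the path exceeds $X_{n\Delta_n^\gamma}+1>\rho_0+\theta_\rho+1$ somewhere. The modulus-of-continuity event in $A_n$ (increments at most $(\Delta_n^\gamma)^{4/9}$) then forces three consecutive observations $X_{k_0\Delta_n^\gamma},X_{(k_0+1)\Delta_n^\gamma},X_{(k_0+2)\Delta_n^\gamma}$ above the level. Since $k_0+1$ is not an upcrossing, $k_0+2$ is a class-(B) index outside the image of $k\mapsto k+1$, and you map $n\mapsto k_0+2$.

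There are also secondary problems; write $\alpha'=\alpha_0+\theta_\alpha$, $\beta'=\beta_0+\theta_\beta$.

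\begin{itemize}
\item Your claim that class (A) off the crossings involves only $\alpha'$ is false. The density for $x<\rho,\,y\le\rho$ contains the reflection coefficient $\frac{\alpha-\beta}{\alpha+\beta}$. Hence the $\alpha$-part of Lemma~\ref{lemma:estimates_tripleMLE}(d) still carries $g(\beta_0/\beta')$, which explodes exactly in the regimes of (i) and (ii). You need the uniform bound $\bigl|\frac{\alpha'-\beta'}{\alpha'+\beta'}-\frac{\alpha_0-\beta_0}{\alpha_0+\beta_0}\bigr|\le 2$ instead.
\item In (i), bounding the class-(B) reflection factor by $d_{\alpha_0,\beta_0}$ per index leaves an extra $d_{\alpha_0,\beta_0}\sum_k\1_{\{X_{(k-1)\Delta_n^\gamma}\ge\rho_0+\theta_\rho,\,X_{k\Delta_n^\gamma}>\rho_0+\theta_\rho\}}$, which is not on the right-hand side of (i). The paper takes $\delta_l\le\beta_0$. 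With $D_k=(X_{k\Delta_n^\gamma}-\rho_0-\theta_\rho)(X_{(k-1)\Delta_n^\gamma}-\rho_0-\theta_\rho)\ge 0$, this gives $\exp(-2D_k/(\beta'^2\Delta_n^\gamma))\le\exp(-2D_k/(\beta_0^2\Delta_n^\gamma))$, so the term is of order $n^{1/2+\gamma/6}$ on $B_n$.
\item For (ii) you never bound the crossing terms. Once $\delta_u\ge\beta_0$, both $\frac{\alpha'}{\alpha'+\beta'}$ and $\frac{\beta'}{\alpha'+\beta'}$ are at most $1$. So each up- or downcrossing contributes at most $C(1+g(\alpha_0/\alpha'))$ plus a multiple of $(X_{k\Delta_n^\gamma}-X_{(k-1)\Delta_n^\gamma})^2/\Delta_n^\gamma$, which is controlled by $\Upsilon_{n,\gamma}^{\leq,1}$.
\end{itemize}
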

\begin{proof}
We begin the proof by decomposing
\[ \ell_{n,\gamma}^1(\theta_\rho) + \ell_{n,\gamma}^2(\theta_\rho,\theta_\alpha,\theta_\beta) = \sum_{j=1}^{5} S_{n,\gamma}^j(\theta_\rho,\theta_\alpha,\theta_\beta),\]
where with the abbreviation $D_k=(X_{k\Delta_n^\gamma}-\rho_0-\theta_\rho)(X_{(k-1)\Delta_n^\gamma}-\rho_0-\theta_\rho)$ and suppressing the argument $(\theta_\rho,\theta_\alpha,\theta_\beta)$ for shorter displays,
\begin{align*}
S_{n,\gamma}^1 &:= \ell_{n,\gamma}^1(\theta_\rho) + \sum_{k=1}^n \left[\log\left(\frac{\alpha_0}{\alpha_0+\theta_\alpha}\right) -\frac12\left(\frac{\alpha_0^2}{(\alpha_0+\theta_\alpha)^2}-1\right)\frac{(X_{k\Delta_n^\gamma}-X_{(k-1)\Delta_n^\gamma})^2}{\Delta_n^\gamma\alpha_0^2}\right] \\
&\hspace{3.5cm} \cdot\1_{\{X_{(k-1)\Delta_n^\gamma}<\rho_0+\theta_\rho, X_{k\Delta_n^\gamma}\leq \rho_0+\theta_\rho\}}, \\
S_{n,\gamma}^2 &:= \sum_{k=1}^n \log\left(\frac{1-\frac{\alpha_0+\theta_\alpha - (\beta_0+\theta_\beta)}{\alpha_0+\theta_\alpha+\beta_0+\theta_\beta}\exp\left(-\frac{2}{(\alpha_0+\theta_\alpha)^2\Delta_n^\gamma}D_k\right)}{1-\frac{\alpha_0 - \beta_0}{\alpha_0+\beta_0}\exp\left(-\frac{2}{\alpha_0^2\Delta_n^\gamma}D_k\right)}\right)\1_{\{X_{(k-1)\Delta_n^\gamma}<\rho_0+\theta_\rho, X_{k\Delta_n^\gamma}\leq \rho_0+\theta_\rho\}}, \\
S_{n,\gamma}^3 &:= \sum_{k=1}^n \left[\log\left(\frac{\beta_0}{\beta_0+\theta_\beta}\right) -\frac12\left(\frac{\beta_0^2}{(\beta_0+\theta_\beta)^2}-1\right)\frac{(X_{k\Delta_n^\gamma}-X_{(k-1)\Delta_n^\gamma})^2}{\Delta_n^\gamma\beta_0^2}\right] \\
&\hspace{1.2cm} \cdot\1_{\{X_{(k-1)\Delta_n^\gamma}\geq\rho_0+\theta_\rho, X_{k\Delta_n^\gamma}> \rho_0+\theta_\rho\}}, \\
S_{n,\gamma}^4 &:= \sum_{k=1}^n \log\left(\frac{1+\frac{\alpha_0+\theta_\alpha -( \beta_0+\theta_\beta)}{\alpha_0+\theta_\alpha+\beta_0+\theta_\beta}\exp\left(-\frac{2}{(\beta_0+\theta_\beta)^2\Delta_n^\gamma}D_k\right)}{1+\frac{\alpha_0 - \beta_0}{\alpha_0+\beta_0}\exp\left(-\frac{2}{\beta_0^2\Delta_n^\gamma}D_k\right)}\right) \1_{\{X_{(k-1)\Delta_n^\gamma}\geq\rho_0+\theta_\rho, X_{k\Delta_n^\gamma}> \rho_0+\theta_\rho\}}, \\
S_{n,\gamma}^5 &:= \sum_{k=1}^n \1_{\{ X_{(k-1)\Delta_n^\gamma}<\rho_0+\theta<X_{k\Delta_n^\gamma}\}} \left[\log\left(\frac{\alpha_0+\beta_0}{\alpha_0+\theta_\alpha+\beta_0+\theta_\beta}\frac{\alpha_0+\theta_\alpha}{\alpha_0}\frac{\beta_0}{\beta_0+\theta_\beta}\right) \right.\\
&\hspace{1cm} +\frac{1}{2\Delta_n^\gamma} \left( \left(\frac{X_{k\Delta_n^\gamma}-\rho_0-\theta_\rho}{\beta_0} -\frac{X_{(k-1)\Delta_n^\gamma}-\rho_0-\theta_\rho}{\alpha_0} \right)^2\right. \\
&\hspace{3cm} \left.\left. -\left(\frac{X_{k\Delta_n^\gamma}-\rho_0-\theta_\rho}{\beta_0+\theta_\beta} -\frac{X_{(k-1)\Delta_n^\gamma}-\rho_0-\theta_\rho}{\alpha_0+\theta_\alpha} \right)^2 \right) \right], \\
&\hspace{0.5cm}+ \sum_{k=1}^n \1_{\{ X_{k\Delta_n^\gamma}\leq\rho_0+\theta\leq X_{(k-1)\Delta_n^\gamma}\}} \left[\log\left(\frac{\alpha_0+\beta_0}{\alpha_0+\theta_\alpha+\beta_0+\theta_\beta}\frac{\beta_0+\theta_\beta}{\beta_0}\frac{\alpha_0}{\alpha_0+\theta_\alpha}\right) \right.\\
&\hspace{1cm} +\frac{1}{2\Delta_n^\gamma} \left( \left(\frac{X_{k\Delta_n^\gamma}-\rho_0-\theta_\rho}{\alpha_0} -\frac{X_{(k-1)\Delta_n^\gamma}-\rho_0-\theta_\rho}{\beta_0} \right)^2\right. \\
&\hspace{3cm} \left.\left. -\left(\frac{X_{k\Delta_n^\gamma}-\rho_0-\theta_\rho}{\alpha_0+\theta_\alpha} -\frac{X_{(k-1)\Delta_n^\gamma}-\rho_0-\theta_\rho}{\beta_0+\theta_\beta} \right)^2 \right) \right].
\end{align*}
In what follows, we treat these terms independently of one another. Let us now define $B_n$ as the intersection of the following three events: $B_{1,n}=$ the set for which the bound~\eqref{eq_proof:order_remainder_ell1} is valid, $B_{2,n}=$ the set in Lemma~\ref{lemma:estimates_tripleMLE} (also called $B_n$ in there) and the event
\[ B_{3,n}:=\left\{ \max_{0\leq t\leq n\Delta_n^\gamma} X_t - X_{n\Delta_n^\gamma} >1 \right\}.\]
Recall that $\limsup_{n}\Pr_0(B_{1,n}^c)=0$ as discussed right before~\eqref{eq_proof:order_remainder_ell1} and $\limsup_{n}\Pr_0(B_{2,n}^c)=0$ which is part of the statement of Lemma~\ref{lemma:estimates_tripleMLE}. To see that $\Pr_0(B_{3,n})\rightarrow 1$, we employ the Dambis--Dubins--Schwarz theorem (Theorem~$19.4$ in~\citeSM{App:Kallenberg}). Indeed, let be an independent Brownian motion $B$ (possibly on an extension of the original probability space, we also denote the measure on this extension with $\Pr_0$) such that $X_t = B_{\langle X\rangle_t}$ for all $t\geq 0$ almost surely. Then,
\[ \max_{0\leq t\leq n\Delta_n^\gamma} X_t - X_{n\Delta_n^\gamma} = \max_{0\leq t\leq \langle X\rangle_{n\Delta_n^\gamma}}B_t -  B_{\langle X\rangle_{n\Delta_n^\gamma}}. \]
Using the distributional identity $\max_{0\leq s\leq t}B_s - B_t \stackrel{\mathcal{D}}{=} |B_t|$ (Proposition~$14.13$ in~\citeSM{App:Kallenberg}), we obtain by conditioning
\begin{align*}
\Pr_0\left( \max_{0\leq t\leq n\Delta_n^\gamma} X_t - X_{n\Delta_n^\gamma} \leq 1 \right) &= \Pr_0\left( \max_{0\leq t\leq \langle X\rangle_{n\Delta_n^\gamma}}B_t -  B_{\langle X\rangle_{n\Delta_n^\gamma}} \leq 1 \right) \\
&= \E_0\left[ \Pr_0\left(\left. \max_{0\leq t\leq \langle X\rangle_{n\Delta_n^\gamma}}B_t -  B_{\langle X\rangle_{n\Delta_n^\gamma}}\leq 1 \right| \langle X\rangle_{n\Delta_n^\gamma}\right)\right] \\
&= \E_0 \left[ \Pr_0 \left(\left.\big| B_{\langle X\rangle_{n\Delta_n^\gamma}}\big| \leq 1 \right| \langle X\rangle_{n\Delta_n^\gamma}\right)\right] \\
&\leq \E_0\left[ \frac{2}{\sqrt{\langle X\rangle_{n\Delta_n^\gamma}}}\right] \leq \frac{2}{\min\{\alpha_0,\beta_0\}}\frac{1}{\sqrt{n\Delta_n^\gamma}}\rightarrow 0,
\end{align*}
where the last inequality used that 
\[ \langle X\rangle_{n\Delta_n^\gamma} = \int_0^{n\Delta_n^\gamma} \sigma_{\rho_0,\alpha_0,\beta_0}(X_t)^2 dt \geq \min\{\alpha_0^2,\beta_0^2\}n\Delta_n^\gamma.\]

We now prove assertion~(i) by estimating each $S_{n,\gamma}^j\1_{B_n}$ for $j=1,\dots, 5$ seperately.
\begin{itemize}
\item[$\bullet\ \mathbf{S_{n,\gamma}^1}$.] We have the decomposition
\begin{align*}
S_{n,\gamma}^1(\theta_\rho,\theta_\alpha,\theta_\beta) &= \overline{H}_{n,\gamma}(0,\theta_\alpha) + T_{n,\gamma}(\theta_\rho,\theta_\alpha) + R_{n,\gamma}^1(\theta_\rho)\\
&\hspace{1cm} + \left( H_{n,\gamma}(\theta_\rho,\theta_\alpha) - \overline{H}_{n,\gamma}(\theta_\rho,\theta_\alpha)\right) + R_{n,\gamma}^{\leq, 1}(\theta_\rho,\theta_\alpha, \theta_\beta),
\end{align*} 
where $R_{n,\gamma}^{\leq,1}$ is defined in the proof of Lemma~\ref{lemma:estimates_tripleMLE}(d), does not depend on $\theta_\beta$ and is shown to satisfy
\[ \sup_{\theta_\rho>\zeta\sqrt{n\Delta_n^{\gamma}}/2}R_{n,\gamma}^{\leq,1}(\theta_\rho,\theta_\alpha)\1_{B_n} \leq \1_{B_n}Cg\left(\frac{\alpha_0}{\alpha_0+\theta_\alpha}\right) n^{1/2+\gamma/6}.\]
The same bound is valid for $\1_{B_n}(H_{n,\gamma}(\theta_\rho,\theta_\alpha) - \overline{H}_{n,\gamma}(\theta_\rho,\theta_\alpha)$ by Lemma~\ref{lemma:estimates_tripleMLE}(c). By Lemma~\ref{lemma:estimate_counts_interval} applied to $a=\zeta\sqrt{n\Delta_n^\gamma}/2$ and $b=0$, we obtain
\[ \overline{H}_{n,\gamma}(0,\theta_\alpha) \leq n \frac{\xi\zeta}{2\max\{\alpha_0^2,\beta_0^2\}} f\left(\frac{\alpha_0}{\alpha_0+\theta_\alpha}\right)\]
and $\sup_{\theta_\rho} R_{n,\gamma}^1(\theta_\rho)\1_{B_n} \leq Cn^{1/2+\gamma/6}$ according to~\eqref{eq_proof:order_remainder_ell1}. All these upper bounds appear on the right-hand side of the statement.\smallskip

\item[$\bullet\ \mathbf{S_{n,\gamma}^2}$.] This term is the same as $R_{n,\gamma}^{\leq,2}\leq C_{\alpha_0,\beta_0}(R_{n,\gamma}^{\leq,2,1}+R_{n,\gamma}^{\leq,2,2})$ in the proof of Lemma~\ref{lemma:estimates_tripleMLE}(d) where also the definition of $R_{n,\gamma}^{\leq,2,1},R_{n,\gamma}^{\leq,2,2}$ is given in~\eqref{eq_proof:d_R21}. To bound the first one, we apply the inequality
\begin{align}\label{eq_proof:small_delta_uniform_bound_factor}
\sup_{\theta_\alpha>-\alpha_0,\theta_\beta>-\beta_0} \left|\frac{\alpha_0+\theta_\alpha-(\beta_0+\theta_\beta)}{\alpha_0+\theta_\alpha+\beta_0+\theta_\beta} - \frac{\alpha_0-\beta_0}{\alpha_0+\beta_0}\right| \leq 2 
\end{align} 
and then follow the treatment of $R_{n,\gamma}^{\leq,2,1}$ in the proof of Lemma~\ref{lemma:estimates_tripleMLE}(d). The reasoning for $R_{n,\gamma}^{\leq,2,2}$ does not change at all compared to the proof of Lemma~\ref{lemma:estimates_tripleMLE}(d) and in total we obtain
\[ \sup_{\theta_\rho}S_{n,\gamma}^2(\theta_\rho,\theta_\alpha,\theta_\beta) \leq C(R)n^{1/2+\gamma/6}\left( 1+ g\left(\frac{\alpha_0}{\alpha_0+\theta_\alpha}\right)\right)\]
in case $\theta_\alpha\leq R$ and obtain the bound $nd_{\alpha_0,\beta_0}$ in case $\theta_\alpha>R$. \smallskip

\item[$\bullet\ \mathbf{S_{n,\gamma}^3}$.] This term is already present on the right-hand side of the assertion.\smallskip

\item[$\bullet\ \mathbf{S_{n,\gamma}^4}$.] As for $S_{n,\gamma}^2$ we obtain $S_{n,\gamma}^4\leq C_{\alpha_0,\beta_0}(S_{n,\gamma}^{4,1} + S_{n,\gamma}^{4,2})$, where the terms $S_{n,\gamma}^{4,1},S_{n,\gamma}^{4,2}$ replace $R_{n,\gamma}^{\leq,2,1}, R_{n,\gamma}^{\leq,2,2}$ from the proof of $S_{n,\gamma}^2$ and are given as
\begin{align*}
S_{n,\gamma}^{4,1}(\theta_\rho,\theta_\alpha,\theta_\beta) &= \left|\frac{\alpha_0+\theta_\alpha-(\beta_0+\theta_\beta)}{\alpha_0+\theta_\alpha+\beta_0+\theta_\beta} - \frac{\alpha_0-\beta_0}{\alpha_0+\beta_0}\right| \\
&\hspace{1.5cm} \cdot \sum_{k=1}^n\1_{\{X_{(k-1)\Delta_n^\gamma}\geq \rho_0+\theta_\rho,X_{k\Delta_n^\gamma}>\rho_0+\theta_0\}} \exp\left(-\frac{2}{\beta_0^2\Delta_n^\gamma} D_k\right)\\
S_{n,\gamma}^{4,2}(\theta_\rho,\theta_\alpha,\theta_\beta) &= \left|\frac{\alpha_0+\theta_\alpha-(\beta_0+\theta_\beta)}{\alpha_0+\theta_\alpha+\beta_0+\theta_\beta}\right|\sum_{k=1}^n\1_{\{X_{(k-1)\Delta_n^\gamma}\geq \rho_0+\theta_\rho,X_{k\Delta_n^\gamma}>\rho_0+\theta_0\}} \\
&\hspace{1.5cm} \cdot \left| \exp\left( -\frac{2}{\beta_0^2\Delta_n^\gamma}D_k\right)-\exp\left(-\frac{2}{(\beta_0+\theta_\beta)^2\Delta_n^\gamma} D_k\right)\right|,
\end{align*} 
where $D_k = (X_{(k-1)\Delta_n^\gamma}-\rho_0-\theta_\rho)(X_{k\Delta_n^\gamma}-\rho_0-\theta_\rho)$. The term $S_{n,\gamma}^{4,1}$ can then be dealt with as $R_{n,\gamma}^{\leq,2,1}$ in the proof of Lemma~\ref{lemma:estimates_tripleMLE}(d) by again applying bound~\eqref{eq_proof:small_delta_uniform_bound_factor}. For $S_{n,\gamma}^{4,2}$ we observe that the factor infront of the sum is uniformly in $\theta_\alpha,\theta_\beta$ bounded by one. Moreover, $(\beta_0+\theta_\beta)^{-2}\geq \delta_l^{-2}$ and by choosing $\delta_l\in (0,1]$ small enough such that $\delta_l^{-2}\geq \beta_0^{-2}$ we obtain by observing $D_k\geq 0$ on the event $\{X_{(k-1)\Delta_n^\gamma}\geq \rho_0+\theta_\rho,X_{k\Delta_n^\gamma}>\rho_0+\theta_0\}$
\[ S_{n,\gamma}^{4,2}(\theta_\rho,\theta_\alpha,\theta_\beta) \leq 2 \sum_{k=1}^n\1_{\{X_{(k-1)\Delta_n^\gamma}\geq \rho_0+\theta_\rho,X_{k\Delta_n^\gamma}>\rho_0+\theta_0\}}\exp\left( -\frac{2}{\beta_0^2\Delta_n^\gamma}D_k\right)\]
and we can then follow the follow the lines of $S_{n,\gamma}^{4,1}$. In total we then obtain
\[ S_{n,\gamma}^{4}(\theta_\rho,\theta_\alpha,\theta_\beta)\1_{B_n} \leq \1_{B_n} C_{\alpha_0,\beta_0}n^{1/2+\gamma/6}.\]

\item[$\bullet\ \mathbf{S_{n,\gamma}^5}$.] In this part, the event $B_{3,n}$ is used in a somewhat surprising way. First of all we drop negative summands and bound
\[ S_{n,\gamma}^5(\theta_\rho,\theta_\alpha,\theta_\beta) \leq S_{n,\gamma}^{5,1}(\theta_\rho,\theta_\alpha,\theta_\beta)+S_{n,\gamma}^{5,2}(\theta_\rho,\theta_\alpha,\theta_\beta)+S_{n,\gamma}^{5,3}(\theta_\rho,\theta_\beta)+S_{n,\gamma}^{5,4}(\theta_\rho,\theta_\beta), \]
where
\begin{align*}
S_{n,\gamma}^{5,1}(\theta_\rho,\theta_\alpha,\theta_\beta) &:=  \sum_{k=1}^n \1_{\{ X_{(k-1)\Delta_n^\gamma}<\rho_0+\theta_\rho<X_{k\Delta_n^\gamma}\}} \left[ \log\left( \frac{\alpha_0+\beta_0}{\alpha_0+\theta_\alpha+\beta_0+\theta_\beta}\frac{\alpha_0+\theta_\alpha}{\alpha_0}\right)\right.\\
&\hspace{2cm} \left. +\frac{1}{2\Delta_n^\gamma} \left(\frac{X_{k\Delta_n^\gamma}-\rho_0-\theta_\rho}{\beta_0} -\frac{X_{(k-1)\Delta_n^\gamma}-\rho_0-\theta_\rho}{\alpha_0} \right)^2\right],\\
S_{n,\gamma}^{5,2}(\theta_\rho,\theta_\alpha,\theta_\beta)&:= \sum_{k=1}^n \1_{\{ X_{k\Delta_n^\gamma}\leq\rho_0+\theta\leq X_{(k-1)\Delta_n^\gamma}\}} \left[\log\left(\frac{\alpha_0+\beta_0}{\alpha_0+\theta_\alpha+\beta_0+\theta_\beta}\frac{\alpha_0}{\alpha_0+\theta_\alpha}\right) \right.\\
&\hspace{1cm} +\frac{1}{2\Delta_n^\gamma} \left( \left(\frac{X_{k\Delta_n^\gamma}-\rho_0-\theta_\rho}{\alpha_0} -\frac{X_{(k-1)\Delta_n^\gamma}-\rho_0-\theta_\rho}{\beta_0} \right)^2\right],\\
S_{n,\gamma}^{5,3}(\theta_\rho,\theta_\beta) &:=  \sum_{k=1}^n \1_{\{ X_{(k-1)\Delta_n^\gamma}<\rho_0+\theta_\rho<X_{k\Delta_n^\gamma}\}} \log\left(\frac{\beta_0}{\beta_0+\theta_\beta}\right), \\
S_{n,\gamma}^{5,4}(\theta_\rho,\theta_\beta) &:=  \sum_{k=1}^n \1_{\{ X_{k\Delta_n^\gamma}<\rho_0+\theta_\rho<X_{(k-1)\Delta_n^\gamma}\}} \log\left(\frac{\beta_0+\theta_\beta}{\beta_0}\right).
\end{align*}
For $S_{n,\gamma}^{5,1}$, we note that for $\delta_l\leq 1$,
\[ \sup_{\theta_\alpha>-\alpha_0} \sup_{\theta_\beta\in (-\beta_0,-\beta_0+\delta_l)} \log\left( \frac{\alpha_0+\beta_0}{\alpha_0+\theta_\alpha+\beta_0+\theta_\beta}\frac{\alpha_0+\theta_\alpha}{\alpha_0}\right)\leq \log\left(\frac{\alpha_0+\beta_0}{\alpha_0}\right).\]
Next, we have
\begin{align*}
&\1_{\{ X_{(k-1)\Delta_n^\gamma}<\rho_0+\theta_\rho<X_{k\Delta_n^\gamma}\}} \left( X_{k\Delta_n^\gamma}-\rho_0-\theta_\rho\right)^2 \\
&\hspace{2cm}\leq \1_{\{ X_{(k-1)\Delta_n^\gamma}<\rho_0+\theta<X_{k\Delta_n^\gamma}\}}\left( X_{k\Delta_n^\gamma}-X_{(k-1)\Delta_n^\gamma}\right)^2
\end{align*} 
and the same with $k$ replaced by $(k-1)$. This means that in total
\[ S_{n,\gamma}^{5,1}(\theta_\rho,\theta_\alpha,\theta_\beta) \leq C_{\alpha_0,\beta_0} \Upsilon_{n,\gamma}^{\leq,1}(\theta_\rho,\theta_\alpha,\theta_\beta),\]
where $\Upsilon_{n,\gamma}^{\leq,1}$ is given in~\eqref{eq_proof:Upsilon} and satisfies $\Upsilon_{n,\gamma}^{\leq,1}\1_{B_n} \leq \1_{B_n}C_{\alpha_0,\beta_0}n^{1/2+\gamma/6}$. For $S_{n,\gamma}^{5,2}$ we note that
\[ \sup_{\theta_\beta\in (-\beta_0,-\beta_0+\delta_l)}\log\left( \frac{\alpha_0+\beta_0}{\alpha_0+\theta_\alpha+\beta_0+\theta_\beta}\frac{\alpha_0}{\alpha_0+\theta_\alpha}\right) \leq C_{\alpha_0,\beta_0}\left( 1+ g\left(\frac{\alpha_0}{\alpha_0+\theta_\alpha}\right)\right)\]
and then proceed as for $S_{n,\gamma}^{5,1}$ to obtain
\[ S_{n,\gamma}^{5,2}(\theta_\rho,\theta_\alpha,\theta_\beta)\1_{B_n} \leq \1_{B_n}C_{\alpha_0,\beta_0}\left( 1+ g\left(\frac{\alpha_0}{\alpha_0+\theta_\alpha}\right)\right) n^{1/2+\gamma/6}.\]
We now turn to $S_{n,\gamma}^{5,3}$ which is the most interesting part of the proof. This term is positive for $\theta_\beta<0$ and this positivity cannot be compensated within the same regime, i.e. with other indices $k$ for which $k\in I_{up} := \{1\leq k\leq n:X_{(k-1)\Delta_n^\gamma}<\rho_0+\theta_\rho\leq X_{k\Delta_n^\gamma}\}$. The idea to solve this issue is to identify possible (negative) compensators and there are two different types:\smallskip
\begin{itemize}
\item[$\bullet$] For each $k$ with $X_{k\Delta_n^\gamma}<\rho_0+\theta_\rho<X_{(k-1)\Delta_n^\gamma}$ we obtain the corresponding negative summand $-\log(\beta_0/(\beta_0+\theta_\beta))$ which is part of $S_{n,\gamma}^{5,4}$.
\item[$\bullet$] From $S_{n,\gamma}^3$, we already have the summand
\begin{align}\label{eq_proof:Sn5_delta_small}
\sum_{k=1}^n \log\left(\frac{\beta_0}{\beta_0+\theta_\beta}\right)\1_{\{X_{(k-1)\Delta_n^\gamma}\geq \rho_0+\theta_\rho, X_{k\Delta_n^\gamma}>\rho_0+\theta_\rho\}},
\end{align} 
which in an application of this lemma is seen to compensated for by the negative term
\[ -\frac12\left(\frac{\beta_0^2}{(\beta_0+\theta_\beta)^2}\right) \sum_{k=1}^n \frac{(X_{k\Delta_n^\gamma}-X_{(k-1)\Delta_n^\gamma})^2}{\Delta_n^\gamma\beta_0^2}\1_{\{X_{(k-1)\Delta_n^\gamma}\geq \rho_0+\theta_\rho, X_{k\Delta_n^\gamma}>\rho_0+\theta_\rho\}} \]
that is also part of $S_{n,\gamma}^3$. In principle, adding this term and two times~\eqref{eq_proof:Sn5_delta_small} would also result in a negative term with high probability.
\end{itemize}\smallskip
Now, the crucial observation is that on the event $B_n$, we can construct for each path an injective mapping
\begin{align}\label{eq_proof:injection}
\begin{split}
I_{up} \rightarrow \left\{ k:X_{k\Delta_n^\gamma}<\rho_0+\theta_\rho<X_{(k-1)\Delta_n^\gamma} \right\} \cup \left\{ k:X_{(k-1)\Delta_n^\gamma}\geq \rho_0+\theta_\rho, X_{k\Delta_n^\gamma}>\rho_0+\theta_\rho\right\},
\end{split}
\end{align}
where especially $B_{3,n}$ plays an important role: For each $k\leq n-1$ with $k\in I_{up}$ we obviously have either $X_{(k+1)\Delta_n^\gamma}>\rho_0+\theta_\rho$ or $X_{(k+1)\Delta_n^\gamma}\leq\rho_0+\theta_\rho$ and map $k\mapsto k+1$. For $k=n$ we have no successor and argue with the path properties of $(X_t)_{0\leq t\leq n\Delta_n^\gamma}$ and our particular choice of $B_n$. On this event, the difference of the running maximum of $(X_t)_{0\leq t\leq n\Delta_n^\gamma}$ and $X_{n\Delta_n^\gamma}$ is at least one. Since $|X_{(k-1)\Delta_n^\gamma}-X_{k\Delta_n^\gamma}|\leq (\Delta_n^\gamma)^{4/9}$ on $A_n\supset B_n$ for two consecutive observations, this implies that for $n$ large enough, there exists a smallest index $k_0\leq n-2$ such that three consecutive observations lie above $\rho_0+\theta_\rho$, i.e. $X_{k_0\Delta_n^\gamma}, X_{(k_0+1)\Delta_n^\gamma}, X_{(k_0+2)\Delta_n^\gamma}>\rho_0+\theta_\rho$. In particular, $k_0+1\notin I_{up}$ such that we can map $n\mapsto k_0+2$ (see Figure~\ref{Fig_cancelling} for an illustration).

\vspace{-0.5cm}

\begin{center}
\begin{figure}[h]
\begin{tikzpicture}
  \draw[->] (0,0.2) -- (12.5,0.2) node[right] {$x$};
  \draw[->] (0,0.2) -- (0,5) node[left] {$y$};
  \draw[-] (0,2) -- (12,2);
  
  \draw[black,thick] (0,1) -- (1,1.5);
  \draw[red,thick] (1,1.5) -- (2,3);
  \draw[blue,thick] (2,3) -- (3,1.3);
  \draw[black,thick] (3,1.3) -- (4,1.8);
  \draw[black,thick] (4,1.8) -- (5,0.6);
  \draw[black,thick] (5,0.6) -- (6,0.9);
  \draw[red,thick] (6,0.9) -- (7,2.2);
  \draw[blue,thick] (7,2.2) -- (8, 2.8); 
  \draw[blue,thick] (8,2.8) -- (9,4.5);
  \draw[black,thick] (9,4.5) -- (10, 2.4);
  \draw[black,thick] (10,2.4) -- (11,1.6);
  \draw[red,thick] (11,1.6) -- (12,2.4);
  
  \fill (0,1) circle (1.5pt);
  \fill (1,1.5) circle (1.5pt);
  \fill (2,3) circle (1.5pt);
  \fill (3,1.3) circle (1.5pt);
  \fill (4,1.8) circle (1.5pt);
  \fill (5,0.6) circle (1.5pt);
  \fill (6,0.9) circle (1.5pt);
  \fill (7,2.2) circle (1.5pt);
  \fill (8,2.8) circle (1.5pt);
  \fill (9,4.5) circle (1.5pt);
  \fill (10,2.4) circle (1.5pt);
  \fill (11,1.6) circle (1.5pt);
  \fill (12,2.4) circle (1.5pt);

  \foreach \z in {1,2,3,4,5,6,7,8,9,10,11,12} {
    \draw[black,thick] (\z,0.1) -- (\z,0.3) node[below, yshift=-5pt] {\z};
  }

\end{tikzpicture}
\caption{\small{An illustration of discrete realization, linearly interpolated. To construct the injection~\eqref{eq_proof:injection}, each upcrossing (red) is mapped to its successor, except of the last segment. Here, we chose one of the earlier segments above the threshold that has no preceeding upcrossing. Such a segment always exists if there are three consecutive points above the threshold.}}\label{Fig_cancelling}
\end{figure}
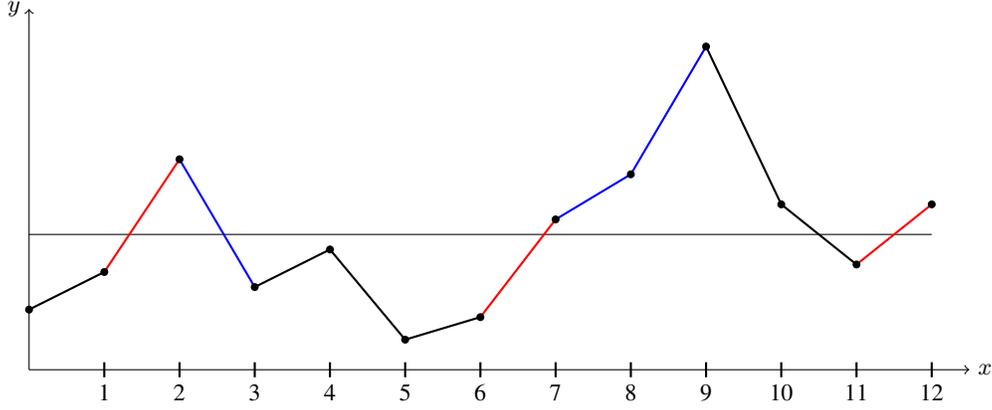
\end{center}

\vspace{-1cm}

Using the above construction of an injective mapping in~\eqref{eq_proof:injection} reveals
\begin{align*}
S_{n,\gamma}^{5,3}(\theta_\rho,\theta_\beta) &= \sum_{k=1}^{n} \1_{\{ X_{(k-1)\Delta_n^\gamma}<\rho_0+\theta_\rho<X_{k\Delta_n^\gamma}\}} \log\left(\frac{\beta_0}{\beta_0+\theta_\beta}\right)\\
&\leq \log\left(\frac{\beta_0}{\beta_0+\theta_\beta}\right)\sum_{k=1}^{n}\1_{\{ \rho_0+\theta_\rho<X_{(k-1)\Delta_n^\gamma}, X_{k\Delta_n^\gamma}\}}  \\
&\hspace{1cm} + \log\left(\frac{\beta_0}{\beta_0+\theta_\beta}\right)\sum_{k=1}^{n}\1_{\{ X_{k\Delta_n^\gamma}\leq\rho_0+\theta_\rho<X_{(k-1)\Delta_n^\gamma}\}}\\
& = -S_{n,\gamma}^{5,4}(\theta_\rho,\theta_\beta) + \log\left(\frac{\beta_0}{\beta_0+\theta_\beta}\right)\sum_{k=1}^{n}\1_{\{ \rho_0+\theta_\rho<X_{(k-1)\Delta_n^\gamma}, X_{k\Delta_n^\gamma}\}}.
\end{align*}
Now it is clear that $S_{n,\gamma}^{5,3}+S_{n,\gamma}^{5,4}$ appears on the right-hand side of the statement and the proof of part (i) is finished.
\end{itemize}

We finally turn to part (ii) that is actually easier to prove than (i). Here, $S_{n,\gamma}^j$ can be dealt with the same as in (i) for $j=1,2,3$ and we discuss the remaining two cases.
\begin{itemize}
\item[$\bullet\ \mathbf{S_{n,\gamma}^4}$.] We observe that as $D_k=(X_{k\Delta_n^\gamma}-\rho_0-\theta_\rho)(X_{(k-1)\Delta_n^\gamma}-\rho_0-\theta_\rho)\geq 0$ on the event that $X_{(k-1)\Delta_n^\gamma},X_{k\Delta_n^\gamma}\geq \rho_0+\theta_\rho$,
\begin{align*}
\log\left(\frac{1-\frac{\alpha_0+\theta_\alpha - (\beta_0+\theta_\beta)}{\alpha_0+\theta_\alpha+\beta_0+\theta_\beta}\exp\left(-\frac{2}{(\beta_0+\theta_\beta)^2\Delta_n^\gamma}D_k\right)}{1-\frac{\alpha_0 - \beta_0}{\alpha_0+\beta_0}\exp\left(-\frac{2}{\beta_0^2\Delta_n^\gamma}(X_{k\Delta_n^\gamma}-\rho_0-\theta_\rho)D_k\right)} \right) \leq \log\left( \frac{2}{1-\frac{|\alpha_0-\beta_0|}{\alpha_0+\beta_0}} \right) =d_{\alpha_0,\beta_0},
\end{align*}
such that
\[ S_{n,\gamma}^4(\theta_\rho,\theta_\alpha,\theta_\beta) \leq d_{\alpha_0,\beta_0} \sum_{k=1}^n\1_{\{X_{(k-1)\Delta_n^\gamma}\geq\rho_0+\theta_\rho, X_{k\Delta_n^\gamma}> \rho_0+\theta_\rho\}},\]
which appear on the right-hand side of the statement. \smallskip

\item[$\bullet\ \mathbf{S_{n,\gamma}^5}$.] Recall $S_{n,\gamma}^{5,j}$, $j=1,2,3,4$, from the treatment of $S_{n,\gamma}^5$ in~(i). Nothing has to be changed in the discussion of $S_{n,\gamma}^{5,1}$ and $S_{n,\gamma}^{5,3}\leq 0$ for $\delta_u>0$. Adding $S_{n,\gamma}^{5,2}+S_{n,\gamma}^{5,4}$ and observing that
\begin{align*}
&\sup_{\theta_\alpha}\sup_{\theta_\beta>\delta_u-\beta_0} \log\left( \frac{\alpha_0+\beta_0}{\alpha_0+\theta_\alpha+\beta_0+\theta_\beta}\frac{\alpha_0}{\alpha_0+\theta_\alpha}\frac{\beta_0+\theta_\beta}{\beta_0}\right) \leq C_{\alpha_0,\beta_0}\left( 1+ g\left(\frac{\alpha_0}{\alpha_0+\theta_\alpha}\right)\right),
\end{align*} 
once can proceed as for $S_{n,\gamma}^{5,2}$ in~(i). In total, this gives
\[ S_{n,\gamma}^{5}(\theta_\rho,\theta_\alpha,\theta_\beta)\1_{B_n} \leq \1_{B_n}C_{\alpha_0,\beta_0}\left( 1+ g\left(\frac{\alpha_0}{\alpha_0+\theta_\alpha}\right)\right) n^{1/2+\gamma/6},\]
which appears on the right-hand side of the statement.
\end{itemize}
\end{proof}


\bibliographystyleSM{imsart-nameyear} 
\bibliographySM{Bibliography_Appendix}       

\end{document}